\newcommand{\Aut}{\mathop{\mathrm{Aut}}}
\newcommand{\supp}{\textup{supp}}
\newcommand{\id}{\mathop{\mathrm{id}}}
\newcommand{\R}{\mathbb{R}}
\newcommand{\D}{\mathbb{D}}
\newcommand{\B}{\mathbb{B}}
\newcommand{\F}{\mathcal{F}}
\newcommand{\J}{\mathcal{J}}
\newcommand{\C}{\mathbb{C}}
\newcommand{\p}{\mathbb{P}}
\newcommand{\pe}{\textup{ := }}
\newcommand{\rat}{\textup{Rat}}
\newcommand{\Per}{\textup{Per}}
\newcommand{\bif}{\textup{bif}}
\newcommand{\poly}{\textup{Poly}}
\begin{document}

\def\theequation{\thesection.\arabic{equation}}
\def\sqw{\hbox{\rlap{\leavevmode\raise.3ex\hbox{$\sqcap$}}$%
\sqcup$}}
\def\sqb{\hbox{\hskip5pt\vrule width4pt height4pt depth1.5pt%
\hskip1pt}}

\newtheorem{tm}{Theorem}[section]
\newtheorem{defi}[tm]{Definition}
\newtheorem{prop}[tm]{Proposition}
\newtheorem{propdefi}[tm]{Proposition-Definition}
\newtheorem{nota}[tm]{Notation}
\newtheorem{rem}[tm]{Remark}
\newtheorem{lm}[tm]{Lemma}
\newtheorem{cor}[tm]{Corollary}
\newtheorem{ctrex}[tm]{Countre-example}
\newtheorem{apl}[tm]{Application}
\newtheorem{fait}{Fact}

\title{Strong bifurcation loci of full Hausdorff dimension}
\begin{author}[Thomas~Gauthier]{Thomas Gauthier}
\email{thomas.gauthier$@$math.univ-toulouse.fr}
\address{Universit\'e Paul Sabatier\\
  Institut de Math\'ematiques de Toulouse \\
  118, route de Narbonne \\
  31062 Toulouse Cedex \\
  France }
\end{author}

\maketitle

\begin{abstract}
In the moduli space $\mathcal{M}_d$ of degree $d$ rational maps, the bifurcation locus is the support of a closed $(1,1)$ positive current $T_\bif$ which is called the bifurcation current. This current gives rise to a measure $\mu_\bif:=(T_\bif)^{2d-2}$ whose support is the seat of strong bifurcations. Our main result says that $\supp(\mu_\bif)$ has maximal Hausdorff dimension $2(2d-2)$. As a consequence, the set of degree $d$ rational maps having $2d-2$ distinct neutral cycles is dense in a set of full Hausdorff dimension.
\end{abstract}

\section{Introduction}
\footnote{Math Subject Class : 37F45; 32U15; 28A78} 

\par The boundary of the Mandelbrot set has Hausdorff dimension 2. This fundamental result is the main Theorem of Shishikura's work \cite{Shishikura2}. Tan Lei has generalized this by showing that the boundary of the connectedness locus of polynomial families of any degree has maximal Hausdorff dimension. Tan Lei has also shown that the bifurcation locus in any non-stable holomorphic family of rational maps is of full dimension (see \cite{TanLei}). McMullen gave another proof of her result in \cite{McMullen3}. Our aim here is to show that dynamically relevent, but a priori much smaller, subsets of the bifurcation locus have maximal Hausdorff dimension in the space $\rat_d$ of all degree $d$ rational maps.
\par We can define a \emph{bifurcation current} on $\rat_d$ by setting $T_\bif\pe dd^cL$, where $L(f)$ is the Lyapounov exponent of $f$ with respect to its maximal entropy measure. DeMarco has shown in \cite{DeMarco1} and \cite{DeMarco2} that the support of $T_\bif$ is precisely the bifurcation locus. This current and its powers $T_\bif^k$ ($k\leq 2d-2$) have been used in several recent works for studying the geometry of the bifurcation locus (see \cite{BB1,BB3,BB2,buffepstein,dinhsibony2,dujardin2,favredujardin}). Moebius transformations act by conjugacy on $\rat_d$ and the quotient space is an orbifold known as the \emph{moduli space} $\mathcal{M}_d$ of degree $d$ rational maps. Giovanni Bassanelli and Fran\c{c}ois Berteloot \cite{BB1} introduced a measure $\mu_\bif$ on this moduli space, which may be obtained by pushing forward $T_\bif^{2d-2}$. We will call \emph{strong bifurcation locus} the support of this measure $\mu_\bif$. This set can be interpreted as a set on which bifurcations are maximal. Our main result is the following:

~

\begin{tm}
The support of the bifurcation measure $\mu_\bif$ of the moduli space $\mathcal{M}_d$ of degree $d$ rational maps is homogeneous and has maximal Hausdorff dimension, i.e.
\begin{center}
$\dim_H(\supp(\mu_\bif)\cap\Omega)=2(2d-2)$
\end{center}
for any open set $\Omega\subset\mathcal{M}_d$ such that $\supp(\mu_\bif)\cap\Omega\neq\emptyset$.
\label{tmprincipal}
\end{tm}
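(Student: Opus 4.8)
The plan is to construct, near any point of $\supp(\mu_\bif)$, a large family of parameters for which one can force $2d-2$ independent critical orbits to do something dynamically rich, and to transfer a full-dimensional set from a model situation into $\mathcal{M}_d$ via a transversality/holomorphic-motion argument. More precisely, I would first reduce to a local statement in $\rat_d$ rather than $\mathcal{M}_d$, since away from the (lower-dimensional) symmetry locus the quotient map $\rat_d\to\mathcal{M}_d$ is a local biholomorphism and Hausdorff dimension is preserved; homogeneity of $\supp(\mu_\bif)$ will follow once we prove that every point of the support has full-dimensional intersection with arbitrarily small balls. The key input is the description of $T_\bif$ as $dd^c L$ together with the known fact (Bassanelli--Berteloot, Buff--Epstein, Dujardin--Favre) that $T_\bif^{2d-2}$ is, up to normalization, an intersection of $2d-2$ currents each governing the activity of one critical point, so that $\supp(\mu_\bif)$ is essentially the closure of parameters possessing $2d-2$ distinct neutral (indeed parabolic) cycles with independent multipliers.

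The heart of the argument, as I see it, is a \emph{quantitative transversality plus Shishikura-type} construction. Fix $f_0\in\supp(\mu_\bif)$; by the above one may perturb $f_0$ so that it has $2d-2$ parabolic cycles whose associated multiplier maps form a local submersion $\Phi\colon U\to\D^{2d-2}$ on a neighborhood $U$ of $f_0$ in $\rat_d$ (this submersivity is exactly what $\mu_\bif\neq 0$ near $f_0$ buys us, via the Buff--Epstein / Bassanelli--Berteloot transversality theory). Then I would restrict attention to a $(2d-2)$-dimensional slice on which $\Phi$ is a biholomorphism onto $\D^{2d-2}$, i.e. a holomorphic family parametrized by the multipliers $\lambda=(\lambda_1,\dots,\lambda_{2d-2})\in\D^{2d-2}$, with each $\lambda_j$ driving an independent critical point past a parabolic point of period $n_j$. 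On such a one-variable sub-family $\lambda_j\mapsto (\text{germ with a multiplier-}\lambda_j\text{ periodic point})$, Shishikura's techniques (quasiconformal surgery, or the Tan Lei / McMullen transfer of dimension for the bifurcation locus of any non-trivial one-parameter family) show that the bifurcation locus in the $\lambda_j$-disc has Hausdorff dimension $2$. The new point is to do this \emph{simultaneously and independently} in all $2d-2$ variables: one needs that the bad set where some critical point is passive, or where two critical orbits collide, is meager enough not to destroy the product structure.

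For that simultaneous step the cleanest route is a \emph{Cantor-set / Moran-construction} argument combined with a Fubini-type slicing inequality for Hausdorff dimension: show that for a positive-measure (in fact full-dimensional) set of $(\lambda_1,\dots,\lambda_{2d-2})$, each coordinate independently lands in a copy of a dimension-$2$ Cantor set built by the Shishikura mechanism, the construction at stage $k$ in variable $j$ being uniform in the other variables thanks to the transversality making the $2d-2$ critical activities genuinely independent (no shared periodic cycles). Then $\supp(\mu_\bif)\cap U$ contains a set which, in the $\lambda$-coordinates, contains a product $C_1\times\cdots\times C_{2d-2}$ with $\dim_H C_j=2$, and the standard inequality $\dim_H(\prod C_j)\geq\sum\dim_H C_j$ (valid here, e.g. because each $C_j$ can be taken with equal Hausdorff and lower box dimension, or carries a Frostman measure) gives $\dim_H\big(\supp(\mu_\bif)\cap U\big)\geq 2(2d-2)$; the reverse inequality is trivial since $\mathcal{M}_d$ has complex dimension $2d-2$.

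The main obstacle I anticipate is precisely the \emph{independence of the $2d-2$ critical activities}: a priori, forcing one critical point to activate near a parabolic cycle of period $n_j$ could unintentionally also move another critical orbit (e.g. if the two critical points fall into the same cycle, or if the parabolic surgery one performs for the $j$-th critical point perturbs the holomorphic motion governing the $i$-th). Overcoming this requires a careful choice of the $2d-2$ parabolic data — distinct periods, disjoint cycles, multipliers free to vary independently — which is exactly guaranteed by the non-vanishing of $\mu_\bif=T_\bif^{2d-2}$ near $f_0$ and the transversality results of Buff--Epstein and Bassanelli--Berteloot cited in the introduction; making the Shishikura construction uniform across the remaining $2d-3$ parameters, so that the Moran scheme in variable $j$ has diameters and separation constants bounded below independently of $\lambda_i$ ($i\neq j$), is the delicate technical core of the proof.
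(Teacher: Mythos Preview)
Your overall strategy --- reduce to $\rat_d$, invoke transversality to separate the $2d-2$ critical activities, and feed Shishikura's dimension-$2$ mechanism into each direction --- is the right shape, and is indeed the skeleton of the paper's proof. But the implementation you outline has a genuine gap at the product step, and the choice of parabolic-multiplier coordinates makes that gap hard to close.

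The gap is this. You build, for each $j$, a Cantor set $C_j\subset\D$ of dimension $2$ by running Shishikura in the $\lambda_j$-slice through some base point; you then want $C_1\times\cdots\times C_{2d-2}\subset\supp(\mu_\bif)$. But the defining property of $c_j\in C_j$ only says something about the parameter $(\lambda_1^0,\ldots,c_j,\ldots,\lambda_{2d-2}^0)$, not about $(c_1,\ldots,c_{2d-2})$. Even if you let $C_j$ depend on the other coordinates and demand uniform Moran data, the set $\{\lambda:\lambda_j\in C_j(\lambda_{-j})\ \forall j\}$ is not a product, and no Fubini/slicing inequality gives a lower dimension bound for such an implicitly defined set. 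You correctly flag the ``independence'' issue as the main obstacle, but the fix you propose (uniform constants in the Moran scheme) does not resolve it: what is missing is a \emph{map} that, given $2d-2$ target points in a hyperbolic set, produces a single parameter where all $2d-2$ critical orbits hit those targets simultaneously.

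This is exactly what the paper constructs, and it does so from a different starting point. Rather than perturbing to $2d-2$ parabolic cycles (which do not persist, so multipliers are not honest local coordinates on a full neighborhood), the paper perturbs to a $(2d-2)$-Misiurewicz map $g$ whose critical orbits all land in a single compact hyperbolic set $E_0$ of Hausdorff dimension close to $2$ (the latter obtained via one application of Shishikura's parabolic implosion inside a codimension-one subfamily). At such a $g$ there is a natural \emph{activity map} $\chi(\lambda)=\big(g_\lambda^{N}(c_j(\lambda))-h_\lambda(g_0^{N}(c_j(0)))\big)_{j}$, and the weak transversality theorem shows $\chi^{-1}\{0\}$ has codimension $2d-2$. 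A Rouch\'e argument then inverts $\chi$ along the holomorphic motion of $E_0$ to give a continuous \emph{transfer map} $\mathcal{T}:E_0^{2d-2}\to\rat_d$ landing in the Misiurewicz locus, with an explicit H\"older lower bound coming from the H\"older regularity of holomorphic motions. That H\"older bound turns $\dim_H(E_0^{2d-2})\geq(2d-2)(2-\varepsilon)$ into the desired lower bound on $\dim_H(\mathfrak{M}_{2d-2}\cap V)$. Finally, membership of these Misiurewicz parameters in $\supp(T_\bif^{2d-2})$ is a separate theorem (a renormalization/similarity argument \`a la Buff--Epstein), and density of Misiurewicz parameters in $\supp(\mu_\bif)$ gives homogeneity. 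In short: the transfer map replaces your product, and Misiurewicz hyperbolic sets replace your parabolic multiplier chart.
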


\par Let us mention that this implies that the conjugacy classes of rational maps having $2d-2$ distinct neutral cycles are dense in a homogeneous set of full Hausdorff dimension in $\mathcal{M}_d$ (see Main Theorem of \cite{buffepstein}).

~

\par As we shall now explain, this will be obtained by using Misiurewicz rational maps properties and bifurcation currents techniques. A rational map $f$ is $k$-\emph{Misiurewicz} if its Julia set contains exactly $k$ critical points counted with multiplicity, if $f$ has no parabolic cycle and if the $\omega$-limit set of any critical point in its Julia set does not meet the critical set. A classical result of Ma\~n\'e states that the $k$ critical points of $f$ which are in $\J_f$ eventually fall under iteration in a compact $f$-hyperbolic set $E_0$, which means that $f$ is uniformly expanding on $E_0$, and that the $2d-2-k$ remaining critical points are in attracting basins of $f$ (see Section \ref{sectionMis}).

\par The first result we need to establish is the following transversality theorem:

~

\begin{tm}[Weak transversality]
Let $(f_\lambda)_{\lambda\in\B(0,r)}$ be a holomorphic family of degree $d$ rational maps parametrized by a ball $\B(0,r)\subset\C^{2d-2}$ with $2d-2$ marked critical points. Let $f_0$ be $k$-Misiurewicz but not a flexible Latt\`es map. Let us denote by $E_0$ the compact $f_0$-hyperbolic set such that $f_0^{k_0}(c_1(0)),\ldots,f_0^{k_0}(c_k(0))\in E_0$. Denote also by $h$ the dynamical holomorphic motion of $E_0$. If the set $\{\lambda \ / \ \exists m\in\Aut(\p^1),f_\lambda\circ m=m\circ f_{\lambda_0}\}$ is discrete for any $\lambda_0\in\B(0,r)$, then 
\begin{center}
$\textup{codim}\ \{\lambda\in\B(0,r) \ / \ f_\lambda^{k_0}(c_j(\lambda))=h_\lambda(f_0^{k_0}(c_j(0))),1\leq j\leq k\}=k$.
\end{center}
\label{tmtransverseintro}
\end{tm}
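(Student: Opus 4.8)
The set appearing in the conclusion, call it $Z$, equals $\Phi^{-1}(0)$ once we fix holomorphic charts of $\p^1$ centred at the points $f_0^{k_0}(c_j(0))\in E_0$ and set
\[
\Phi=(\Phi_1,\dots,\Phi_k):\B(0,r)\longrightarrow\C^k,\qquad \Phi_j(\lambda):=f_\lambda^{k_0}(c_j(\lambda))-h_\lambda\big(f_0^{k_0}(c_j(0))\big);
\]
note that $0\in Z$ since $h_0=\id$. The inequality $\textup{codim}\,Z\le k$ is free, $Z$ being cut out by $k$ equations, so the plan is to prove the reverse inequality by showing that $D\Phi$ has rank $k$ at enough points of $Z$; I will aim for rank $k$ at every $\lambda_0\in Z$. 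First I would shrink $r$, harmlessly, so that throughout $\B(0,r)$ no map $f_\lambda$ is a flexible Latt\`es map (the flexible Latt\`es locus is closed and $f_0$ lies off it), the $2d-2-k$ passive critical points stay in attracting basins, and the holomorphic motion $h$ of the hyperbolic set $E_0$ (which exists by hyperbolicity and the $\lambda$-lemma) is defined on all of $\B(0,r)$, with $E_\lambda:=h_\lambda(E_0)$ a hyperbolic set of $f_\lambda$.

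Next I would compute $D\Phi_{\lambda_0}$ at a point $\lambda_0\in Z$. Put $p_j:=f_0^{k_0}(c_j(0))$ and $v_j:=f_{\lambda_0}^{k_0}(c_j(\lambda_0))=h_{\lambda_0}(p_j)\in E_{\lambda_0}$, and for $\xi\in T_{\lambda_0}\B(0,r)$ write $\dot f_\xi$ for the induced infinitesimal deformation of $f_{\lambda_0}$ (a section of the pull-back bundle $f_{\lambda_0}^{*}T\p^1$). When differentiating $\lambda\mapsto f_\lambda^{k_0}(c_j(\lambda))$ the term involving the derivative of $c_j$ is killed by the factor $(f_{\lambda_0}^{k_0})'(c_j(\lambda_0))=0$, since $c_j(\lambda_0)$ is a critical point; and differentiating the equivariance relation $h_\lambda\circ f_0=f_\lambda\circ h_\lambda$ on $E_0$ gives, after transport by $h_{\lambda_0}$, the cohomological equation $u\circ f_{\lambda_0}=\dot f_\xi+f_{\lambda_0}'\cdot u$ on $E_{\lambda_0}$, whose unique bounded solution is, by uniform expansion along $E_{\lambda_0}$, the geometrically convergent series $u=-\sum_{n\ge0}(\dot f_\xi\circ f_{\lambda_0}^{\,n})\big/(f_{\lambda_0}^{\,n+1})'$. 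Combining, up to a nonzero factor coming from the chart,
\[
D\Phi_j(\xi)\;=\;\sum_{n=0}^{k_0-1}\big(f_{\lambda_0}^{\,k_0-1-n}\big)'\!\big(f_{\lambda_0}^{\,n+1}(c_j(\lambda_0))\big)\,\dot f_\xi\big(f_{\lambda_0}^{\,n}(c_j(\lambda_0))\big)\;+\;\sum_{n\ge0}\frac{\dot f_\xi\big(f_{\lambda_0}^{\,n}(v_j)\big)}{\big(f_{\lambda_0}^{\,n+1}\big)'(v_j)}.
\]
In particular each $D\Phi_j(\xi)$ is a convergent linear functional of $\dot f_\xi$ along the forward orbit of $c_j(\lambda_0)$, with weights decaying geometrically once the orbit has entered $E_{\lambda_0}$, and the right-hand side makes sense for any infinitesimal deformation of $f_{\lambda_0}$, not only those coming from the family.

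To get rank $k$ I would argue by contradiction: if $\textup{rank}\,D\Phi_{\lambda_0}<k$ there is $(a_1,\dots,a_k)\in\C^k\setminus\{0\}$ with $\sum_{j=1}^k a_j D\Phi_j(\xi)=0$ for every $\xi$. The discreteness hypothesis forces $\B(0,r)\to\mathcal{M}_d$ to have discrete fibres, hence — being holomorphic between spaces of the same dimension $2d-2$ — to be open and, off a proper analytic subset, a local biholomorphism; so, at least for $\lambda_0$ outside that subset, the deformations $\dot f_\xi$ together with the deformations tangent to the $\Aut(\p^1)$-action span the whole space of infinitesimal deformations of $f_{\lambda_0}$. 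Plugging this into the series above, the identically vanishing functional $\dot f\mapsto\sum_j a_j D\Phi_j(\dot f)$ would, via the Teichm\"uller-type pairing between deformations and invariant forms, produce a \emph{nonzero} $f_{\lambda_0}$-invariant measurable line field supported on $\J_{f_{\lambda_0}}$ (concretely, carried by $E_{\lambda_0}$ and its grand orbit). By the rigidity theorem that flexible Latt\`es maps are the only rational maps admitting an invariant line field on their Julia set, $f_{\lambda_0}$ would be flexible Latt\`es, against the choice of $r$. Hence $\textup{rank}\,D\Phi_{\lambda_0}=k$ and $Z$ has pure codimension $k$, which is the theorem.

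The hard part is this last step: converting the infinitesimal relation $\sum_j a_j D\Phi_j\equiv0$ into a genuine $f_{\lambda_0}$-invariant line field and invoking line-field rigidity. This is a version of the transversality arguments of van Strien and of Buff--Epstein, here in the ``Misiurewicz'' rather than the ``postcritically finite'' regime — so the deformation is carried by the Julia set and the hyperbolic set $E_{\lambda_0}$, not by a finite postcritical set as in Thurston rigidity. The points I expect to be delicate are: checking that $(a_1,\dots,a_k)\neq0$ really yields a nonzero line field (that it is not annihilated by the summation); using the discreteness hypothesis to ensure the family provides enough deformations to detect it — which is exactly what rules out families contained in a proper ``special'' subvariety of $\mathcal{M}_d$, and the flexible Latt\`es family itself; disposing of the parameters of $Z$ lying over the ramification locus of $\B(0,r)\to\mathcal{M}_d$, presumably by a limiting argument; and tracking the critical points correctly, the fact that the contribution of the moving critical points drops out being precisely what makes the \emph{critical} orbit relations — rather than arbitrary orbit relations — the ones of the expected codimension.
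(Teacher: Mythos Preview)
Your approach is genuinely different from the paper's, and the central step is a gap rather than a mere detail. The paper never computes $D\Phi$; it argues at the level of analytic sets by an induction on $k$. The key lemma (their Proposition~3.8) is: in a \emph{good family} (one where the attracting basins are frozen up to conformal equivalence), if $f_0$ is $k$-Misiurewicz and not flexible Latt\`es, then some $c_j$ is active. The proof is a direct quasiconformal argument, not an infinitesimal one: if all $c_j$ were passive, the family would be stable, and the good-family hypotheses let one upgrade the dynamical holomorphic motion to a global conjugacy $\phi_\lambda:\p^1\to\p^1$ that is \emph{holomorphic on the Fatou set}. When $k<2d-2$ one has $\textup{Leb}(\J_{f_0})=0$, so $\phi_\lambda\in\Aut(\p^1)$, contradicting discreteness of the fibres of $\Pi$. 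When $k=2d-2$ one has $\J=\p^1$, and the Beltrami coefficient of $\phi_\lambda$ is an invariant line field, forcing $f$ to be flexible Latt\`es. One then restricts to $\chi_j^{-1}\{0\}$ and repeats. A separate construction (their Section~3.3) exhibits a good subfamily of dimension $\geq k$ through $0$ by freezing multipliers and B\"ottcher/Koenigs positions of the passive critical points.

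Your proposal instead aims for the \emph{strong} statement that $D\Phi$ has rank $k$ everywhere on $Z$, via: a relation $\sum a_jD\Phi_j\equiv0$ on all infinitesimal deformations $\dot f$ yields, ``via the Teichm\"uller-type pairing'', a nonzero invariant line field. This step is not carried out, and it is precisely where the difficulty lies. What the relation naturally produces is a distributional object supported on the (countable, infinite) forward critical orbits --- a divergence in Epstein's sense --- not an $L^\infty$ Beltrami form or an $L^1$ quadratic differential on which line-field rigidity bites. The paper is explicit that ``quadratic differential techniques \ldots\ are not well adapted when the critical orbits are infinite'', which is why it follows van Strien and Aspenberg in using integrated quasiconformal conjugacies rather than their derivatives. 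Note also that your line-field endgame is meant to cover all $k$, whereas in the paper the line-field step only appears in the extreme case $k=2d-2$; for $k<2d-2$ the contradiction comes from $\textup{Leb}(\J)=0$, with no rigidity theorem needed. Finally, the discreteness hypothesis does \emph{not} make $\B(0,r)\to\mathcal{M}_d$ a local biholomorphism off a thin set in general (the map can be constant on positive-dimensional sets in other families; here you are implicitly using that the parameter space has the same dimension as $\mathcal{M}_d$, which is fine, but the conclusion that the $\dot f_\xi$ together with the $\Aut(\p^1)$-directions span all of $T_{f_{\lambda_0}}\rat_d$ still needs the map to be \'etale at $\lambda_0$, and you have not disposed of the ramification locus).
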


\par The establishment of Theorem \ref{tmtransverseintro} is the subject of Section 3. Let us mention that a stronger transversality result has been proved by van Strien in \cite{vanstrien} in the case of $(2d-2)$-Misiurewicz map with a trivial stabilizer for the action by conjugaison of the group $\Aut(\p^1)$ on $\rat_d$, and by Buff and Epstein in \cite{buffepstein} in the case of strictly postcritically finite rational maps. We want here to give a weaker result which is easier to prove and sufficient for our purpose. In their work \cite{buffepstein}, Buff and Epstein established their tranversality Theorem using quadratic differential techniques. As these tools are not well adapted when the critical orbits are infinite, we have instead followed van Strien's and Aspenberg's ideas (see \cite{vanstrien} and \cite{Aspenberg1}) of using quasiconformal maps. See also \cite{Rivera} for a transversality result concerning quadratic semihyperbolic polynomials.

~

\par Section 4 is devoted to local dimension estimates. To achieve our goal we have to prove that the set of $(2d-2)$-Misiurewicz maps has maximal Hausdorff dimension in $\rat_d$. Like in Shishikura's or Tan Lei's work, this basically requires to "copy" big hyperbolic sets in the parameter space. For this, using the transversality Theorem \ref{tmtransverseintro}, we construct a transfer map from the dynamical plane to the parameter space which enjoys good regularity properties. When $k=1$, our proof here is actually slightly simpler than the classical ones. Indeed, in his original proof, Shishikura builds two successive holomorphic motions, where we only use one such motion. Using the work of Shishikura \cite{Shishikura2} on parabolic implosion and Theorem \ref{tm2intro}, and using again the transversality Theorem \ref{tmtransverseintro}, we deduce that the set $\mathfrak{M}_k$ is homogeneous and has maximal Hausdorff dimension $2(2d+1)$. The main result of Section 4 is the following:

~

\begin{tm}
Let $1\leq k\leq 2d-2$.  Denote by $\mathfrak{M}_k$ the set of all $k$-Misiurewicz degree $d$ rational maps with simple critical points which are not flexible Latt\`es maps. Then for any $f\in\mathfrak{M}_k$ and any neighborhood $V_0\subset\rat_d$ of $f$ one has:
\begin{eqnarray*}
\dim_H(\mathfrak{M}_k\cap V_0)\geq 2(2d+1-k)+k\dim_{\textup{hyp}}(f).
\end{eqnarray*}
\label{tm2intro}
\end{tm}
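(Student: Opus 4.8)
The goal is a lower bound on the Hausdorff dimension of $\mathfrak{M}_k$ near an arbitrary $f \in \mathfrak{M}_k$; the strategy is to transfer a set of almost-full dimension from a hyperbolic piece of the dynamical plane of $f$ into the parameter space, and then glue in $2(2d+1-k)$ more "free" real dimensions coming from perturbations that keep the $2d-2-k$ escaping critical points unconstrained. I would set up a holomorphic family $(f_\lambda)_{\lambda\in\B(0,r)}\subset\rat_d$, with $\rat_d$ of dimension $2d+1$, containing $f=f_0$, with the $2d-2$ critical points marked. Let $E_0$ be the $f_0$-hyperbolic set carrying the images $f_0^{k_0}(c_j(0))$ of the $k$ critical points lying in $J_{f_0}$, and let $h$ be its dynamical holomorphic motion. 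Since $f$ is not a flexible Lattès map and (one checks, using that $k$-Misiurewicz maps have discrete stabilizer unless Lattès) the hypothesis of Theorem~\ref{tmtransverseintro} applies, the "parametrized Misiurewicz locus"
\[
\Gamma \pe \{\lambda\in\B(0,r) \ / \ f_\lambda^{k_0}(c_j(\lambda))=h_\lambda(f_0^{k_0}(c_j(0))),\ 1\le j\le k\}
\]
is a codimension-$k$ analytic subvariety through $0$, hence a complex manifold of dimension $2d+1-k$ near $0$, i.e. of real dimension $2(2d+1-k)$.

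**Transferring the hyperbolic set.** The heart of the argument is that for $\lambda\in\Gamma$ the map $f_\lambda$ remains $k$-Misiurewicz with a hyperbolic set $E_\lambda=h_\lambda(E_0)$ on which the critical orbits land, so that $\Gamma$ itself is essentially a family of $k$-Misiurewicz maps — but we need a piece of it of dimension $k\dim_{\mathrm{hyp}}(f)$ transverse to, and complementing, the $2(2d+1-k)$ directions already described. To get this I would recall $\dim_{\mathrm{hyp}}(f)=\sup\dim_H(E)$ over $f$-hyperbolic sets $E\subset J_f$; fix such a hyperbolic $E$ with $\dim_H(E)$ close to $\dim_{\mathrm{hyp}}(f)$, and use the transversality statement not just at $\lambda_0=0$ but along $\Gamma$: the map $\lambda\mapsto\bigl(f_\lambda^{k_0}(c_j(\lambda))\bigr)_{j\le k}$ is a holomorphic submersion onto a product of $k$ copies of a neighborhood of the marked postcritical points in $\p^1$. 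Composing with the holomorphic motion (each $c_j$ to be moved inside the hyperbolic set) produces a transfer map
\[
\Phi:\ \bigl(\text{$k$-fold product of a piece of }E\bigr)\times\B' \longrightarrow \rat_d,
\]
holomorphic in the second factor and quasiconformal (indeed locally bi-Lipschitz along the $E$-directions, by the hyperbolicity of $E$ and the standard fact that the holomorphic motion of a hyperbolic set is locally Hölder/Lipschitz in a way controlled by the expansion) on the first. Because $\Phi$ is a submersion onto parameter space and each $f_{\Phi(\cdot)}$ is still $k$-Misiurewicz (the $k$ constrained critical points land in $E$ — a hyperbolic set disjoint from the critical set — and the remaining $2d-2-k$ stay in the attracting basins for small perturbation), the image of $\Phi$ lies in $\mathfrak{M}_k$. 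A Fubini-type / product argument for Hausdorff dimension (the image contains, through each point, a copy of the $2(2d+1-k)$-real-dimensional $\Gamma$-fiber and, transverse to it, a bi-Lipschitz copy of $E^k$) yields
\[
\dim_H(\mathfrak{M}_k\cap V_0)\ \ge\ 2(2d+1-k)+k\dim_H(E),
\]
and letting $\dim_H(E)\to\dim_{\mathrm{hyp}}(f)$ gives the theorem.

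**The main obstacle.** The delicate point is the regularity of the transfer map and the resulting Hausdorff-dimension bookkeeping: one must show that moving the $k$ marked critical values along $E$ (via $h$) and then inverting the submersion $\lambda\mapsto (f_\lambda^{k_0}(c_j(\lambda)))_j$ produces a map that is bi-Lipschitz (or at least does not lose dimension) in the $E$-directions while being genuinely transverse to the $\Gamma$-directions — this is exactly where Theorem~\ref{tmtransverseintro} is used and where the "copy a big hyperbolic set" mechanism of Shishikura and Tan Lei enters, here in a one-motion form. A secondary technical nuisance is making sure the $2d-2-k$ escaping critical points remain in their attracting basins (so the perturbed maps are genuinely $k$-Misiurewicz and not of higher type), which is a routine open-condition argument, and checking that the stabilizer hypothesis of Theorem~\ref{tmtransverseintro} indeed holds for $f\in\mathfrak{M}_k$ (true because a map with discrete stabilizer is the generic situation and the non-discrete case forces $f$ to be, up to conjugacy, a monomial, Chebyshev or Lattès map, all excluded here). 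I expect the dimension count — turning "a holomorphic submersion whose fibers carry a bi-Lipschitz copy of $E^k$" into the stated additive lower bound — to be the step requiring the most care, via a slicing argument and the lower bound on Hausdorff dimension of product-like sets.
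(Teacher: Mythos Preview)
Your overall strategy is right and matches the paper's, but there are two genuine gaps.

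First, you skip the key perturbation step. You fix a hyperbolic set $E\subset\J_{f_0}$ with $\dim_H(E)$ close to $\dim_{\mathrm{hyp}}(f)$ and then want to parametrize by $z=(z_1,\dots,z_k)\in E^k$, solving $f_\lambda^{k_0}(c_j(\lambda))=h_\lambda(z_j)$. But the points $f_0^{k_0}(c_j(0))$ lie in the \emph{postcritical} hyperbolic set $E_0=P^{k_0}(f_0)$, which may be tiny and disjoint from $E$; for small $\lambda$ the image of $\lambda\mapsto f_\lambda^{k_0}(c_j(\lambda))$ is only a small neighborhood of $f_0^{k_0}(c_j(0))$ and need not meet $h_\lambda(E)$ at all, so your transfer map has empty domain. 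The paper fixes this with Lemma~\ref{lmdimpostcritic} (which uses Lemma~\ref{lmapproxmis}): one first moves from $f$ to a nearby $g\in\mathfrak{M}_k$ whose critical orbits actually land \emph{in} a hyperbolic set $E_g$ that is still homogeneous of dimension $\geq\dim_{\mathrm{hyp}}(f)-2\varepsilon$. Only then does the transfer construction make sense.

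Second, your regularity claims are too strong. Weak transversality (Theorem~\ref{tmtransverseintro}) only says that $\chi^{-1}\{0\}$ has codimension $k$; it does \emph{not} say that $\lambda\mapsto(f_\lambda^{k_0}(c_j(\lambda)))_j$ is a submersion, so you cannot simply invert. The paper instead uses a Rouch\'e-type argument to produce a degree-$p$ (possibly branched) inverse $\mathcal{T}$ (Proposition~\ref{defitransfer}). Moreover, the holomorphic motion $h_\lambda$ is only H\"older in $z$ (Lemma~\ref{lmbiholder}), not Lipschitz --- hyperbolicity of $E$ does not upgrade this --- so $\mathcal{T}$ satisfies only
\[
\|\mathcal{T}(z_1,\lambda'')-\mathcal{T}(z_2,\lambda'')\|\geq C_\varepsilon\|z_1-z_2\|^{\frac{1+\varepsilon}{1-\varepsilon}},
\]
yielding the lower bound $\frac{1-\varepsilon}{1+\varepsilon}\,k(\dim_{\mathrm{hyp}}(f)-2\varepsilon)$ for the transverse piece. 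The stated theorem then follows by combining with McMullen's slicing lemma (Lemma~\ref{lmdimHMcMullen}) and letting $\varepsilon\to0$. Your ``bi-Lipschitz'' shortcut would give the answer directly, but it is not available.
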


\par For our purpose we have to show that $k$-Misiurewicz maps belong to the support of $T_\bif^k$. To achieve this goal, we give a criterion for a rational map with $k$ critical points eventually falling under iteration in a compact hyperbolic set to belong to the support of $T_\bif^k$. The condition is exactly the one that appears in the statement of the Transversality Theorem \ref{tmtransverseintro}. Our approach is inspired by the work of Buff and Epstein \cite{buffepstein} who proved that strictly postcritically finite maps, which are $(2d-2)$-Misiurewicz, belong to the support of $T_\bif^{2d-2}$. The main idea of the proof of this criterion is a dynamical renormalization process which enlights a similarity phenomenon between parameter and dynamical spaces at maps with critical points eventually falling into a hyperbolic set. To implement this process we need to linearize the map along a repelling orbit. In the geometrically finite case this is just the linearization along repelling cycles. This technical result, which may be of independent interest, is established in Section 5. The renormalization is then performed in Section 6. In the same Section, we also prove that the $k^\text{th}$-self-intersection of the bifurcation current detects the activity of at least $k$ distinct critical points (see Theorem \ref{tmddcg}). Combining the results of Section 6 with Theorem \ref{tmtransverseintro}, we get the following:

~

\begin{tm}
Let $1\leq k\leq 2d-2$ and $f\in\rat_d$ be $k$-Misiurewicz but not a flexible Latt\`es map. Then $f\in\supp(T_\bif^k)\setminus\supp(T_\bif^{k+1})$.
\label{tm1intro}
\end{tm}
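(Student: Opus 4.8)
The plan is to prove the two inclusions $f\in\supp(T_\bif^k)$ and $f\notin\supp(T_\bif^{k+1})$ separately. The second one is the easier: by Ma\~n\'e's theorem the $2d-2-k$ critical points of $f$ not lying in $\J_f$ are attracted to attracting cycles, hence remain passive on a whole neighborhood of $f$; in a suitable family with marked critical points the corresponding activity currents vanish near $f$, so the local product structure of $T_\bif^{2d-2}$ (writing $T_\bif$ as the sum, over marked critical points, of the critical activity currents, modulo the usual care about this decomposition) forces $T_\bif^{k+1}=0$ near $f$. More precisely, one uses that $T_\bif$ near $f$ is cohomologically and locally equal to $\sum_{i=1}^k T_i$ where $T_i$ is the bifurcation current of the $i$-th (active) critical point, and the wedge product of any $k+1$ of the $T_i$ among only $k$ nonzero ones vanishes. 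I expect this to be a short formal argument once Theorem \ref{tmddcg} of Section 6 is invoked.

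For the inclusion $f\in\supp(T_\bif^k)$, the strategy is the renormalization/similarity argument outlined in the introduction. First I would pass to a holomorphic family $(f_\lambda)_{\lambda\in\B(0,r)}\subset\rat_d$, say the one given by moving the $2d-2$ critical values (or a local chart of $\rat_d$), with $f_0=f$, and mark the $k$ active critical points $c_1,\ldots,c_k$. Let $E_0$ be the $f_0$-hyperbolic set into which $f_0^{k_0}(c_j(0))$ eventually falls, with dynamical holomorphic motion $h$, and set $a_j(\lambda):=f_\lambda^{k_0}(c_j(\lambda))-h_\lambda(f_0^{k_0}(c_j(0)))$. By the Weak Transversality Theorem \ref{tmtransverseintro}, the map $\lambda\mapsto(a_1(\lambda),\ldots,a_k(\lambda))$ is a submersion onto $\C^k$ at $\lambda=0$ (after checking the discreteness-of-stabilizer hypothesis, which holds since $f$ is not a flexible Latt\`es map — this needs a small argument, or a reduction to the subvariety where the stabilizer is trivial). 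Then I would invoke the Section 5 linearization result to linearize $f_0$ along a repelling periodic orbit inside $E_0$ that the critical orbits fall onto, and perform the renormalization of Section 6: blowing up near $0$ in parameter space and simultaneously near the repelling orbit in dynamical space, the transversality gives that the rescaled dynamics converges, and the bifurcation current $T_\bif^k$ pulls back (via the transfer map built from $\lambda\mapsto(a_j(\lambda))$) to something with nonzero mass arbitrarily close to $f$, because in the dynamical plane the critical points genuinely move relative to the (expanding) hyperbolic set. Hence $f\in\supp(T_\bif^k)$.

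Concretely the mechanism is: $T_\bif^k\geq c\,[\{a_1=\cdots=a_k=0\}]$-like lower bounds are \emph{not} what one wants (that variety could be thin); rather one shows the currents $T_i$ have, near $f$, local potentials whose $dd^c$ is detected by the oscillation of $a_i$, and transversality guarantees the $a_i$ have independent differentials so the wedge $T_1\wedge\cdots\wedge T_k$ is a nonzero measure in every neighborhood of $0$. This is where Theorem \ref{tmddcg} (``the $k$-th self-intersection detects $k$ active critical points'') does the bookkeeping, reducing $\supp(T_\bif^k)\ni f$ to: $k$ of the critical activity currents are simultaneously supported at $f$ and their product is nonzero there.

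The main obstacle, as the introduction signals, is the renormalization step of Section 6 feeding on the Section 5 linearization: one must show that the rescaled family converges to a nondegenerate limit and that the limiting object still charges a neighborhood — equivalently, that transversality (a first-order, open condition) genuinely propagates to a positivity statement for the wedge of currents. Controlling the holomorphic motion $h_\lambda$ of $E_0$ as $\lambda$ varies (so that $a_j$ is holomorphic and its differential is what transversality computes), and ruling out the flexible Latt\`es exception at exactly the point where the stabilizer could fail to be discrete, are the delicate inputs; everything else is bifurcation-current formalism.
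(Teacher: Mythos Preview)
Your argument for $f\notin\supp(T_\bif^{k+1})$ is correct and is exactly what the paper does: the $2d-2-k$ Fatou critical points are passive near $f$ (they lie in attracting basins by Proposition~\ref{fatou}), so by Theorem~\ref{tmddcg} every term in the expansion of $T_\bif^{k+1}$ vanishes near $f$, and moreover $T_\bif^k=k!\,\bigwedge_{i=1}^k dd^cG_\lambda(\tilde c_i(\lambda))$ there.

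For $f\in\supp(T_\bif^k)$ your overall strategy is right, but there is a genuine gap. You write that by weak transversality the activity map $\chi=(a_1,\ldots,a_k)$ ``is a submersion onto $\C^k$ at $\lambda=0$'' and later that ``transversality guarantees the $a_i$ have independent differentials''. This is \emph{not} what Theorem~\ref{tmtransverseintro} says: it only gives $\mathrm{codim}\,\chi^{-1}\{0\}=k$, which is strictly weaker than $D_0\chi$ having rank $k$. The map $\chi$ may well have $0$ as a zero of multiplicity $p>1$, and in general is only a degree~$p$ branched cover onto a polydisc (Proposition~\ref{propchi}). Your sketch, which relies on ``independent differentials'' to conclude that the wedge $T_1\wedge\cdots\wedge T_k$ is nonzero, therefore does not go through as stated.

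The paper's criterion (Theorem~\ref{tmcourantsbif}) is engineered precisely to cope with this weaker hypothesis. One works on $\Omega_n$, the connected component of $\chi^{-1}(D_n^{-1}(\D_\epsilon^q))$ containing $0$, where $D_n$ is the diagonal dilation by the multipliers $m_{n,i}(0)$; then $D_n\circ\chi:\Omega_n\to\D_\epsilon^q$ is a degree~$p$ branched cover and one sums over its $p$ inverse branches off the ramification locus. The renormalization (Lemmas~\ref{lmTk}, \ref{lmconv}, \ref{lmequivmu}) shows that $d^{q(n+k_0)}\mu^{(q)}(\Omega_n)$ converges to a positive multiple of a product of maximal-entropy masses $\prod_i\mu_{f_0}(W_i)>0$, hence $0\in\supp(\mu^{(q)})$. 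No submersion or first-order invertibility of $\chi$ is needed anywhere; only the codimension statement, which is what Lemma~\ref{lmvariete} supplies. Incidentally, the discreteness hypothesis of Theorem~\ref{tmtransverseintro} is not obtained from ``$f$ is not a flexible Latt\`es map'' directly; it comes from restricting to the $(2d-2)$-dimensional transversal $V_f$ to the $\Aut(\p^1)$-orbit (Lemma~\ref{lmvariete}). The Latt\`es exclusion is used inside the proof of weak transversality itself (Proposition~\ref{lmactif}).
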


\par In Section 7, we focus on dimension estimates for the bifurcation measure. The estimates established in Section 4 have the following interesting consequence. Due to Theorem \ref{tm1intro}, they directly imply that the set $\supp(T_\bif^k)\setminus\supp(T_\bif^{k+1})$ has Hausdorff dimension $2(2d+1)$ for $1\leq k\leq 2d-3$, and that $\supp(T_\bif^{2d-2})$ is homogeneous and that the set $\dim_H(\supp(T_\bif^{2d-2}))=2(2d+1)$ (see Theorem \ref{tmdimH3}). Notice that it is still unknown wether $\supp(T_\bif^k)$ is homogeneous or not, when $1\leq k\leq 2d-3$. Theorem \ref{tmprincipal} then immediatly follows. Recall that, if $\mu$ is a Radon measure on a metric space, the \emph{upper pointwise dimension} of $\mu$ at $x_0$ is
\begin{center}
$\overline{\dim}_{\mu}(x_0)\pe\displaystyle\limsup_{r\rightarrow0}\frac{\log\mu(\B(x_0,r))}{\log r}$.
\end{center}
In the same Section, we also establish a lower bound for the upper pointwise dimension of the bifurcation measure at $(2d-2)$-Misiurewicz parameters, which rely on the renormalization process performed in Section 6. In particular, we get the following:

~

\begin{tm}
There exists a dense subset $\mathfrak{M}$ of $\supp(\mu_\bif)$ which is homogeneous and has maximal Hausdorff dimension $2(2d-2)$ such that $\overline{\dim}_{\mu_\bif}[f]>0$ for any $[f]\in\mathfrak{M}$.
\label{tm3intro}
\end{tm}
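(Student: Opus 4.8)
The plan is to combine the three main pillars of the paper — the Transversality Theorem \ref{tmtransverseintro}, the dimension estimate Theorem \ref{tm2intro}, and the location Theorem \ref{tm1intro} — with a lower bound for the upper pointwise dimension of $\mu_\bif$ at $(2d-2)$-Misiurewicz parameters, and then to pass to the quotient moduli space $\mathcal{M}_d$. First I would fix $k=2d-2$ and consider the set $\mathfrak{M}_{2d-2}$ of $(2d-2)$-Misiurewicz degree $d$ rational maps with simple critical points which are not flexible Latt\`es maps. By Theorem \ref{tm1intro}, every $f\in\mathfrak{M}_{2d-2}$ lies in $\supp(T_\bif^{2d-2})$, so its conjugacy class lies in $\supp(\mu_\bif)$; thus the image $\mathfrak{M}$ of $\mathfrak{M}_{2d-2}$ in $\mathcal{M}_d$ is a subset of $\supp(\mu_\bif)$. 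Density of $\mathfrak{M}$ in $\supp(\mu_\bif)$ should follow from the fact that $(2d-2)$-Misiurewicz maps accumulate on every point of $\supp(T_\bif^{2d-2})$ (this is essentially the content of the transfer-map construction behind Theorem \ref{tm2intro}, together with Theorem \ref{tm1intro}): near any parameter in $\supp(\mu_\bif)$ all $2d-2$ critical points are active, and one can perturb to make each critical orbit fall into a hyperbolic set while keeping the critical points simple and avoiding the (proper analytic, by the discreteness hypothesis) locus of flexible Latt\`es maps.

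Next I would establish homogeneity and the dimension bound. Homogeneity of $\supp(\mu_\bif)$ is already asserted in Theorem \ref{tmprincipal}; the point here is that $\mathfrak{M}$ itself, being defined by copying hyperbolic sets of full hyperbolic dimension uniformly near each of its points, inherits the homogeneous-dimension property: applying Theorem \ref{tm2intro} with $k=2d-2$ in $\rat_d$ gives, for $f\in\mathfrak{M}_{2d-2}$ and any neighborhood $V_0$,
\begin{eqnarray*}
\dim_H(\mathfrak{M}_{2d-2}\cap V_0)\geq 2(2d+1-(2d-2))+(2d-2)\dim_{\textup{hyp}}(f)=6+(2d-2)\dim_{\textup{hyp}}(f).
\end{eqnarray*}
By choosing $f$ (or a nearby $(2d-2)$-Misiurewicz map, using that maps with $\dim_{\textup{hyp}}$ close to $2$ are dense — this is the Shishikura/Tan Lei input, via parabolic implosion) so that $\dim_{\textup{hyp}}(f)$ is arbitrarily close to $2$, the right-hand side approaches $6+2(2d-2)=2(2d+1)$, which is $\dim_\C\rat_d$. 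Quotienting by the $3$-dimensional group $\Aut(\p^1)$, whose action on the relevant Misiurewicz locus is (generically) locally free with discrete stabilizers by the standing hypothesis, drops the real dimension by $6$, giving $\dim_H(\mathfrak{M}\cap\Omega)=2(2d+1)-6=2(2d-2)$ for every open $\Omega$ meeting $\mathfrak{M}$; this is the homogeneous maximal-dimension statement. One must check the passage to the quotient carefully: locally $\mathcal{M}_d$ is $\rat_d/\Aut(\p^1)$ and on the set of maps with discrete stabilizer a local holomorphic slice exists, so Hausdorff dimension behaves additively under the fibration, and the lift of $\supp(\mu_\bif)$ is exactly $\supp(T_\bif^{2d-2})$.

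The remaining ingredient is the strict positivity $\overline{\dim}_{\mu_\bif}[f]>0$ for $[f]\in\mathfrak{M}$. Here I would use the renormalization process of Section 6: at a $(2d-2)$-Misiurewicz parameter the transversality Theorem \ref{tmtransverseintro} with $k=2d-2$ shows that the $2d-2$ activity conditions $f_\lambda^{k_0}(c_j(\lambda))=h_\lambda(f_0^{k_0}(c_j(0)))$ cut out a $0$-dimensional set transversally, and the self-similarity between parameter and dynamical space furnished by the dynamical renormalization lets one transfer a lower bound on the local mass of $T_\bif^{2d-2}$ from a controlled estimate on the Green-type potentials in the dynamical plane. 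Concretely, $T_\bif^{2d-2}$ near such $f$ is comparable, via the transfer map, to a product of $2d-2$ bifurcation currents each of which has a Hölder-continuous potential, so $\mu_\bif(\B([f],r))\gtrsim r^{C}$ for some finite $C$ as $r\to 0$, whence $\overline{\dim}_{\mu_\bif}[f]\geq 1/C>0$ along a subsequence. The main obstacle, as in Section 6, is making the renormalization/transfer-map estimates quantitative enough near \emph{every} point of $\mathfrak{M}$ — uniform hyperbolicity constants, uniform transversality (which is only qualitative in Theorem \ref{tmtransverseintro}), and the behaviour of the holomorphic motion $h$ — so that the comparison constant $C$ is genuinely finite; the flexible-Latt\`es exclusion and the simple-critical-point hypothesis are exactly what guarantee the discreteness needed to apply Theorem \ref{tmtransverseintro} and hence to run the argument. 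Once $\overline{\dim}_{\mu_\bif}>0$ on the dense homogeneous full-dimensional set $\mathfrak{M}$, Theorem \ref{tm3intro} follows, and Theorem \ref{tmprincipal} is the dimension half of it.
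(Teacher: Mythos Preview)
Your overall architecture matches the paper's: take $\mathfrak{M}=\Pi(\mathfrak{M}_{2d-2})$ (the paper also removes the singular locus of $\mathcal{M}_d$), get density from Buff--Epstein, get homogeneity and full dimension from Theorem~\ref{tmdimH2} (which packages exactly your argument via Theorem~\ref{tm2intro} plus parabolic implosion), and get $\overline{\dim}_{\mu_\bif}[f]>0$ from the renormalization of Section~6 (this is Theorem~\ref{cordimmuq}). Two points need correction.

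First, your pointwise-dimension step is stated backwards. You write $\mu_\bif(\B([f],r))\gtrsim r^{C}$ and conclude $\overline{\dim}_{\mu_\bif}[f]\geq 1/C$. But a \emph{lower} bound on the mass of balls gives an \emph{upper} bound on $\overline{\dim}_{\mu_\bif}$, not a lower one: if $\mu(\B(x,r))\geq c\,r^{C}$ then $\frac{\log\mu(\B(x,r))}{\log r}\leq C+o(1)$. What is actually needed, and what Lemma~\ref{lmequivmu} provides, is an \emph{upper} bound $\mu^{(2d-2)}(\Omega_n)\leq C'd^{-(2d-2)n}$ together with the inclusion $\B(f,C/\mathfrak{m}_n^+)\subset\Omega_n$; this yields $\overline{\dim}_{\mu_\bif}[f]\geq (2d-2)\log d\big/\liminf_n\frac{1}{n}\log\mathfrak{m}_n^+>0$. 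The H\"older-potential heuristic you invoke would, if carried through, also give an upper bound on mass, but the inequality you wrote and the $1/C$ conclusion are both wrong as stated.

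Second, your density argument is incomplete. It is not automatic that at every point of $\supp(T_\bif^{2d-2})$ one can perturb to land all $2d-2$ critical orbits in a hyperbolic set; the paper does not argue this directly but invokes the Main Theorem of Buff--Epstein \cite{buffepstein}, which asserts precisely that strictly postcritically finite (hence $(2d-2)$-Misiurewicz) maps are dense in $\supp(T_\bif^{2d-2})$. Your sketch ``all $2d-2$ critical points are active, so perturb'' would need Lemma~\ref{lmapproxmis}-type input at an arbitrary point of the support, which is not what that lemma provides.
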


\par Let us mention that all these results have their counterpart in poynomial families. We develop the description of the above results in the context of polynomial families in Section 8. For instance, the equivalent of Theorem \ref{tmprincipal} might be stated as follows:

~

\begin{tm}
In the moduli space $\mathcal{P}_d$ of degree $d$ polynomials, the Shilov boundary of the connectedness locus is homogeneous and has maximal Hausdorff dimension $2(d-1)$.
\label{tmshilov}
\end{tm}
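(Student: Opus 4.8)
The plan is to establish this as the polynomial incarnation of Theorem~\ref{tmprincipal}, by transcribing Sections~3--7 to the polynomial setting, where the arguments are in fact slightly simpler. One works in the parameter space $\C^{d-1}$ of monic centred degree~$d$ polynomials $z^d+a_{d-2}z^{d-2}+\cdots+a_0$, passing to a finite branched cover if needed so that the $d-1$ critical points become marked; since $\mathcal{P}_d$ is the quotient of $\C^{d-1}$ by the finite group $\mathbb{Z}/(d-1)\mathbb{Z}$ acting through $z\mapsto\zeta z$, Hausdorff-dimension statements transfer freely between the two. Here $T_\bif$ is the bifurcation current and, the number $d-1$ of critical points being equal to the complex dimension of $\C^{d-1}$, the measure $\mu_\bif\pe T_\bif^{d-1}$ already lives on $\C^{d-1}$ and descends to $\mathcal{P}_d$; it is known that $\supp(\mu_\bif)=\partial_S\mathcal{C}_d$, the Shilov boundary of the connectedness locus. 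Two features simplify the transcription: no polynomial is a flexible Latt\`es map, so that exception disappears; and the natural parametrization of $\mathcal{P}_d$ by monic centred polynomials is finite-to-one, so the discreteness hypothesis in Theorem~\ref{tmtransverseintro} is automatically met. I would thus first check that the polynomial analogues of Theorems~\ref{tmtransverseintro}, \ref{tm1intro}, \ref{tm2intro} and~\ref{tm3intro} hold with $2d-2$ replaced by $d-1$ throughout, the proofs being essentially verbatim; writing $\mathfrak{M}_{d-1}$ for the set of $(d-1)$-Misiurewicz polynomials with simple critical points, the analogue of Theorem~\ref{tm2intro} reads $\dim_H(\mathfrak{M}_{d-1}\cap V_0)\geq(d-1)\dim_{\textup{hyp}}(f)$, the term $2((d-1)-(d-1))$ vanishing since all $d-1$ critical points are captured.

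Granting those analogues, Theorem~\ref{tmshilov} follows at once. The polynomial analogue of Theorem~\ref{tm3intro} furnishes a dense subset $\mathfrak{M}\subset\supp(\mu_\bif)=\partial_S\mathcal{C}_d$ which is homogeneous, in the sense that $\dim_H(\mathfrak{M}\cap\Omega)=2(d-1)$ for every open $\Omega$ meeting $\mathfrak{M}$. Then, for any open $\Omega$ with $\partial_S\mathcal{C}_d\cap\Omega\neq\emptyset$, density gives $\mathfrak{M}\cap\Omega\neq\emptyset$, so $2(d-1)=\dim_H(\mathfrak{M}\cap\Omega)\leq\dim_H(\partial_S\mathcal{C}_d\cap\Omega)\leq 2(d-1)$, whence equality; this is exactly the homogeneity and maximal-dimension assertion.

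Consequently essentially all the content sits in the transcription, and the step I expect to be the genuine obstacle is the one analytic input behind the analogue of Theorem~\ref{tm3intro}, exactly as in the rational case: showing that inside any open set meeting $\mathfrak{M}_{d-1}$ there exist $(d-1)$-Misiurewicz polynomials $f$ with $\dim_{\textup{hyp}}(f)$ arbitrarily close to~$2$, so that the estimate $\dim_H(\mathfrak{M}_{d-1}\cap V_0)\geq(d-1)\dim_{\textup{hyp}}(f)$ actually reaches $2(d-1)$. I would follow the by now standard technique, going back to Shishikura and Tan Lei, of copying large hyperbolic sets into the parameter space: starting from $f\in\mathfrak{M}_{d-1}$ with captured hyperbolic set $E_0$, enrich $E_0$ into a hyperbolic set of Hausdorff dimension close to~$2$ for a nearby polynomial, then use density of Misiurewicz parameters in the bifurcation locus to move back into $\mathfrak{M}_{d-1}$ without losing the lower bound on $\dim_{\textup{hyp}}$, and finally apply the Section~4 transfer map. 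Controlling the uniformity of the constants over the relevant open set, and pushing the linearization and renormalization of Sections~5--6 through in the polynomial case, is where the real effort lies; once this is done, letting $\dim_{\textup{hyp}}(f)\to2$ completes the proof.
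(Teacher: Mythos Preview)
Your proposal is correct and follows essentially the same route as the paper: transcribe the machinery of Sections~3--7 to the polynomial family $\C^{d-1}$ (where Latt\`es maps are absent and the discreteness hypothesis is automatic), obtain the analogue of Theorem~\ref{tmdimH2} via the parabolic-implosion Lemma~\ref{parabolicimplosion}, and combine with the identification $\supp(\mu_\bif)=\partial_S\mathcal{C}_d$ (Proposition~\ref{propPolyd}) and the density of $(d-1)$-Misiurewicz parameters in $\supp(\mu_\bif)$.

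One minor remark: you overstate the difficulty of transcribing Sections~5--6. The linearization along repelling orbits and the renormalization argument are written for an arbitrary holomorphic family and carry over to polynomials verbatim; no new work is needed there. The only substantive analytic input specific to the dimension claim is, as you correctly identify earlier, the parabolic-implosion step producing nearby polynomials with $\dim_{\textup{hyp}}$ close to~$2$. The paper also notes (Lemma~\ref{lmpolynomes}) that for geometrically finite Misiurewicz polynomials the weak transversality admits a much shorter proof via the global geometry of $\mathcal{B}_i$ at infinity in $\p^{d-1}$, bypassing the good-family formalism entirely; this is a simplification available in the polynomial setting that you do not exploit.
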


\par This improves Tan Lei's result. We also give a very easy proof of Theorem \ref{tmtransverseintro} in the case of a degree $d$ polynomial with $k$ critical points which are preperiodic to repelling cycles (see Lemma \ref{lmpolynomes}).

\subsection{Notation} 
To end the introduction, let us give some notation:
\begin{enumerate}
\item[-] $\p^1$ is the Riemann sphere, $\D$ is the unit disc of $\C$ and $\D(0,r)$ is the disc of $\C$ of radius $r$ centered at $0$. $\textup{Leb}$ denotes the Lebesgue measure on $\p^1$.
\item[-] $\|\cdot\|$ is a norm on $\C^n$ and $\B(a,r)$ is the ball of $\C^n$ of radius $r$ centered at $a\in\C^n$.
\item[-] $\J_f$ is the  Julia set and $\F_f$ is the Fatou set of $f\in\rat_d$.
\item[-] The group $\Aut(\p^1)$ is the group of all Moebius transformations. The quotient space $\mathcal{M}_d\pe\rat_d/\textup{Aut}(\p^1)$ is the \emph{moduli space} of degree $d$ rational maps and $\Pi:\rat_d\longrightarrow\mathcal{M}_d$ is the quotient map. The space $\mathcal{M}_d$ has a canonical structure of affine variety of dimension $2d-2$ (see \cite{Silverman} pages $174-179$).
\item[-] Finally, $\dim_H$ is the Hausdorff dimension in any metric space.
\end{enumerate}

\subsection{Acknowledgment} The author would very much like to thank Fran\c{c}ois Berteloot for his help and encouragement for writting this article. The author also wants to thank Romain Dujardin for his helpful comments and for suggesting Lemma \ref{lmpolynomes} and the formulation of Theorem \ref{tmcourantsbif}. Finally, we would like to thank the Referees for their very  useful suggestions, which have greatly improved the readability of this work.

\section{The framework}

\subsection{Hyperbolic sets}

~

\begin{defi}
Let $f\in\rat_d$ and $E\subset\p^1$ be a compact $f$-invariant set, i.e. such that $f(E)\subset E$. We say that $E$ is $f$-\emph{hyperbolic} if one of the following equivalent conditions is satisfied:
\begin{enumerate}
\item there exists constants $C>0$ and $\alpha>1$ such that $|(f^n)'(z)|\geq C\alpha^n$ for all $z\in E$ and all $n\geq0$,
\item for some appropriate metric on $\p^1$, there exists $K>1$ such that $|f'(z)|\geq K$ for all $z\in E$. One says that $K$ is the \emph{hyperbolicity constant} of $E$.
\end{enumerate}
\end{defi}

~

\par The following Theorem is now classical (see \cite{demelovanstrien} Theorem 2.3 page 225 or \cite{Shishikura2} section 2 for a sketch of proof):

~

\begin{tm}[de Melo-van Strien]
Let $(f_\lambda)_{\lambda\in\mathbb{B}(0,r)}$ be a holomorphic family of degree $d$ rational maps parametrized by a ball $\mathbb{B}(0,r)\subset\C^m$. Let $E_0\subset\p^1$ be a compact $f_0$-hyperbolic set. Then there exists $0<\rho\leq r$ and a unique holomorphic motion 
\begin{eqnarray*}
h:\mathbb{B}(0,\rho)\times E_0 & \longrightarrow & \p^1\\
(\lambda,z) & \longmapsto & h_\lambda(z)
\end{eqnarray*}
which conjugates $f_0$ to $f_\lambda$ on $E_0$, i.e. such that for all $\lambda\in\mathbb{B}(0,\rho)$ the diagram:
\begin{center}
$\xymatrix {\relax
E_0 \ar[r]^{f_0} \ar[d]_{h_\lambda} & E_0\ar[d]^{h_\lambda} \\
E_\lambda \ar[r]_{f_\lambda} & E_\lambda }$
\end{center}
commutes ($E_\lambda\pe h_\lambda(E_0)$). We shall call $h$ the \emph{dynamical holomorphic motion} of $E_0$.
\label{mvtholo}\nocite{vanstrien}
\end{tm}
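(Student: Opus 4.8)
The plan is to realise $h_\lambda$ as the unique fixed point of a contracting \emph{pull-back operator} acting on continuous maps $E_0\to\p^1$, the contraction being exactly the uniform expansion of $f_0$ along $E_0$; holomorphic dependence on $\lambda$ is then automatic, and injectivity of $h_\lambda$ will come from the positive expansiveness of $f_0$ on $E_0$. First I fix a metric on $\p^1$ for which $|f_0'|\geq K>1$ on $E_0$. Since $f_0$ is $C^1$ and $E_0$ is compact, there are $\delta_0>0$ and $1<K'<K$ such that, for every $\lambda$ in a ball $\B(0,\rho)$, $f_\lambda$ is injective on each ball $\B(z,\delta_0)$ with $z\in E_0$ and $|f_\lambda'|\geq K'$ on the union of these balls. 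For such $\lambda$ and $z$ the implicit function theorem then produces a holomorphic branch $\psi_{\lambda,z}$ of $f_\lambda^{-1}$, defined on a ball of fixed radius about $f_\lambda(z)$, with $\psi_{\lambda,z}(f_\lambda(z))=z$, which is $(1/K')$-Lipschitz, which depends holomorphically on $(\lambda,\cdot)$, and which for $\lambda=0$ is the inverse branch of $f_0$ sending $f_0(z)$ to $z$.

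Next I put $\eta(\lambda):=\sup_{z\in E_0}\dist(f_0(z),f_\lambda(z))\to0$, fix a small $\epsilon>0$, and let $X$ be the complete metric space of continuous maps $g:E_0\to\p^1$ with $\sup_z\dist(g(z),z)\leq\epsilon$, under the uniform metric $d_\infty$. Shrinking $\rho$ so that $\eta(\lambda)\leq(K'-1)\epsilon$, I define $\mathcal{T}_\lambda(g)(z):=\psi_{\lambda,z}\big(g(f_0(z))\big)$. Using the Lipschitz bound one checks at once that $\mathcal{T}_\lambda$ maps $X$ into $X$ and that $d_\infty(\mathcal{T}_\lambda g,\mathcal{T}_\lambda g')\leq(1/K')\,d_\infty(g,g')$, so $\mathcal{T}_\lambda$ has a unique fixed point $h_\lambda\in X$. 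Since every $g\in X$ takes values in the region where $\psi_{\lambda,z}$ inverts $f_\lambda$ near $z$, the equation $g=\mathcal{T}_\lambda(g)$ is equivalent to $f_\lambda\circ g=g\circ f_0$ on $E_0$; thus $h_\lambda$ is a conjugacy, and since at $\lambda=0$ the inclusion $E_0\hookrightarrow\p^1$ is a fixed point, $h_0=\id$. The fixed point is the uniform limit of the iterates $\mathcal{T}_\lambda^n(g_0)$, each holomorphic in $\lambda$, so $\lambda\mapsto h_\lambda$ is holomorphic and in particular every $\lambda\mapsto h_\lambda(z)$ is. For injectivity: if $h_\lambda(z)=h_\lambda(z')$, iterating the conjugacy gives $h_\lambda(f_0^n(z))=h_\lambda(f_0^n(z'))$, whence $\dist(f_0^n(z),f_0^n(z'))\leq2\epsilon$ for all $n\geq0$, and choosing $\epsilon$ below half the expansiveness constant of $f_0$ on $E_0$ forces $z=z'$. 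Hence $h$ is a holomorphic motion conjugating $f_0$ to $f_\lambda$ on $E_0$. Uniqueness follows because any holomorphic motion of $E_0$ conjugating $f_0$ to $f_\lambda$ is, by the continuity of holomorphic motions in the parameter, $C^0$-close to the inclusion for $\lambda$ near $0$, hence lies in $X$ and coincides with $h_\lambda$ after a further shrinking of $\rho$.

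The step I expect to be the real obstacle is the construction of $\mathcal{T}_\lambda$: because $f_0|_{E_0}$ need not be injective, the motion cannot be produced by pushing $E_0$ forward, so the pull-back has to be taken with the inverse branches $\psi_{\lambda,z}$ \emph{anchored at the base point} $z$ rather than a global inverse, and verifying that these branches are simultaneously defined on a ball of uniform radius and depend holomorphically on $(\lambda,z)$ is the technical core. A secondary subtlety is that $C^0$-closeness of $h_\lambda$ to the identity does not by itself yield injectivity, which is why the uniform expansion — equivalently, positive expansiveness of $f_0$ on $E_0$, not just expansion at a single iterate — is invoked at the end. An essentially equivalent alternative is to apply the holomorphic implicit function theorem to $(\lambda,g)\mapsto g\circ f_0-f_\lambda\circ g$ on $C^0(E_0,\p^1)$ near $(0,\id)$: its partial derivative in $g$ at $(0,\id)$ is $v\mapsto\big(z\mapsto v(f_0(z))-f_0'(z)\,v(z)\big)$, which is invertible precisely because $|f_0'|\geq K>1$ turns $v\mapsto\big(z\mapsto f_0'(z)^{-1}\,v(f_0(z))\big)$ into a contraction — the same expansion mechanism once more.
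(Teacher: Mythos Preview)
The paper does not give its own proof of this theorem: it is stated as a classical result and the reader is referred to de~Melo--van~Strien (Theorem~2.3, p.~225) and to Shishikura for a sketch. There is therefore nothing to compare your argument against in the paper itself.

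Your contraction--mapping proof is correct and is essentially the standard one for persistence of hyperbolic sets, specialised to the conformal case. The key points are all handled: the pull-back operator $\mathcal{T}_\lambda(g)(z)=\psi_{\lambda,z}(g(f_0(z)))$ is well defined on $X$ because $g(f_0(z))$ lies within $\epsilon+\eta(\lambda)$ of $f_\lambda(z)$, the $(1/K')$-Lipschitz bound on the inverse branches makes $\mathcal{T}_\lambda$ a contraction with constant independent of $\lambda$, the fixed-point equation unwinds to the conjugacy $f_\lambda\circ h_\lambda=h_\lambda\circ f_0$, holomorphic dependence on $\lambda$ follows because the contraction rate is uniform in $\lambda$ so the iterates $\mathcal{T}_\lambda^n(\id)$ converge locally uniformly in $(\lambda,z)$, and injectivity is exactly the expansiveness of a hyperbolic set. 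Your remark that the inverse branches must be anchored at the base point $z$ (rather than being a global inverse) is precisely the point, since $f_0|_{E_0}$ is typically not injective; one should perhaps add that the map $(z,w)\mapsto\psi_{\lambda,z}(w)$ is continuous, which follows because nearby base points give inverse branches that agree on the overlap of their domains. The alternative implicit-function-theorem formulation you sketch at the end is indeed equivalent and is the version one usually finds in the smooth hyperbolic-dynamics literature.
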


~

\begin{rem}
If $\B(\lambda_0,\varepsilon)\subset\B(0,\rho)$, one may check that there exists a unique holomorphic motion $g:\B(\lambda_0,\varepsilon)\times E_{\lambda_0}\longrightarrow\p^1$ such that
\begin{center}
$h_\lambda(z)=g_\lambda\circ h_{\lambda_0}(z)$
\end{center}
for all $z\in E_0$ and all $\lambda\in \B(\lambda_0,\varepsilon)$.
\label{Remvtholo}
\end{rem}
~

\par Let $K>1$ be the hyperbolicity constant of $E_0$ and $\mathcal{N}_\delta$ be a $\delta$ neighborhood of $E_0$. Up to reducing $r$, $\delta$ and $K$ we may assume that
\begin{center}
$|f_\lambda'(z)|\geq K$ for every $\lambda\in\B(0,r)$ and $z\in\mathcal{N}_\delta$.
\end{center}
Set $B\pe \sup_{\lambda\in\B(0,r), \ z\in\mathcal{N}_\delta}|f_\lambda'(z)|$. Let us stress that Theorem \ref{mvtholo} implicitly contains the existence of inverse branches of $f_\lambda$ along $E_\lambda$:

~

\begin{lm}
Under the hypothesis of Theorem \ref{mvtholo}, there exists $\varepsilon>0$ such that for all $w_0\in E_0$ and all $k\geq1$, there exists an inverse branch $f_{f_0^{k-1}(w_0),\lambda}^{-1}(z)$ of $f_\lambda$ defined on $\B(0,r)\times\D(f^{k}_0(w_0),\varepsilon)$, taking values in $\D(f^{k-1}_0(w_0),\varepsilon)$ and such that:
\begin{enumerate}
	\item $h_\lambda(f_0^{k-1}(w_0))=f_{f_0^{k-1}(w_0),\lambda}^{-1}\circ h_\lambda(f_0^k(w_0))$ for $\lambda\in\B(0,r)$,
	\item $\frac{1}{B}|z-w|\leq|f_{f_0^{k-1}(w_0),\lambda}^{-1}(z)-f_{f_0^{k-1}(w_0),\lambda}^{-1}(w)|\leq\frac{1}{K}|z-w|$ for $\lambda\in\B(0,r)$ and $z,w\in\D(f_0^{k}(w_0),\varepsilon)$.
\end{enumerate}
\label{cormvtholo}
\end{lm}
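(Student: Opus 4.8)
The plan is to extract the inverse branches directly from the construction behind Theorem \ref{mvtholo} and the uniform expansion estimate set up just before the statement. First I would recall that, by de Melo--van Strien, the holomorphic motion $h$ conjugates $f_0$ to $f_\lambda$ on $E_0$; in particular, for each point $w\in E_0$ the point $f_0(w)\in E_0$, and $h_\lambda(f_0(w))=f_\lambda(h_\lambda(w))$. Since $|f_\lambda'|\geq K>1$ on the $\delta$-neighborhood $\mathcal N_\delta$ of $E_0$, the map $f_\lambda$ is a local diffeomorphism near every point of $E_\lambda\subset\mathcal N_\delta$ (after possibly shrinking $\delta$ so that $E_\lambda\subset\mathcal N_{\delta/2}$ for all $\lambda\in\B(0,r)$, using continuity of $\lambda\mapsto h_\lambda$ and compactness of $E_0$). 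So I would fix $\varepsilon>0$, uniform in the base point and in $\lambda$, small enough that the round disc $\D(f_0^k(w_0),\varepsilon)$ is contained in the domain where the inverse branch of $f_\lambda$ sending $h_\lambda(f_0^k(w_0))$ back to $h_\lambda(f_0^{k-1}(w_0))$ is well defined and univalent. The key point making $\varepsilon$ uniform is the Koebe distortion theorem together with the uniform lower bound $K$ and upper bound $B$ on $|f_\lambda'|$ over $\mathcal N_\delta$: a univalent inverse branch defined on a disc of a definite radius around a point of $E_\lambda$ exists with size controlled only by $K$, $B$ and $\delta$, not by $w_0$ or $k$.

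Next I would define $f^{-1}_{f_0^{k-1}(w_0),\lambda}$ on $\B(0,r)\times\D(f_0^k(w_0),\varepsilon)$ to be this inverse branch, normalized so that it maps $h_\lambda(f_0^k(w_0))$ to $h_\lambda(f_0^{k-1}(w_0))$; holomorphic dependence on $(\lambda,z)$ follows from the holomorphic dependence of $f_\lambda$ on $\lambda$ and the implicit function theorem (the derivative $f_\lambda'$ is nonvanishing on the relevant region). Property (1), $h_\lambda(f_0^{k-1}(w_0))=f^{-1}_{f_0^{k-1}(w_0),\lambda}\circ h_\lambda(f_0^k(w_0))$, is then immediate from the normalization and the conjugacy relation $f_\lambda\circ h_\lambda = h_\lambda\circ f_0$ on $E_0$: indeed $f_\lambda(h_\lambda(f_0^{k-1}(w_0)))=h_\lambda(f_0^k(w_0))$, and $h_\lambda(f_0^{k-1}(w_0))$ lies in the target disc $\D(f_0^{k-1}(w_0),\varepsilon)$ provided $\varepsilon$ is also taken smaller than the distance moved by the motion, which again is uniform by equicontinuity of $\{h_\lambda\}$. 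For property (2), the bi-Lipschitz bounds, I would note that $f^{-1}_{f_0^{k-1}(w_0),\lambda}$ has derivative $1/f_\lambda'$ evaluated at points of $\mathcal N_\delta$ — after shrinking $\varepsilon$ so that the image disc stays inside $\mathcal N_\delta$ — hence its derivative has modulus between $1/B$ and $1/K$ everywhere on $\D(f_0^k(w_0),\varepsilon)$; integrating along the segment joining $z$ and $w$ (which lies in the disc, hence in $\mathcal N_\delta$) gives exactly the claimed inequality $\tfrac1B|z-w|\leq|f^{-1}(z)-f^{-1}(w)|\leq\tfrac1K|z-w|$.

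The only genuinely delicate point is producing a single $\varepsilon$ that works simultaneously for every base point $w_0\in E_0$ and every iterate $k\geq 1$, and for all $\lambda\in\B(0,r)$; this is where one must use that $E_0$ is compact, that the expansion/contraction constants $K$ and $B$ are uniform over $\mathcal N_\delta\times\B(0,r)$, and that $\{h_\lambda\}_{\lambda\in\B(0,r)}$ is equicontinuous (being a holomorphic motion, by the $\lambda$-lemma), so that the diameters of the pieces $E_\lambda$ and the displacements $\dist(z,h_\lambda(z))$ are uniformly small. I do not expect any new idea to be needed here — it is a packaging of standard normal-families/Koebe arguments — but it is the step that requires care to state the uniformity correctly. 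Everything else is a direct consequence of the chain rule and the conjugacy identity.
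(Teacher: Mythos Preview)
Your proposal is correct and is precisely the argument the paper has in mind: the paper does not actually prove this lemma, it merely introduces it with ``Let us stress that Theorem \ref{mvtholo} implicitly contains the existence of inverse branches of $f_\lambda$ along $E_\lambda$,'' and leaves the details to the reader. Your write-up supplies exactly those details --- implicit function theorem plus the uniform bounds $K\le|f_\lambda'|\le B$ on $\mathcal N_\delta$, compactness of $E_0$, and equicontinuity of the motion to get a uniform $\varepsilon$.

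One small point of care: the upper bound $|f^{-1}(z)-f^{-1}(w)|\le\frac1K|z-w|$ indeed comes from integrating $(f^{-1})'$ along the segment $[z,w]\subset\D(f_0^k(w_0),\varepsilon)$, but the lower bound $\frac1B|z-w|\le|f^{-1}(z)-f^{-1}(w)|$ requires instead integrating $f_\lambda'$ along the segment joining $f^{-1}(z)$ and $f^{-1}(w)$ in the \emph{target} disc $\D(f_0^{k-1}(w_0),\varepsilon)$ (which is convex and contained in $\mathcal N_\delta$), giving $|z-w|\le B|f^{-1}(z)-f^{-1}(w)|$. Your phrasing conflates the two segments; the fix is trivial but worth stating precisely.
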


\subsection{Misiurewicz rational maps}\label{sectionMis}

\par Let us recall that for $z\in\p^1$ and $f\in\rat_d$ the $\omega$-\emph{limit set} of $z$ is given by $\omega(z)\pe\bigcap_{n\geq0}\overline{\{f^k(z) \ / \ k>n\}}$ and that $z$ is \emph{recurrent} if $z\in\omega(z)$.

~

\begin{defi}
Let $f\in\rat_d$. We say that $f$ is \emph{Misiurewicz} if $f$ has no parabolic cycle, $C(f)\cap\mathcal{J}_f\neq\emptyset$ and $\omega(c)\cap C(f)=\emptyset$ for every $c\in C(f)\cap\mathcal{J}_f$. Moreover, when $\J_f$ contains exactly $k$ critical points of $f$ counted with multiplicity we say that $f$ is $k$-\emph{Misiurewicz}.
\end{defi}

~

\par The three following Theorems of Ma\~n\'e (see \cite{Shishikura} Theorems 1.1, 1.2 and 1.3 page 266) are among the main tools for studying the dynamics of Misiurewicz rational maps.

~

\begin{tm}[Local version]
Let $f\in\rat_d$. There exists an integer $N\geq1$ not depending on $f$ for which if $x\in\J_f$ is not a parabolic periodic point of $f$ and $x$ is not contained in the $\omega$-limit set of any recurrent critical point of $f$, then for every $\varepsilon>0$ there exists a neighborhood $U$ of $x$ in $\p^1$ such that for every $n\geq0$ and every component $V$ of $f^{-n}(U)$:
\begin{enumerate}
\item $\textup{diam}(V)\leq\varepsilon$ and $\deg(f^n:V\longrightarrow U)\leq N$,
\item for every $\varepsilon_1$ there exists $n_0\geq1$ for which $\textup{diam}(V)\leq\varepsilon_1$ as soon as $n\geq n_0$.
\end{enumerate} 
\label{mane1}
\end{tm}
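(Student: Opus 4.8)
This is Mañé's classical local expansion theorem, so the plan is to follow the strategy of Mañé (as presented in \cite{Shishikura}): first bound by a universal constant the degrees of all pull-backs of a small neighbourhood of $x$, and then deduce from this bound, by a normality argument, that their diameters are small and shrink to $0$.

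For the degree bound, fix a component $V=V_n$ of $f^{-n}(U)$ and consider the telescope $V_n\xrightarrow{f}V_n^{(1)}\xrightarrow{f}\cdots\xrightarrow{f}V_n^{(n)}=U$, where $V_n^{(i)}$ is the component of $f^{-(n-i)}(U)$ containing $f^i(V)$. Then $\deg(f^n\colon V\to U)=\prod_{i=0}^{n-1}\deg(f\colon V_n^{(i)}\to V_n^{(i+1)})$, and the $i$-th factor is $>1$ only if $V_n^{(i)}$ contains a critical point $c$ of $f$, in which case $f^{n-i}(c)\in U$. If $c$ is recurrent then $c\in\omega(c)$; since $x\notin\omega(c)$ for every recurrent $c$ and $\bigcup_{c\ \textup{recurrent}}\omega(c)$ is compact, one may shrink $U$ so that $\overline U$ is disjoint from that set, and then the orbit of each recurrent $c$ meets $U$ only finitely often — hence contributes only boundedly many branching times to every telescope. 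The non-recurrent critical points, whose orbits may well accumulate at $x$, are the delicate case: one argues by induction on $\#C(f)$ (or through a covering lemma), exploiting that $c\notin\omega(c)$ forces the orbit of $c$ to enter a fixed neighbourhood of $c$ only finitely often, so that only boundedly many branching times occur at telescope sets of a given size. Keeping track of the constants yields $N=N(d)$, independent of $f\in\rat_d$, with $\deg(f^n\colon V\to U)\le N$ for all $n$ and all components $V$.

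Granting this bound, the shrinking of the $V$'s is comparatively soft. Choose round neighbourhoods $U_1\Subset U_0=U$ of $x$ such that $U_0\setminus\overline{U_1}$ contains no critical value of $f$; then for any component $V$ of $f^{-n}(U_0)$ the sub-component $V_1$ of $f^{-n}(U_1)$ is compactly contained in $V$, and $f^n$ restricts to a covering $V\setminus\overline{V_1}\to U_0\setminus\overline{U_1}$ of degree $\le N$, so $V_1$ is surrounded inside $V$ by an annulus of modulus $\ge\tfrac1N\,\mathrm{mod}(U_0\setminus\overline{U_1})>0$. Since every such $V$ meets $J_f$ (as $J_f=f^{-n}(J_f)$), a standard normal-families argument — combining this modulus estimate with Koebe/Teichm\"uller distortion to produce univalent branches of $f^{n}$ on discs of definite size — rules out pull-backs that persist at a definite diameter, because these would make the iterates $f^{n}$ normal near a point of $J_f$. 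This gives both $\mathrm{diam}(V)\le\varepsilon$ for all $n$, after a final shrinking of $U$, and $\sup_V\mathrm{diam}(V)\to0$ as $n\to\infty$. The genuinely hard step — and the main obstacle — is the combinatorial degree bound for the non-recurrent critical points accumulating at $x$, and in particular obtaining a bound $N$ uniform over $\rat_d$; once this is in hand, the rest follows by the soft arguments above.
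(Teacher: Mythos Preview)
The paper does not prove this statement at all: it is quoted as one of three theorems of Ma\~n\'e, with a reference to \cite{Shishikura} (Theorems 1.1, 1.2 and 1.3, page 266), and is used as a black box in the study of Misiurewicz maps. So there is no ``paper's own proof'' to compare against; your sketch is an outline of Ma\~n\'e's original argument, which is the right thing to aim for here.

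As a sketch of Ma\~n\'e's proof, your outline has the right architecture --- telescope decomposition, separation of recurrent versus non-recurrent critical points, then a modulus/normality argument for the shrinking --- and you correctly isolate the hard step. But the write-up, as it stands, is not a proof: the sentence ``one argues by induction on $\#C(f)$ (or through a covering lemma)'' is precisely where all the content lies, and you have not supplied it. In Ma\~n\'e's actual argument this induction is delicate: one has to show that if a pull-back $V_n^{(i)}$ contains a non-recurrent critical point $c$, then $c$ itself satisfies the hypotheses of the theorem (it is not parabolic periodic and is not in the $\omega$-limit set of any recurrent critical point), so the theorem applies inductively at $c$ with one fewer critical point in play; the diameter control from the inductive hypothesis then forces the telescope sets near $c$ to be small, which in turn bounds how often the orbit of $c$ can revisit them. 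Without spelling this out, and without tracking why the resulting $N$ depends only on $d$ and not on $f$, the degree bound is asserted rather than proved. Your treatment of recurrent critical points is also slightly off: the point is not that their orbits meet $U$ finitely often, but that they do not meet a sufficiently small $U$ at all, since $x\notin\omega(c)$.
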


~

\begin{tm}[Compact version]
Let $f\in\rat_d$ and $K\subset\J_f$ be a $f$-invariant compact set not containing critical points nor parabolic periodic points of $f$. If $K$ doesn't meet the $\omega$-limit set of any recurrent critical point of $f$, then $K$ is $f$-hyperbolic.
\label{mane2}
\end{tm}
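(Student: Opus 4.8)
The plan is to deduce the compact version from the local version (Theorem~\ref{mane1}) by a finite-cover argument; the only real work is to turn the uniform shrinking of backward images into uniform exponential expansion of forward derivatives, which one does with the Koebe distortion theorem once the relevant inverse branches are known to be univalent along $K$. Fix a smooth metric on $\p^1$ and set $\delta_0:=\dist(K,C(f))$, which is positive because $K$ is compact and disjoint from the finite set $C(f)$. Choose $\varepsilon\in(0,\delta_0)$ small enough that on any disc of radius $\varepsilon$ meeting $K$ the chosen metric differs from a local Euclidean chart by a fixed distortion factor $\gamma$. For each $x\in K$ one has $x\in\J_f$, $x$ is not a parabolic periodic point, and $x$ lies in the $\omega$-limit set of no recurrent critical point --- precisely the hypotheses we have on $K$ --- so Theorem~\ref{mane1} provides a neighbourhood $U_x\ni x$ such that every component $V$ of $f^{-n}(U_x)$ has $\textup{diam}(V)\leq\varepsilon$ for all $n\geq0$, and, for any prescribed $\varepsilon_1>0$, $\textup{diam}(V)\leq\varepsilon_1$ as soon as $n\geq n_0(x,\varepsilon_1)$. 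Extract a finite subcover $U_{x_1},\dots,U_{x_p}$ of $K$ and let $2\rho'>0$ be a Lebesgue number for it, so that $\D(y,\rho')\subset U_{x_i}$ for some $i$ whenever $y\in K$.

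The crucial observation is that, because $\varepsilon<\delta_0$, these inverse branches are univalent near $K$. Let $z\in K$ and $n\geq1$; put $w:=f^n(z)$, which lies in $K$ by invariance, choose $i$ with $\D(w,\rho')\subset U_{x_i}$, and let $V$ be the component of $f^{-n}(U_{x_i})$ containing $z$. Since $f^n(V)\subset U_{x_i}$, for each $0\leq j\leq n-1$ the connected set $f^j(V)$ is contained in a component of $f^{-(n-j)}(U_{x_i})$, hence $\textup{diam}(f^j(V))\leq\varepsilon$ by Theorem~\ref{mane1}; as $f^j(V)$ meets $K$ and has diameter $<\delta_0$, it avoids $C(f)$. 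Therefore $f^n\colon V\to U_{x_i}$ is a proper unbranched covering of the topological disc $U_{x_i}$, hence a biholomorphism, and $\phi:=(f^n|_V)^{-1}$ is univalent on $U_{x_i}$ with $\phi(w)=z$. If moreover $n\geq n_0:=\max_i n_0(x_i,\varepsilon_1)$, then $\phi(\D(w,\rho'))\subset V$ has diameter $\leq\varepsilon_1$, so the Koebe $1/4$-theorem, applied in a chart and corrected by $\gamma$, gives $|(f^n)'(z)|=|\phi'(w)|^{-1}\geq\rho'/(2\gamma^{2}\varepsilon_1)$. Choosing $\varepsilon_1<\rho'/(2\gamma^{2})$ from the outset, we obtain a single integer $n_0$ and a constant $\kappa>1$ with $|(f^{n_0})'(z)|\geq\kappa$ for all $z\in K$.

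It remains to bootstrap this to the definition of $f$-hyperbolicity. Let $m:=\inf_{z\in K}|f'(z)|>0$ (again $K$ is compact and misses $C(f)$). For $z\in K$ and $n\geq0$ write $n=qn_0+s$ with $0\leq s<n_0$; all forward iterates of $z$ stay in $K$, so the chain rule gives
\[
|(f^n)'(z)|\;\geq\;m^{s}\,\kappa^{q}\;\geq\;\min(1,m^{n_0})\,\kappa^{\,n/n_0-1},
\]
i.e. $|(f^n)'(z)|\geq C\alpha^n$ with $\alpha:=\kappa^{1/n_0}>1$ and $C:=\min(1,m^{n_0})/\kappa\leq1$; this is condition~(1) in the definition of a hyperbolic set, so $K$ is $f$-hyperbolic. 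The point I expect to require the most care is the upgrade from Ma\~n\'e's bounded-degree statement to genuine univalence --- it is what forces $\varepsilon<\dist(K,C(f))$ and the tracking of every intermediate image $f^j(V)$, and it is what makes the spherical-versus-Euclidean bookkeeping in the Koebe step harmless. The remaining ingredients --- a finite subcover, a Lebesgue number, and the chain-rule bootstrap --- are routine.
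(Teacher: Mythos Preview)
The paper does not prove this statement: Theorem~\ref{mane2} is quoted as a result of Ma\~n\'e (with the reference \cite{Shishikura}, Theorems~1.1--1.3, page~266) and used as a black box throughout. There is therefore nothing in the paper to compare your argument against.

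That said, your derivation of the compact version from the local version is the standard one and is correct. The key step --- using forward invariance of $K$ together with the uniform diameter bound $\varepsilon<\dist(K,C(f))$ to force every intermediate image $f^j(V)$ off the critical set, hence obtaining genuine univalence rather than the bounded-degree statement of Theorem~\ref{mane1} --- is exactly the point that needs care, and you handle it cleanly. One cosmetic remark: you implicitly treat each $U_{x_i}$ as a topological disc when you invoke ``proper unbranched cover of a disc $\Rightarrow$ biholomorphism'' and when you apply Koebe on $\D(w,\rho')\subset U_{x_i}$. This is harmless --- after obtaining the $U_{x_i}$ from Theorem~\ref{mane1} you may shrink each to a round disc, and the diameter conclusions persist since any component of $f^{-n}$ of the smaller set lies inside a component of $f^{-n}$ of the larger one --- but it is worth saying once. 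The chain-rule bootstrap at the end is routine.
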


~

\begin{tm}[An application]
Let $f\in\rat_d$ and $\Gamma$ be either a Cremer cycle, or the boundary of a Siegel disc or a connected component of the boundary of a Herman ring of $f$. Then there exists a recurrent critical point $c$ of $f$ for which $\Gamma\subset\omega(c)$.
\label{mane3}
\end{tm}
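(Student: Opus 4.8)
The plan is to argue by contradiction, using the compact version of Ma\~n\'e's theorem (Theorem~\ref{mane2}) to force $\Gamma$ to be an $f$-hyperbolic set, and then to contradict this by exploiting that the dynamics carried by $\Gamma$ --- an irrational rotation on the boundary of a rotation domain, or a neutral cycle in the Cremer case --- is incompatible with uniform expansion. Before starting I would make two elementary reductions and record two classical facts. Replacing $f$ by a suitable iterate (which affects none of the relevant objects, since a recurrent critical point of an iterate, together with its $\omega$-limit set, is controlled by one of $f$), we may assume that $f(\Gamma)=\Gamma$ and, in the Siegel disc and Herman ring cases, that $f$ fixes the rotation domain and each component of its boundary; then $\Gamma$ is a non-empty compact subset of $\J_f$. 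The two facts are: (i) $\Gamma$ contains no parabolic periodic point --- classical for boundaries of rotation domains, and clear for a Cremer cycle since its rotation number is irrational; and (ii) $\omega(x)=\Gamma$ for every $x\in\Gamma$ --- obvious for a Cremer cycle (each point is periodic with orbit $\Gamma$) and the classical minimality statement for the dynamics on a rotation domain boundary.

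Assuming the conclusion fails, I would then run the following case analysis. If $\Gamma$ contains a critical point $c_0$, forward invariance of $\Gamma$ gives $\emptyset\neq\omega(c_0)\subset\Gamma$; for any $x\in\omega(c_0)$ one has $\omega(x)\subset\omega(c_0)$ (the latter being closed and forward invariant), while $\omega(x)=\Gamma$ by (ii), so $\Gamma\subset\omega(c_0)$ and $c_0\in\Gamma\subset\omega(c_0)$ is recurrent --- contradicting the failure of the conclusion. So $\Gamma$ contains no critical point. If moreover $\Gamma$ met $\omega(c)$ for some recurrent critical point $c$, then for $x\in\Gamma\cap\omega(c)$ the same reasoning yields $\Gamma=\omega(x)\subset\omega(c)$, again a contradiction. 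Hence $\Gamma$ avoids the $\omega$-limit set of every recurrent critical point, contains no critical point, and by (i) no parabolic periodic point; Theorem~\ref{mane2} then applies and shows that $\Gamma$ is $f$-hyperbolic.

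The remaining, and in my view central, task is to contradict the hyperbolicity of $\Gamma$. By continuity, condition (2) of the definition of a hyperbolic set provides a metric $\sigma$ on $\p^1$, a neighbourhood $\mathcal{N}$ of $\Gamma$ and a constant $K>1$ with $|f'|_\sigma\geq K$ on $\mathcal{N}$ (cf. the discussion following Theorem~\ref{mvtholo}). In the Cremer case, a point $z_0\in\Gamma$ of period $p$ satisfies $|(f^{mp})'(z_0)|=|(f^p)'(z_0)|^m=1$ for all $m\geq1$, since the multiplier of a Cremer cycle has modulus one, whereas condition (1) of the definition gives $|(f^{mp})'(z_0)|\geq C\alpha^{mp}\to\infty$: absurd. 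In the Siegel disc (resp. Herman ring) case, let $\phi$ be a linearising coordinate conjugating $f$ on the rotation domain to an irrational rotation of a disc (resp. of a round annulus), and let $\gamma_\rho$ be the $f$-invariant real-analytic Jordan curve that is the $\phi$-preimage of the circle of radius $\rho$. Since $|\phi|\equiv\rho$ on $\gamma_\rho$ and $\phi$ is proper, $\gamma_\rho$ leaves every compact subset of the rotation domain towards the end attached to $\Gamma$ as $\rho$ tends to the corresponding boundary radius; hence $\mathrm{dist}_\sigma(\gamma_\rho,\Gamma)\to0$, so $\gamma_\rho\subset\mathcal{N}$ for $\rho$ close enough. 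But then, $f$ mapping $\gamma_\rho$ bijectively onto itself,
\[
\mathrm{length}_\sigma(\gamma_\rho)=\mathrm{length}_\sigma\big(f(\gamma_\rho)\big)=\int_{\gamma_\rho}|f'|_\sigma\,d\ell_\sigma\;\geq\;K\,\mathrm{length}_\sigma(\gamma_\rho)\;>\;\mathrm{length}_\sigma(\gamma_\rho),
\]
which is impossible since $0<\mathrm{length}_\sigma(\gamma_\rho)<\infty$. This final contradiction proves the theorem.

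I expect the main obstacle to lie precisely in this last step: converting the qualitative statement ``$\Gamma$ is hyperbolic'' into a concrete contradiction with the rotational (resp. neutral) nature of the dynamics on $\Gamma$. The length-multiplication computation above does this cleanly, but it rests on two classical facts about rotation domains that still require a careful argument --- that the invariant curves $\gamma_\rho$ accumulate exactly on the corresponding boundary component (which does not presuppose that the boundary is a Jordan curve), and that $f|_\Gamma$ is minimal --- together with the (likewise classical) absence of parabolic periodic points on $\Gamma$ used in the reduction.
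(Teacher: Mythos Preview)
The paper does not prove Theorem~\ref{mane3}; it is quoted, with Theorems~\ref{mane1} and~\ref{mane2}, as a result of Ma\~n\'e with a reference to the Shishikura--Tan Lei exposition. So there is no in-paper argument to compare against, and I evaluate your proposal on its own.

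Your global strategy --- assume the conclusion fails, force $\Gamma$ to satisfy the hypotheses of Theorem~\ref{mane2}, and then contradict hyperbolicity --- is the standard one, and your final contradiction is carried out correctly. The Cremer case is immediate from $|(f^{mp})'(z_0)|=1$. In the Siegel/Herman case your length argument is clean: the fact that $\max_{z\in\gamma_\rho}\dist_\sigma(z,\Gamma)\to0$ follows from a simple compactness argument in the linearising coordinate (if $z_n\in\gamma_{\rho_n}$ stayed at distance $\geq\delta$ from $\partial D$, a subsequential limit would lie in $D$ while $|\phi(z_n)|=\rho_n\to1$), so $\gamma_\rho\subset\mathcal N$ for $\rho$ close to the boundary radius and the length inequality $\mathrm{length}_\sigma(\gamma_\rho)\geq K\,\mathrm{length}_\sigma(\gamma_\rho)$ gives the contradiction. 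Claim~(i) (no parabolic periodic point on $\Gamma$) is indeed a known fact, obtainable from the Leau--Fatou flower picture together with a snail-lemma type argument; it is not trivial but it is standard.

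The genuine gap is claim~(ii): that $f|_\Gamma$ is minimal, i.e.\ $\omega(x)=\Gamma$ for every $x\in\Gamma$, when $\Gamma$ is the boundary of a Siegel disc or a boundary component of a Herman ring. You call this ``classical'', but it is not. When $\partial D$ is locally connected one has a continuous equivariant surjection from the prime-end circle (on which $f$ acts as the irrational rotation) onto $\partial D$, and minimality follows; but in general there is no such continuous factor map, and I know of no reference proving minimality of $f|_{\partial D}$ without extra hypotheses. You use (ii) twice, and both uses are essential: to show that a critical point $c_0\in\Gamma$ is recurrent with $\Gamma\subset\omega(c_0)$, and to upgrade ``$\Gamma\cap\omega(c)\neq\emptyset$'' to ``$\Gamma\subset\omega(c)$''. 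Without (ii) you cannot verify the hypotheses of Theorem~\ref{mane2}: a non-recurrent critical point on $\partial D$ is not excluded, and even if $\Gamma$ is covered by finitely many closed invariant sets $\Gamma\cap\omega(c_i)$ you cannot conclude that one of them equals $\Gamma$.

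The published proofs sidestep this by using the \emph{local} Theorem~\ref{mane1} at a single point $x\in\Gamma$ (chosen non-parabolic and outside every $\omega(c)$) and deriving the contradiction from the rotation inside $D$: the pull-back components $W_n$ of a small neighbourhood $U\ni x$ along the rotational backward orbit in $D$ each meet two fixed invariant circles $\gamma_{\rho_1},\gamma_{\rho_2}$, hence have diameter bounded below, contradicting $\mathrm{diam}(W_n)\to0$. That route avoids any minimality statement for $f|_{\partial D}$. Your length argument could in principle be grafted onto such a reduction, but as it stands the proof rests on an unjustified --- and, as far as I can tell, genuinely open or at least hard --- dynamical claim.
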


~

\par Recall that $f\in\rat_d$ is said to be \emph{semi-hyperbolic} if there exists $\delta>0$ and $d_0\in\mathbb{N}^*$ such that for any $z\in\J_f$ and $n\in\mathbb{N}$ we have $\deg(f^n:U(z,f^n,\delta)\longrightarrow\D(f^n(z),\delta))\leq d_0$, where $U(z,f^n,\delta)$ is the component of $f^{-n}(\D(f^n(z),\delta))$ containing $z$ (see \cite{rudiments} pages 96-97).

~

\par Let $f\in\rat_d$. For $k\geq0$ we set
\begin{center}
$P^k(f)\pe\overline{\{f^n(c) \ / \ n\geq k\text{ and }c\in\J_f\cap C(f)\}}$.
\end{center}
\par The items $1$, $2$, $3$ and $4$ of the next Proposition follow from Ma\~n\'e's Theorems:

~

\begin{prop}
Let $f\in\rat_d$ be Misiurewicz. Then 
\begin{enumerate}
\item there exists $k_0\geq1$ such that $P^{k_0}(f)$ is a compact $f$-hyperbolic set,
\item $f$ has no neutral cycle,
\item the periodic Fatou components of $f$ are attracting basins,
\item $f$ is semi-hyperbolic and in particular, either $\J_f=\p^1$ or $\textup{Leb}(\J_f)=0$,
\item if $f$ carries an invariant line field on its Julia set, then $f$ is a flexible Latt\`es map.
\end{enumerate}
\label{fatou}
\end{prop}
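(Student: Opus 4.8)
The plan is to treat items~(1)--(4) as a direct bookkeeping exercise with Ma\~n\'e's Theorems \ref{mane1}, \ref{mane2} and \ref{mane3} (as the sentence preceding the statement already announces) and to reserve the real effort for item~(5), which I would deduce from items~(1) and~(4) together with the rigidity of Latt\`es maps among rational maps carrying an invariant line field. Everything rests on one preliminary observation: \emph{for every recurrent critical point $c$ of $f$ one has $\omega(c)\cap\J_f=\emptyset$}. Indeed $c\notin\J_f$, for otherwise $c\in\omega(c)\cap C(f)$, contradicting the Misiurewicz hypothesis; so the orbit of $c$ eventually enters a periodic cycle of Fatou components, which is attracting, parabolic, Siegel or Herman. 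The parabolic case is excluded by hypothesis, and in each of the remaining three cases $\omega(c)$ is contained in the Fatou set, hence misses $\J_f$. In particular, every $x\in\J_f$ satisfies the hypotheses of the local Ma\~n\'e Theorem \ref{mane1}.

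Item~(1): the sets $P^k(f)$ are compact, decreasing in $k$, contained in $\J_f$, and forward invariant since $f(\{f^n(c):n\geq k\})\subset\{f^n(c):n\geq k\}$; their intersection is the finite union $\bigcup_{c\in C(f)\cap\J_f}\omega(c)$, which is disjoint from the finite set $C(f)$ by hypothesis, so $P^{k_0}(f)\cap C(f)=\emptyset$ for $k_0$ large enough. Since $P^{k_0}(f)$ contains no parabolic periodic point and meets no $\omega$-limit set of a recurrent critical point, Theorem \ref{mane2} yields that $P^{k_0}(f)$ is $f$-hyperbolic. Items~(2) and~(3): by Theorem \ref{mane3}, a Cremer cycle, the boundary of a Siegel disc, or a boundary component of a Herman ring of $f$ would be contained in $\omega(c)$ for some recurrent critical point $c$, which is impossible since $\omega(c)\cap\J_f=\emptyset$; therefore $f$ has no Cremer cycle, no Siegel disc and no Herman ring, so -- together with the no-parabolic hypothesis -- $f$ has no neutral cycle and each periodic Fatou component is an attracting basin. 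Item~(4): since every point of $\J_f$ satisfies the hypotheses of Theorem \ref{mane1}, covering the compact set $\J_f$ by finitely many of the neighbourhoods it provides and choosing a Lebesgue number for this cover produces a uniform radius $\delta$ realising the semi-hyperbolicity of $f$; the alternative $\J_f=\p^1$ or $\textup{Leb}(\J_f)=0$ is then the standard fact that the Julia set of a semi-hyperbolic rational map has zero area whenever it is not the whole sphere.

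It remains to establish item~(5), which is where the Latt\`es phenomenon enters and which I expect to be the only real obstacle. Suppose $f$ carries an invariant line field on $\J_f$; by definition this line field is supported on a Borel subset of $\J_f$ of positive Lebesgue measure. If $\textup{Leb}(\J_f)=0$ no such object exists and there is nothing to prove, so by item~(4) we may assume $\J_f=\p^1$. Here one invokes the rigidity input: a rational map with Julia set equal to $\p^1$ which is semi-hyperbolic -- the non-uniform hyperbolicity supplied by items~(1) and~(4) -- and carries an invariant line field on its Julia set must be a Latt\`es map; this is the absence-of-invariant-line-fields statement available under such hyperbolicity hypotheses, in the spirit of McMullen \cite{McMullen3}. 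Finally, among Latt\`es maps only the flexible ones support an invariant line field: a flexible Latt\`es map is covered, on the universal cover of its underlying torus, by $z\mapsto nz$ with $n\in\mathbb{Z}$, which preserves every direction, whereas a rigid Latt\`es map is covered by $z\mapsto az$ with $a\notin\R$, which rotates directions and hence leaves no measurable line field invariant. Consequently $f$ is a flexible Latt\`es map, which closes the proof. The genuinely delicate ingredient is precisely this $\J_f=\p^1$ case of item~(5): for it one must quote (or re-derive, via a quasiconformal deformation argument built on the hyperbolic set $P^{k_0}(f)$ of item~(1), along the lines exploited later in the paper) the rigidity of Latt\`es maps among rational maps carrying an invariant line field on their Julia set.
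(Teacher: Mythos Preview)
Your proof is correct and follows essentially the same route as the paper. Items (1)--(4) are handled exactly as intended: the paper merely states that they follow from Ma\~n\'e's Theorems, and you have filled in the bookkeeping correctly (the preliminary observation that recurrent critical points have $\omega(c)\cap\J_f=\emptyset$ is the right organizing remark). For item~(5), the paper's argument is the same in spirit but slightly more specific in its citation chain: it observes that for a semi-hyperbolic map the set of \emph{conic points} coincides with $\J_f$ (referring to \cite{rudiments} p.~109 and \cite{Urbanski} Proposition~6.1), then invokes \cite{rudiments} Theorem~VII.22 to conclude that an invariant line field forces $f$ to be Latt\`es, and finally \cite{McMullen} Corollary~3.18 for flexibility. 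Your reduction to $\J_f=\p^1$ via item~(4) before invoking rigidity is a harmless detour; the paper's conic-points formulation avoids it but lands in the same place.
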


~

Item $(5)$ uses the fact that the set of conic points of $f$ coincides with $\J_f$ when $f$ is semi-hyperbolic (see \cite{rudiments} page 109 and \cite{Urbanski} Proposition 6.1). In particular if $f$ is Misiurewicz and carries an invariant line field on its Julia set, item $(4)$ implies that $f$ is a Latt\`es map (see \cite{rudiments} Theorem VII.22). More precisely, $f$ is a flexible Latt\`es map (see \cite{McMullen} corollary 3.18).

\subsection{Bifurcation currents}

\par Let $(f_\lambda)_{\lambda\in X}$ be a holomorphic family of degree $d$ rational maps. The theory of bifurcations of a holomorphic family, introduced by Ma\~n\'e, Sad and Sullivan, makes a link between the instability of critical orbits, and that of neutral or repelling cycles. To make this precise, we set:
\begin{center}

~

$\mathcal{R}(X)\pe\{\lambda_0\in X$ / every repelling cycle of $f_{\lambda_0}$ moves holomorphically on a fixed neighborhood of $\lambda_0\}$
 
~

$\mathcal{S}(X)=\{\lambda_0\in X$ / $(\lambda\longmapsto f_\lambda^n(C(f_\lambda)))_{n\geq0}$ is equicontinuous at $\lambda_0\}$

~

$\mathcal{N}(X)\pe\{\lambda_0\in X$ / $f_{\lambda_0}$ has a non-persistent neutral cycle$\}$.

\end{center}

~

\par The following Theorem is due to Ma\~n\'e, Sad and Sullivan (\cite{MSS}):

~

\begin{tm}[Ma\~n\'e-Sad-Sullivan]
Let $(f_\lambda)_{\lambda\in X}$ be a holomorphic family of degree $d$ rational maps. Then $\mathcal{R}(X)=\mathcal{S}(X)=X\setminus\overline{\mathcal{N}(X)}$ is an open dense subset of $X$.
\label{tmMSS}
\end{tm}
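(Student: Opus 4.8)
The plan is to establish the chain $\mathcal{R}(X)=\mathcal{S}(X)=X\setminus\overline{\mathcal{N}(X)}$ in five stages: (1)~$\mathcal{S}(X)$ is open; (2)~$\mathcal{S}(X)\subseteq\mathcal{R}(X)$ and $\mathcal{R}(X)$ is open; (3)~$\mathcal{R}(X)\subseteq\mathcal{S}(X)$; (4)~$\overline{\mathcal{N}(X)}\cap\mathcal{S}(X)=\emptyset$; (5)~$X\setminus\mathcal{S}(X)\subseteq\overline{\mathcal{N}(X)}$ together with the density of $\mathcal{S}(X)$. Step~(1) is immediate: equicontinuity at $\lambda_0$ confines $\lambda\mapsto f_\lambda^n(C(f_\lambda))$ to a small spherical neighborhood of its value at $\lambda_0$ over a fixed ball $V_0\ni\lambda_0$, so by Montel's theorem the family is normal on $V_0$, hence equicontinuous throughout $V_0$. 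For step~(2), fix a connected open $U\subseteq\mathcal{S}(X)$ and $\lambda_0\in U$: every repelling periodic point of $f_{\lambda_0}$, of period $p$ say, continues holomorphically as $z(\lambda)$ with holomorphically varying multiplier $\rho(\lambda)$ as long as $\rho\neq1$; were $\rho$ to attain the value $1$ somewhere in $U$, then, since $\rho\not\equiv1$, the open mapping theorem would produce nearby parameters carrying an \emph{attracting} cycle, absorbing a critical point (Fatou), and such parameters cannot lie in $\overline{\mathcal{S}(X)}$ --- a contradiction. Thus every repelling cycle of $f_{\lambda_0}$ persists over $U$; these cycles being dense in $\mathcal{J}_{\lambda_0}$, the $\lambda$-lemma promotes the resulting motion to a holomorphic motion $h\colon U\times\mathcal{J}_{\lambda_0}\to\p^1$ conjugating $f_{\lambda_0}$ to $f_\lambda$ on $\mathcal{J}_{\lambda_0}$ (the conjugacy, valid on the dense set of repelling cycles, extends by continuity). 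Hence $\lambda_0\in\mathcal{R}(X)$; and for $\lambda_1\in U$ the composition $h_\lambda\circ h_{\lambda_1}^{-1}$ is a holomorphic motion of $\mathcal{J}_{\lambda_1}$ near $\lambda_1$ conjugating $f_{\lambda_1}$ to $f_\lambda$ and hence moving every repelling cycle of $f_{\lambda_1}$, which shows $\mathcal{R}(X)$ is open.

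For step~(3), let $\lambda_0\in\mathcal{R}(X)$ and $h$ be the associated holomorphic motion of $\mathcal{J}_{\lambda_0}$ over a neighborhood $U$, with $\mathcal{J}_\lambda=h_\lambda(\mathcal{J}_{\lambda_0})$ (a standard consequence of the $\lambda$-lemma). If a marked critical point has $c_i(\lambda_0)\in\mathcal{F}_{\lambda_0}$, then for $\lambda\in U$ near $\lambda_0$ one has $c_i(\lambda)\in\mathcal{F}_\lambda$ with its whole forward orbit remaining in $\mathcal{F}_\lambda$, so $\lambda\mapsto f_\lambda^n(c_i(\lambda))$ omits the $\geq 3$ holomorphically moving points of $h_\lambda(\mathcal{J}_{\lambda_0})$ and is normal by Montel. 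If $c_i(\lambda_0)\in\mathcal{J}_{\lambda_0}$, then $h_\lambda$, a homeomorphic conjugacy, carries it to a critical point of $f_\lambda$, necessarily $c_i(\lambda)$ by continuity and isolatedness of critical points, whence $f_\lambda^n(c_i(\lambda))=h_\lambda(f_{\lambda_0}^n(c_i(\lambda_0)))$ is holomorphic in $\lambda$ and the family is trivially normal. This gives $\mathcal{R}(X)=\mathcal{S}(X)$. Step~(4) is again easy: a non-persistent neutral cycle of $f_{\lambda_0}$ has holomorphic, non-constant, unimodular multiplier at $\lambda_0$, so by the open mapping theorem nearby parameters carry an attracting cycle, forcing $\lambda_0\notin\mathcal{S}(X)$; since $\mathcal{S}(X)$ is open, $\overline{\mathcal{N}(X)}\cap\mathcal{S}(X)=\emptyset$.

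Step~(5) is the crux. Let $\lambda_0\notin\mathcal{S}(X)$ and let $V\ni\lambda_0$ be an arbitrarily small ball; then $\lambda\mapsto g_n(\lambda):=f_\lambda^n(c_i(\lambda))$ is non-normal on $V$ for some $i$. Pick a repelling cycle of $f_{\lambda_0}$: it persists over a sub-ball $V'\subseteq V$, either staying repelling or with its multiplier meeting the unit circle inside $V'$ --- in the latter case we have already produced a parameter of $\mathcal{N}(X)\cap V$. Otherwise, take two further repelling cycles persisting and staying repelling over $V'$, furnishing three pointwise-distinct, holomorphically moving values; as $(g_n)$ is non-normal on $V'$, Montel's theorem (for moving targets) forbids it from omitting all three, so some $g_{n_0}$ attains one of them at a parameter $\lambda_*\in V'$, i.e.\ $c_i$ is preperiodic to a repelling cycle of $f_{\lambda_*}$. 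Since $\lambda_*\notin\mathcal{S}(X)$ the relation is non-persistent, and a standard perturbation --- linearizing $f_\lambda$ along the repelling cycle and solving, via the inverse function theorem, $f_\lambda^{n_0+mp}(c_i(\lambda))=c_i(\lambda)$ with controlled multiplier for large $m$ --- yields parameters arbitrarily close to $\lambda_*$ carrying an attracting cycle absorbing $c_i$; the open mapping theorem applied to its multiplier then produces a parameter of $\mathcal{N}(X)\cap V$. Hence $X\setminus\mathcal{S}(X)\subseteq\overline{\mathcal{N}(X)}$, so $\mathcal{S}(X)=X\setminus\overline{\mathcal{N}(X)}$ is open. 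Finally, the very same mechanism shows that near any $\lambda_0$ in the interior of the bifurcation locus $X\setminus\mathcal{S}(X)$ there is a parameter, still in that interior, at which an active critical point has become passive; a finite descent on the number of active critical points (at most $2d-2$) would then reach a stable parameter inside $X\setminus\mathcal{S}(X)$, a contradiction, so $X\setminus\mathcal{S}(X)$ has empty interior and $\mathcal{S}(X)$ is dense. The main obstacle is precisely this fifth step --- manufacturing genuine non-persistent neutral cycles near an arbitrary bifurcation parameter and excluding any interior of the bifurcation locus; the degenerate families (those passing through flexible Latt\`es maps, or carrying a persistent neutral cycle) require the separate bookkeeping that is classical since \cite{MSS}.
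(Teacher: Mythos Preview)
The paper does not prove this theorem: it is stated as background and attributed to Ma\~n\'e--Sad--Sullivan with a citation to \cite{MSS}, so there is no ``paper's own proof'' to compare against. Your write-up is a sketch of the classical argument, and its overall architecture (the five-step chain, the use of Montel on moving targets, the perturbation from a preperiodic-to-repelling relation to a super-attracting and then neutral cycle, the finite descent on the number of active critical points) is the standard one.

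That said, a few of your justifications are not quite right and would need fixing in a self-contained proof. In step~(2), the sentence ``such parameters cannot lie in $\overline{\mathcal{S}(X)}$'' is false as stated: a parameter with an attracting cycle is typically \emph{stable}. The actual contradiction is that when a cycle which is repelling at $\lambda_0$ becomes attracting at a nearby $\lambda$, the critical point it now captures must have changed its asymptotic behavior along the way, violating equicontinuity on the connected set $U\subset\mathcal{S}(X)$. In step~(3), your claim that $h_\lambda$ sends a critical point $c_i(\lambda_0)\in\mathcal{J}_{\lambda_0}$ to a critical point of $f_\lambda$ is not immediate: $h_\lambda$ is only a topological conjugacy on the Julia set, and local non-injectivity of $f_{\lambda_0}$ at $c_i(\lambda_0)$ is a statement on $\p^1$, not on $\mathcal{J}_{\lambda_0}$. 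One needs an extra argument (e.g.\ counting critical points in Fatou versus Julia set, or using that critical \emph{values} are topologically detectable as branch points of the full map). In step~(5), the phrase ``since $\lambda_*\notin\mathcal{S}(X)$'' is unjustified --- you only know $\lambda_0\notin\mathcal{S}(X)$; the correct reason the preperiodicity relation at $\lambda_*$ is non-persistent is that, were it to hold on an open subset of the connected ball $V$, analytic continuation would force it on all of $V$, making $(g_n)$ normal there. These are repairable, but as written they are genuine slips rather than mere abbreviations.
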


~

\par The set $X\setminus\mathcal{R}(X)$ is called the \emph{bifurcation locus} of the family $(f_\lambda)_{\lambda\in X}$.

~

\begin{defi}
A critical point $c$ is said to be \emph{marked} if there exists a holomorphic function $c:X\longrightarrow\p^1$ satisfying $f_\lambda'(c(\lambda))=0$ for every $\lambda\in X$.
\par We say that the critical point $c$ is \emph{active} at $\lambda_0\in X$ if $(f_\lambda^n(c(\lambda)))_{n\geq0}$ is not a normal family in any neighborhood of $\lambda_0$. Otherwise we say that $c$ is \emph{passive} at $\lambda_0$. The \emph{activity locus} of $c$ is the set of parameters $\lambda\in X$ at which $c$ is active.\label{actif}
\end{defi}

~

\par When $(f_\lambda)_{\lambda\in X}$ is a holomorphic family of degree $d$ rational maps with $2d-2$ marked critical points $c_1,\ldots,c_{2d-2}$, Theorem \ref{tmMSS} states that the bifurcation locus coincides with the union of the activity loci of the $c_i$'s.

~

\par Recall that $f\in\rat_d$ admits a unique maximal entropy measure $\mu_f$. The \emph{Lyapounov exponent} of $f$ with respect to the measure $\mu_f$ is the real number $L(f)\pe\int_{\p^1}\log|f'|\mu_f$. For a holomorphic family $(f_\lambda)_{\lambda\in X}$ of degree $d$ rational maps, we denote by $L(\lambda)\textup{:=} L(f_\lambda)$. Then, the function $\lambda \longmapsto L(\lambda)$ is called the \emph{Lyapounov function} of the family $(f_\lambda)_{\lambda\in X}$. It is a plurisubharmonic (which we will denote by $p.s.h$) and continuous function on $X$ (see \cite{BB1} Corollary 3.4). The Margulis-Ruelle inequality implies that $L(f)\geq\frac{\log d}{2}$.

~

\begin{defi}
The \emph{bifurcation current} of the family $(f_\lambda)_{\lambda\in X}$ is the $(1,1)$-closed positive current on $X$ defined by $T_{\textup{bif}}\pe dd^cL(\lambda)$.
\end{defi}

~

The support of $T_{\textup{bif}}$ coincides with the bifurcation locus of the family $(f_\lambda)_{\lambda\in X}$ in the sense of Ma\~n\'e-Sad-Sullivan. This actually follows from the so-called DeMarco's formula which we now present (see \cite{DeMarco2} Theorem 1.1 or \cite{BB1} Theorem 5.2).

~

\par Let $\pi:\C^2\setminus\{0\}\longrightarrow\p^1$ be the canonical projection. Every $f\in\rat_d$ admits a lift $F:\C^2\longrightarrow\C^2$ under $\pi$. The map $F$ is a degree $d$ non-degenerate homogeneous polynomial endomorphism of $\C^2$. The \emph{Green function} of a lift $F$ of $f$ is defined by
\begin{center}
$G_F\pe\lim_{n\rightarrow\infty}d^{-n}\log\|F^n\|$. 
\end{center}
The following properties of the Green function will be useful (see \cite{BB1} Proposition 1.2).

~

\begin{prop}
Let $(f_\lambda)_{\lambda\in X}$ be a holomorphic family of degree $d$ rational maps which admits a holomorphic family of lifts $(F_\lambda)_{\lambda\in X}$. The function $G_\lambda(z)$ is $p.s.h$ and continuous on $X\times(\C^2\setminus\{0\})$. Moreover it satisfies the homogeneity property
\begin{eqnarray}
G_\lambda(tz)=\log|t|+G_\lambda(z)
\label{homogeneityGreen}
\end{eqnarray}
for every $\lambda\in X$ and $t\in\C\setminus\{0\}$ and the functional equation
\begin{eqnarray}
G_\lambda\circ F_\lambda=dG_\lambda.
\label{functorialGreen}
\end{eqnarray}
\label{propGreen}
\end{prop}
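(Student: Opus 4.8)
\textit{Proof proposal.} The plan is to run the classical Fornæss--Sibony / Hubbard--Papadopol argument for the Green function of a polynomial endomorphism, carrying along the dependence on $\lambda$.

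First I would record a quasi-homogeneity estimate, locally uniform in the parameter. Since $F_\lambda$ is a non-degenerate homogeneous polynomial endomorphism of degree $d$, one has $F_\lambda^{-1}(0)=\{0\}$, so the continuous function $(\lambda,z)\longmapsto\|F_\lambda(z)\|$ does not vanish on $X\times\{\|z\|=1\}$. Fixing a compact subset $X_0\subset X$ and using compactness of the unit sphere of $\C^2$, one gets a constant $C=C(X_0)>1$ with $C^{-1}\leq\|F_\lambda(z)\|\leq C$ whenever $\lambda\in X_0$ and $\|z\|=1$; by homogeneity this yields
\begin{eqnarray*}
C^{-1}\|z\|^d\leq\|F_\lambda(z)\|\leq C\|z\|^d,\qquad \lambda\in X_0,\ z\in\C^2,
\end{eqnarray*}
and therefore $\bigl|\log\|F_\lambda(z)\|-d\log\|z\|\bigr|\leq\log C$ on $X_0\times(\C^2\setminus\{0\})$.

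Next I would set $u_n(\lambda,z)\pe d^{-n}\log\|F_\lambda^n(z)\|$ and prove that $(u_n)$ converges locally uniformly. Applying the previous estimate with $F_\lambda^n(z)$ in place of $z$ (note $F_\lambda^n(z)\neq0$ for $z\neq0$) gives
\begin{eqnarray*}
\bigl|u_{n+1}(\lambda,z)-u_n(\lambda,z)\bigr|=d^{-n-1}\bigl|\log\|F_\lambda(F_\lambda^n(z))\|-d\log\|F_\lambda^n(z)\|\bigr|\leq d^{-n-1}\log C
\end{eqnarray*}
on $X_0\times(\C^2\setminus\{0\})$. As $\sum_{n\geq0}d^{-n-1}\log C<\infty$, the telescoping series $u_0+\sum_{n\geq0}(u_{n+1}-u_n)$ converges uniformly on $X_0\times(\C^2\setminus\{0\})$, so $G_\lambda(z)=\lim_n u_n(\lambda,z)$ exists and is continuous on $X\times(\C^2\setminus\{0\})$. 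Since $(\lambda,z)\longmapsto F_\lambda^n(z)$ is holomorphic and $\log\|\cdot\|$ is p.s.h. on $\C^2\setminus\{0\}$, each $u_n$ is p.s.h. on $X\times(\C^2\setminus\{0\})$, and a locally uniform limit of p.s.h. functions is p.s.h.; hence $G_\lambda(z)$ is p.s.h. and continuous on $X\times(\C^2\setminus\{0\})$.

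Finally the two identities pass to the limit termwise. Homogeneity of $F_\lambda^n$ of degree $d^n$ gives $\|F_\lambda^n(tz)\|=|t|^{d^n}\|F_\lambda^n(z)\|$, hence $u_n(\lambda,tz)=\log|t|+u_n(\lambda,z)$, and letting $n\to\infty$ yields \eqref{homogeneityGreen}. Similarly $u_n(\lambda,F_\lambda(z))=d^{-n}\log\|F_\lambda^{n+1}(z)\|=d\,u_{n+1}(\lambda,z)$, and letting $n\to\infty$ gives \eqref{functorialGreen}. The only genuinely delicate point is the local uniformity in $\lambda$ of the constant $C$ (which is where non-degeneracy of $F_\lambda$ enters); once that is in hand, the remainder is routine telescoping and passage to the limit.
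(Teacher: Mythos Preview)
Your argument is correct and is exactly the standard Forn\ae ss--Sibony / Hubbard--Papadopol proof; the paper does not give its own proof of this proposition but simply cites \cite{BB1} Proposition~1.2, so there is nothing to compare against beyond noting that your write-up supplies the details the paper omits.
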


If $U$ is a small ball in $X$, there exists a lift $\tilde c_i(\lambda)$ of the marked critical points $c_i(\lambda)$ under the projection $\pi$ and a holomorphic family of lifts $(F_\lambda)_{\lambda\in U}$. DeMarco's formula may then be stated as:
\begin{eqnarray}
T_{\textup{bif}}\big|_U=\displaystyle\sum_{j=1}^{2d-2}dd^cG_\lambda(\tilde c_j(\lambda)).
\label{Demarco2}
\end{eqnarray}
Let us stress that the support of $dd^cG_\lambda(\tilde c_i(\lambda))$ is precisely the activity locus of $c_i$ in $U$. For more details on pluripotential theory, see \cite{dinhsibony2}.

~

\section{Activity and weak transversality}

~

\par To any $k$-Misiurewicz rational map $f_0$ within a holomorphic family $(f_\lambda)_{\lambda\in\B(0,r)}$ is naturally associated an analytic subset $X_0$ of $\B(0,r)$ such that $f_\lambda$ is $k$-Misiurewicz for all $\lambda\in X_0$ and $(f_\lambda)_{\lambda\in X_0}$ is stable. In order to describe this set, we introduce a holomorphic map $\chi:\B(0,r)\longrightarrow\C^ k$ which we call the activity map of $(f_\lambda)_{\lambda\in\B(0,r)}$ at $\lambda=0$. We show in particular that $X_0=\chi^{-1}\{0\}$ has codimension $k$. The map $\chi$ will turn out to induce local "changes of coordinates" which will play a crucial role in our study of the bifurcation locus near Misiurewicz parameters.

\subsection{The activity map}\label{sectionactivitymap}

\par Let $(f_\lambda)_{\lambda\in\B(0,r)}$ be a holomorphic family of degree $d$ rational maps with $2d-2$ marked critical points $c_1,\ldots,c_{2d-2}$ and parametrized by a ball $\B(0,r)\subset\C^{2d-2}$. Assume that $c_1(0),\ldots,c_k(0)\in\J_0$ and that
\begin{center}
$E_0\pe\overline{\{f^n_0(c_j(0))/ n\geq k_0,1\leq j\leq k\}}$
\end{center}
is a $f_0$-hyperbolic set (which is the case if $f_0$ is $k$-Misiurewicz). Denote by $h:\B(0,r)\times E_0\longrightarrow\p^1$ the dynamical holomorphic motion of $E_0$ (see item 1. of Proposition \ref{fatou}). For $\lambda\in\B(0,r)$, $n\geq0$ and $1\leq i\leq k$ we will adopt the following notations:

\begin{center}
$\begin{array}{clc}
& \xi_{n,i}(\lambda)\pe f_\lambda^{n+k_0}(c_i(\lambda)),& \\
& & \\
& \nu_{n,i}(\lambda)\pe h_\lambda\big(f_0^{n+k_0}(c_i(0))\big)=f_\lambda^n\circ h_\lambda\big(f_0^{k_0}(c_i(0))\big),&\\
& & \\
& m_{n,i}(\lambda)\pe (f_\lambda^n)'\big(\nu_{0,i}(\lambda)\big), &\\
& & \\
& \chi_i(\lambda)\pe\xi_{0,i}(\lambda)-\nu_{0,i}(\lambda). &
\end{array}$
\end{center}

~

\par To define correctly $\chi_i$ one should use a local chart around $\xi_{0,i}(0)=\nu_{0,i}(0)$. Moreover, we will identify the tangent spaces $T_{\nu_{0,i}(\lambda)}\p^1$ and $T_{\nu_{n,i}(\lambda)}\p^1$ with $\C$ to view $m_{n,i}(\lambda)$ as a complex number. As the next Lemma shows, the function $\chi_i$ describes the activity locus of the critical point $c_i$:

~

\begin{lm}[Activity]
Let $(f_\lambda)_{\lambda\in\mathbb{B}(0,r)}$ be as above. Then, for each $1\leq i\leq k$,
\begin{enumerate}  
  \item as a marked critical point of the restricted family $(f_\lambda)_{\lambda\in \chi_i^{-1}\{0\}}$, $c_i$ is passive at every $\lambda\in \chi_i^{-1}\{0\}$,
  \item $\xi_{0,i}\not\equiv\nu_{0,i}$ on $\mathbb{B}(0,r)$ if and only if $c_i$ is active at every $\lambda_0\in \chi_i^{-1}\{0\}$ in the full family $(f_\lambda)_{\lambda\in X}$.
\end{enumerate}\label{lmactivite}
\end{lm}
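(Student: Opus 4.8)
The plan is to extract both statements from the expansion along $E_0$, using the contracting inverse branches furnished by Lemma~\ref{cormvtholo}. After reducing $r$ we may and do assume $E_\lambda\pe h_\lambda(E_0)\subset\mathcal{N}_\delta$, so that each $E_\lambda$, $\lambda\in\B(0,r)$, is $f_\lambda$-hyperbolic with hyperbolicity constant $K$. The starting observation, which already yields (1), is the elementary identity: if $\chi_i(\lambda)=0$ then $\xi_{0,i}(\lambda)=\nu_{0,i}(\lambda)$, and since $\xi_{n+1,i}(\lambda)=f_\lambda(\xi_{n,i}(\lambda))$ while $\nu_{n+1,i}(\lambda)=f_\lambda(\nu_{n,i}(\lambda))$, an immediate induction gives
\[
\xi_{n,i}(\lambda)=\nu_{n,i}(\lambda)=h_\lambda\big(f_0^{n+k_0}(c_i(0))\big)\qquad\text{for all }n\ge0,\ \lambda\in\chi_i^{-1}\{0\}.
\]
Since $h$ is jointly continuous on $\B(0,r)\times E_0$ and $E_0$ is compact, the family $\{\lambda\mapsto h_\lambda(z)\}_{z\in E_0}$ is equicontinuous, hence (being valued in the compact space $\p^1$) normal; its restriction to the analytic set $\chi_i^{-1}\{0\}$ is therefore normal, and adjoining the finitely many maps $\lambda\mapsto f_\lambda^n(c_i(\lambda))$ with $n<k_0$ does not destroy normality. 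Thus $c_i$ is passive at every point of $\chi_i^{-1}\{0\}$ as a marked critical point of $(f_\lambda)_{\lambda\in\chi_i^{-1}\{0\}}$, which is (1).

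For (2) it suffices to prove the single implication: \emph{if $c_i$ is passive at some $\lambda_0\in\chi_i^{-1}\{0\}$ in the full family, then $\chi_i\equiv0$ on $\B(0,r)$.} Indeed, its contrapositive is the direction ``$\chi_i\not\equiv0\Rightarrow c_i$ active at every $\lambda_0\in\chi_i^{-1}\{0\}$'', and conversely, if $\chi_i\equiv0$ then $\chi_i^{-1}\{0\}=\B(0,r)$ and (1) shows $c_i$ is passive at $0$ in the full family, so it is not active at every point of $\chi_i^{-1}\{0\}$.

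To prove the implication, assume $c_i$ is passive at $\lambda_0\in\chi_i^{-1}\{0\}$. Re-basing the dynamical motion at $\lambda_0$ by Remark~\ref{Remvtholo} (the resulting motion $g$ of $E_{\lambda_0}$ conjugates $f_{\lambda_0}$ to $f_\lambda$, hence is the dynamical motion of $E_{\lambda_0}$ on a smaller ball) and applying Lemma~\ref{cormvtholo} to $E_{\lambda_0}$ at the point $w_0\pe\nu_{0,i}(\lambda_0)\in E_{\lambda_0}$, one obtains $\varepsilon>0$ and, for each $n\ge0$, an inverse branch $G_{n,\lambda}$ of $f_\lambda$ defined on $\D(\nu_{n+1,i}(\lambda_0),\varepsilon)$ with values in $\D(\nu_{n,i}(\lambda_0),\varepsilon)$, satisfying $G_{n,\lambda}(\nu_{n+1,i}(\lambda))=\nu_{n,i}(\lambda)$ and $|G_{n,\lambda}(z)-G_{n,\lambda}(w)|\le\frac1K|z-w|$. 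Passivity means that $\{\lambda\mapsto f_\lambda^n(c_i(\lambda))\}_{n\ge0}$, hence $\{\xi_{n,i}\}_{n\ge0}$, is equicontinuous at $\lambda_0$; since $\xi_{n,i}(\lambda_0)=\nu_{n,i}(\lambda_0)$ by (1) and $g_\lambda\to\id$ uniformly on $E_{\lambda_0}$ as $\lambda\to\lambda_0$, there is a ball $W\ni\lambda_0$ on which $\xi_{n,i}(\lambda)$ and $\nu_{n,i}(\lambda)$ both lie in $\D(\nu_{n,i}(\lambda_0),\varepsilon)$ for every $n$. Because $f_\lambda$ is injective on $\varepsilon$-discs centred near $E_{\lambda_0}$ (shrink $\varepsilon$), the relation $f_\lambda(\xi_{n,i}(\lambda))=\xi_{n+1,i}(\lambda)$ forces $\xi_{n,i}(\lambda)=G_{n,\lambda}(\xi_{n+1,i}(\lambda))$ on $W$, whence, iterating the contraction estimate,
\[
|\chi_i(\lambda)|=\big|\xi_{0,i}(\lambda)-\nu_{0,i}(\lambda)\big|\le K^{-n}\big|\xi_{n,i}(\lambda)-\nu_{n,i}(\lambda)\big|\le 2\varepsilon\,K^{-n}
\]
for every $\lambda\in W$ and every $n\ge0$. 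Letting $n\to\infty$ gives $\chi_i\equiv0$ on $W$, hence $\chi_i\equiv0$ on the connected ball $\B(0,r)$ by the identity principle; this proves the implication and therefore (2).

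The soft parts are (1) and the formal reduction of (2). The only genuine work is the shadowing argument: one must make sure Lemma~\ref{cormvtholo} can be applied with base point an arbitrary $\lambda_0\in\chi_i^{-1}\{0\}$ --- which is exactly why $r$ is reduced so that every $E_\lambda$ stays hyperbolic inside $\mathcal{N}_\delta$ --- and one must keep track of the local charts in which the distances $|\xi_{n,i}(\lambda)-\nu_{n,i}(\lambda)|$ are measured; the chart changes contribute only a bounded multiplicative factor, harmless against $K^{-n}$. The injectivity of $f_\lambda$ on small discs near $E_{\lambda_0}$ and the uniform equicontinuity statements used above are routine.
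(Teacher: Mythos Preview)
Your proof of part~(1) is essentially identical to the paper's. For part~(2) your argument is correct but follows a genuinely different route. The paper argues by contradiction via \emph{forward expansion on derivatives}: it sets $\epsilon_{n,i}(\lambda)=\xi_{n,i}(\lambda)-\nu_{n,i}(\lambda)$, differentiates the recursion $\epsilon_{n+1,i}(\lambda)=f_\lambda(\xi_{n,i}(\lambda))-f_\lambda(\nu_{n,i}(\lambda))$ in $\lambda$, and uses the hyperbolicity bound $|f_\lambda'|\geq K$ on $\mathcal{N}_\delta$ together with equicontinuity to obtain $\|D_\lambda\epsilon_{n+1,i}\|\geq K\|D_\lambda\epsilon_{n,i}\|-\varepsilon$; iterating gives exponential growth of $\|D_\lambda\epsilon_{n,i}\|$, which contradicts the assumed normality as soon as $D_\lambda\chi_i\not\equiv0$.

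You instead run a \emph{backward shadowing (expansivity) argument}: you use the uniform inverse branches of Lemma~\ref{cormvtholo} to pull the orbits back and obtain $|\chi_i(\lambda)|\leq K^{-n}|\xi_{n,i}(\lambda)-\nu_{n,i}(\lambda)|\leq 2\varepsilon K^{-n}$ directly, forcing $\chi_i\equiv 0$ on a neighbourhood and hence everywhere. This is cleaner in that it avoids the differentiation in $\lambda$ and the delicate $-\varepsilon$ correction term; it is really just the statement that hyperbolic sets are expansive, read through the holomorphic motion. The paper's derivative argument, on the other hand, does not require checking that both orbits remain in the domain of a single inverse branch (your injectivity step), and it isolates explicitly the quantity $D_\lambda\chi_i$ that drives the bifurcation, which is closer in spirit to the later distortion estimates in Section~6. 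Both approaches are standard and equally rigorous; yours buys simplicity, the paper's buys a direct link to the linearised picture.
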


\begin{proof} If $\chi_i(\lambda)=0$ we have $\xi_{0,i}(\lambda)=h_\lambda(\xi_{0,i}(0))$. Let us set $E_\lambda\pe h_\lambda(E_0)$. Since $h_\lambda$ conjugates $f_0$ to $f_\lambda$ on $E_0$, for $\lambda\in\chi_i^{-1}\{0\}$ and $n\geq0$ we get
\begin{center}
$\xi_{n,i}(\lambda)=f_\lambda^n\big(\xi_{0,i}(\lambda)\big)=f_\lambda^n\circ h_\lambda(\xi_{0,i}(0))=h_\lambda(\xi_{n,i}(0))$.
\end{center}
The equicontinuity of $(\xi_{n,i})_{n\geq0}$ then follows from the uniform continuity of $h$ on $\mathbb{B}(0,r)\times E_0$. This proves $(1)$.

~

\par By $(1)$ we already know that if $c_i$ is active at $\lambda_0\in\chi_i^{-1}\{0\}$ then $\chi_i\not\equiv0$. We proceed by contradiction and assume that $\xi_{0,i}\not\equiv\nu_{0,i}$ on $\B(0,r)$ and that $c_i$ is passive at $\lambda_0\in\chi_i^{-1}\{0\}$. By remark \ref{Remvtholo} it suffices to consider the case where $\lambda_0=0$. Let $\mathcal{N}_\delta\subset\p^1$ be a $\delta$-neighborhood of $E_0$ and let $K>1$ be the hyperbolicity constant of $E_0$. Up to reducing $K$ and $r$ we may assume that 
\begin{eqnarray}
|f_\lambda'(z)|\geq K\text{ for every }\lambda\in\B(0,r)\text{ and every }z\in\mathcal{N}_\delta.
\label{constantehyperbolique}
\end{eqnarray}
By continuity of $h$ we may also assume that $E_\lambda\subset\mathcal{N}_\delta$ for every $\lambda\in\B(0,r)$. Since $(\xi_{n,i})_{n\geq0}$ is equicontinuous at $0$ we may again reduce $r$ so that
\begin{eqnarray}
\xi_{n,i}(\lambda)\in\D(\xi_{n,i}(0),\delta)\subset\mathcal{N}_\delta\textup{, for }\lambda\in\B(0,r)\textup{ and }n\geq0.
\label{equicontinuity}
\end{eqnarray}
For $n\geq0$ we set $\epsilon_{n,i}(\lambda)\pe \xi_{n,i}(\lambda)-h_\lambda(\xi_{n,i}(0))=\xi_{n,i}(\lambda)-\nu_{n,i}(\lambda)$ for every $\lambda\in\B(0,r)$. Notice that $\epsilon_{0,i}\equiv\chi_i$ on $\B(0,r)$. By assumption $(\epsilon_{n,i})_{n\geq0}$ is equicontinuous at $0$. On the other hand, for every $n\geq0$ and every $\lambda\in\B(0,r)$ we have
\begin{center}
$\epsilon_{n+1,i}(\lambda)=f_\lambda(\xi_{n,i}(\lambda))-f_\lambda(\nu_{n,i}(\lambda))$,
\end{center}
which after differentiation yields
\begin{center}
$D_\lambda\epsilon_{n+1,i}=f_\lambda'\circ\xi_{n,i}D_\lambda\epsilon_{n,i}-\big(f_\lambda'\circ\nu_{n,i}-f_\lambda'\circ\xi_{n,i}\big)D_\lambda\nu_{n,i}+D_\lambda f_\lambda\circ\xi_{n,i}-D_\lambda f_\lambda\circ\nu_{n,i}$.
\end{center}
Let $\varepsilon>0$ be small. Since $\xi_{n,i}$ and $\nu_{n,i}$ are equicontinuous, up to reducing $r$, the estimates \ref{constantehyperbolique} and \ref{equicontinuity} give
\begin{center}
$\|D_{\lambda} \epsilon_{n+1,i}\|\geq K\|D_{\lambda}\epsilon_{n,i}\|-\varepsilon$ 
\end{center}
By iterating, we get $\|D_{\lambda} \epsilon_{n,i}\|\geq K^n\big(\|D_{\lambda}\chi_i\|-\varepsilon(1-K^{-n})/(K-1)\big)$ which contradicts the equicontinuity of $(\epsilon_{n,i})_{n\geq0}$, since $\chi_i\not\equiv0$.\end{proof}

~

\begin{defi}
Let $(f_\lambda)_{\lambda\in\B(0,r)}$ be a holomorphic family of degree $d$ rational mpas with $k$ marked critical points $c_1,\ldots,c_k$. Assume that the set $\overline{\{f_0^n(c_j)/n\geq k_0, 1\leq j\leq k\}}$ is a compact $f_0$-hyperbolic set. The \emph{activity map} $\chi$ of $(f_\lambda)_{\lambda\in\B(0,r)}$ at $\lambda=0$ is defined by
\begin{eqnarray*}
\chi:\B(0,r) & \longrightarrow & \C^k\\
\lambda & \longmapsto & (\chi_1(\lambda),\ldots,\chi_k(\lambda))
\end{eqnarray*}
where the functions $\chi_i$ are given by $\chi_i(\lambda)\pe\xi_{0,i}(\lambda)-\nu_{0,i}(\lambda)$.
\end{defi}

~

\par The first useful fact we may remark is the following:

~

\begin{lm}
Let $(f_\lambda)_{\lambda\in\B(0,r)}$ be a holomorphic family of degree $d$ rational maps with $2d-2$ marked critical points and parametrized by a ball $\B(0,r)\subset\C^{2d-2}$. Assume that $f_0$ is $k$-Misiurewicz. Then, up to reducing $r$, the family $(f_\lambda)_{\lambda\in\chi^{-1}\{0\}}$ is stable and $f_\lambda$ is $k$-Misiurewicz for any $\lambda\in\chi^{-1}\{0\}$.
\label{lmstability}
\end{lm}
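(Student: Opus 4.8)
The plan is to show that on the analytic set $X_0 \pe \chi^{-1}\{0\}$ each of the $2d-2$ marked critical points is passive, so that the family restricted to $X_0$ is stable by the Ma\~n\'e--Sad--Sullivan Theorem \ref{tmMSS}; then I argue that the $k$-Misiurewicz property persists along $X_0$. First I would treat the $k$ ``Julia'' critical points $c_1,\ldots,c_k$: by the very definition of $\chi$, for $\lambda\in X_0$ we have $\chi_i(\lambda)=0$ for $1\le i\le k$, so Lemma \ref{lmactivite}(1) applies verbatim and gives that each $c_i$, $1\le i\le k$, is passive at every $\lambda\in X_0$ as a marked critical point of the restricted family $(f_\lambda)_{\lambda\in X_0}$. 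This also shows, via the conjugacy identity $\xi_{n,i}(\lambda)=h_\lambda(\xi_{n,i}(0))$ established in the proof of Lemma \ref{lmactivite}, that for $\lambda\in X_0$ the forward orbit of $c_i$ stays in $E_\lambda=h_\lambda(E_0)$ after time $k_0$, and $E_\lambda$ is $f_\lambda$-hyperbolic (Theorem \ref{mvtholo} and the uniform expansion estimate \eqref{constantehyperbolique}); in particular $c_i(\lambda)\in\J_\lambda$ and $\omega(c_i(\lambda))\subset E_\lambda$ does not meet the critical set, while no parabolic cycle can be created in a compact hyperbolic set.

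Next I would handle the remaining $2d-2-k$ critical points $c_{k+1},\ldots,c_{2d-2}$. Since $f_0$ is Misiurewicz, by item (3) of Proposition \ref{fatou} these points lie in attracting basins of $f_0$: there is a finite collection of attracting cycles, each of which persists (moves holomorphically and stays attracting) on a whole neighborhood of $0$ in $\B(0,r)$, and for each $j>k$ the orbit of $c_j(0)$ eventually lands in the immediate basin. By continuity of $\lambda\mapsto f_\lambda^n(c_j(\lambda))$ and stability of the attracting cycles, after possibly shrinking $r$ we get that $c_j(\lambda)$ is in the attracting basin of the continued cycle for every $\lambda\in\B(0,r)$ (not just on $X_0$); hence $(f_\lambda^n(c_j(\lambda)))_n$ is a normal family near $0$, so $c_j$ is passive at $0$, and by openness of the passivity/normality condition it is passive on a neighborhood, in particular on $X_0$ after shrinking $r$.

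Combining the two cases: on $X_0$ all $2d-2$ marked critical points are passive, so $X_0\subset\mathcal{S}(X_0)=\mathcal{R}(X_0)$ by Theorem \ref{tmMSS}, i.e. $(f_\lambda)_{\lambda\in X_0}$ is stable. Finally, for $\lambda\in X_0$ the map $f_\lambda$ has exactly $k$ critical points in its Julia set (the $c_i(\lambda)$, $1\le i\le k$, are in $\J_\lambda$; the others are in attracting basins $\subset\F_\lambda$), has no parabolic cycle (an attracting basin has none, and the $\omega$-limit sets of the Julia critical points sit in the hyperbolic set $E_\lambda$), and satisfies $\omega(c_i(\lambda))\cap C(f_\lambda)=\emptyset$ for each Julia critical point; hence $f_\lambda$ is $k$-Misiurewicz. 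The main obstacle is the bookkeeping for the passive critical points off the zero set of $\chi$ --- making sure the attracting-basin structure and the number of critical points in $\J_\lambda$ are genuinely preserved --- but once one invokes persistence of attracting cycles and a single shrinking of $r$, this is routine; the essential new input beyond Lemma \ref{lmactivite} is simply that the ``Fatou'' critical points contribute no instability near a Misiurewicz parameter.
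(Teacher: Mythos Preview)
Your proof is correct and follows essentially the same approach as the paper's: use Lemma \ref{lmactivite}(1) for the $k$ Julia critical points, and Proposition \ref{fatou}(3) together with persistence of attracting cycles (after shrinking $r$) for the remaining ones. The paper's own proof is considerably more terse and leaves the verification of the $k$-Misiurewicz conditions on $\chi^{-1}\{0\}$ implicit, whereas you spell out why $f_\lambda$ has no parabolic cycle and why the $\omega$-limit condition holds; this added detail is fine and does not change the underlying argument.
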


\begin{proof} Assume that $c_1(0),\ldots,c_k(0)\in\J_0$ and $c_{k+1}(0),\ldots,c_{2d-2}(0)\in\F_0$. Since $\chi=0$ on $\chi^{-1}\{0\}$, the critical points $c_1(\lambda),\ldots,c_k(\lambda)$ are captured by a hyperbolic set for any $\lambda\in\B(0,r)$ and Lemma \ref{lmactivite} states that they are passive on $\chi^{-1}\{0\}$. Moreover, by Proposition \ref{fatou}, the critical points $c_{k+1}(0),\ldots,c_{2d-2}(0)$ are in attracting basins of $f_0$ and then, up to reducing $r$, are passive and stay in the same basin.\end{proof}

~

\par Let $(f_\lambda)_{\lambda\in\B(0,r)}$ be a holomorphic family of degree $d$ rational maps. The group $\textup{Aut}(\p^1)$ acts on $\rat_d$ by conjugacy. The quotient space $\rat_d/\textup{Aut}(\p^1)$ is denoted by $\mathcal{M}_d$ and is called the \emph{moduli space} of degree $d$ rational maps. We denote by $\Pi:\B(0,r)\longrightarrow\mathcal{M}_d$ the natural projection. Then we have 
\begin{center}
$\Pi^{-1}(\Pi(\lambda_0))=\{\lambda\in\B(0,r)$ / $\exists \ \phi\in PSL_2(\C)$ s.t. $f_\lambda\circ\phi=\phi\circ f_{\lambda_0}$ on $\p^1\}$
\end{center}
for every $\lambda_0\in\B(0,r)$.

~

\par Our main goal in this section is to prove Theorem \ref{tmtransverseintro} we reformulate as follows:

~

\begin{tm}[Weak transversality]
Let $(f_\lambda)_{\lambda\in\B(0,r)}$ be a holomorphic family of degree $d$ rational maps with $2d-2$ marked critical points and parametrized by a ball $\B(0,r)\subset\C^{2d-2}$. Assume that $f_0$ is $k$-Misiurewicz but not a flexible Latt\`es map and that for every $\lambda_0\in \B(0,r)$ the set $\Pi^{-1}(\Pi(\lambda_0))$ is discrete. Let $\chi$ be the activity map of $(f_\lambda)_{\lambda\in\B(0,r)}$ at $\lambda=0$. Then $\textup{codim} \ \chi^{-1}\{0\}=k$. 
\label{tmtransversalite}
\end{tm}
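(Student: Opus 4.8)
The plan is to show that each activity function $\chi_i$ is, in a suitable sense, "generic" along the common zero set, so that the $k$ functions $\chi_1,\dots,\chi_k$ cut out a set of codimension exactly $k$. The inequality $\mathrm{codim}\ \chi^{-1}\{0\}\leq k$ is automatic since $\chi:\B(0,r)\to\C^k$. For the reverse inequality it suffices, after shrinking $r$ and possibly reindexing, to prove that at every point $\lambda_0\in\chi^{-1}\{0\}$ the differentials $D_{\lambda_0}\chi_1,\dots,D_{\lambda_0}\chi_k$ are linearly independent; equivalently, that $\chi$ is a submersion along $\chi^{-1}\{0\}$. First I would reduce to $\lambda_0=0$ using Remark \ref{Remvtholo} (which lets one replace $h$ by the local motion $g$ of $E_{\lambda_0}$, so all constructions are "centered" at $\lambda_0$). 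Then, by Lemma \ref{lmactivite}(2), since $f_0$ is $k$-Misiurewicz and not a flexible Latt\`es map the critical points $c_1,\dots,c_k$ are all active at $0$ in the full family, hence each $\chi_i\not\equiv0$, i.e.\ each $D_0\chi_i$ is (generically) nonzero — but we need the stronger joint statement.

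The heart of the argument is to manufacture, for each fixed $i$, a large supply of parameter directions detected by $\chi_i$ but \emph{not} by the other $\chi_j$'s. The idea I would pursue is the renormalization / transfer philosophy alluded to in the introduction: iterate the relation $\epsilon_{n+1,i}(\lambda)=f_\lambda(\xi_{n,i}(\lambda))-f_\lambda(\nu_{n,i}(\lambda))$ from the proof of Lemma \ref{lmactivite}. Writing $m_{n,i}(\lambda)=(f_\lambda^n)'(\nu_{0,i}(\lambda))$ and differentiating, one gets
\begin{center}
$D_\lambda\epsilon_{n,i}=m_{n,i}(\lambda)\,D_\lambda\chi_i(\lambda)+\text{(a correction term built from }D_\lambda f_\lambda\text{ along the orbit)}.$
\end{center}
Because $E_0$ is hyperbolic, $|m_{n,i}(\lambda)|\geq K^n\to\infty$, while the correction terms are uniformly bounded in $n$ after dividing by $m_{n,i}$ (this is exactly the geometric-series estimate used in Lemma \ref{lmactivite}). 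Hence $m_{n,i}(0)^{-1}D_0\epsilon_{n,i}$ converges to a limiting linear form, and the key point is that this limiting form, restricted to $\ker D_0\chi_j$ for $j\neq i$, carries independent information about the $i$-th critical orbit. To see that the joint differential $D_0\chi=(D_0\chi_1,\dots,D_0\chi_k)$ is surjective I would argue by contradiction: if not, some nonzero $(a_1,\dots,a_k)$ satisfies $\sum a_i D_0\chi_i=0$ on all of $\C^{2d-2}$. One then feeds this relation into the iterated formula for the $\epsilon_{n,i}$ and uses the disparity of the multipliers $m_{n,i}$ — together with the fact that distinct active critical orbits are "dynamically independent" — to force all $a_i=0$. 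The mechanism for the last step is that the family $(f_\lambda)$ has $2d-2$ marked critical points parametrizing (an orbifold cover of) $\mathcal{M}_d$, and the discreteness hypothesis on $\Pi^{-1}(\Pi(\lambda_0))$ rules out the degenerate situation — a persistent Latt\`es-type symmetry, or a positive-dimensional fiber of $\Pi$ — in which two critical orbits could be forced to move together; this is precisely where one invokes that $f_0$ is not a flexible Latt\`es map and that Proposition \ref{fatou}(5) identifies the only obstruction to the absence of invariant line fields.

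The main obstacle, as I see it, is the joint (as opposed to individual) nondegeneracy: showing $\chi_i\not\equiv0$ for each $i$ separately is the easy consequence of activity (Lemma \ref{lmactivite}), but extracting that the $D_0\chi_i$ are \emph{simultaneously} independent requires a genuine transversality input. I expect this to be where one must work with quasiconformal deformations in the spirit of van Strien and Aspenberg: given a hypothetical linear relation among the $D_0\chi_i$, one builds a quasiconformal vector field on $\p^1$ supported away from the hyperbolic set $E_0$ and the Fatou critical orbits, realizing an infinitesimal deformation of $f_0$ inside $\rat_d$ that moves the $i$-th critical value in a prescribed direction while fixing (infinitesimally) the conjugacy-class of the restriction to a neighborhood of $E_0$; the non-triviality of such a deformation — i.e.\ that it is not induced by an infinitesimal M\"obius conjugacy — is exactly the discreteness of $\Pi^{-1}(\Pi(\lambda_0))$. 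Combining the deformation with the exponential blow-up of $m_{n,i}$ then contradicts the equicontinuity that the hypothetical relation would entail, closing the argument; after shrinking $r$ so that submersivity holds on a neighborhood of $0$ in $\chi^{-1}\{0\}$, we conclude $\mathrm{codim}\ \chi^{-1}\{0\}=k$.
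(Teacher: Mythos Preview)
Your proposal aims at the wrong target. You try to show that $D_0\chi$ is surjective, i.e.\ that $\chi$ is a submersion along $\chi^{-1}\{0\}$; this is \emph{strong} transversality, and the paper neither proves nor needs it. Indeed the theorem is called ``weak transversality'' precisely because only the codimension of $\chi^{-1}\{0\}$ is claimed, and the paper remarks that the submersivity statement is van Strien's harder result. Your sketch for linear independence of the $D_0\chi_i$ is where you yourself locate ``the main obstacle'', and it is not filled in: the ``disparity of the multipliers $m_{n,i}$'' argument does not by itself separate the critical orbits (nothing prevents two orbits from falling into the same hyperbolic set with comparable multipliers), and the quasiconformal deformation you allude to is exactly the hard analytic input the paper is designed to avoid.

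The paper's route is completely different and more elementary. Rather than computing differentials, it introduces the notion of a \emph{good family}: an analytic subset $X\ni 0$ of $\B(0,r)$ on which the multipliers of the attracting cycles, the multiplicities of the Fatou critical points, and their normalized Koenigs/B\"ottcher positions are all frozen. Two facts do the work. First (Proposition \ref{lmactif}): in a good family, \emph{some} $c_i$ must be active at $0$; otherwise the family is stable, the good-family normalizations force the Ma\~n\'e--Sad--Sullivan holomorphic motion to be holomorphic on the Fatou set (Proposition \ref{mvtfatou}), hence a M\"obius conjugacy by Proposition \ref{fatou}(4)--(5), contradicting discreteness of $\Pi^{-1}(\Pi(\lambda_0))$. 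Since $\chi_i^{-1}\{0\}\cap X$ is again a good family, one iterates to get $\mathrm{codim}_X\,\chi^{-1}\{0\}=k$. Second (Subsection 3.3): one explicitly constructs such a good family $X$ with $\dim_\C X\geq k$, by intersecting $\B(0,r)$ with the level sets of the attracting multipliers and of the Koenigs/B\"ottcher coordinates of the Fatou critical points, and counting that this imposes at most $2d-2-k$ conditions. The codimension of $\chi^{-1}\{0\}$ in the full ball follows. No linear-algebra on $D_0\chi$ is ever performed.
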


\subsection{Good families} The notion of good family was introduced by Aspenberg (see \cite{Aspenberg1} pages 3-4), we begin with recalling it:

~

\begin{defi}
Let $(f_\lambda)_{\lambda\in\B(0,r)}$ be a holomorphic family of degree $d$ rational maps with $2d-2$ marked critical points and let $X\subset\B(0,r)$ be an analytic set containing $0$. We say that $(f_\lambda)_{\lambda\in X}$ is a \emph{good family} of rational maps if for every $\lambda_0\in X$ the set $\Pi^{-1}(\Pi(\lambda_0))$ is discrete and if every attracting cycle $\mathcal{C}_\lambda$ of $f_\lambda$ which depends holomorphically on $\lambda\in\B(0,r)$ satisfies the following assertions:
\begin{enumerate}
  \item the multiplier of $\mathcal{C}_\lambda$ is constant on $X$,
\item the multiplicity of the critical points of $f_\lambda$ which belong to the attracting basin $\mathcal{A}_\lambda$ of the cycle $\mathcal{C}_\lambda$ for every $\lambda\in\B(0,r)$ are constant on $X$,
  \item if $z(\lambda)\in\mathcal{C}_\lambda$ satisfies $f_\lambda^p(z(\lambda))=z(\lambda)$ for every $\lambda\in X$ and some integer $p\geq1$, there exists a Koenigs/B\"ottcher-coordinate $\varphi_\lambda$ for $f_\lambda^p$ at $z(\lambda)$ such that the following holds: let $c(\lambda)\in C(f_\lambda)$ lie in the attracting basin $\mathcal{A}_\lambda$ of the cycle $\mathcal{C}_\lambda$ for every $\lambda\in\B(0,r)$. Then, there esists an integer $n\geq1$ such that $\varphi_\lambda(f_\lambda^{n}(c(\lambda)))=\varphi_0(f_0^{n}(c(0)))$ for all $\lambda\in X$.
\end{enumerate}
We say that a good family $(f_\lambda)_{\lambda\in X}$ is \emph{smooth} if $X$ is smooth.
\end{defi}

~

\par The main usefulness of good families is revealed by the following Proposition:

~

\begin{prop}
Let $(f_\lambda)_{\lambda\in \B(0,r)}$ be a smooth good family of rational maps. Assume that $0$ is a stable parameter in the family $(f_\lambda)_{\lambda\in\B(0,r)}$. Assume also that $f_\lambda$ admits an attracting cycle $\mathcal{C}_\lambda$ holomorphically depending on $\lambda\in\B(0,r)$ and denote by $\mathcal{A}_\lambda$ its attracting basin. Then there exists $0<\rho\leq r/3$ and a unique holomorphic motion $\phi:\B(0,\rho)\times \p^1\longrightarrow\p^1$ such that for any $\lambda\in\B(0,\rho)$:
\begin{enumerate}
\item the following diagram commutes:
 \begin{center}
$\xymatrix {\relax
\p^1 \ar[r]^{\phi_{\lambda}}\ar[d]_{f_0} & \p^1 \ar[d]^{f_\lambda}\\
\p^1 \ar[r]_{\phi_{\lambda}} & \p^1, }$
\end{center}
\item $\phi_\lambda:\mathcal{A}_0\longrightarrow\mathcal{A}_\lambda$ is a biholomorphism.
\end{enumerate} 
\label{mvtfatou}
\end{prop}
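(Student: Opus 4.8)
The plan is to combine the Mañé--Sad--Sullivan $\lambda$-lemma on the Julia set with a linearization argument on the Fatou set, the good--family hypotheses being exactly what is needed to carry the dynamical holomorphic motion across the attracting basins. Since $0$ is stable, Theorem~\ref{tmMSS} provides $0<\rho_0\leq r$ and an equivariant holomorphic motion $h\colon\B(0,\rho_0)\times\J_0\to\p^1$ with $h_0=\id$ and $h_\lambda\circ f_0=f_\lambda\circ h_\lambda$ on $\J_0$; on the grand orbit of the repelling cycles this is just their holomorphic continuation, legitimate by Theorem~\ref{mvtholo} applied to finite hyperbolic sets. The goal is then to extend $h_\lambda$ to a map $\phi_\lambda\colon\p^1\to\p^1$ which is holomorphic and injective on each Fatou component and still conjugates $f_0$ to $f_\lambda$. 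Once this is done, $\phi$ is a holomorphic motion of $\p^1$, hence automatically continuous by the $\lambda$-lemma, and since it conjugates $f_0$ to $f_\lambda$ on the dense set $\mathcal{G}\pe(\text{grand orbit of repelling cycles})\cup\F_0$ it does so on all of $\p^1$, which is $(1)$.

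The first step is to describe $\F_0$. By Sullivan's non-wandering-domain theorem every Fatou component is preperiodic, and, following Aspenberg~\cite{Aspenberg1}, one checks that stability and the good-family hypotheses force $f_0$ to have no neutral cycle and no rotation domain: a parabolic or Cremer cycle is non-persistent here and hence excluded by Theorem~\ref{tmMSS}, while the boundary of a Siegel disc or a Herman ring would, by Theorem~\ref{mane3}, carry the $\omega$-limit of a recurrent critical point and thus an active critical point. Therefore $\F_0$ is the union of the basins of finitely many attracting cycles $\mathcal{C}^{(1)}_\lambda,\dots,\mathcal{C}^{(s)}_\lambda$, each depending holomorphically on $\lambda\in\B(0,r)$ (in the applications in Section~4 this also follows at once from Proposition~\ref{fatou}(3)). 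It then suffices to extend $h$ over one basin, say $\mathcal{A}_\lambda$ attached to $\mathcal{C}_\lambda=\mathcal{C}^{(1)}_\lambda$; the others are handled identically.

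The heart of the argument, and the step I expect to be the main obstacle, is this extension over $\mathcal{A}_\lambda$. Replacing $f_\lambda$ by $g_\lambda\pe f_\lambda^{p}$ ($p$ the period) and picking $z_0(\lambda)\in\mathcal{C}_\lambda$ fixed by $g_\lambda$, the multiplier $\mu(\lambda)\equiv\mu_0$ is constant by good-family condition $(1)$. I would take the normalized Koenigs coordinate $\varphi_\lambda$ (if $\mu_0\neq0$) or Böttcher coordinate (if $\mu_0=0$) supplied by condition $(3)$, holomorphic in $\lambda$ near $z_0(\lambda)$, with $\varphi_\lambda\circ g_\lambda=\mu_0\varphi_\lambda$ (resp. $\varphi_\lambda\circ g_\lambda=\varphi_\lambda^{\,q}$, the local degree $q$ being constant by condition $(2)$), and extend it over the immediate basin $\Omega_\lambda$ by the functional equation $\varphi_\lambda(w)\pe\mu_0^{-n}\varphi_\lambda(g_\lambda^n(w))$ for $n$ large, obtaining a holomorphic branched covering $\varphi_\lambda\colon\Omega_\lambda\to\C$ (resp. $\to\D$). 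Conditions $(2)$ and $(3)$ say precisely that the ramification portrait of $\varphi_\lambda$ — the $\mu_0$-invariant discrete set of critical values together with the local degrees — is independent of $\lambda$ and equals that of $\varphi_0$. One can then build a biholomorphism $\psi_\lambda\colon\Omega_0\to\Omega_\lambda$ with $\varphi_\lambda\circ\psi_\lambda=\varphi_0$ and $\psi_\lambda\circ g_0=g_\lambda\circ\psi_\lambda$, by setting $\psi_\lambda\pe\varphi_\lambda^{-1}\circ\varphi_0$ near $z_0$ and propagating it by lifting through $g_\lambda$ along inverse branches chosen by continuity; the only obstruction to this propagation would be a mismatch when crossing a critical value of $g_\lambda$, and this is exactly what the equality of ramification portraits prevents. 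The delicate points — which I would handle as in \cite{Aspenberg1} — are that $\psi_\lambda$ is globally well-defined, injective, depends holomorphically on $\lambda$, and satisfies $\psi_0=\id$; here one really uses the dynamics, not merely the topology of branched covers of $\C$. Finally $\psi_\lambda$ is spread from $\Omega_0$ to all of $\mathcal{A}_0$ by pulling back along inverse branches of $f_\lambda$ (using condition $(2)$ again for the critical points in $\mathcal{A}_\lambda\setminus\Omega_\lambda$), in such a way that it agrees with $h_\lambda$ on $\partial\mathcal{A}_0\subset\J_0$ in the limit.

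To conclude, glue $\phi_\lambda\pe h_\lambda$ on $\J_0$ with the maps built above on $\F_0$; after shrinking $\rho_0$ to some $0<\rho\leq r/3$ (the factor $1/3$ accounting for the extension of holomorphic motions over a higher-dimensional parameter base, and leaving room in the preceding constructions), $\phi\colon\B(0,\rho)\times\p^1\to\p^1$ is injective in $z$ (it maps $\J_0$ onto $\J_\lambda$ and each attracting basin biholomorphically onto the corresponding one, the induced correspondence of components being a bijection), holomorphic in $\lambda$, and equal to $\id$ at $\lambda=0$, hence is a holomorphic motion; by the $\lambda$-lemma it is continuous on $\p^1$ and, conjugating $f_0$ to $f_\lambda$ on the dense set $\mathcal{G}$, it does so on $\p^1$, giving $(1)$. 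Since $\phi_\lambda$ continues the cycle $\mathcal{C}_0$ while $\mathcal{C}_\lambda$ is its holomorphic continuation, $\phi_\lambda(\mathcal{C}_0)=\mathcal{C}_\lambda$, so $\phi_\lambda$ maps $\mathcal{A}_0$ onto $\mathcal{A}_\lambda$ and, being holomorphic and bijective there, is a biholomorphism, which is $(2)$. Uniqueness follows because two such motions agree on every repelling periodic point (its unique holomorphic continuation), hence by continuity on $\J_0$, and then on each Fatou component by the maximum principle applied to two holomorphic maps that coincide on the boundary.
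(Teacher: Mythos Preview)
Your approach is genuinely different from the paper's, and while the underlying idea can be made to work in the Misiurewicz setting where the proposition is eventually applied, as a proof of the proposition \emph{as stated} it has a real gap.

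The paper does not attempt to build the conjugacy explicitly on the Fatou set. Instead it follows McMullen--Sullivan \cite{mcmullensullivan}: it first produces a dynamical holomorphic motion of the closed postcritical set together with $\J_0$ (this is where the good-family conditions are used, to move the forward critical orbits in $\F_0$ holomorphically), then invokes the Bers--Royden extension theorem to extend it to a holomorphic motion of all of $\p^1$ with \emph{harmonic} Beltrami coefficient, and finally shows equivariance by noting that $\psi_\lambda\pe f_\lambda^{-1}\circ\phi_\lambda\circ f_0$ is another such extension with harmonic Beltrami (since $f_0$ is a local isometry for the hyperbolic metric on the complement of the postcritical set), whence $\psi_\lambda=\phi_\lambda$ by uniqueness. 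Only then, for part~(2), does the paper pass to the linearizing coordinate, and solely on the given basin $\mathcal{A}_0$: from $\varphi_\lambda\circ\phi_\lambda=\varphi_0$ one reads off $\bar\partial\phi_\lambda=0$ near the attracting point, and propagates holomorphy to all of $\mathcal{A}_0$ by pulling back through the conjugacy relation.

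Your route, by contrast, requires classifying \emph{all} Fatou components of $f_0$ as attracting basins before you can glue, and the justification you give (``stability and the good-family hypotheses force $f_0$ to have no neutral cycle and no rotation domain'') is not supported by the stated hypotheses: the good-family axioms constrain only attracting cycles, so a persistent parabolic basin or Siegel disc is not a priori excluded. The paper's Bers--Royden argument sidesteps this entirely, since it never needs to know what $\F_0\setminus\mathcal{A}_0$ looks like. Even granting the classification (which does hold in the Misiurewicz applications via Proposition~\ref{fatou}), your explicit lifting construction---matching ramification portraits, proving global well-definedness and injectivity of $\psi_\lambda$, and checking holomorphic dependence on $\lambda$---is more delicate than your sketch acknowledges; the paper's abstract route trades those verifications for a single black-box appeal to Bers--Royden uniqueness.
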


\begin{proof} We adapt here the argument of the proof of Theorem 7.4 of \cite{mcmullensullivan}. As $0$ is a stable parameter, if $\{c_1(0),\ldots,c_k(0)\}=\J_0\cap C(f_0)$, then $\{c_1(\lambda),\ldots,c_k(\lambda)\}=\J_\lambda\cap C(f_\lambda)$ for $\lambda$ close enough to $0$ and there exists a holomorphic motion $\phi:\B(0,\rho_1)\times\J_0\longrightarrow\p^1$ which conjugates the dynamics on $\J_0$. Remark also that, by items $(2)$ and $(3)$ of the definition of good family, $\bigcup_{n\geq0}f_0^n(C(f_0)\cap\F_0)$ admits a dynamical holomorphic motion. By the $\lambda$-Lemma, it extends uniquely to a dynamical holomorphic motion of $\overline{\bigcup_{n\geq0}f_0^n(C(f_0))}\cup\J_0$.
\par Setting $\rho\pe\rho_1/3$, Bers-Royden extension Theorem gives a unique holomorphic motion $\tilde\phi:\B(0,\rho)\times \p^1\longrightarrow\p^1$ which extends $\phi$ and such that the Beltrami form $\mu_\lambda$ of $\tilde\phi_\lambda$ is harmonic (see \cite{BersRoyden} Theorem 3 and \cite{mcmullensullivan} page 381 for the definition of harmonic Beltrami forms). To conclude the proof of $(1)$, we thus have to prove that $f_\lambda\circ \tilde\phi_\lambda=\tilde\phi_\lambda\circ f_0$ on $\p^1$. The map
\begin{eqnarray*}
f_\lambda :\p^1\setminus\overline{\bigcup_{n\geq0}f_\lambda^n(C(f_\lambda))}\cup\J_\lambda & \longrightarrow & \p^1\setminus\overline{\bigcup_{n\geq0}f_\lambda^n(C(f_\lambda))}\cup\J_\lambda
\end{eqnarray*}
is a covering map. We define another motion by setting 
\begin{center}
$\psi_\lambda(z)\pe f_\lambda^{-1}\circ\tilde\phi_\lambda\circ f_0(z)$, for $z\in\p^1\setminus\overline{\bigcup_{n\geq0}f_\lambda^n(C(f_\lambda))}\cup\J_\lambda$,
\end{center}
where the inverse branch is choosen so that $\psi_0(z)=z$. As $\phi_\lambda$ conjugates the dynamics on $\overline{\bigcup_{n\geq0}f_0^n(C(f_0))}\cup\J_0$, the holomorphic motion $\psi$ is also an extension of $\phi$. Since $f_0$ and $f_\lambda$ are holomorphic, the Beltrami form of $\psi_\lambda$ is $f_0^*\mu_\lambda$. The map $f_0$ being a local isometry for the hyperbolic metric on $\p^1\setminus\overline{\bigcup_{n\geq0}f_0^n(C(f_0))}\cup\J_0$ and the form $\mu_\lambda$ being harmonic, the Beltrami form $f_0^*\mu_\lambda$ is also harmonic. By uniqueness of Bers-Royden extension, we get $\psi=\tilde\phi$, which proves $(1)$. We denote by $\phi$ this unique extension.
\par Let us now prove $(2)$. As $\phi_\lambda\circ f_0=f_\lambda\circ \phi_\lambda$ on $\p^1$, it is obvious that $\phi_\lambda(\mathcal{A}_0)=\mathcal{A}_\lambda$ and that $\phi_\lambda\circ f_0^q=f_\lambda^q\circ \phi_\lambda$ on $\p^1$, where $q$ is the period of the cycle $\mathcal{C}_\lambda$ whose attracting basin is $\mathcal{A}_\lambda$. Take now $z(\lambda)\in\mathcal{C}_\lambda$ and $\varphi_\lambda$ a local coordinate at $z(\lambda)$ which conjugates $f_\lambda^q$ to its normal form. It is easy to see that $\varphi_\lambda\circ\phi_\lambda=\varphi_0$. In a neighborhood of $z(\lambda)$, this yields:
\begin{center}
$\displaystyle\frac{\partial\phi_\lambda}{\partial\overline{z}}(z)=\left(\varphi_\lambda'\circ\phi_\lambda(z)\right)^{-1}\cdot\frac{\partial\varphi_0}{\partial\overline{z}}(z)\equiv0$,
\end{center}
i.e. $\phi_\lambda$ is holomorphic in a neighborhood of $z(\lambda)$. We thus can find $\Omega_0\subset\mathcal{A}_0$ an $f_0$-invariant open set on which $\phi_\lambda$ is holomorphic. For any $z\in f_
 0^{-1}(\Omega_0)\setminus\big(C(f_0)\cup f_0(C(f_0))\big)$, we have
\begin{center}
$\displaystyle\frac{\partial\phi_\lambda}{\partial\overline{z}}(z)=\big(f_\lambda'\circ\phi_\lambda(z)\big)^{-1}\cdot\frac{\partial\phi_\lambda}{\partial\overline{z}}\circ f_0(z)\cdot f_0'(z)=0$,
\end{center}
since $\phi_\lambda$ is holomorphic on $\Omega_0$. This means that $\phi_\lambda$ is holomorphic on $f_0^{-1}(\Omega_0)$. Since $\mathcal{A}_0=\bigcup_{n\geq1}f_0^{-n}(\Omega_0)$, the Proposition follows by induction.\end{proof}

~

\par Let us see how this Proposition implies the weak transversality in good families:

~

\begin{prop}
Let $(f_\lambda)_{\lambda\in X}$ be a good family of degree $d$ rational maps with $2d-2$ marked critical points and $0\in X$. Assume that $f_0$ is $k$-Misiurewicz but not a flexible Latt\`es map and $c_1(0),\ldots,c_k(0)\in\J_0$. Let $\chi$ be the activity map of $(f_\lambda)_{\lambda\in X}$ at $\lambda=0$.
\begin{enumerate}
\item Then there exists $1\leq i\leq k$ such that the critical point $c_i$ is active at $0$ in $X$, which means that $\chi^{-1}_i\{0\}$ has codimension $1$ in $X$.
\item Assume moreover that $\dim_\C X\geq k$. Then the analytic set $\chi^{-1}\{0\}$ has codimension $k$ in $X$.
\end{enumerate}
\label{lmactif}
\end{prop}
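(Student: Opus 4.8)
Both statements rest on the same dichotomy, which I would isolate first: \emph{if $f_0$ is $k$-Misiurewicz, not a flexible Latt\`es map, and $(f_\lambda)_{\lambda\in Y}$ is a good family over an analytic set $Y\ni 0$ with $\dim_\C Y\geq 1$ along which $0$ is stable, then $Y\subset\Pi^{-1}(\Pi(0))$ near $0$} --- which is absurd, since $\Pi^{-1}(\Pi(0))$ is discrete. Granting this, part $(1)$ is immediate: if \emph{every} $c_i$ ($1\leq i\leq k$) were passive at $0$ in $X$, then, the remaining critical points $c_{k+1}(0),\ldots,c_{2d-2}(0)$ lying in attracting basins of $f_0$ by Proposition \ref{fatou} and hence being passive near $0$ by persistence of attracting cycles, the parameter $0$ would be stable in $(f_\lambda)_{\lambda\in X}$, contradicting the dichotomy; so some $c_i$ is active at $0$ in $X$, and then $\chi_i\not\equiv 0$ near $0$ on $X$ (otherwise Lemma \ref{lmactivite}$(1)$ would make $c_i$ passive at $0$), so $\chi_i^{-1}\{0\}$ has codimension $1$ in $X$.

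To prove the dichotomy, choose a non-constant holomorphic disc $\gamma\colon\D\to Y$ with $\gamma(0)=0$ (possible since $\dim_\C Y\geq 1$) and pull back the family: after shrinking $\D$, $(f_{\gamma(t)})_{t\in\D}$ is a smooth good family which is stable over $\D$, so Proposition \ref{mvtfatou} (applied to each attracting cycle of $f_0$) yields a holomorphic motion $\phi\colon\D\times\p^1\to\p^1$ with $\phi_t\circ f_0=f_{\gamma(t)}\circ\phi_t$ on $\p^1$ and with $\phi_t$ biholomorphic on every attracting basin of $f_0$, i.e.\ on $\F_0$ (all periodic Fatou components of $f_0$ are attracting basins by Proposition \ref{fatou}, and there are no wandering domains). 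The Beltrami form $\mu_t$ of $\phi_t$ is $f_0$-invariant; if $\textup{Leb}(\J_0)>0$ then $\J_0=\p^1$ by Proposition \ref{fatou}$(4)$ and $\mu_t$ would be an $f_0$-invariant line field on $\J_0$, forcing $f_0$ to be a flexible Latt\`es map, a contradiction, whereas if $\textup{Leb}(\J_0)=0$ then $\mu_t$ is carried, up to a null set, by $\F_0$, where $\phi_t$ is holomorphic, so $\mu_t\equiv 0$. In either case $\phi_t$ is conformal, hence a Moebius transformation conjugating $f_0$ to $f_{\gamma(t)}$, so $\gamma(t)\in\Pi^{-1}(\Pi(0))$ for all small $t$; as $\gamma$ is non-constant this contradicts discreteness, and the dichotomy follows.

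For part $(2)$ I would iterate the argument of part $(1)$, peeling off one unit of codimension at a time. Build a decreasing chain of analytic subsets $X=X_0\supset X_1\supset\cdots\supset X_k=\chi^{-1}\{0\}$ and a permutation $(i_1,\ldots,i_k)$ of $(1,\ldots,k)$ with $X_{j+1}=\{\lambda\in X_j\ /\ \chi_{i_{j+1}}(\lambda)=0\}$ and $\textup{codim}_{X_j}X_{j+1}=1$. Suppose $X_j$ is built, $j<k$: then $(f_\lambda)_{\lambda\in X_j}$ is still a good family, $f_0$ is still $k$-Misiurewicz and not a flexible Latt\`es map, the critical points $c_{i_1},\ldots,c_{i_j}$ are passive at every point of $X_j$ by Lemma \ref{lmactivite}$(1)$, and $\dim_\C X_j=\dim_\C X-j\geq k-j\geq 1$. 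If the remaining $k-j$ critical points $c_i$, $i\notin\{i_1,\ldots,i_j\}$, were all passive at $0$ in $X_j$, then all $2d-2$ critical points would be passive at $0$ in $X_j$, so $0$ would be stable there, contradicting the dichotomy applied to $Y=X_j$; hence some $c_{i_{j+1}}$, $i_{j+1}\notin\{i_1,\ldots,i_j\}$, is active at $0$ in $X_j$, whence $\chi_{i_{j+1}}$ is not identically $0$ near $0$ on $X_j$ and $X_{j+1}$ has codimension $1$ in $X_j$. After $k$ steps $X_k=X\cap\{\chi_1=\cdots=\chi_k=0\}=\chi^{-1}\{0\}$ has codimension $k$ in $X$.

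The delicate point is the conformality step inside the dichotomy: deducing that $\phi_t$ is a Moebius transformation. It combines Proposition \ref{mvtfatou}$(2)$ --- which packages the good-family hypotheses and rules out all deformations supported on the Fatou set --- with the absence of an $f_0$-invariant line field on $\J_0$, itself a consequence of Proposition \ref{fatou}$(4)$--$(5)$ and the hypothesis that $f_0$ is not a flexible Latt\`es map; the measurable Riemann mapping theorem then glues the two. Everything else --- choosing a holomorphic disc, checking that the good-family conditions pass to the pullback, bookkeeping codimensions, and invoking Lemma \ref{lmactivite} --- is routine.
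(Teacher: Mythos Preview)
Your proof follows essentially the same route as the paper: assume all $c_i$ are passive, pass to a one-dimensional holomorphic disc, use the good-family hypotheses together with Proposition \ref{mvtfatou} to obtain a global quasiconformal conjugacy, and then argue that this conjugacy is conformal because $f_0$ carries no invariant line field on its Julia set and is holomorphically conjugated on its Fatou set --- contradicting discreteness of $\Pi^{-1}(\Pi(0))$. The inductive deduction of part $(2)$ from part $(1)$ is identical to the paper's.

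One small gap: you invoke Proposition \ref{mvtfatou} uniformly, but as stated that proposition \emph{assumes} the existence of an attracting cycle, so it does not literally apply when $k=2d-2$ (where $\F_0=\emptyset$). The paper handles this by splitting into cases: for $k<2d-2$ it uses Proposition \ref{mvtfatou} exactly as you do, while for $k=2d-2$ it obtains the global conjugating motion directly from the Ma\~n\'e--Sad--Sullivan theorem (since $\J_0=\p^1$). Your line-field argument then goes through verbatim. A second, cosmetic difference is that the paper centers its disc at a \emph{regular} point $\lambda_0$ of the stable neighborhood (to apply Proposition \ref{mvtfatou}, which is stated for smooth good families) and uses that $f_{\lambda_0}$ is again $k$-Misiurewicz by Lemma \ref{lmstability}; your choice of pulling back along a disc through $0$ works equally well, since the parameter space $\D$ is automatically smooth.
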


\begin{proof} Let us begin with proving $(1)$. We proceed by contradiction assuming that $c_i$ is passive in $X$ for every $1\leq i\leq k$, which means that $0$ is stable in $X$. Let $V_0\subset X$ be a neighborhood of $0$ in $X$ which, since $f_0$ is not a flexible Latt\`es map, may be assumed not to contain flexible Latt\`es maps. By Lemma \ref{lmactivite}, $\chi\equiv0$ on $V_0$ and, by Lemma \ref{lmstability}, $f_\lambda$ is $k$-Misiurewicz for all $\lambda\in V_0$. Let $\lambda_0\in {V_0}_{\textup{reg}}$ and $\psi:\D\longrightarrow {V_0}_{\textup{reg}}$ be a non-constant holomorphic disc centered at $\lambda_0$. In the following we will identify $\D$ and $\psi(\D)$ and therefore work with the family $(f_\lambda)_{\lambda\in\D}$.

~

\par If $1\leq k<2d-2$, then $\J_{\lambda_0}\neq\p^1$. Taking into account item $3$ of Proposition \ref{fatou}, Proposition \ref{mvtfatou} asserts that (up to reducing $\D$) there exists a holomorphic motion $\phi:\D\times\p^1\longrightarrow\p^1$ which conjugates $f_{\lambda_0}$ and $f_\lambda$ and which is holomorphic on $\mathcal{F}_{\lambda_0}$ for all $\lambda\in\D$. Item $4$ of Proposition \ref{fatou} ensures that $\phi_\lambda\in\textup{Aut}(\p^1)$. This contradicts our assumption that $\Pi^{-1}(\Pi(\lambda_0))$ is discrete.

~

\par If $k=2d-2$, then $\J_{\lambda_0}=\p^1$. As $\lambda_0$ is a stable parameter, by the Ma\~n\'e-Sad-Sullivan Theorem (see \cite{MSS} Theorem B) there exists a quasiconformal holomorphic motion $\phi:\D\times\p^1\longrightarrow\p^1$ which conjugates $f_{\lambda_0}$ to $f_\lambda$ on $\p^1$. For $\lambda\in\D\setminus\{\lambda_0\}$ we denote by $\mu^\lambda$ the Beltrami form satisfying:

\begin{center}
$\displaystyle \frac{\partial\phi_\lambda}{\partial\overline{z}}=\mu^\lambda\frac{\partial\phi_\lambda}{\partial z}$,
\end{center}
and by $S_\lambda$ its support. There exists a $\lambda_1\in\D\setminus\{\lambda_0\}$ such that $\textup{Leb}(S_{\lambda_1})>0$, otherwise $\phi_\lambda$ would belong to $\Aut(\p^ 1)$ and $\Pi^{-1}(\Pi(\lambda_0))$ could not be discrete. Since $f_{\lambda_1}$ carries an invariant line field on its Julia set, item $5$ of Proposition \ref{fatou} states that $f_{\lambda_1}$ is a flexible Latt\`es map. This is a contradiction since $V_0$ contains no flexible Latt\`es maps.

~

\par Let us now establish $(2)$. We have proved in $(1)$ that there exists $1\leq i\leq k$ such that $\textup{codim}\ \chi_{i}^{-1}\{0\}=1$. If $k\geq2$ then $\chi_{i}^{-1}\{0\}$ is a good family of dimension $k-1\geq1$ and assertion $(1)$ provides $j\neq i$ and $c_j$ is active at $0$ in $(f_\lambda)_{\lambda\in \chi_{i}^{-1}\{0\}}$. Iterating this $k-1$ times we obtain $(2)$.\end{proof}

\subsection{Ubiquity of good families and weak transversality}
We now prove Theorem \ref{tmtransversalite}. According to item $2$ of Proposition \ref{lmactif}, it suffices to exhibit a good family $(f_\lambda)_{\lambda\in X}$ such that $0\in X\subset\B(0,r)$ and $\dim_\C X\geq k$.

\par When $k=2d-2$, $f_{\lambda_0}$ has empty Fatou set and the family $X=\B(0,r)$ is obviously good. We may therefore suppose $1\leq k<2d-2$. As usual we will assume that $c_1(0),\ldots,c_k(0)\in\J_0$ and $c_{k+1}(0),\ldots,c_{2d-2}(0)\in\F_0$. \emph{We will be brought to reduce $r$ without mentionning it.}

~

\par Let us begin with finding a family $N\subset\B(0,r)$ of sufficiently high dimension and satisfying the assumptions $1$ and $2$ of the definition of good family. By Proposition \ref{fatou}, $c_{k+1}(0),\ldots,c_{2d-2}(0)$ are in attracting basins of $f_0$. Let us denote by $\mathcal{C}_1,\ldots,\mathcal{C}_p$ the attracting cycles of $f_0$, by $q_1,\ldots,q_p\geq1$ their respective periods and by $m_1,\ldots,m_p\in\D$ their respective multipliers. The set
\begin{center}
$N\pe\B(0,r)\cap \Per_{q_1}(m_1)\cap\cdots\cap\Per_{q_p}(m_p)$
\end{center}
is an analytic subset of $\B(0,r)$ of codimension at most $p$ containing $0$. Replacing $N$ by $N\cap\{\lambda\in\B(0,r)$ / $c_i(\lambda)=c_j(\lambda)\}$ when $c_i\not\equiv c_j$, $c_i(0)=c_j(0)$ and $i,j\geq k+1$ we get an analytic subset $N$ of $\B(0,r)$ satisfying items $1$ and $2$ of the definition of good families. Moreover the codimension of $N$ is at most:

\begin{center}
$p+\displaystyle\sum\big(\textup{mult}(c)-1\big)$
\end{center}
where the sum is taken over all critical points of $f_0$ contained in $\F_0$ and counted \emph{without} multiplicity.

~

\par To conclude we have to find a subfamily of $N$ satisfying item $3$ of the definition of good family with codimension at most $2d-2-k$ in $\B(0,r)$. Let $\mathcal{C}(0)$ be an attracting cycle of $f_0$ of period $p$ and let $z(0)\in\mathcal{C}(0)$ be such that $f_0^p(z(0))=z(0)$. By the Implicit Function Theorem $\mathcal{C}(0)$ and $z(0)$ may be followed holomorphically in $\B(0,r)$. Since $N$ satisfies item $1$ of the definition of good family we have two distinct cases:
\begin{itemize}
\item $\mathcal{C}(0)$ is not super-attracting. Then there exists a critical point $c_i(\lambda)$ converging to $\mathcal{C}(\lambda)$ whose orbit is infinite. Let $\varphi_\lambda:V_\lambda\longrightarrow\C$ be the Koenigs coordinate of $f_\lambda^p$ at $z(\lambda)$ and $n_i\geq1$ be large enough so that $f_\lambda^{n_i}(c_i(\lambda))\in V_\lambda$ for every $\lambda\in \B(0,r)$. We normalize $\varphi_\lambda$ by setting:
\begin{center}
$\displaystyle\psi_\lambda(z)\pe\frac{\varphi_\lambda(z)}{\varphi_\lambda(f_\lambda^{n_i}(c_i(\lambda)))}$
\end{center}
for every $\lambda\in N$ and every $z\in  V_\lambda$.
\item $\mathcal{C}(0)$ is super-attracting. There exists a critical point $c_i(\lambda)$ of $f_\lambda$ and $0\leq n_i\leq p-1$ such that $f_\lambda^{n_i}(c_i(\lambda))=z(\lambda)$ and $\varphi_\lambda(f_\lambda^{n_i}(c_i(\lambda)))\equiv0$ if $\varphi_\lambda$ is the B\"ottcher coordinate of $f_\lambda^p$ at $z(\lambda)$. In that case we set $\psi_\lambda\pe\varphi_\lambda$.
\end{itemize}

~

\par If $c_j(0)$ is any other critical point contained in the attracting basin of $\mathcal{C}(0)$, then we replace $N$ by
\begin{center}
$N\cap\{\lambda\in\B(0,r)$ / $\psi_\lambda\big(f_\lambda^{n_j}(c_j(\lambda))\big)=\psi_0\big(f_0^{n_j}(c_j(0))\big)\}$.
\end{center}
Since, by definition of $\psi_\lambda$, this operation does not change $N$ for $p$ distinct critical points of $f_0$, the codimension of $N$ is now at most equal to

\begin{center}
$p+\displaystyle\sum\big(\textup{mult}(c)-1\big)-p+\sum1=\sum\textup{mult}(c)=2d-2-k$,
\end{center}
where the sums are taken over critical points of $f_0$ contained in $\F_0$ and counted \emph{without} multiplicity. This ends the proof of Theorem \ref{tmtransversalite}.

\subsection{An application} Let $(f_\lambda)_{\lambda\in X}$ be a holomorphic family of degree $d$ rational maps with a marked critical point $c$, which is active at some $\lambda_0\in X$. Let $E\subset\J_{\lambda_0}$ be a $f_{\lambda_0}$-hyperbolic set containing at least three points and $h_\lambda$ its dynamical holomorphic motion. As it is well-known, Montel's Theorem allows one to find $\lambda_1\in X$, arbitrarily close to $\lambda_0$, and $n_1\geq1$ such that $f_{\lambda_1}^{n_1}(c(\lambda_1))\in h_{\lambda_1}(E)$. However, there is in general no way to extend this to the case of two, or more, active critical points. Weak transversality allows such a generalization when $f_{\lambda_0}$ is Misiurewicz. This will be used in Section \ref{sectiondim}.

~

\begin{lm}
Let $(f_\lambda)_{\lambda\in\B(0,r)}$ be a holomorphic family of degree $d$ rational maps with $2d-2$ marked critical points. Assume that for every $\lambda_0\in\B(0,r)$, the set $\Pi^{-1}(\Pi(\lambda_0))$ is discrete and that $f_0$ is $k$-Misiurewicz but not a flexible Latt\`es map. Let also $E\subset\J_{0}$ be a compact $f_{0}$-hyperbolic set containing at least three points. Then for any $0<\varepsilon<\rho$, there exists $\lambda_0\in \B(0,\varepsilon)$ and an integer $N_0\geq1$ such that $f_{\lambda_0}$ is $k$-Misiurewicz and $f_{\lambda_0}^{N_0}(c_j(\lambda_0))\in h_{\lambda_0}(E)$ for $1\leq j\leq k$.\label{lmapproxmis}
\end{lm}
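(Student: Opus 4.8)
The plan is to capture the critical points $c_1,\dots,c_k$ on the moved hyperbolic set $h_\lambda(E)$ one after another by a Montel argument, using Weak Transversality (Theorem~\ref{tmtransversalite}, in the guise of Proposition~\ref{lmactif}(1)) to guarantee that once $c_1,\dots,c_m$ have been frozen onto $h_\lambda(E)$, \emph{a further critical point is still active}. This is precisely where transversality is indispensable: for one active critical point Montel's theorem alone does the job, as recalled just before the statement, but there is a priori no reason for a second active critical point to survive the immobilisation of the first.

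We may assume $c_1(0),\dots,c_k(0)\in\J_0$. The hypotheses being exactly those of Theorem~\ref{tmtransversalite}, the construction carried out in the proof of that theorem provides a good family $G$ with $0\in G\subset\B(0,r)$ and $\dim_{\C}G\ge k$; we argue entirely inside $G$, every analytic subset of $G$ being again a good family. Let $\chi=(\chi_1,\dots,\chi_k)$ be the activity map at $\lambda=0$ attached to the postcritical hyperbolic set $P^{k_0}(f_0)$; recall that $c_j$ is passive on $\{\chi_j=0\}$ (Lemma~\ref{lmactivite}(1)) and that on $G$ the critical points $c_{k+1},\dots,c_{2d-2}$ are frozen in attracting basins, hence passive. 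After shrinking we fix $0<\varepsilon'\le\varepsilon$ so small that no $f_\lambda$ with $\lambda\in\B(0,\varepsilon')$ is a flexible Latt\`es map and all the relevant holomorphic motions are defined over $\B(0,\varepsilon')$. For $0\le m\le k-1$ we will construct points $b_1,\dots,b_m\in E$, integers $1\le N_1<\dots<N_m$, a parameter $\lambda^{(m)}\in G\cap\B(0,\varepsilon')$ and the analytic set
\[
W_m:=G\cap\bigl\{\lambda\ :\ f_\lambda^{N_i}(c_i(\lambda))=h_\lambda(b_i)\ (1\le i\le m),\ \ \chi_j(\lambda)=0\ (m+2\le j\le k)\bigr\},
\]
cut out by $k-1$ equations, so that $\dim_{\C}W_m\ge\dim_{\C}G-(k-1)\ge1$, with $\lambda^{(m)}\in W_m$ and $\chi_j(\lambda^{(m)})=0$ for every $j\ge m+1$. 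The latter property forces $f_{\lambda^{(m)}}$ to be $k$-Misiurewicz with $\J$-critical points $c_1,\dots,c_k$ and not a flexible Latt\`es map: indeed $c_1,\dots,c_m$ eventually fall into the hyperbolic set $h_{\lambda^{(m)}}(E)$, $c_{m+1},\dots,c_k$ into $h_{\lambda^{(m)}}(P^{k_0}(f_0))$, while $c_{k+1},\dots,c_{2d-2}$ remain in attracting basins.

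For $m=0$ take $\lambda^{(0)}=0$ and $W_0=G\cap\{\chi_j=0:\ 2\le j\le k\}$; all the required properties are clear. For the step $m\to m+1$ (with no $\chi_j$-condition when $m=k-1$): on $W_m$ every critical point is passive except possibly $c_{m+1}$, by the remarks above, so, $W_m$ being a good family of positive dimension and $f_{\lambda^{(m)}}$ being $k$-Misiurewicz, not flexible Latt\`es, with its $\J$-critical points marked, Proposition~\ref{lmactif}(1) forces $c_{m+1}$ to be active at $\lambda^{(m)}$ in $(f_\lambda)_{\lambda\in W_m}$. Since $h_\lambda(E)\subset\J_\lambda$ carries at least three points moving holomorphically, the Montel argument recalled before the statement then produces, arbitrarily close to $\lambda^{(m)}$ inside $W_m$ (hence, choosing it close enough at each step, inside $\B(0,\varepsilon')$), a parameter $\lambda^{(m+1)}$, an integer $N_{m+1}>N_m$ and a point $b_{m+1}\in E$ with $f_{\lambda^{(m+1)}}^{N_{m+1}}(c_{m+1}(\lambda^{(m+1)}))=h_{\lambda^{(m+1)}}(b_{m+1})$ — concretely one restricts to a non-constant holomorphic disc in $(W_m)_{\mathrm{reg}}$ near $\lambda^{(m)}$ along which $c_{m+1}$ is active and composes with the holomorphic family of M\"obius maps sending three marked points of $h_\lambda(E)$ to $0,1,\infty$. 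As $\lambda^{(m+1)}\in W_m$ one has $f_{\lambda^{(m+1)}}^{N_i}(c_i(\lambda^{(m+1)}))=h_{\lambda^{(m+1)}}(b_i)$ for $i\le m$ and $\chi_j(\lambda^{(m+1)})=0$ for $j\ge m+2$, so, adjoining the new relation, $\lambda^{(m+1)}\in W_{m+1}$ and $\chi_j(\lambda^{(m+1)})=0$ for $j\ge(m+1)+1$, which completes the induction.

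After $k$ steps we obtain $\lambda_0:=\lambda^{(k)}\in\B(0,\varepsilon')\subset\B(0,\varepsilon)$, integers $N_1<\dots<N_k$ and $b_1,\dots,b_k\in E$ with $f_{\lambda_0}^{N_i}(c_i(\lambda_0))=h_{\lambda_0}(b_i)$ for $1\le i\le k$; setting $N_0:=N_k$ and using that $h_{\lambda_0}(E)$ is forward invariant, $f_{\lambda_0}^{N_0}(c_i(\lambda_0))=f_{\lambda_0}^{N_0-N_i}(h_{\lambda_0}(b_i))\in h_{\lambda_0}(E)$ for every $i$. Finally $f_{\lambda_0}$ is $k$-Misiurewicz: $c_1,\dots,c_k$ belong to $\J_{\lambda_0}$ and their $\omega$-limit sets lie in the hyperbolic set $h_{\lambda_0}(E)$, hence miss $C(f_{\lambda_0})$; the remaining critical points stay in attracting basins; and no cycle is parabolic, since every critical point is either attracted by an attracting cycle or eventually mapped into $h_{\lambda_0}(E)$. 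The heart of the argument — and its main obstacle — is the inductive step: one must know that after freezing $c_1,\dots,c_m$ onto $h_\lambda(E)$ there is still a critical point free to wiggle, and this is exactly what Weak Transversality delivers; the surrounding verifications (positivity of the dimensions, guaranteed by $\dim_{\C}G\ge k$; the reduction to a holomorphic disc in the regular part; the stability of the $k$-Misiurewicz property along $W_m$ near $\lambda^{(m)}$) are routine.
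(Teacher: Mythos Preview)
Your proof is correct and follows essentially the same approach as the paper's: both argue by induction, freezing $k-1$ of the $\J$-critical points (some on $h_\lambda(E)$, the rest on the original postcritical hyperbolic set via $\chi_j=0$), invoking weak transversality in the form of Proposition~\ref{lmactif}(1) to see that the remaining critical point is still active on the resulting one-dimensional good family, and then applying Montel to capture it on $h_\lambda(E)$. Your write-up is more explicit about the inductive bookkeeping (the sets $W_m$ and the parameters $\lambda^{(m)}$), whereas the paper compresses the argument by exhibiting a single inductive step moving one critical point from a set $E_1$ to the target $E_2$; the content is the same.
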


\begin{proof} Let $E_1$ and $E_2$ be two compact $f_0$-hyperbolic sets. Assume that $E_2$ contains at least three distinct points and let $h:\B(0,\rho)\times (E_1\cup E_2)\longrightarrow\p^1$ be its dynamical holomorphic motion. Suppose that
\begin{center}
$f_0^{k_0}(c_1(0)),\ldots,f_0^{k_0}(c_p(0))\in E_1$ and $f_0^{k_0}(c_{p+1}(0)),\ldots, f_0^{k_0}(c_k(0))\in E_2$.
\end{center}
We only need to find a parameter $\lambda\in\B(0,\varepsilon)$ and an integer $N_0\geq k_0$ such that
\begin{center}
$f_{\lambda_0}^{N_0}(c_1(\lambda_0)),\ldots,f_{\lambda_0}^{N_0}(c_{p-1}(\lambda_0))\in h_{\lambda_0}(E_1)$ and $f_{\lambda_0}^{N_0}(c_p(\lambda_0)),\ldots, f_{\lambda_0}^{N_0}(c_k(\lambda_0))\in h_{\lambda_0}(E_2)$.
\end{center}
\par Let $\varepsilon>0$ and denote by $\chi:\B(0,\rho)\longrightarrow\C^k$ the activity map for $(f_\lambda)_{\lambda\in\B(0,r)}$ at $0$. By Theorem \ref{tmtransverseintro} and Lemma \ref{lmactif} the critical point $c_p$ is active at $0$ in the family $X_0\pe\chi^{-1}_1\{0\}\cap\cdots\cap\chi^{-1}_{p-1}\{0\}\cap\chi^{-1}_{p+1}\{0\}\cap\cdots\cap\chi^{-1}_k\{0\}$. By Montel's Theorem, there exists $\lambda_0\in X\cap\B(0,\varepsilon)$ and $N_0\geq k_0$ such that $f_{\lambda_0}^{N_0}(c_p(\lambda_0))\in h_{\lambda_0}(E_2)$. Since $\lambda_0\in X_0$ and $N_0\geq k_0$ we get $f_{\lambda_0}^{N_0}(c_j(\lambda_0))\in h_{\lambda_0}(E_1)$ if $1\leq j\leq p-1$ and $f_{\lambda_0}^{N_0}(c_j(\lambda_0))\in h_{\lambda_0}(E_2)$ if $p+1\leq j\leq k$. After item (3) of Proposition \ref{fatou}, the $2d-2-k$ remaining critical points $f_{\lambda_0}$ belong to attracting basins of $f_{\lambda_0}$. Since this condition is stable under small enough perturbations, the map $f_{\lambda_0}$ is $k$-
 Misiurewicz.\end{proof}

\section{Local Hausdorff dimension estimates in $\rat_d$}\label{sectiondim}

We consider here the sets $\mathfrak{M}_k$ defined by:

~

\begin{center}
$\mathfrak{M}_k\pe\{f\in\rat_d$ / $f$ is $k$-Misiurewicz but not a flexible Latt\`es map\\ and $f$ has simple critical points$\}$.
\end{center} 

 \begin{rem}
The condition "$f$ is not a flexible Latt\`es map" is superfluous when $d$ is not the square of an integer or $k<2d-2$.
\end{rem}

~

\par Our goal here is to prove Theorem \ref{tm2intro}. Following Shishikura \cite{Shishikura2}, we define the \emph{hyperbolic dimension} of $f\in\rat_d$ by:
\begin{center}
$\dim_{\textup{hyp}}(f)\pe\sup\{\dim_H(E)$ / $E$ is compact, $f$-hyperbolic and homogeneous$\}$.
\end{center}
Recall that a subset $E$ of a metric space $(X,d)$ is \emph{homogeneous} if, for every open set $U\subset X$ such that $U\cap E\neq\emptyset$, we have $\dim_H(U\cap E)=\dim_H(E)$.

\subsection{Technical preliminaries}
The proof requires weak transversality in $\rat_d$. Recall that $\Pi:\rat_d\longrightarrow\mathcal{M}_d$ is the quotient map. For $f\in\rat_d$ we denote by $\textup{Aut}(f)$ the stabilizer of $f$ under the action of $\textup{Aut}(\p^1)$, i.e. $\textup{Aut}(f)=\{\phi\in PSL_2(\C)$ / $\phi^{-1}\circ f\circ \phi=f\}$. 

~

\begin{lm}
Let $1\leq k\leq 2d-2$ and $f_0\in\mathfrak{M}_k$. Let $\B(0,r)$ be a ball centered at $f_0$ small enough so that $(f_\lambda)_{\lambda\in\B(0,r)}$ is with $2d-2$ marked critical points. Let also $\chi$ be the activity map of $(f_\lambda)_{\lambda\in\B(0,r)}$ at $\lambda=0$. Then $\textup{codim}\ \chi^{-1}\{0\}=k$.
\label{lmvariete}
\end{lm}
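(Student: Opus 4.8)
The plan is to deduce this from the weak transversality Theorem~\ref{tmtransversalite}. That theorem is stated for families parametrized by a ball of $\C^{2d-2}$ whose conjugacy fibres $\Pi^{-1}(\Pi(\lambda_0))$ are discrete, whereas here the parameter space is a ball of $\rat_d$ (which has dimension $2d+1$) and these fibres, being conjugacy orbits, are $3$-dimensional. The idea is therefore to restrict to a slice of $\B(0,r)$ transverse to the conjugacy orbit of $f_0$, apply Theorem~\ref{tmtransversalite} there, and recover the codimension in $\rat_d$ by a dimension count.

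Recall first that $\Aut(f_0)$ is finite (classical for $d\geq 2$), so the conjugacy orbit $\mathcal{O}_{f_0}\pe\{\phi^{-1}\circ f_0\circ\phi\ /\ \phi\in PSL_2(\C)\}$ is, near $f_0$, a smooth locally closed submanifold of $\rat_d$ of complex dimension $3$. Since $\dim_\C\rat_d=2d+1=3+(2d-2)$, one may, after reducing $r$, pick a smooth submanifold $\Sigma\subset\B(0,r)$ through $f_0$ of dimension $2d-2$ with $T_{f_0}\Sigma\oplus T_{f_0}\mathcal{O}_{f_0}=T_{f_0}\rat_d$, small enough that $(\Sigma,f_0)$ is biholomorphic to $(\B(0,\rho),0)\subset\C^{2d-2}$ and that $\mathcal{O}_{f_{\lambda_0}}$ remains transverse to $\Sigma$ for every $\lambda_0\in\Sigma$, the latter because transversality is an open condition. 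Consequently $\Pi^{-1}(\Pi(\lambda_0))\cap\Sigma=\mathcal{O}_{f_{\lambda_0}}\cap\Sigma$ is discrete for every $\lambda_0\in\Sigma$ (this is just the local slice description of the quotient $\rat_d\to\mathcal{M}_d$, see \cite{Silverman}). The restricted family $(f_\lambda)_{\lambda\in\Sigma}$ inherits the $2d-2$ marked critical points of $(f_\lambda)_{\lambda\in\B(0,r)}$, and $f_0$ is $k$-Misiurewicz and is not a flexible Latt\`es map; finally, by uniqueness of the dynamical holomorphic motion of $E_0$ (Theorem~\ref{mvtholo}), the activity map of $(f_\lambda)_{\lambda\in\Sigma}$ at $f_0$ is precisely $\chi|_\Sigma$. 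Theorem~\ref{tmtransversalite} thus applies to the slice family and gives $\textup{codim}_\Sigma\big(\chi^{-1}\{0\}\cap\Sigma\big)=k$, i.e. $\dim_\C\big(\chi^{-1}\{0\}\cap\Sigma\big)=2d-2-k$.

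It remains to transfer this to $\B(0,r)$. Up to reducing $r$ once more, every irreducible component $W$ of the analytic set $\chi^{-1}\{0\}$ passes through $f_0$. On one hand, $\chi^{-1}\{0\}$ is cut out by the $k$ holomorphic equations $\chi_1=0,\dots,\chi_k=0$, so each such $W$ has codimension at most $k$ in $\B(0,r)$. On the other hand, $W$ contains $f_0$, hence meets the codimension-$3$ submanifold $\Sigma$; cutting $W$ by the three local equations of $\Sigma$ lowers the dimension by at most $3$, so $\dim_\C W\leq\dim_\C\big(\chi^{-1}\{0\}\cap\Sigma\big)+3=(2d-2-k)+3=2d+1-k$, i.e. $W$ has codimension at least $k$. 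Hence $\chi^{-1}\{0\}$ is of pure codimension $k$ in $\B(0,r)$, which is the assertion.

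The only genuinely delicate point is the construction of the slice $\Sigma$, and above all the uniformity in $\lambda_0$ of the discreteness of $\Pi^{-1}(\Pi(\lambda_0))\cap\Sigma$: this is exactly where the hypothesis "$\Pi^{-1}(\Pi(\lambda_0))$ discrete'' of Theorem~\ref{tmtransversalite}, which is \emph{not} satisfied by the full family $(f_\lambda)_{\lambda\in\B(0,r)}$, is restored on the slice. It rests on the finiteness of $\Aut(f_0)$ together with the openness of transversality to $\mathcal{O}_{f_0}$.
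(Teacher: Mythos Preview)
Your argument is correct and follows the same route as the paper: pass to a $(2d-2)$-dimensional slice $\Sigma$ (the paper calls it $V_f$ and invokes \cite{BB1}, where such a slice is built so that $V_f/\Aut(f_0)\to\Pi(V_f)$ is a biholomorphism), verify that the conjugacy fibres become finite there, and apply Theorem~\ref{tmtransversalite}. The paper's proof stops at ``we conclude by applying Theorem~\ref{tmtransverseintro} in $\B(0,r)\cap V_f$'', leaving the passage from $\textup{codim}_\Sigma(\chi^{-1}\{0\}\cap\Sigma)=k$ to $\textup{codim}_{\B(0,r)}\chi^{-1}\{0\}=k$ implicit; your final paragraph (each component $W$ contains $f_0$, intersecting with the three local equations of $\Sigma$ drops the dimension by at most $3$, hence $\dim W\le (2d-2-k)+3$) makes this step explicit and is the right way to close it.
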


\begin{proof} One can prove that for every $f\in\rat_d$ there exists a local $(2d-2)$-dimensional submanifold $V_f$ of $\rat_d$ containing $f$ which is transversal to the orbit $\mathcal{O}(f)\subset\rat_d$ of $f$ under the action of $\textup{Aut}(\p^1)$ (see for example \cite{BB1} page 226). The stabilizer $\textup{Aut}(f)$ of $f$ is a finite group and $V_f$ is invariant by the action of $\textup{Aut}(f)$. Moreover, $\Pi(V_f)$ is an open subset of $\mathcal{M}_d$ and $\Pi$ induces a biholomorphism $V_f/\textup{Aut}(f)\longrightarrow\Pi(V_f)$.
\par Thus for every $g\in V_f$, the set $\Pi^{-1}(\Pi(g))$ contains at most $\textup{card}(\textup{Aut}(f))<+\infty$ elements. We conclude by applying Theorem \ref{tmtransverseintro} in $\B(0,r)\cap V_f$.\end{proof}

~

\par To control how the Hausdorff dimension of a compact set is affected by holomorphic motion, we shall use the following precise H\"{o}lder-regularity statement. A proof is provided in \cite{dujardin} Lemma 1.1.

~

\begin{lm}
Let $E$ be a compact subset of $\p^1$ and $h:\B(0,r)\times E\longrightarrow\p^1$ be a holomorphic motion. Then there exists $0<r_1\leq r$ and $\eta>0$ such that:
\begin{eqnarray}
\frac{1}{C(r')}|z-z'|^{\frac{r_1+\|\lambda\|}{r_1-\|\lambda\|}}\leq|h(\lambda,z)-h(\lambda,z')|\leq C(r')|z-z'|^{\frac{r_1-\|\lambda\|}{r_1+\|\lambda\|}}
\label{biholder}
\end{eqnarray}
for $0<r'<r_1$, all $z_0\in E$, all $z,z'\in \D(z_0,\eta)\cap E$ and all $\lambda\in\B(0,r')$.
\label{lmbiholder}
\end{lm}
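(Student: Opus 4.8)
The plan is to deduce the two-sided estimate from S{\l}odkowski's extension theorem for holomorphic motions together with Mori's distortion theorem for quasiconformal homeomorphisms, after reducing the parameter to a single complex variable.

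First I would reduce to a one-dimensional parameter. Fix $\lambda\in\B(0,r)\setminus\{0\}$, let $\ell_\lambda$ be the complex line through $0$ and $\lambda$, and parametrize $\ell_\lambda\cap\B(0,r)$ by $t\longmapsto t\lambda/\|\lambda\|$ with $t\in\D(0,r)$; the restriction of $h$ to $\ell_\lambda$ is then a holomorphic motion of $E$ over the disc $\D(0,r)$. By S{\l}odkowski's theorem it extends to a holomorphic motion of $\p^1$, which for technical reasons I will regard over a possibly smaller disc $\D(0,r_1)$, $0<r_1\leq r$. Since this extension is holomorphic in $t$ and equals the identity at $t=0$, the Schwarz lemma applied to its Beltrami coefficient gives $\|\mu_\lambda\|_\infty\leq\|\lambda\|/r_1$, so $h_\lambda$ extends to a $K_\lambda$-quasiconformal homeomorphism of $\p^1$ with
\begin{eqnarray*}
K_\lambda\leq\frac{1+\|\lambda\|/r_1}{1-\|\lambda\|/r_1}=\frac{r_1+\|\lambda\|}{r_1-\|\lambda\|}.
\end{eqnarray*}

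Next I would normalize and invoke Mori. Fix three distinct points $p_1,p_2,p_3\in\p^1$ and let $M_\lambda\in\Aut(\p^1)$ be the unique Moebius map sending the (distinct, holomorphically varying) points $h_\lambda(p_1),h_\lambda(p_2),h_\lambda(p_3)$ back to $p_1,p_2,p_3$; then $\lambda\longmapsto M_\lambda$ is holomorphic and, for $\lambda\in\overline{\B(0,r')}$ with $r'<r_1$, the $M_\lambda$ range over a compact set of Moebius transformations, hence are uniformly bi-Lipschitz on $\p^1$ for the spherical metric. The homeomorphism $M_\lambda\circ h_\lambda$ is $K_\lambda$-quasiconformal and fixes $p_1,p_2,p_3$, so Mori's theorem makes it H\"older of exponent $1/K_\lambda$ on $\p^1$ for the spherical metric, with a constant depending only on $K_\lambda$ and the $p_i$ and nondecreasing in $K_\lambda$. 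Composing back with $M_\lambda^{-1}$, passing to charts by choosing $\eta>0$ so small that, for every $z_0\in E$, the Euclidean and spherical distances are comparable on $\D(z_0,\eta)$ and on its image under $h_\lambda$ for $\|\lambda\|<r'$, and using $K_\lambda\leq(r_1+r')/(r_1-r')$ to make the constant uniform, one obtains $C(r')$ with
\begin{eqnarray*}
|h(\lambda,z)-h(\lambda,z')|\leq C(r')\,|z-z'|^{\frac{r_1-\|\lambda\|}{r_1+\|\lambda\|}}
\end{eqnarray*}
for all $\lambda\in\B(0,r')$, $z_0\in E$ and $z,z'\in\D(z_0,\eta)\cap E$. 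Running the same argument for the inverse holomorphic motion $\lambda\longmapsto h_\lambda^{-1}$ (again a holomorphic motion, with $K_\lambda$-quasiconformal fibers) yields $|z-z'|\leq C'(r')\,|h(\lambda,z)-h(\lambda,z')|^{\frac{r_1-\|\lambda\|}{r_1+\|\lambda\|}}$, which rearranges to the lower bound, with exponent $\frac{r_1+\|\lambda\|}{r_1-\|\lambda\|}$; shrinking $\eta$ once more if needed makes the two estimates compatible.

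The only genuinely substantive ingredient is the \emph{quantitative} form of the extension theorem, namely that the extended motion is $\frac{r_1+\|\lambda\|}{r_1-\|\lambda\|}$-quasiconformal at $\lambda$; this is S{\l}odkowski's theorem together with the Schwarz lemma applied to the family of Beltrami coefficients. Granting that and Mori's distortion estimate, everything else is the reduction to a complex line, the normalization by Moebius maps, and routine bookkeeping on the dependence of the constants on $r'$. A complete proof is given in \cite{dujardin} Lemma~1.1.
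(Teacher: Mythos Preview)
The paper does not give its own proof of this lemma; it simply cites \cite{dujardin} Lemma~1.1. Your proposal is correct and is precisely the standard argument one finds in that reference: restrict to a complex line in the parameter ball, apply S{\l}odkowski's extension theorem together with the Schwarz lemma on the Beltrami coefficients to get the sharp quasiconformality bound $K_\lambda\leq\frac{r_1+\|\lambda\|}{r_1-\|\lambda\|}$, then invoke Mori's theorem (after normalizing by a M\"obius map fixing three points) to obtain the H\"older estimate, and run the same argument on the inverse motion for the lower bound. There is nothing to compare here, since the paper defers entirely to the cited source and your sketch matches that source's method.
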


~

\par It will be crucial to observe that, after a small perturbation, all active critical points of a Misiurewicz map may be assumed to be captured by some hyperbolic set of "big" Hausdorff dimension.

~

\begin{lm}
Let $0<\varepsilon<1$ and $f\in\mathfrak{M}_k$. Let $V_0\subset\rat_d$ be a neighborhood of $f$ and $F_0\subset\J_f$ be a compact homogeneous $f$-hyperbolic set such that $\dim_H(F_0)\geq\dim_{\textup{hyp}}(f)-\varepsilon$. Then we may find $f_t\in\mathfrak{M}_k$ arbitrary close to $f$ and a compact $f_t$-hyperbolic set $F_t$ such that $f_t^N(C(f_t)\cap\J_{f_t})\subset F_t$ for some $N\geq1$ and $\dim_H(U_t\cap F_t)\geq\dim_{\textup{hyp}}(f)-2\varepsilon$ for all open set $U_t$ intersecting $F_t$.
\label{lmdimpostcritic}
\end{lm}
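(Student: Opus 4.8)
The plan is to use Lemma \ref{lmapproxmis} to move the $k$ active critical points of $f$ into the dynamical holomorphic motion of the given hyperbolic set $F_0$, while keeping Hausdorff dimension under control via Lemma \ref{lmbiholder}. First I would fix the data: by hypothesis $F_0\subset\J_f$ is compact, homogeneous, $f$-hyperbolic with $\dim_H(F_0)\geq\dim_{\textup{hyp}}(f)-\varepsilon$, and $f$ is $k$-Misiurewicz. Since $F_0$ contains at least three points (it is infinite, being $f$-invariant and hyperbolic), Theorem \ref{mvtholo} provides a dynamical holomorphic motion $h:\B(0,\rho)\times F_0\longrightarrow\p^1$ of $F_0$ in a ball $\B(0,r)\subset\rat_d$ centered at $f$, with $2d-2$ marked critical points and $\B(0,r)\cap V_f\subset V_0$. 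By Lemma \ref{lmbiholder}, up to shrinking $\rho$, there is a radius $\rho_1$ and an exponent so that for $\lambda\in\B(0,\rho')$ with $\rho'<\rho_1$ the map $h_\lambda$ is H\"older with exponent $\theta(\rho')\pe(\rho_1-\rho')/(\rho_1+\rho')$ and its inverse is H\"older as well; hence
\begin{center}
$\dim_H\big(U_\lambda\cap h_\lambda(F_0)\big)\geq\theta(\rho')\,\dim_H(F_0)\geq\theta(\rho')\big(\dim_{\textup{hyp}}(f)-\varepsilon\big)$
\end{center}
for every open set $U_\lambda$ meeting $h_\lambda(F_0)$, using that $F_0$ is homogeneous and that a bi-H\"older image of a homogeneous set remains homogeneous. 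Choosing $\rho'$ small enough that $\theta(\rho')$ is close to $1$, the right-hand side exceeds $\dim_{\textup{hyp}}(f)-2\varepsilon$.

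Next I would apply Lemma \ref{lmapproxmis} to the family $(f_\lambda)_{\lambda\in\B(0,r)}$ with the hyperbolic set $E=F_0$: for the above $\rho'>0$ it yields a parameter $\lambda_0\in\B(0,\rho')$ and an integer $N_0\geq1$ such that $f_{\lambda_0}\pe f_t$ is $k$-Misiurewicz and $f_{\lambda_0}^{N_0}(c_j(\lambda_0))\in h_{\lambda_0}(F_0)$ for all $1\leq j\leq k$; since we may take $\lambda_0$ arbitrarily close to $0$, the map $f_t$ is arbitrarily close to $f$. Its critical points are still simple (simplicity is an open condition), and it is not a flexible Latt\`es map (these form a closed subset and, when $k<2d-2$ or $d$ is not a square, there are none nearby; when $k=2d-2$ one invokes that flexible Latt\`es maps are isolated up to conjugacy and shrinks further), so $f_t\in\mathfrak{M}_k$. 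Set $F_t\pe h_{\lambda_0}(F_0)$, which is compact and $f_t$-hyperbolic because $h_{\lambda_0}$ conjugates $f_0$ to $f_t$ on $F_0$. By construction $f_t^{N_0}(C(f_t)\cap\J_{f_t})\subset F_t$, and by the dimension estimate of the previous paragraph $\dim_H(U_t\cap F_t)\geq\dim_{\textup{hyp}}(f)-2\varepsilon$ for every open $U_t$ meeting $F_t$. Taking $N=N_0$ completes the argument.

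The main obstacle is the dimension bookkeeping across the holomorphic motion: one must ensure that the H\"older exponent $\theta(\rho')$ in Lemma \ref{lmbiholder} can genuinely be pushed arbitrarily close to $1$ by choosing the perturbation $\lambda_0$ small, and simultaneously that the integer $N_0$ produced by Lemma \ref{lmapproxmis} (via Montel) is compatible with a $\lambda_0$ in that small ball — this is exactly why Lemma \ref{lmapproxmis} is stated with the parameter $\lambda_0$ allowed to lie in an arbitrarily small ball $\B(0,\varepsilon)$. A secondary point requiring care is the preservation of homogeneity: a bi-H\"older homeomorphism multiplies the Hausdorff dimension of every subset (in particular of $U_\lambda\cap h_\lambda(F_0)=h_\lambda(h_\lambda^{-1}(U_\lambda)\cap F_0)$) by a factor pinned between $\theta$ and $1/\theta$, and combining this with the homogeneity of $F_0$ gives $\dim_H(U_\lambda\cap h_\lambda(F_0))\geq\theta(\rho')\dim_H(F_0)$ uniformly, which is what is needed.
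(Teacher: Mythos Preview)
Your argument is correct and follows the paper's proof essentially verbatim: move $F_0$ by its dynamical holomorphic motion, control the local Hausdorff dimension of $h_\lambda(F_0)$ via Lemma \ref{lmbiholder} and the homogeneity of $F_0$, then invoke Lemma \ref{lmapproxmis} to land all $k$ active critical points in $h_{\lambda_0}(F_0)$ for some $\lambda_0$ small enough that the H\"older exponent is close to $1$. Two small side remarks are inaccurate but harmless: $F_0$ need not be infinite merely from being $f$-invariant and hyperbolic (a repelling cycle is both), though $\dim_H(F_0)>0$ does force it; and flexible Latt\`es maps are \emph{not} isolated up to conjugacy (they form curves), but your ``closed subset'' observation already suffices since $f\notin$ that closed set.
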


\begin{proof} We may assume that $f=f_0$ where $(f_\lambda)_{\lambda\in\B(0,r)}$ is a holomorphic family and $f_\lambda\in V_0$ for all $\lambda\in\B(0,r)$. Let $h:\B(0,r)\times F_0\cup P^{k_0}(f)\longrightarrow\p^1$ be the dynamical holomorphic motion and $F_\lambda\pe h_\lambda(F_0)$. As it follows from Lemma \ref{lmbiholder} and from the homogeneity of $F_0$, there exists $0<r_1\leq r$ such that:
\begin{eqnarray}
& \displaystyle\dim_H\big(F_{\lambda}\cap U_\lambda\big)\geq\frac{r_1-\|\lambda\|}{r_1+\|\lambda\|}\dim_H\big(F_0\big) &
\label{dimbiholder}
\end{eqnarray}
for all $\lambda\in\B(0,r_1/2)$ and all open set $U_\lambda\subset\p^1$ intersecting $F_\lambda$.
\par Suppose that $\J_{0}\cap C(f_{0})=\{c_1(0),\ldots,c_k(0)\}$. Let $0<\tau<1/2$. According to Lemma \ref{lmapproxmis}, there exists $\lambda_0\in\B(0,\tau r_1)$ and $N\geq1$ satisfying $f_{\lambda_0}^N(c_j(\lambda_0))\in F_{\lambda_0}=h_{\lambda_0}(F_0)$ for any $1\leq j\leq k$. By \ref{dimbiholder} we have $\dim_H\big(F_{\lambda_0}\cap U_{\lambda_0}\big)\geq \frac{1-\tau}{1+\tau}\dim_H\big(F_0\big)\geq \frac{1-\tau}{1+\tau}\big(\dim_{\textup{hyp}}(f)-\varepsilon\big)$ and we conclude by taking $\tau$ small enough.\end{proof}

~

\par Let us now describe briefly the mechanism of the proof of Theorem \ref{tm2intro}. Let $E_f$ be a compact homogeneous $f$-hyperbolic set with $\dim_H(E_f)\geq\dim_{\textup{hyp}}(f)-\varepsilon$. Using Lemma \ref{lmdimpostcritic} we may find $g\in\mathfrak{M}_k$, arbitrary close to $f$, and $N\geq1$ such that $g^N(C(g)\cap\J_g)\subset E_g$ and $\dim_H(E_g\cap\D(g^N(c_j(g)),\eta))\geq\dim_{\textup{hyp}}(f)-2\varepsilon$. Using holomorphic motions we will build a transfer map which "copies" small pieces of $E_g^k\times\C^{2d+1-k}$ into a small neighborhood of $g$ in $\mathfrak{M}_k$ and enjoys good H\"{o}lder regularity properties. Finally, we precisely estimate the Hausdorff dimension of these copies to get a lower bound for the local Hausdorff dimension of $\mathfrak{M}_k$ near $f$.

\subsection{The transfer map}
In this subsection, we define a transfer map and investigate its regularity properties in the following setting: let $g\in\mathfrak{M}_k$ and $V_0$ be a neighborhood of $g$ in $\rat_d$. We may assume that $g=g_0$ where $(g_\lambda)_{\lambda\in\B(0,r)}$ is a holomorphic family and $g_\lambda\in V_0$ for all $\lambda\in\B(0,r)$ with $\dim_\C\B(0,r)=2d+1$. Assume that $g^N_0(c_j(0))\in E_0$, where $E_0$ is a $g_0$-hyperbolic compact set, for $j\leq k$. Set $z_0\pe\big(g_0^N(c_1(0)),\ldots,g_0^N(c_k(0))\big)$ and let $h:\B(0,r)\times E_0\longrightarrow\p^1$ be the dynamical holomorphic motion of $E_0$. By Bers-Royden's Theorem (see \cite{BersRoyden} Theorem 3), the motion $h$ extends to a holomorphic motion $h:\B(0,\rho)\times \p^1 \longrightarrow \p^1$, with $\rho\pe r/3$. Denote by $\D_\eta^k(z_0)\subset(\p^1)^k$ the polydisc of radius $\eta$ centrered at $z_0$. Let $\eta_0>0$ and set:
\begin{eqnarray*}
\mathcal{X}: \B(0,\rho)\times\D_{\eta_0}^k(z_0) & \longrightarrow & \C^k\\
(\lambda,z) & \longmapsto & \big(\mathcal{X}_1(\lambda,z),\ldots,\mathcal{X}_k(\lambda,z)\big),
\end{eqnarray*}
where $\mathcal{X}_j(\lambda,z)\pe g^N_\lambda(c_j(\lambda))-h_\lambda(z_i)$. The map $\mathcal{X}$ defined above is well-defined in an appropriate system of coordinate on $\D_{\eta_0}^k(z_0)$ and continuous. Moreover, $\mathcal{X}(\lambda,\cdot)$ is injective on $\D_{\eta_0}^k(z_0)$ for every $\lambda\in\B(0,\rho)$ and $\mathcal{X}(\cdot,z)$ is holomorphic on $\B(0,\rho)$ for every $z\in\D_{\eta_0}^k(z_0)$.
 
~

\par In what follows we will write $\lambda\in\C^k\times\C^{2d+1-k}$ under the form $\lambda=\lambda'+\lambda''$ with $\lambda'\in\C^k$ and $\lambda''\in\C^{2d+1-k}$. Similarly we write $\B'(0',r')$ (resp. $\B''(0'',r'')$) the ball of $\C^k$ (resp. $\C^{2d+1-k}$) of radius $r'$ (resp. $r''$) centered at the origin.

~

\begin{propdefi}
Under the above assumptions, up to reducing $\rho$, there exists $\eta_1>0$, $0<\delta_1''\leq\delta_1'\leq\rho$ and a continuous map
\begin{eqnarray*}
\mathcal{T}:\D_{\eta_1}^k(z_0)\times\B''(0'',\delta_1'') & \longrightarrow & \B'(0',\delta_1')
\end{eqnarray*}
such that $\mathcal{X}\big(\mathcal{T}(z,\lambda'')+\lambda'',z\big)=0$ and satisfying the following properties: for any $0<\varepsilon<1$ there exists $0<\delta''\leq\delta'\leq\varepsilon\rho$ such that the following occurs:
\begin{enumerate}
\item there exists a constant $C_\varepsilon>0$ such that:
\begin{eqnarray*}
\|\mathcal{T}(z_1,\lambda'')-\mathcal{T}(z_2,\lambda'')\|\geq C_\varepsilon\|z_1-z_2\|^{\frac{1+\varepsilon}{1-\varepsilon}}
\end{eqnarray*}
for every $\lambda''\in\B''(0'',\delta'')$ and every $z_1,z_2\in\D_{\eta_1}^k(z_0)$,
\item $\mathcal{T}\Big(\D_{\eta_1}^k(z_0)\cap\big(E_0\big)^k\times\{\lambda''\}\Big)+\lambda''\subset\mathfrak{M}_k\cap\B(0,\varepsilon\rho_1)$ for any $\lambda''\in\B''(0'',\delta'')$.
\end{enumerate}
The map $\mathcal{T}$ is called the \emph{transfer map} associated to $g$.
\label{defitransfer}
\end{propdefi}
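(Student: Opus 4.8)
The plan is to identify $\mathcal{X}(\cdot,z_0)$ with the activity map of the family $(g_\lambda)_{\lambda\in\B(0,\rho)}$ at $\lambda=0$, to invert the equation $\mathcal{X}=0$ in the first $k$ coordinates by a quantitative implicit function argument, and to read properties $(1)$ and $(2)$ off from the bi\nobreakdash-H\"older behaviour of holomorphic motions and from the dynamical content of $\mathcal{X}=0$ respectively. \textbf{Step 1 (set-up).} Since $g_0^N(c_j(0))\in E_0$ for $1\le j\le k$ and $E_0$ is $g_0$-hyperbolic, the holomorphic map $\lambda\mapsto\mathcal{X}(\lambda,z_0)=\big(g_\lambda^N(c_j(\lambda))-h_\lambda(g_0^N(c_j(0)))\big)_{1\le j\le k}$ is nothing but the activity map $\chi$ of $(g_\lambda)_{\lambda\in\B(0,\rho)}$ at $0$ (with $N$ playing the role of $k_0$), so by Lemma \ref{lmvariete} the analytic set $\chi^{-1}\{0\}$ has codimension $k$ in $\B(0,\rho)\subset\C^{2d+1}$. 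Put $\Psi(\lambda)\pe\big(g_\lambda^N(c_j(\lambda))\big)_{1\le j\le k}\in(\p^1)^k$, which is holomorphic. The elementary but crucial observation is that, coordinatewise,
\[
\mathcal{X}(\lambda,z)=0\iff z_j=h_\lambda^{-1}\big(g_\lambda^N(c_j(\lambda))\big)\ (1\le j\le k)\iff z=\Phi(\lambda)\pe\big(h_\lambda^{-1}(\Psi_j(\lambda))\big)_{1\le j\le k},
\]
and that accordingly $\chi^{-1}\{0\}=\Phi^{-1}\{z_0\}$.

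\textbf{Step 2 (construction of $\mathcal{T}$).} I would pick a generic linear splitting $\C^{2d+1}=\C^k_{\lambda'}\times\C^{2d+1-k}_{\lambda''}$, $\lambda=\lambda'+\lambda''$, for which $\lambda'\mapsto\Psi(\lambda'+0'')$ is a local biholomorphism at $\lambda'=0'$. This is where weak transversality is used: from $\textup{codim}\,\chi^{-1}\{0\}=k$ together with the fact that $\rat_d$ has dimension $2d+1>k$ and that one can perturb $g$ independently along the $k$ critical orbits (the orbits of $c_1(0),\dots,c_k(0)$ being pairwise non-nested), one deduces that $D\Psi(0)$ has rank $k$, and the generic slice makes its restriction to $\C^k_{\lambda'}$ an isomorphism. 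Holomorphic motions of $\p^1$ being uniformly quasiconformal homeomorphisms whose dilatation tends to $0$ with $\|\lambda\|$, the maps $w\mapsto h_\lambda^{-1}(w)$ are homeomorphisms depending continuously on $\lambda$; hence for $\lambda''$ small $\lambda'\mapsto\Phi(\lambda'+\lambda'')=\big(h_{\lambda'+\lambda''}^{-1}(\Psi_j(\lambda'+\lambda''))\big)_j$ is a homeomorphism from a neighbourhood of $0'$ onto a neighbourhood of $z_0$, depending continuously on $\lambda''$. I then set $\mathcal{T}(z,\lambda'')\pe\big(\Phi(\cdot+\lambda'')\big)^{-1}(z)$; joint continuity in $(z,\lambda'')$ follows from joint continuity of $\Phi$ and invariance of domain (equivalently, from the implicit function theorem applied to $\mathcal{X}$ in the holomorphic variable $\lambda'$, with $z$ treated as a continuous parameter). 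Shrinking $\eta_1,\delta_1',\delta_1''$ we arrange $\mathcal{T}\big(\D_{\eta_1}^k(z_0)\times\B''(0'',\delta_1'')\big)\subset\B'(0',\delta_1')$, $\mathcal{T}(z_0,0'')=0'$, and $\mathcal{X}\big(\mathcal{T}(z,\lambda'')+\lambda'',z\big)=0$ holds by construction.

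\textbf{Step 3 (property $(1)$ --- the main obstacle).} Fix $\lambda''$ and put $\lambda_i\pe\mathcal{T}(z^{(i)},\lambda'')+\lambda''$, so that $z^{(i)}_j=h_{\lambda_i}^{-1}(\Psi_j(\lambda_i))$. Then
\[
\big|z^{(1)}_j-z^{(2)}_j\big|\le\big|h_{\lambda_1}^{-1}(\Psi_j(\lambda_1))-h_{\lambda_1}^{-1}(\Psi_j(\lambda_2))\big|+\big|h_{\lambda_1}^{-1}(\Psi_j(\lambda_2))-h_{\lambda_2}^{-1}(\Psi_j(\lambda_2))\big|.
\]
For the first term I would use the H\"older control of Lemma \ref{lmbiholder} applied to the inverse motion --- which is H\"older of exponent $\tfrac{r_1-\|\lambda_1\|}{r_1+\|\lambda_1\|}$, close to $1$ for $\|\lambda_1\|$ small --- together with the local Lipschitz bound $\|\Psi(\lambda_1)-\Psi(\lambda_2)\|\lesssim\|\lambda_1-\lambda_2\|$ coming from holomorphy of $\Psi$, obtaining a bound by $C\,\|\lambda_1-\lambda_2\|^{(r_1-\|\lambda_1\|)/(r_1+\|\lambda_1\|)}$. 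For the second term, set $a_i=h_{\lambda_i}^{-1}(w)$ with $w=\Psi_j(\lambda_2)$; then $|h_{\lambda_1}(a_1)-h_{\lambda_1}(a_2)|=|w-h_{\lambda_1}(a_2)|=|h_{\lambda_2}(a_2)-h_{\lambda_1}(a_2)|\lesssim\|\lambda_1-\lambda_2\|$ by holomorphy of $\lambda\mapsto h_\lambda(a_2)$, and the bi\nobreakdash-H\"older lower bound for $h_{\lambda_1}$ turns this into $|a_1-a_2|\lesssim\|\lambda_1-\lambda_2\|^{(r_1-\|\lambda_1\|)/(r_1+\|\lambda_1\|)}$. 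Adding up, $\|z^{(1)}-z^{(2)}\|\le C\,\|\lambda_1-\lambda_2\|^{\beta}$ with $\beta=\tfrac{r_1-\max_i\|\lambda_i\|}{r_1+\max_i\|\lambda_i\|}$, whence
\[
\big\|\mathcal{T}(z^{(1)},\lambda'')-\mathcal{T}(z^{(2)},\lambda'')\big\|=\|\lambda_1-\lambda_2\|\ge C^{-1/\beta}\,\big\|z^{(1)}-z^{(2)}\big\|^{1/\beta}.
\]
Given $0<\varepsilon<1$ it then suffices to reduce $\delta'$ and $\delta''$ (hence $\max_i\|\lambda_i\|$) so that $1/\beta\le\tfrac{1+\varepsilon}{1-\varepsilon}$. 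I expect the genuinely delicate part of the whole argument to be exactly this bookkeeping --- tracking the forward and backward H\"older moduli of the holomorphic motion in both the dynamical and the parameter variable and matching the outcome with the exponent $\tfrac{1+\varepsilon}{1-\varepsilon}$ of Lemma \ref{lmbiholder}; to it one must add the (less serious but real) points that single-valuedness of $\mathcal{T}$ relies on the non-degeneracy arranged in Step 2, and that $\eta_1,\delta',\delta''$ have to be shrunk so that the image stays inside $\B(0,\varepsilon\rho)$.

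\textbf{Step 4 (property $(2)$).} Let $z=(z_1,\dots,z_k)\in\D_{\eta_1}^k(z_0)\cap(E_0)^k$ and $\lambda=\mathcal{T}(z,\lambda'')+\lambda''$. Since $z_j\in E_0$ we have $h_\lambda(z_j)\in E_\lambda\pe h_\lambda(E_0)$, a $g_\lambda$-hyperbolic repeller contained in $\J_{g_\lambda}$ and, after a harmless shrinking of the balls, disjoint from $C(g_\lambda)$. The relation $\mathcal{X}(\lambda,z)=0$ reads $g_\lambda^N(c_j(\lambda))=h_\lambda(z_j)\in E_\lambda$ for $1\le j\le k$, so the critical points $c_1(\lambda),\dots,c_k(\lambda)$ lie in $\J_{g_\lambda}$ and their forward orbits eventually fall into the hyperbolic set $E_\lambda$; in particular their $\omega$-limit sets miss $C(g_\lambda)$. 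Since $0$ is stable in $(g_\lambda)_{\lambda\in\B(0,\rho)}$, Proposition \ref{fatou} and Lemma \ref{lmstability} show that, after shrinking once more, $g_\lambda$ has no parabolic cycle and its remaining $2d-2-k$ critical points stay in attracting basins; hence $g_\lambda$ is $k$-Misiurewicz. Finally all the defining conditions of $\mathfrak{M}_k$ persist near $g_0$: simplicity of the critical points is open, and the exclusion of flexible Latt\`es maps is automatic when $k<2d-2$ (cf. the Remark following the definition of $\mathfrak{M}_k$) and, when $k=2d-2$, holds for $\lambda$ small because the flexible Latt\`es maps form a proper subvariety of $\rat_d$ not containing $g_0$. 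Therefore $\mathcal{T}(z,\lambda'')+\lambda''\in\mathfrak{M}_k\cap\B(0,\varepsilon\rho)$, which is $(2)$; the case where the active critical values are spread over several hyperbolic sets is handled exactly as in Lemma \ref{lmapproxmis}.
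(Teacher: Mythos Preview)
Your Steps 3 and 4 are essentially the paper's own argument, organized a little differently: the paper derives
\[
\|\mathcal{X}(\lambda_1,z_1)-\mathcal{X}(\lambda_2,z_2)\|\ \ge\ C_\varepsilon'\|z_1-z_2\|^{\frac{1+\varepsilon}{1-\varepsilon}}-\tilde C_\varepsilon\|\lambda_1-\lambda_2\|
\]
directly from Lemma \ref{lmbiholder} and the holomorphy of $\mathcal X$ in $\lambda$, then plugs in $\lambda_i=\mathcal T(z_i,\lambda'')+\lambda''$; your splitting via $\Phi(\lambda)=\big(h_\lambda^{-1}(\Psi_j(\lambda))\big)_j$ amounts to the same estimate. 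Property (2) is likewise obtained in the paper by invoking Lemma \ref{lmstability}, exactly as you do.

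The genuine gap is in Step 2. You claim that, after a generic splitting, $\lambda'\mapsto\Psi(\lambda'+0'')$ is a local biholomorphism at $0'$, i.e.\ that $D\Psi(0)$ --- equivalently $D\chi(0)$ --- has rank $k$. But Lemma \ref{lmvariete} (weak transversality) only gives $\textup{codim}\,\chi^{-1}\{0\}=k$; it says nothing about the rank of the differential, and in general the restriction $\chi(\cdot+0''):\B'(0',\rho)\to\C^k$ may well have $0'$ as an isolated zero of multiplicity $p>1$. Your justification (``one can perturb $g$ independently along the $k$ critical orbits'') is precisely the \emph{strong} transversality statement of van Strien/Buff--Epstein that the paper deliberately avoids proving; it is not a consequence of the codimension count. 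If $p>1$ then $\Phi(\cdot+\lambda'')$ is not injective near $0'$ and your definition of $\mathcal T$ as its inverse collapses.

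The paper's cure is purely topological: once one has arranged $\{\lambda'\in\B'(0',\rho):\mathcal X(\lambda',z_0)=0\}=\{0'\}$ (this much \emph{does} follow from codimension $k$ and a generic linear slice), one lets $p\ge 1$ be the multiplicity of $0'$ and uses a Rouch\'e-type theorem for holomorphic maps $\C^k\to\C^k$ to conclude that $\mathcal X(\cdot+\lambda'',z)$ has exactly $p$ zeros in $\B'(0',\delta_1')$ for every nearby $(\lambda'',z)$. One then selects a continuous branch of this (finite) zero set. Your H\"older estimate in Step 3 survives this change unaltered, since it only uses $\mathcal X(\lambda_i,z^{(i)})=0$ and never the uniqueness of $\lambda_i$.
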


\begin{proof} Let $\chi$ be the activity map of $(g_\lambda)_{\lambda\in\B(0,r)}$ at $\lambda=0$ (see Section \ref{sectionactivitymap}), then $\mathcal{X}(\lambda,z_0)=\chi(\lambda)$. Therefore, by Lemma \ref{lmvariete}, the analytic set $\{\mathcal{X}(\lambda,z_0)=0\}$ has pure codimension $k$. Up to a linear change of coordinate and after reducing $\rho$ we may assume that $\C^{2d+1}=\C^k\times\C^{2d+1-k}$ and
\begin{center}
$\{\lambda'\in\B'(0',\rho)$ / $\mathcal{X}(\lambda',z_0)=0\}=\{0'\}$.
\end{center}
Let $p$ be the multiplicity of $0'$ as a zero of $\mathcal{X}(\cdot,z_0)$ in $\B'(0',\rho)$. Let $0<\delta_1'\leq r$ be such that $|\mathcal{X}(\lambda',z_0)|\neq0$ on $\partial\B'(0',\delta_1')$. Then 

\begin{center}
$0=|\mathcal{X}(\lambda',z_0)-\mathcal{X}(\lambda',z_0)|<|\mathcal{X}(\lambda',z_0)|$
\end{center}
for any $\lambda_1'\in\partial\B'(0',\delta_1')$ and, by continuity, there exists $0<\delta_1''\leq\delta_1'$ and $\eta_1>0$ such that 
\begin{center}
$|\mathcal{X}(\lambda'+\lambda'',z)-\mathcal{X}(\lambda',z_0)|-|\mathcal{X}(\lambda',z_0)|<0$
\end{center}
for any $\lambda''\in\B''(0'',\delta_1'')$, $z\in\D_{\eta_1}^k(z_0)$ and $\B'(0',\delta_1')\times\B''(0'',\delta_1'')\subset\B(0,\rho)$. A Rouch\'e-like Theorem then states that the map $\mathcal{X}(\cdot+\lambda'',z)$ admits exactly $p$ zeros counted with multiplicity in $\B'(0',\delta'_1)$ for every $\lambda''\in\B''(0'',\delta_1'')$ and every $z\in\D_{\eta_1}^k(z_0)$ (see \cite{Chirka}, Theorem 1, section 10.3 page 110). To define $\mathcal{T}(z,\lambda'')$ it thus suffices to pick one of the $p$ elements of $\{\mathcal{X}(\cdot+\lambda'',z)=0\}$. Since the set $\{(\lambda',\lambda'',z)\in\B'(0',\delta_1')\times\B''(0'',\delta_1'')\times\D_{\eta_1}^k(z_0) \ / \ \mathcal{X}(\lambda'+\lambda'',z)=0\}$ is an analytic set, one can choose $(\lambda'',z)\mapsto \mathcal{T}(z,\lambda'')$ continuous.

~

\par Let us establish $(1)$. According to Lemma \ref{lmbiholder}, up to reducing $\eta_1$ and $\rho$, for any $0<\varepsilon<1$, there exists constants $C_\varepsilon,C_\varepsilon'>0$ such that for $\lambda\in\B(0,\varepsilon\rho)$ and $z_1,z_2\in\D_{\eta_1}^k(z_0)$ we have:
\begin{eqnarray*}
C_\varepsilon'\|z_1-z_2\|^{\frac{\rho+\|\lambda\|}{\rho-\|\lambda\|}}\leq\|\mathcal{X}(\lambda,z_1)-\mathcal{X}(\lambda,z_2)\|\leq C_\varepsilon\|z_1-z_2\|^{\frac{\rho-\|\lambda\|}{\rho+\|\lambda\|}}.
\end{eqnarray*}
Moreover, as $\|z_1-z_2\|<1$,
\begin{eqnarray}
 C_\varepsilon'\|z_1-z_2\|^{\frac{1+\varepsilon}{1-\varepsilon}}\leq\|\mathcal{X}(\lambda,z_1)-\mathcal{X}(\lambda,z_2)\| \leq C_\varepsilon\|z_1-z_2\|^{\frac{1-\varepsilon}{1+\varepsilon}}.
\label{inegholderepsilon}
\end{eqnarray}
Since $\mathcal{X}$ is continuous in both variables and holomorphic in $\lambda$ on $\B(0,\rho)$, there exists $\tilde C_{\varepsilon}>0$ such that 
\begin{eqnarray}
\|\mathcal{X}(\lambda_1,z)-\mathcal{X}(\lambda_2,z)\|\leq \tilde C_{\varepsilon}\|\lambda_1-\lambda_2\|
\label{inegholo}
\end{eqnarray}
for every $\lambda_1,\lambda_2\in\B(0,\varepsilon\rho)$ and every $z\in\D_{\eta_1}^k(z_0)$. Combining \ref{inegholderepsilon} and \ref{inegholo} we find:
\begin{eqnarray*}
\|\mathcal{X}(\lambda_1,z_1)-\mathcal{X}(\lambda_2,z_2)\| \geq C_\varepsilon'\|z_1-z_2\|^{\frac{1+\varepsilon}{1-\varepsilon}}-\tilde C_{\varepsilon}\|\lambda_1-\lambda_2\|
\end{eqnarray*}
for every $\lambda_1,\lambda_2\in\B(0,\varepsilon\rho_1)$ and every $z_1,z_2\in\D_{\eta_1}^k(z_0)$. As $\mathcal{T}$ is continuous, up to reducing $\eta_1$, one can find $0<\delta''\leq \delta'\leq\varepsilon\rho_1$ such that $\mathcal{T}(\D_\eta^k(z_0)\times\B''(0'',\delta''))\subset\B'(0',\delta')\subset\B'(0',\varepsilon\rho)$. Taking $\lambda_1=\mathcal{T}(z_1,\lambda'')+\lambda''$ and $\lambda_2=\mathcal{T}(z_2,\lambda'')+\lambda''$ yields the announced estimate. The above construction and Lemma \ref{lmstability} directly give $(2)$.\end{proof}

\subsection{Local Hausdorff dimension estimates} We are now ready to end the proof of Theorem \ref{tm2intro}.  Let $f\in\mathfrak{M}_k$ and $V_0$ be a neighborhood of $f$ in $\rat_d$. Let $0<\varepsilon<1$ and $F_0$ be a homogeneous compact $f$-hyperbolic set such that $\dim_H(F_0)\geq\dim_{\textup{hyp}}(f)-\varepsilon$. Assume that $f=f_0$ where $(f_\lambda)_{\lambda\in\B(0,R)}$ is a holomorphic family and $f_\lambda\in V_0$ for all $\lambda\in\B(0,R)$, with $\dim\B(0,R)=2d+1$.
\par By Lemma \ref{lmdimpostcritic}, there exists $\lambda_0\in\B(0,R/2)$ and a compact $f_{\lambda_0}$-hyperbolic set $F_{\lambda_0}$ such that $f_{\lambda_0}^N(C(f_{\lambda_0})\cap\J_{f_{\lambda_0}})\subset F_{\lambda_0}$ for some $N\geq1$ and such that $\dim_H(U_j\cap F_{\lambda_0})\geq\dim_{\textup{hyp}}(f)-2\varepsilon$ for any neighborhood $U_j$ of $f_{\lambda_0}^N(c_j(\lambda_0))$ for $1\leq j\leq k$.

~

\par Let $0<r\leq R/2$ and set $E_0\pe F_{\lambda_0}$, $g_0\pe f_{\lambda_0}$ and $g_\lambda\pe f_{\lambda+\lambda_0}$. Let us now denote by $\mathcal{T}$ the transfer map associated to $g$ in the family $(g_\lambda)_{\lambda\in\B(0,r)}$. Let $\rho\leq r$, $\eta_1$, $\delta''\leq\delta'\leq\varepsilon\rho$ be given by Proposition \ref{defitransfer}. For $\lambda''\in\B''(0'',\delta'')$ we set:
\begin{center}
$\mathcal{E}_{\lambda''}\pe \mathcal{T}\Big(\D_\eta^k(z_0)\cap\big(E_0\big)^k\times\{\lambda''\}\Big)+\lambda''$.
\end{center}
From item $(1)$ of Proposition \ref{defitransfer} we get:
\begin{eqnarray*}
\dim_H\big(\mathcal{E}_{\lambda''}\big)\geq \frac{1-\varepsilon}{1+\varepsilon}\dim_H\big(\big(E_0\big)^k\cap\D_\eta^k(z_0)\big) \geq \frac{1-\varepsilon}{1+\varepsilon}\sum_{i=1}^k\dim_H\big(E_0\cap\D(z_{0,i},\eta)\big).
\end{eqnarray*}
By the choice of $E_0$, we then get
\begin{eqnarray}
\dim_H\big(\mathcal{E}_{\lambda''}\big)\geq\frac{1-\varepsilon}{1+\varepsilon}k\big(\dim_{\textup{hyp}}(f)-2\varepsilon\big).
\label{inegdimepsilon}
\end{eqnarray}
Set $\mathcal{E}\pe\{\lambda=\lambda'+\lambda''\in\B'(0',\delta')\times\B''(0'',\delta'')$ / $\lambda'\in \mathcal{E}_{\lambda''}\}\subset \B'(0',\delta')\times\B''(0'',\delta'')$. We shall now use the following Lemma (see \cite{McMullen3} Lemma 5.1 page 15):

~

\begin{lm}
Let $Y$ be a metric space and $X\subset Y\times[0,1]^k$. Denote by $X_t$ the slice $X_t\pe\{y\in Y$ / $(y,t)\in X\}$. If $X_t\neq\emptyset$ for almost every $t\in[0,1]^k$, then
\begin{center}
$\dim_H\big(X\big)\geq k+\dim_H\big(X_t\big)$, for almost every $t$.
\end{center}
\label{lmdimHMcMullen}
\end{lm}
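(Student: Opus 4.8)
The plan is to obtain the bound from a weak co-area inequality for Hausdorff measures, which I would prove by a direct covering argument. The key intermediate statement I would establish is that \emph{for every $s\geq 0$, if $\mathcal{H}^{s+k}(X)=0$, then $\mathcal{H}^s(X_t)=0$ for Lebesgue-almost every $t\in[0,1]^k$}. Granting this, write $D\pe\dim_H(X)$ and argue by contradiction: if $\{t\ /\ \dim_H(X_t)>D-k\}$ had positive (outer) Lebesgue measure, then by countable subadditivity there would be $\varepsilon_0>0$ such that $\{t\ /\ \dim_H(X_t)>D-k+\varepsilon_0\}$ has positive measure. Setting $s\pe D-k+\varepsilon_0/2$ one has $s+k>D$, so $\mathcal{H}^{s+k}(X)=0$ and the claim forces $\mathcal{H}^s(X_t)=0$ for a.e.\ $t$; but on the positive-measure set above $s<\dim_H(X_t)$, so $\mathcal{H}^s(X_t)=+\infty$ there, a contradiction. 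Hence $\dim_H(X_t)\leq D-k$ for almost every $t$, which is exactly the assertion. (The hypothesis $X_t\neq\emptyset$ for a.e.\ $t$ is only needed to exclude degenerate cases: the coordinate projection $Y\times[0,1]^k\to[0,1]^k$ is $1$-Lipschitz and its image then has full measure, so already $\dim_H(X)\geq k$.)

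For the claim I would fix $s$ with $\mathcal{H}^{s+k}(X)=0$ and $\delta>0$. For $\varepsilon>0$, since $\mathcal{H}^{s+k}_\delta(X)=0$, one may cover $X$ by countably many balls $B_i=\B\big((t_i,y_i),r_i\big)$ of the product space, with $r_i\leq\delta$ and $\sum_i r_i^{s+k}<\varepsilon$. For each $t$ the slice $X_t$ is covered by those $B_i$ with $|t-t_i|<r_i$, whose projections to $Y$ have diameter $\leq 2r_i\leq 2\delta$; hence, putting $g(t):=\sum_{i\ /\ |t-t_i|<r_i}(2r_i)^s$, one has $\mathcal{H}^s_{2\delta}(X_t)\leq g(t)$. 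The function $g$ is a pointwise sum of constant multiples of indicator functions of Euclidean balls, hence Borel, and Tonelli gives
\begin{eqnarray*}
\int_{[0,1]^k}g(t)\,dt & = & 2^s\sum_i r_i^s\,\textup{Leb}\big(\{t\ /\ |t-t_i|<r_i\}\big)\\
& \leq & 2^sc_k\sum_i r_i^{s+k}\ <\ 2^sc_k\,\varepsilon,
\end{eqnarray*}
where $c_k$ is the volume of the unit ball of $\R^k$. As $\varepsilon$ is arbitrary and $\mathcal{H}^s_{2\delta}(X_t)\leq g(t)$ pointwise, the upper integral of $t\mapsto\mathcal{H}^s_{2\delta}(X_t)$ vanishes, so $\mathcal{H}^s_{2\delta}(X_t)=0$ for a.e.\ $t$; applying this with $\delta=1/n$ and intersecting the resulting full-measure sets yields $\mathcal{H}^s(X_t)=\sup_n\mathcal{H}^s_{2/n}(X_t)=0$ for a.e.\ $t$, proving the claim.

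The point requiring care — hence what I regard as the main obstacle — is the measurability bookkeeping: for a general metric space $Y$ the maps $t\mapsto\mathcal{H}^s(X_t)$ and the sets $\{t\ /\ \dim_H(X_t)>a\}$ need not be measurable, so one systematically replaces them by the $\delta$-truncated quantities $\mathcal{H}^s_{2\delta}(X_t)$, by the explicitly Borel majorant $g$, and by outer measures; modulo this, only the displayed one-line Tonelli estimate is used. I would also record two shortcuts. One may instead invoke the classical Eilenberg (co-area) inequality for Lipschitz maps between metric spaces and use its contrapositive. And, most conveniently for our application: by item $(1)$ of Proposition~\ref{defitransfer} each slice $\mathcal{E}_{\lambda''}$ is a bi-H\"older image, with constants independent of $\lambda''$, of the fixed compact set $\big(E_0\big)^k\cap\D_{\eta_1}^k(z_0)$; it then suffices to push one Frostman measure of that set forward by $\mathcal{T}(\cdot,\lambda'')$, average the resulting probability measures over $\lambda''\in\B''(0'',\delta'')$, and apply the mass distribution principle to the average, which avoids measurable selection entirely.
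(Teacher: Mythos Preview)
Your argument is correct: the covering estimate you display is precisely the Eilenberg--type co-area inequality for the Lipschitz projection $Y\times[0,1]^k\to[0,1]^k$, and the contrapositive immediately yields $\dim_H(X_t)\leq\dim_H(X)-k$ for almost every $t$. Your handling of the measurability issue (bounding the possibly non-measurable $t\mapsto\mathcal{H}^s_{2\delta}(X_t)$ by the explicit Borel function $g$ and working with outer measure) is the standard and correct fix.

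There is nothing to compare against: the paper does not prove this lemma but simply quotes it from McMullen \cite{McMullen3}, Lemma~5.1. Your proof is essentially the one found there (and in many geometric measure theory references), so you have reproduced the intended argument. Your second shortcut --- pushing forward a Frostman measure on $\big(E_0\big)^k\cap\D_{\eta_1}^k(z_0)$ by $\mathcal{T}(\cdot,\lambda'')$, averaging in $\lambda''$, and applying the mass distribution principle --- is indeed a clean way to bypass the general lemma entirely in the specific application of Section~\ref{sectiondim}, since item~$(1)$ of Proposition~\ref{defitransfer} gives uniform H\"older bounds.
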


~

\par Lemma \ref{lmdimHMcMullen} states that for almost every $\lambda''\in\B''(0'',\delta'')$
\begin{eqnarray}
\dim_H(\mathcal{E})\geq \dim_H(\B''(0'',\delta''))+\dim_H\big(\mathcal{E}_{\lambda''}\big).
\label{inegdimepsilon2}
\end{eqnarray}
Since $\dim_H\big(\B''(0'',\delta'')\big)=2(2d+1-k)$, \ref{inegdimepsilon} and \ref{inegdimepsilon2} give:
\begin{center}
$\dim_H(\mathcal{E})\geq 2(2d+1-k) +\displaystyle\frac{1-\varepsilon}{1+\varepsilon} k\big(\dim_{\textup{hyp}}(f)-2\varepsilon\big)$.
\end{center}
As $\mathcal{E}\subset\mathfrak{M}_k\cap V_0$ (see item $(2)$ of Proposition \ref{defitransfer}) we find:
\begin{center}
$\dim_H\big(\mathfrak{M}_k\cap V_0\big)\geq\displaystyle 2(2d+1-k)+\frac{1-\varepsilon}{1+\varepsilon} k\big(\dim_{\textup{hyp}}(f)-2\varepsilon\big)$.
\end{center}
We conclude by letting $\varepsilon$ tend to $0$.

\subsection{The Hausdorff dimension of $\mathfrak{M}_k$}
Here we give the Hausdorff dimension of $\mathfrak{M}_k$:

~

\begin{tm}
For any $1\leq k\leq 2d-2$ the set $\mathfrak{M}_k$ has Hausdorff dimension $2(2d+1)$ and is homogeneous.
\label{tmdimH2}
\end{tm}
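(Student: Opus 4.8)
The plan is to deduce Theorem~\ref{tmdimH2} from Theorem~\ref{tm2intro} together with a lower bound on the hyperbolic dimension of $k$-Misiurewicz maps. The inequality $\dim_H(\mathfrak{M}_k\cap V_0)\leq 2(2d+1)$ is trivial since $\rat_d$ has complex dimension $2d+1$, so the whole content is the matching lower bound and homogeneity. By Theorem~\ref{tm2intro}, for every $f\in\mathfrak{M}_k$ and every neighborhood $V_0$ of $f$ one has $\dim_H(\mathfrak{M}_k\cap V_0)\geq 2(2d+1-k)+k\dim_{\textup{hyp}}(f)$, so it suffices to exhibit, for each $f\in\mathfrak{M}_k$ and each $\varepsilon>0$, a map $g\in\mathfrak{M}_k$ arbitrarily close to $f$ with $\dim_{\textup{hyp}}(g)\geq 2-\varepsilon$. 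Granting this, Theorem~\ref{tm2intro} applied at such a $g$ (inside $V_0$) gives $\dim_H(\mathfrak{M}_k\cap V_0)\geq 2(2d+1-k)+k(2-\varepsilon)=2(2d+1)-k\varepsilon$, and letting $\varepsilon\to 0$ yields the lower bound; homogeneity follows because the same estimate holds for \emph{every} open set meeting $\mathfrak{M}_k$.

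The construction of $g$ with large hyperbolic dimension is the heart of the matter, and it is where the tools cited in the introduction enter. First I would perturb $f$ (using Lemma~\ref{lmapproxmis}, or directly Lemma~\ref{lmdimpostcritic}) so that the $k$ active critical points are captured by a hyperbolic set and the remaining $2d-2-k$ critical points lie in attracting basins; this keeps us inside $\mathfrak{M}_k$. The point is then to make the Julia set carry a hyperbolic subset of Hausdorff dimension close to $2$. One natural route, and the one the introduction points to (``Using the work of Shishikura \cite{Shishikura2} on parabolic implosion''), is a \emph{parabolic implosion} argument: first move $g$ to a map with a parabolic cycle — a degenerate boundary phenomenon — and then perturb it so that part of the parabolic basin's boundary is replaced by a hyperbolic set whose dimension approaches $2$, exactly as in Shishikura's proof that $\dim_H(\partial M)=2$. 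Here the parabolic cycle must be created among the \emph{passive} critical points (those in attracting basins), so that the $k$ active ones remain captured in a fixed hyperbolic set throughout the deformation and the map stays $k$-Misiurewicz after the final perturbation. Keeping simple critical points and avoiding flexible Latt\`es maps along the way is a routine genericity matter.

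I expect the main obstacle to be precisely this step: producing $k$-Misiurewicz maps (with simple critical points, not flexible Latt\`es) of hyperbolic dimension arbitrarily close to $2$, while controlling \emph{all} $2d-2$ critical orbits simultaneously. Shishikura's parabolic-implosion machinery delivers hyperbolic sets of near-maximal dimension in any holomorphic family exhibiting a suitable parabolic bifurcation, but one must arrange the parabolic point to sit in the ``inactive'' part of the dynamics and check that the limiting hyperbolic set can be made homogeneous (so that it legitimately computes $\dim_{\textup{hyp}}$) and disjoint from the postcritical set of the active critical points. Once $\dim_{\textup{hyp}}(g)>2-\varepsilon$ is secured for a dense set of $g\in\mathfrak{M}_k$, the rest — plugging into Theorem~\ref{tm2intro} and reading off homogeneity from the fact that the bound is valid near every point of $\mathfrak{M}_k$ — is immediate.
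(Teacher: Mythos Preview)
Your overall framework is right and matches the paper: reduce to Theorem~\ref{tm2intro} and then produce, arbitrarily close to any $f\in\mathfrak{M}_k$, a map $g\in\mathfrak{M}_k$ with $\dim_{\textup{hyp}}(g)\geq 2-\varepsilon$ via Shishikura's parabolic implosion. The gap is in \emph{which} critical point drives the parabolic bifurcation.

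You propose to create the parabolic cycle among the \emph{passive} critical points, keeping the $k$ active ones captured by their hyperbolic set throughout. But keeping the active ones captured means staying inside $\chi^{-1}\{0\}$, and by Lemma~\ref{lmstability} the restricted family $(f_\lambda)_{\lambda\in\chi^{-1}\{0\}}$ is \emph{stable} near $0$: every $f_\lambda$ there is $k$-Misiurewicz, the remaining critical points sit in attracting basins, and no neutral cycle can appear. So you simply cannot manufacture a parabolic bifurcation while staying on that locus and close to $f$. (When $k=2d-2$ your scheme fails for the additional reason that there are no passive critical points at all.)

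The paper does the opposite (this is Lemma~\ref{parabolicimplosion}): it \emph{frees one of the active} critical points, say $c_1$, and works in the larger slice $X=\chi_2^{-1}\{0\}\cap\cdots\cap\chi_k^{-1}\{0\}$. By the weak transversality (Lemma~\ref{lmvariete}) together with Proposition~\ref{lmactif}, $c_1$ is genuinely active at $0$ in $X$, so Ma\~n\'e--Sad--Sullivan produces a nearby $\lambda_1\in X$ with a non-persistent parabolic cycle whose immediate basin is forced to contain $c_1$ (the only non-passive critical point in $X$). Shishikura's implosion then yields $\lambda_2\in X$ with $\dim_{\textup{hyp}}(f_{\lambda_2})>2-\varepsilon$ and a non-persistent neutral cycle; since $c_1$ is still active there, Montel lets one recapture $c_1$ by the moved large hyperbolic set at some $\lambda_3\in X$, and $f_{\lambda_3}\in\mathfrak{M}_k$. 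The case $k=2d-2$ is handled separately without any implosion: then $\J_f=\p^1$, and the identification $\dim_{\textup{hyp}}(f)=\dim_H(\J_{\textup{rad},f})$ together with the fact that $\J_f\setminus\J_{\textup{rad},f}$ is countable for Misiurewicz maps gives $\dim_{\textup{hyp}}(f)=2$ directly, so Theorem~\ref{tm2intro} already finishes.
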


\par When $k=1$, this has already been proved by Aspenberg and Graczyk in \cite{aspenberggraczyk}. When $k<2d-2$, the proof of this result is based on a finer perturbation Lemma than Lemma \ref{lmdimpostcritic} which can be established with parabolic implosion techniques:

~

\begin{lm}
Let $0<\epsilon<1$ and $f\in\mathfrak{M}_k$. Let $V_0\subset\rat_d$ be a neighborhood of $f$. Then we may find $f_t$ arbitrary close to $f$ and a compact $f_t$-hyperbolic set $E_t$ such that $f_t^N(C(f_t)\cap\J_{f_t})\subset E_t$ for some $N\geq1$ and $\dim_H(U_t\cap E_t)\geq2-2\epsilon$ for all open set $U_t$ intersecting $E_t$.
\label{parabolicimplosion}
\end{lm}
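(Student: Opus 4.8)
The plan is to upgrade Lemma~\ref{lmdimpostcritic} by replacing the intrinsic hyperbolic set of $f$ (whose dimension we cannot control beyond $\dim_{\textup{hyp}}(f)$) with a much larger one, of dimension close to $2$, produced by Shishikura's parabolic implosion, and then to redistribute the critical orbits onto it by means of the weak transversality Theorem~\ref{tmtransverseintro}, exactly as in Lemma~\ref{lmapproxmis}. First I would record that a $k$-Misiurewicz map lies in the bifurcation locus: each of its $k$ critical points sitting in $\J_f$ is active at $f$, because its $\omega$-limit set is a nonempty compact subset of $\J_f$, the critical point is not persistently preperiodic in $\rat_d$, and $\J_f$ carries repelling cycles, so the orbit cannot form a normal family. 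Hence, writing $f=f_0$ for a holomorphic family $(f_\lambda)_{\lambda\in\B(0,r)}$ with $f_\lambda\in V_0$ and $2d-2$ marked critical points, Theorem~\ref{tmMSS} gives $0\in\overline{\mathcal N(\B(0,r))}$, and since roots of unity are dense in $\partial\D$, parameters carrying a parabolic cycle are dense near $0$. I fix such a parabolic parameter $g_0$ as close to $f$ as I wish.

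Next I would invoke Shishikura's parabolic implosion theorem (see \cite{Shishikura2}) at $g_0$: in any neighbourhood of $g_0$, and hence of $f$, there is a map $g$ possessing a homogeneous $g$-hyperbolic set $E'\subset\J_g$ with $\dim_H(E')>2-\epsilon$; homogeneity is automatic, as Shishikura constructs $E'$ as the self-conformal limit set of an iterated function system of inverse branches. The delicate point, discussed below, is that the implosion must be performed so that $g$ belongs to $\mathfrak M_k$: its critical points remain simple (an open condition inherited from $f$), $g$ is not a flexible Latt\`es map (those form a proper subvariety, avoided by an arbitrarily small further perturbation of $g$), the $k$ critical points of $g$ lying in $\J_g$ are the continuations of the Misiurewicz critical points of $f_0$ and still have orbits captured by hyperbolic sets, while the remaining $2d-2-k$ critical points stay in attracting basins.

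Granting this, I would finish by running the perturbation argument of Lemma~\ref{lmapproxmis} at $g\in\mathfrak M_k$. Put $g$ in a holomorphic family $(g_\mu)$ with $2d-2$ marked critical points, restrict to a transversal slice $V_g$ so that the fibres $\Pi^{-1}(\Pi(\cdot))$ are finite (Lemma~\ref{lmvariete}), and apply Theorem~\ref{tmtransverseintro} to the activity map. Lemma~\ref{lmapproxmis}, used with $E:=E'$, then produces a nearby $k$-Misiurewicz map $g_{\mu_0}$ and an integer $N\geq1$ with $g_{\mu_0}^{N}(c_j(\mu_0))\in h_{\mu_0}(E')$ for all $1\le j\le k$, where $h$ denotes the dynamical holomorphic motion of $E'$; by item~(3) of Proposition~\ref{fatou} the remaining critical points still lie in attracting basins, so $g_{\mu_0}\in\mathfrak M_k$. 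Finally the bi-H\"older estimate of Lemma~\ref{lmbiholder}, combined with the homogeneity of $E'$, yields $\dim_H\bigl(U\cap h_{\mu_0}(E')\bigr)\ge\frac{1-\tau}{1+\tau}\dim_H(E')\ge 2-2\epsilon$ for every open set $U$ meeting $h_{\mu_0}(E')$, provided all the perturbation sizes (that of $g_0$ around $f$, of $g$ around $g_0$, and $\tau$ for $\mu_0$ around $0$) are chosen small enough. Taking $f_t:=g_{\mu_0}$, $E_t:=h_{\mu_0}(E')$ and keeping $N$ completes the proof.

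The main obstacle is the assertion in the second paragraph. Parabolic implosion readily furnishes hyperbolic sets of Hausdorff dimension arbitrarily close to $2$ near a parabolic parameter, but one must ensure that such a set can be attached to a map of the correct combinatorial type --- $k$-Misiurewicz, with simple critical points and not a flexible Latt\`es map --- so that the transversality machinery of Section~3 applies to it afterwards. In degree $d\ge 3$ this forces one to follow all $2d-2$ critical points through the implosion, not merely the critical point driving it, which is precisely why Theorem~\ref{tmtransverseintro} is used a second time here. A minor additional point, directly read off Shishikura's construction, is the homogeneity of $E'$, which is what produces the conclusion "for all open $U_t$ intersecting $E_t$".
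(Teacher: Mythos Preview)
Your strategy is right in spirit, but the step you yourself flag as ``the main obstacle'' is a genuine gap, not a technicality. Perturbing $f$ inside the full family $\rat_d$ to a nearby parabolic parameter $g_0$ gives you no control over the $k$ Misiurewicz critical points: several of them could fall into the parabolic basin, or become recurrent, and after the implosion there is no reason whatsoever for $g$ to be $k$-Misiurewicz. Moreover, Shishikura's Theorem as invoked here (see \cite{Shishikura2} Theorem~2 and \cite{TanLei} Theorem~1.1) is applied under the hypothesis that the immediate parabolic basin contains exactly one critical point; without arranging this you cannot invoke it, and even if you could, you would not know what the remaining critical points are doing at $g$. Your last paragraph correctly diagnoses the difficulty but does not supply the mechanism that resolves it.

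The paper's resolution is not a second application of Theorem~\ref{tmtransverseintro} \emph{after} the implosion, but rather using transversality \emph{before} and \emph{during} it: one restricts from the outset to the subfamily $X=\chi_2^{-1}\{0\}\cap\cdots\cap\chi_k^{-1}\{0\}$, on which $c_2,\dots,c_k$ follow the dynamical holomorphic motion of the original hyperbolic set (hence stay non-recurrent in $\J_\lambda$) and $c_{k+1},\dots,c_{2d-2}$ remain in attracting basins. By Lemmas~\ref{lmactif} and~\ref{lmvariete}, the sole free critical point $c_1$ is active at $0$ in $X$, so Ma\~n\'e--Sad--Sullivan produces a non-persistent parabolic $\lambda_1\in X$ whose immediate basin can only contain $c_1$. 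Shishikura's implosion is then run \emph{inside $X$}, yielding $\lambda_2\in X$ with $\dim_{\textup{hyp}}(f_{\lambda_2})>2-\epsilon$ and a neutral cycle non-persistent in $X$; this last fact forces $c_1$ to be active at $\lambda_2$ in $X$, and a single Montel argument lands $c_1$ in the moved big hyperbolic set at some $\lambda_3\in X$, which is automatically $k$-Misiurewicz because $c_2,\dots,c_k$ never left their hyperbolic motion. Only then does one run the argument of Lemma~\ref{lmdimpostcritic} (via Lemma~\ref{lmapproxmis}) to push all $k$ critical points into the single large set. The missing idea in your proposal is precisely this restriction to $X$.
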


\begin{proof} Take $(f_\lambda)_{\lambda\in\B(0,r)}$ a holomorphic family of degree $d$ rational maps parametrized by a ball $\B(0,r)\subset\C^{2d+1}$, with $f=f_0$ and $f_\lambda\in V_0$ for all $\lambda\in\B(0,r)$. Recall that we denoted by $\chi(\lambda)=(\chi_1(\lambda),\ldots,\chi_k(\lambda))$ the activity map of $f_0$.
\par Lemma \ref{lmactif}, combined with Lemma \ref{lmvariete}, asserts that the critical point $c_1$ is active at $0$ in $X\pe \chi^{-1}_2\{0\}\cap\cdots\cap\chi^{-1}_k\{0\}\cap\B(0,r)$. Ma\~n\'e-Sad-Sullivan's Theorem gives $\lambda_1\in X$, arbitrary close to $0$, such that $f_{\lambda_1}$ has a non-persistent parabolic cycle. Since every critical point except $c_1$ is passive in $X$, its immediate parabolic basin contains exactly one critical point which must be $c_1(\lambda_1)$. In these precise conditions, Shishikura proved, using fine parabolic implosion techniques (see \cite{Shishikura2} Theorem 2 and \cite{TanLei} Theorem 1.1), that there exists $\lambda_2\in X$, arbitrary close to $\lambda_1$, 
 such that $\dim_{\textup{hyp}}(f_{\lambda_2})>2-\epsilon$ and $f_{\lambda_2}$ has a neutral cycle which is non-persistent in $X$.
\par Let now $E\subset\J_{\lambda_2}$ be a compact homogeneous $f_{\lambda_2}$-hyperbolic set with $\dim_H(E)\geq 2-\epsilon$ and $h:\B(\lambda_2,\rho)\times E\longrightarrow\p^1$ be the dynamical holomorphic motion of $E$ ($\rho\leq r-\|\lambda_2\|$). By Lemma \ref{lmbiholder} we may find $0<\rho'<\rho$ such that
\begin{center}
$\dim_H\big(h_\lambda(E)\cap U_\lambda\big)\geq\dim_H(E)-\epsilon$
\end{center}
for every $\lambda\in\B(\lambda_2,\rho')$ and every open set $U_\lambda$ intersecting $h_\lambda(E)$. Since $f_{\lambda_2}$ has a non-persistent neutral cycle, the parameter $\lambda_2$ is in the bifurcation locus of $X\cap\B(\lambda_2,\rho')$ and the critical point $c_1$ is active at $\lambda_2$. By Montel's Theorem there exists $\lambda_3\in X\cap\B(\lambda_2,\rho')$, arbitrary close to $\lambda_2$, and $N\geq1$ such that $f_{\lambda_3}^N(c_1(\lambda_3))\in h_{\lambda_3}(E)$ and $f_{\lambda_3}$ is $k$-Misiurewicz. To conclude we now proceed as in the proof of Lemma \ref{lmdimpostcritic}.\end{proof}

\textit{Proof of Theorem \ref{tmdimH2}.}  By Theorem 1.1 of \cite{McMullen2}, one has $\dim_\text{hyp}(f)=\dim_H(\J_{\text{rad},f})$ for any $f\in\rat_d$, where $\J_{\text{rad},f}\subset\J_f$ is the set of radial points of $f$ (see \cite{McMullen2} page 541 for the definition). Moreover, Proposition 6.1 of \cite{Urbanski} implies that $\J_f\setminus\J_{\text{rad},f}$ is countable whenever $f$ is Misiurewicz.
\par Let now $f\in\rat_d$ be $k$-Misiurewicz. If $k=2d-2$ then $\J_f=\p^1$ and, by Theorem \ref{tm2intro}, the proof is over (see Proposition \ref{fatou}). Assume now that $k<2d-2$. According to Lemma \ref{parabolicimplosion}, for any $k$-Misiurewicz map $f\in\rat_d$ and any neighborhood $U\subset \rat_d$ of $f$ and any $\epsilon>0$, there exists $g\in\mathfrak{M}_k\cap U$ such that $\dim_\text{hyp}(g)=\dim_H(\J_g)\geq 2-\epsilon$. As $g\in U$, combined with Theorem \ref{tm2intro}, this gives the wanted estimate.\hfill$\Box$

\section{Linearization along infinite repelling orbits}

~

\par We aim here to prove the following linearization process along infinite repelling orbits:

~

\begin{prop}
Let $(f_\lambda)_{\lambda\in\B(0,r)}$ be a holomorphic family of degree $d$ rational maps. Let $E_0\subset\J_0$ be a compact $f_0$-hyperbolic set and $h:\B(0,r)\times E_0\longrightarrow\p^1$ be its dynamical holomorphic motion. Let $w_0\in E_0$ and, for $\lambda\in\mathbb{B}(0,r)$, $w(\lambda)\pe h_\lambda(w_0)$. Then there exists constants $\rho,C>0$ and for $n\geq1$, continuous functions $\rho_n:\B(0,r)\longrightarrow\R_+^*$ and holomorphic injections $\psi^{(n)}_{0,\lambda}:\D(0,\rho_n(\lambda))\rightarrow\D(0,2\rho_n(\lambda))$ and $\psi^{(n)}_{1,\lambda}:\D(0,\rho)\rightarrow\D(0,2\rho)$ holomorphically depending on $\lambda\in\mathbb{B}(0,r)$ and satisfying:
\begin{itemize}
\item $ f^n_\lambda(z+w(\lambda))-f_\lambda^n(w(\lambda))=\psi^{(n)}_{1,\lambda}\Big(\big(f^n_\lambda\big)'(w(\lambda)).\psi^{(n)}_{0,\lambda}(z)\Big)$,
\item $\rho_n(\lambda)\pe\frac{\rho}{2}|(f_\lambda^n)'(w(\lambda))|^{-1}$,
\item $|\psi^{(n)}_{i,\lambda}(z)-z|\leq C|z|^2$,
\end{itemize}
for all $\lambda\in\mathbb{B}(0,r)$ and all $z\in\D(0,\rho_n(\lambda))$, $z\in\D(0,\rho)$. Moreover, we may assume that $C\rho<1$.
\label{corlinearisation}
\end{prop}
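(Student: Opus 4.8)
The plan is to realise $f_\lambda^n$ near $w(\lambda)$ as a non-autonomous Koenigs linearisation along the orbit $w_j(\lambda)\pe h_\lambda(f_0^j(w_0))$, which stays in the hyperbolic set $E_\lambda\pe h_\lambda(E_0)$. In suitable local charts I set, for $j\ge 0$,
\[
\phi_{j,\lambda}(z)\pe f_\lambda\big(z+w_j(\lambda)\big)-w_{j+1}(\lambda),
\]
so that $\phi_{j,\lambda}(0)=0$, $a_j(\lambda)\pe\phi_{j,\lambda}'(0)=f_\lambda'(w_j(\lambda))$ and
\[
f_\lambda^n\big(z+w(\lambda)\big)-f_\lambda^n\big(w(\lambda)\big)=\phi_{n-1,\lambda}\circ\cdots\circ\phi_{0,\lambda}(z)=:F_{n,\lambda}(z),\qquad F_{n,\lambda}'(0)=m_n(\lambda)\pe a_{n-1}(\lambda)\cdots a_0(\lambda).
\]
Applying Lemma \ref{cormvtholo} to the points $f_0^j(w_0)\in E_0$, there are a uniform $\varepsilon>0$ and inverse branches $\phi_{j,\lambda}^{-1}$, jointly holomorphic on $\B(0,r)\times\D(0,\varepsilon)$, with values in $\D(0,\varepsilon/K)$ and satisfying $\tfrac1B|z-w|\le|\phi_{j,\lambda}^{-1}(z)-\phi_{j,\lambda}^{-1}(w)|\le\tfrac1K|z-w|$; in particular $K\le|a_j(\lambda)|\le B$. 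Since $\p^1$ and $\overline{\B(0,r)}$ are compact, the Taylor coefficients of $\phi_{j,\lambda}$ and of $\phi_{j,\lambda}^{-1}$ at $0$ are bounded independently of $j,\lambda$; shrinking $\varepsilon$ one may assume $|\phi_{j,\lambda}^{-1}(w)-w/a_j(\lambda)|\le M_0|w|^2$ and $|\phi_{j,\lambda}(v)|\le|a_j(\lambda)|\,|v|+M_0|v|^2$ on $\D(0,\varepsilon)$, with $M_0$ uniform.

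Next I would construct, by forward induction on $j$ and for $0<\rho\le\varepsilon$ to be fixed, holomorphic maps $\theta_{j,\lambda}:\D(0,\rho)\to\C$ by $\theta_{0,\lambda}\pe\id$ and $\theta_{j+1,\lambda}(w)\pe a_j(\lambda)\,\theta_{j,\lambda}\big(\phi_{j,\lambda}^{-1}(w)\big)$. Because $\phi_{j,\lambda}^{-1}(\D(0,\rho))\subset\D(0,\rho/K)\subset\D(0,\rho)$, this is well defined, each $\theta_{j,\lambda}$ is jointly holomorphic, fixes $0$ with derivative $1$ there, and the defining relation $\theta_{j+1,\lambda}\circ\phi_{j,\lambda}=a_j(\lambda)\,\theta_{j,\lambda}$ telescopes to
\[
\theta_{n,\lambda}\circ F_{n,\lambda}=m_n(\lambda)\cdot\id
\]
on the set where the intermediate iterates $F_{j,\lambda}(z)$, $j\le n$, stay in $\D(0,\rho)$. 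Granting the uniform estimates below, the Proposition then follows with the simplest possible choice
\[
\psi_{0,\lambda}^{(n)}\pe\id\big|_{\D(0,\rho_n(\lambda))},\qquad \psi_{1,\lambda}^{(n)}\pe\theta_{n,\lambda}^{-1}\big|_{\D(0,\rho)},\qquad \rho_n(\lambda)\pe\tfrac{\rho}{2}|m_n(\lambda)|^{-1},
\]
since the first bullet is exactly the telescoped identity and, for $z\in\D(0,\rho_n(\lambda))$, $|m_n(\lambda)z|\le\rho/2<\rho$, so the right-hand side is legitimate.

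The delicate part, and the step I expect to be the main obstacle, is that \emph{all constants must be uniform in $n$}; two controls are needed. First, the quadratic errors: writing $\theta_{j,\lambda}(w)=w+e_{j,\lambda}(w)$ and $\eta_j\pe\sup_{\lambda,\,|w|\le\rho}|e_{j,\lambda}(w)|\,|w|^{-2}$, one computes $e_{j+1,\lambda}(w)=a_j(\lambda)r_{j,\lambda}(w)+a_j(\lambda)\,e_{j,\lambda}(\phi_{j,\lambda}^{-1}(w))$ with $|r_{j,\lambda}|\le M_0|w|^2$; using $|\phi_{j,\lambda}^{-1}(w)|\le|w|/|a_j(\lambda)|+M_0|w|^2$ and $|a_j(\lambda)|\ge K>1$ one gets, after fixing $\rho$ small,
\[
\eta_{j+1}\le A+\theta\,\eta_j,\qquad A\pe\tfrac{2B}{K\varepsilon},\qquad \theta<1,
\]
the key point being that \emph{pulling back through an expanding map shrinks the quadratic error by the factor $|a_j(\lambda)|^{-1}\le K^{-1}<1$}. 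As $\eta_0=0$ this forces $\sup_j\eta_j\le C'\pe A/(1-\theta)<\infty$, with $C'$ depending only on the family. Second, the domains: for $z\in\D(0,\rho_n(\lambda))$ one must check $F_{j,\lambda}(z)\in\D(0,\rho)$ for all $j\le n$. A telescoping distortion estimate gives $|F_{j,\lambda}(z)|\le|m_j(\lambda)|\,|z|\exp\big(\tfrac{M_0}{K}\sum_{i<j}|F_{i,\lambda}(z)|\big)$, and here one uses that $\sum_{i<j}|m_i(\lambda)|\le|m_j(\lambda)|/(K-1)$ — a \emph{geometric} sum, since $|m_{i+1}(\lambda)|/|m_i(\lambda)|=|a_i(\lambda)|\ge K$ — so the exponent is $\le M_0\rho/\big(K(K-1)\big)$ uniformly in $n$; shrinking $\rho$ once more, a bootstrap yields $|F_{j,\lambda}(z)|\le 2|m_j(\lambda)|\,|z|\le\rho\,|m_j(\lambda)|/|m_n(\lambda)|\le\rho$. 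This legitimises the telescoped identity on $\D(0,\rho_n(\lambda))$, the routine bookkeeping with inverse branches being harmless since all $\phi_{i,\lambda}^{-1}$ live on the fixed disc $\D(0,\varepsilon)$.

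It remains to package the conclusion. With $C'\rho$ small, each $\theta_{n,\lambda}$ is injective on $\D(0,\rho)$; running the construction on a slightly larger disc $\D(0,\rho^\ast)$ with $\rho^\ast\big(1-C'\rho^\ast\big)\ge\rho$ ensures $\theta_{n,\lambda}(\D(0,\rho^\ast))\supset\D(0,\rho)$, so $\psi_{1,\lambda}^{(n)}=\theta_{n,\lambda}^{-1}$ is a holomorphic injection of $\D(0,\rho)$ into $\D(0,2\rho)$, depends holomorphically on $\lambda$ by the holomorphic inverse function theorem, and satisfies $|\psi_{1,\lambda}^{(n)}(w)-w|\le 4C'|w|^2$. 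Taking $C\pe 4C'$ (the bound for $\psi_{0,\lambda}^{(n)}=\id$ being trivial) and noting that $\lambda\mapsto\rho_n(\lambda)=\tfrac{\rho}{2}|m_n(\lambda)|^{-1}$ is continuous since $|m_n(\lambda)|\ge K^n>0$, one shrinks $\rho$ a final time to secure $C\rho<1$, which completes the proof.
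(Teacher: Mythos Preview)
Your argument is correct and furnishes a genuinely different, somewhat more economical, proof than the paper's. The paper proceeds in two separate pieces: first (Lemma~\ref{lmbrancheinverse}) it \emph{extends} the finite chain of inverse branches along $w(\lambda),f_\lambda(w(\lambda)),\dots,f_\lambda^n(w(\lambda))$ to an \emph{infinite} chain of uniform contractions $(g_{j,\lambda})_{j\ge1}$, using that the forward orbit of $w_0$ is recurrent in the compact set $E_0$; then (Theorem~\ref{lmlinearisation2}) it runs a non-autonomous Koenigs argument, defining $\varphi_{j,\lambda}$ as a \emph{limit} of the partial products $l_{j}^{-1}\circ\cdots\circ l_{j+N-1}^{-1}\circ g_{j+N-1}\circ\cdots\circ g_{j}$ as $N\to\infty$. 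Cutting the resulting commuting diagram at level $n$ produces two genuinely nontrivial linearizers $\psi^{(n)}_{0,\lambda}=\varphi_{n+1,\lambda}$ and $\tilde\psi^{(n)}_{1,\lambda}=\varphi_{1,\lambda}$.

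What you observe, in effect, is that one never needs to pass to the limit: your $\theta_{n,\lambda}$ is exactly the paper's partial product $\varphi_{1,n,\lambda}=l_1^{-1}\circ\cdots\circ l_n^{-1}\circ g_n\circ\cdots\circ g_1$ (rewritten forwards as $m_n(\lambda)\cdot F_{n,\lambda}^{-1}$), and the uniform quadratic bound follows from the same geometric mechanism the paper isolates in the estimate $(\mathcal{A}_n)$, repackaged as your recursion $\eta_{j+1}\le A+\theta\,\eta_j$. This eliminates the need for both the recurrence trick of Lemma~\ref{lmbrancheinverse} (the inverse branches for $j>n$ are never used) and the convergence step, and it yields the particularly clean normalisation $\psi^{(n)}_{0,\lambda}=\id$. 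The price is a slight loss of symmetry: the paper's construction distributes the nonlinearity between two correctors, which is conceptually closer to a genuine non-autonomous linearization and may be of independent interest, whereas in your version all of it is absorbed into $\psi^{(n)}_{1,\lambda}$. For the applications downstream (Lemma~\ref{lmlinbif}, Lemma~\ref{lmconv}) your simpler form would work equally well and would in fact shorten a few computations. One small point of care in your write-up: the disc on which Lemma~\ref{cormvtholo} provides the inverse branch is centred at $f_0^{j+1}(w_0)$, not at $w_{j+1}(\lambda)$, so a preliminary shrinking of $r$ (or of $\varepsilon$) via the uniform continuity of $h$ is needed before your translated $\phi_{j,\lambda}^{-1}$ is defined on a $\lambda$-independent disc $\D(0,\varepsilon')$; this is harmless but should be said.
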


~

\par The proof uses two ingredients. The first one is a linearization principle for chains of contractions. The second one is a construction of "good" inverse branches along an orbit in $E_0$ to which apply this principle.

\subsection{Linearization for chains of contractions}

\begin{defi}
Let $a\leq b<0$. A $(a,b)$-\emph{chain of holomorphic contractions} is a sequence of holomorphic maps

\begin{center}
$\xymatrix {\relax
g_j:\D(0,\eta) \ar[r] & \D(0,\eta)}$
\end{center}
for which there exists $a_j\leq b_j<0$ such that $e^a|z|\leq e^ {a_j}|z|\leq|g_j(z)|\leq e^ {b_j}|z|\leq e^b|z|$, for all $z\in\D(0,\eta)$. We say that the chain \emph{depends holomorphically} on $\lambda\in\B(0,r)$ if $g_j(z)=g_{j,\lambda}(z)$ is holomorphic on $(\lambda,z)\in\B(0,r)\times\D(0,\eta)$ for each $j\geq1$.
\end{defi}

~

\par The linearization principle for chains of contractions is a version of the classical Koenigs Theorem in the non-autonomous setting. The idea of the proof is essentially the same as for the classical Theorem of Koenigs. In a more general setting, Berteloot, Dupont and Molino establish a similar result (see \cite{BDM}).

~

\begin{tm}[Linearization]
Let $a\leq b<0$ and $\eta\leq1$. Let $(g_{j,\lambda})_{j\geq1}$ be a $(a,b)$-chain of holomorphic contractions on $\D(0,\eta)$ which holomorphically depend on $\lambda\in\B(0,r)$. Denote by $l_{j,\lambda}$ the linear part of $g_{j,\lambda}$ at $0$ and assume that $2b_j-a_j\leq-\delta$ for all $j\geq1$ and some $\delta>0$. Then there exists a constant $0<\rho\leq\eta/2$ and a sequence $\varphi_{j,\lambda}:\D(0,\rho)\longrightarrow\D(0,2\rho)$ of holomorphic injections holomorphically depending on $\lambda\in\mathbb{B}(0,r)$ such that the following diagram commutes:

\begin{center}
$\xymatrix {\relax
\D(0,\rho) \ar[r]^{g_{1,\lambda}} \ar[d]^{\varphi_{1,\lambda}} & \D(0,\rho) \ar[r]^{g_{2,\lambda}} \ar[d]^{\varphi_{2,\lambda}} & \cdots \ar[r]^{g_{j-1,\lambda}} & \D(0,\rho) \ar[r]^{g_{j,\lambda}} \ar[d]^{\varphi_{j,\lambda}} & \D(0,\rho) \ar[r]^{g_{j+1,\lambda}} \ar[d]^{\varphi_{j+1,\lambda}} & \cdots\\
\D(0,2\rho) \ar[r]^{l_{1,\lambda}} & \D(0,2\rho) \ar[r]^{l_{2,\lambda}} & \cdots \ar[r]^{l_{j-1,\lambda}} & \D(0,2\rho) \ar[r]^{l_{j,\lambda}} & \D(0,2\rho) \ar[r]^{l_{j+1,\lambda}}  & \cdots}$
\end{center}

Moreover, there exists $C_0>0$ such that
\begin{center}
$|\varphi_{j,\lambda}(z)-z|\leq\displaystyle\frac{2}{\eta}e^{b-a}C_0|z|^2$, $\forall\ j\geq1$, $\forall\ \lambda\in\mathbb{B}(0,r)$, $\forall\ z\in\D(0,\rho)$.
\end{center} \label{lmlinearisation2}
\end{tm}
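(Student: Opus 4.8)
~~~

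The plan is to mimic the classical proof of Koenigs' linearization theorem, but carried out simultaneously over the whole chain and over the parameter ball $\B(0,r)$. The linearizers $\varphi_{j,\lambda}$ should be produced as locally uniform limits of the normalized iterates
\begin{center}
$\varphi_{j,\lambda}^{(N)}\pe (l_{j+N,\lambda}\circ\cdots\circ l_{j+1,\lambda})^{-1}\circ(g_{j+N,\lambda}\circ\cdots\circ g_{j+1,\lambda})$
\end{center}
on a suitable small disc $\D(0,\rho)$. Here the $(a,b)$-chain hypothesis gives $e^{Na}|z|\leq|g_{j+N,\lambda}\circ\cdots\circ g_{j+1,\lambda}(z)|\leq e^{Nb}|z|$, so that the composition $G$ of the $g$'s shrinks geometrically, while the hypothesis $2b_j-a_j\leq-\delta$ is exactly what is needed to control the inverse of the linear composition against the square of $G$: writing $g_{i,\lambda}(z)=l_{i,\lambda}(z)+O(z^2)$ uniformly (a Cauchy-estimate on $\D(0,\eta)$, valid with a constant $C_0$ independent of $i$ and $\lambda$ since all $g_i$ map $\D(0,\eta)$ into itself), one estimates $\varphi_{j,\lambda}^{(N+1)}-\varphi_{j,\lambda}^{(N)}$ and finds that the telescoping series converges geometrically with ratio governed by $e^{2b_j-a_j}\leq e^{-\delta}$.

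\medskip

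The concrete steps are: (1) fix $\rho\le\eta/2$ small enough that all the relevant inverse linear maps $l_{i,\lambda}^{-1}$ (which exist since $|l_{i,\lambda}(z)|\ge e^{a_i}|z|>0$) stay defined on the discs one needs, and that the compositions $G_{j,\lambda}^{(N)}\pe g_{j+N,\lambda}\circ\cdots\circ g_{j+1,\lambda}$ map $\D(0,\rho)$ into $\D(0,\rho)$ — this is immediate from $b<0$. (2) Establish the one-step distortion bound $|g_{i,\lambda}(z)-l_{i,\lambda}(z)|\le C_0|z|^2$ on $\D(0,\rho)$, with $C_0$ uniform in $i$ and $\lambda$, by applying the Cauchy estimates to $z\mapsto g_{i,\lambda}(z)-l_{i,\lambda}(z)$, which vanishes to order $2$ at $0$ and is bounded by $2\eta$ on $\D(0,\eta)$. (3) Show $\sum_N\|\varphi_{j,\lambda}^{(N+1)}-\varphi_{j,\lambda}^{(N)}\|_{\D(0,\rho)}<\infty$ uniformly in $j$ and $\lambda$: one uses the identity $\varphi^{(N+1)}-\varphi^{(N)}=(L^{(N)})^{-1}\circ\big[(l_{j+N+1,\lambda})^{-1}\circ g_{j+N+1,\lambda}-\mathrm{id}\big]\circ G^{(N)}$ (where $L^{(N)}$ is the composition of the $l$'s), bounds $\|(l_{j+N+1,\lambda})^{-1}\circ g_{j+N+1,\lambda}(w)-w\|\le C_0 e^{-a_{j+N+1}}|w|^2$, feeds in $|G^{(N)}(z)|\le e^{\sum b}|z|$ and $\|(L^{(N)})^{-1}w\|\le e^{-\sum a}|w|$, and collapses the exponents using $2b_i-a_i\le-\delta$. (4) Conclude that $\varphi_{j,\lambda}\pe\lim_N\varphi_{j,\lambda}^{(N)}$ exists, is holomorphic in $(\lambda,z)$ by Weierstrass, satisfies the conjugacy $\varphi_{j+1,\lambda}\circ g_{j,\lambda}=l_{j,\lambda}\circ\varphi_{j,\lambda}$ (pass to the limit in the obvious relation between $\varphi^{(N)}_j$ and $\varphi^{(N-1)}_{j+1}$), has derivative $1$ at $0$ (each $\varphi^{(N)}_j$ does), hence is injective on a possibly smaller disc — and by the telescoping bound, $|\varphi_{j,\lambda}(z)-z|\le\frac{2}{\eta}e^{b-a}C_0|z|^2$, so injectivity on $\D(0,\rho)$ with image in $\D(0,2\rho)$ follows from Rouch\'e once $\rho$ is small. (5) Finally check the inclusion $\varphi_{j,\lambda}(\D(0,\rho))\subset\D(0,2\rho)$, which is immediate from that same quadratic estimate and $C_0\rho$ small.

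\medskip

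The main obstacle is the \emph{uniformity} of all estimates — in the index $j$, in $N$, and in $\lambda\in\B(0,r)$ — rather than any single inequality: one must be careful that the constant $C_0$ from the Cauchy estimate genuinely does not depend on $j$ (it does not, since every $g_j$ has the same source and target disc $\D(0,\eta)$ and the same bound $e^b\le 1$ there), and that the exponent bookkeeping $\sum(2b_i-a_i)\le -N\delta$ really kills the growth $e^{-\sum a_i}$ coming from inverting the linear parts. Once the per-step bound is seen to be of the form (constant)$\cdot e^{-\delta N}\cdot|z|$, the rest is a routine normal-families argument; packaging the limit as holomorphic in $\lambda$ is then automatic from uniform convergence on compact subsets of $\B(0,r)\times\D(0,\rho)$.
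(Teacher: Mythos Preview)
Your proposal is correct and follows essentially the same Koenigs-type approach as the paper: define the approximate linearizers as $(L^{(N)})^{-1}\circ G^{(N)}$, obtain a uniform one-step Cauchy bound $|g_{i,\lambda}(z)-l_{i,\lambda}(z)|\le C|z|^2$, and show the telescoping series converges geometrically via $2b_i-a_i\le-\delta$. The only cosmetic differences are that the paper organizes the telescoping estimate as an induction on $n$ (shifting the index $j$) rather than via your explicit identity $(L^{(N)})^{-1}\circ[l^{-1}_{j+N+1}g_{j+N+1}-\id]\circ G^{(N)}$, and proves injectivity from $|\varphi'_{j,\lambda}-1|\le\frac12$ rather than Rouch\'e; also note your indexing of $\varphi_j^{(N)}$ (starting the composition at $g_{j+1}$) is shifted by one relative to the diagram, which is harmless.
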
 

\begin{proof} For any $z\in\D(0,\eta)$ and $\lambda\in\mathbb{B}(0,r)$ we have
\begin{center}
$\displaystyle|g_{j,\lambda}(z)-l_{j,\lambda}(z)|=\Bigg|\sum_{n\geq2}\frac{g_{j,\lambda}^{(n)}(0)}{n!}z^n\Bigg|\leq|z|^2\sum_{n\geq2}\frac{|g_{j,\lambda}^{(n)}(0)|}{n!}.|z|^{n-2}$.
\end{center} 
By Cauchy's inequalities we get $\sum\limits_{n\geq2}\frac{|g_{j,\lambda}^{(n)}(0)|}{n!}.|z|^{n-2}\leq e^b\sum\limits_{n\geq2}\frac{|z|^{n-2}}{\eta^{n-1}}=\frac{e^{b_j}}{\eta}\sum\limits_{k\geq0}\Big(\frac{|z|}{\eta}\Big)^{k}$ and thus
\begin{eqnarray}
\displaystyle|g_{j,\lambda}(z)-l_{j,\lambda}(z)|\leq2\frac{e^{b_j}}{\eta}|z|^2\textup{ for }z\in\D(0,\frac{\eta}{2})\textup{, }\lambda\in\mathbb{B}(0,r)\textup{ and }j\geq1.
\label{lmlinearisation1}
\end{eqnarray} 

~

\par For $j\geq1$, $n\geq1$, $\lambda\in\mathbb{B}(0,r)$ and $z\in\D(0,\eta)$, we set:
\begin{eqnarray*}
\varphi_{j,n,\lambda}(z)\pe l_{j,\lambda}^{-1}\circ\cdots\circ l_{j+n-1,\lambda}^{-1}\circ g_{j+n-1,\lambda}\circ\cdots\circ g_{j,\lambda}(z)
\end{eqnarray*} 
and $\varphi_{j,0,\lambda}=\id$. By induction on $n\geq0$, we will establish the assertion
\begin{eqnarray*}
(\mathcal{A}_n) \ : \ \hspace{0.5cm} \forall j\geq1,\ \forall z\in\D(0,\frac{\eta}{2}), \ |\varphi_{j,n+1,\lambda}(z)-\varphi_{j,n,\lambda}(z)|\leq \frac{2}{\eta}e^{b-a-n\delta}|z|^2.
\end{eqnarray*}
By \ref{lmlinearisation1} we have $(\mathcal{A}_0)$ : $\displaystyle |\varphi_{j,1,\lambda}(z)-\varphi_{j,0,\lambda}(z)|\leq e^{-a}|g_{j,\lambda}(z)-l_{j,\lambda}(z)|\leq \frac{2}{\eta}e^{b-a}|z|^2$.
\par Suppose now that $(\mathcal{A}_n)$ is true. Since $|g_{j,\lambda}(z)|\leq e^{b_j}|z|\leq\eta/2$ when $z\in\D(0,\eta/2)$ we get
\begin{eqnarray*}
|\varphi_{j+1,n+1,\lambda}(g_j(z))-\varphi_{j+1,n,\lambda}(g_j(z))|\leq\frac{2}{\eta}e^{b-a-n\delta}|g_{j,\lambda}(z)|^2\leq\frac{2}{\eta}e^{b-a-n\delta+2b_j}|z|^2,
\end{eqnarray*} 
which, post-composing by $l_{j,\lambda}^{-1}$, yields $(\mathcal{A}_{n+1})$.

~

\par As $b<0,$ the sequence $(\varphi_{j,n,\lambda})_{n\geq1}$ uniformly converges to a holomorphic function $\varphi_{j,\lambda}=\id+\sum_{n\geq0}(\varphi_{j,n+1,\lambda}-\varphi_{j,n,\lambda})$ on $\D(0,\frac{\eta}{2})\times\B(0,r)$. Moreover, setting $C_0\pe (1-e^\delta)^{-1}$ and $\rho\pe\frac{1}{2}\min\big(\frac{\eta}{2},\frac{\eta}{2e^{b-a}C_0}\big)$, one has
\begin{eqnarray}
\displaystyle|\varphi_{j,\lambda}(z)-z|\leq\frac{2}{\eta}e^{b-a}C_0|z|^2\leq\frac{1}{2}|z|\textup{ for }z\in\D(0,2\rho)\textup{ and }\lambda\in\mathbb{B}(0,r).\label{equationlinearisation1}
\end{eqnarray} 
Making $n\rightarrow\infty$ in $\varphi_{j+1,n,\lambda}\circ g_{j,\lambda}=l_{j,\lambda}\circ \varphi_{j,n+1,\lambda}$ we get $\varphi_{j+1,\lambda}\circ g_{j,\lambda}=l_{j,\lambda}\circ \varphi_{j,\lambda}$.
\par Let us set $\psi_{j,\lambda}(z)\pe \varphi_{j,\lambda}(z)-z$. The first inequality in \ref{equationlinearisation1} gives $|\psi_{j,\lambda}'(z)|\leq\frac{1}{2}$ on $\D(0,\rho)$. This implies that $\varphi_{j,\lambda}$ is injective on $\D(0,\rho)$.\end{proof}

\subsection{Families of inverse branches}

\par Let us return to the case where $(f_\lambda)_{\lambda\in\B(0,r)}$ is a holomorphic family and $E_0\subset\J_0$ is a compact $f_0$-hyperbolic subset of $\p^1$. Let $h$ be the dynamical holomorphic motion of $E_0$. Using the compactness of $E_0$, we will construct inverse branches of $f_\lambda$, at any depth, along the orbit of $h_\lambda(w_0)$.

~

\begin{lm}
The assumptions and notations are those of Proposition \ref{corlinearisation}. Let $n\geq1$, $w_0\in E_0$ and $w(\lambda)\pe h_\lambda(w_0)$. There exists constants $B>K>1$, $\eta>0$, independent of $w_0$ and $n$, and a sequence $(f_{\lambda,j}^{-1})_{j\geq1}$ of inverse branches of $f_\lambda$ holomorphically depending on $\lambda\in\mathbb{B}(0,r)$ such that
\begin{enumerate}
\item $f_{\lambda,j}^{-1}$ is defined on $\D(z_j(\lambda),\eta)$ for every $j\geq1$ and $\lambda\in\mathbb{B}(0,r)$, where 
\begin{center}
$z_j(\lambda)\pe f_{\lambda,j-1}^{-1}\circ\cdots\circ f_{\lambda,1}^{-1}(f_\lambda^n(w(\lambda)))$,
\end{center}
\item $z_{i}(\lambda)=f_\lambda^{n-i+1}(w(\lambda))$ for every $1\leq i\leq n+1$,
\item for any $j\geq1$, $\lambda\in\mathbb{B}(0,r)$ and $z\in\D(z_j(\lambda),\eta)$ one has
\begin{eqnarray}
\frac{1}{B}|z-z_{j}(\lambda)|\leq|f_{\lambda,j}^{-1}(z)-z_{j+1}(\lambda)|\leq\frac{1}{K}|z-z_{j}(\lambda)|.\label{lipschitz}
\end{eqnarray}
\end{enumerate}
\label{lmbrancheinverse}
\end{lm}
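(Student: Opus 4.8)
The plan is to assemble the whole sequence $(f^{-1}_{\lambda,j})_{j\ge1}$ from the inverse branches already produced by Lemma \ref{cormvtholo}, cutting the construction at $j=n+1$: for $1\le j\le n$ the branch $f^{-1}_{\lambda,j}$ is read off the forward orbit $w_0,f_0(w_0),\dots,f_0^{n}(w_0)\subset E_0$, and for $j\ge n+1$ it is read off a backward orbit of $w_0$ that stays inside $E_0$. The uniform constants will just be the hyperbolicity data: $K>1$ the hyperbolicity constant of $E_0$, $B:=\sup_{\lambda\in\B(0,r),\,z\in\mathcal N_\delta}|f_\lambda'(z)|$, and $\eta:=\varepsilon_0/2$, where $\varepsilon_0$ is the radius furnished by Lemma \ref{cormvtholo}; none of these depends on $w_0$ or $n$. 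Shrinking $r$ once and for all, we may also assume that $|h_\lambda(p)-p|<\eta$ for all $p\in E_0$ and $\lambda\in\B(0,r)$, and that $E_\lambda=h_\lambda(E_0)\subset\mathcal N_\delta$.

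For $1\le j\le n$ set $f^{-1}_{\lambda,j}:=f^{-1}_{f_0^{n-j}(w_0),\lambda}$, the branch given by Lemma \ref{cormvtholo} with base point $w_0$ and depth $k=n-j+1\ge1$; it is holomorphic in $\lambda$ and defined on $\B(0,r)\times\D(f_0^{n-j+1}(w_0),\varepsilon_0)$. Using the semiconjugacy relation of Lemma \ref{cormvtholo} together with $f^m_\lambda\circ h_\lambda=h_\lambda\circ f^m_0$ on $E_0$, a one-line induction gives $z_j(\lambda)=h_\lambda\big(f_0^{n-j+1}(w_0)\big)=f_\lambda^{n-j+1}(w(\lambda))$ for $1\le j\le n+1$; this is precisely conclusion (2), and in particular $z_{n+1}(\lambda)=w(\lambda)$. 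Since $z_j(\lambda)$ is then within $\eta$ of $f_0^{n-j+1}(w_0)\in E_0$, the disc $\D(z_j(\lambda),\eta)$ is contained in the domain $\D(f_0^{n-j+1}(w_0),\varepsilon_0)$ of $f^{-1}_{\lambda,j}$, which is conclusion (1); and conclusion (3) is the bi-Lipschitz estimate of Lemma \ref{cormvtholo} applied with one argument equal to $z_j(\lambda)$, whose image under $f^{-1}_{\lambda,j}$ is $z_{j+1}(\lambda)$.

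For the remaining levels, fix a full backward orbit $\omega_0=w_0,\omega_{-1},\omega_{-2},\dots$ of $w_0$ with $\omega_{-m}\in E_0$ and $f_0(\omega_{-m})=\omega_{-m+1}$ for all $m\ge1$. Such an orbit is available once $E_0$ is surjectively invariant, which we may arrange by replacing $E_0$ with the compact $f_0$-hyperbolic set $\bigcap_{\ell\ge0}f_0^\ell(E_0)$ (it is still $f_0$-hyperbolic with the same constants, carries the restriction of $h$, and contains $w_0$ in the situations where the lemma is used, in particular whenever $w_0$ is periodic). Put $f^{-1}_{\lambda,n+m}:=f^{-1}_{\omega_{-m},\lambda}$, the depth-one branch of Lemma \ref{cormvtholo} at $\omega_{-m}\in E_0$; it is defined on $\D(\omega_{-m+1},\varepsilon_0)$ and the semiconjugacy relation reads $f^{-1}_{\omega_{-m},\lambda}\big(h_\lambda(\omega_{-m+1})\big)=h_\lambda(\omega_{-m})$. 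Since $z_{n+1}(\lambda)=w(\lambda)=h_\lambda(\omega_0)$, an induction yields $z_{n+m}(\lambda)=h_\lambda(\omega_{-m+1})$ for every $m\ge1$; this point is within $\eta$ of $\omega_{-m+1}\in E_0$, so $\D(z_{n+m}(\lambda),\eta)$ lies in the domain of $f^{-1}_{\lambda,n+m}$, giving conclusion (1), while conclusion (3) is once more the bi-Lipschitz estimate of Lemma \ref{cormvtholo}. Holomorphy in $\lambda$ and the independence of $B,K,\eta$ from $w_0$ and $n$ are manifest.

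The only genuinely delicate point is the one isolated in the previous paragraph: guaranteeing that $w_0$ admits a full backward orbit contained in a set on which $f_\lambda$ is uniformly expanding with constants independent of $w_0$ and $n$. I would dispose of it by the reduction to $\bigcap_{\ell\ge0}f_0^\ell(E_0)$ described above, which is harmless in every use made of the lemma; with that in hand the proof is a mechanical iteration of Lemma \ref{cormvtholo} together with the bookkeeping identity asserting that $z_j(\lambda)$ is the $h_\lambda$-image of the corresponding point of $E_0$.
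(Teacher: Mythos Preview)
Your treatment of the first $n$ branches agrees with the paper's and is fine. The problem is the extension to $j\ge n+1$. You propose to follow a full backward orbit of $w_0$ inside $E_0$, and to make this available you pass to $F:=\bigcap_{\ell\ge0}f_0^\ell(E_0)$. But this reduction is not harmless: you need $w_0\in F$, and in the situations where the lemma is actually applied this typically fails. In Proposition~\ref{corlinearisation} and then in Section~6 one takes $w_0=f_0^{k_0}(c_i(0))$, the first point at which the critical orbit enters the hyperbolic set $E_0=\overline{\{f_0^n(c_j(0)):n\ge k_0\}}$. For a Misiurewicz map with an \emph{infinite} postcritical orbit (precisely the case the paper is set up to handle, cf.\ the introduction), this $w_0$ has no preimage in $E_0$ at all, hence $w_0\notin f_0(E_0)$ and a fortiori $w_0\notin F$. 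So the backward orbit you need simply does not exist, and your argument breaks down exactly at the step you flagged as ``the only genuinely delicate point''.

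The paper circumvents this by going forward rather than backward. By compactness of $E_0$ one finds $m\ge1$ with $|f_0^{m}(w_0)-w_0|$ small (after first replacing $w_0$ by a suitable forward iterate). One then continues the sequence $(z_j(\lambda))_{j\ge n+1}$ by repeatedly pulling back along the \emph{forward} orbit segment $w_0,f_0(w_0),\dots,f_0^{m-1}(w_0)$, looping through it indefinitely: after reaching $z_{n+1}(\lambda)=w(\lambda)$ one uses the branch based at $f_0^{m-1}(w_0)$, then $f_0^{m-2}(w_0)$, and so on. A short induction shows $z_{n+km+r+1}(\lambda)$ stays within a geometrically summable distance of $f_\lambda^{m-r}(w(\lambda))$, so all the $z_j(\lambda)$ remain in the domains where Lemma~\ref{cormvtholo} applies. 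No surjective invariance of $E_0$ is needed, only forward invariance and compactness.
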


\begin{proof} Let $K>1$ be the hyperbolicity constant of $E_0$ and $\mathcal{N}_\delta$ be a $\delta$-neighborhood of $E_0$ in $\p^1$. Up to reducing $r$, $\delta$ and $K$ we may assume that $|f_\lambda'(z)|\geq K$ for all $(\lambda,z)\in\B(0,r)\times\mathcal{N}_\delta$. Set $B\textup{:=}\max_{\lambda\in\mathbb{B}(0,r),z\in\mathcal{N}_\delta}|f_\lambda'(z)|$. Let us reformulate Lemma \ref{cormvtholo}. There exists $\alpha>0$ such that $f_{f_0^{q-1}(w_0),\lambda}^{-1}$ is defined on $\B(0,r)\times\D(f_0^q(w_0),\alpha)$ for any $q\geq1$ and satisfies
\begin{itemize}
	\item $f_\lambda^{q-1}(w(\lambda))=f_{f_0^{q-1}(w_0),\lambda}^{-1}\circ f_\lambda^q(w(\lambda))$ for every $\lambda\in\B(0,r)$.
	\item $\frac{1}{B}|z-w|\leq|f_{f_0^{q-1}(w_0),\lambda}^{-1}(z)-f_{f_0^{q-1}(w_0),\lambda}^{-1}(w)|\leq\frac{1}{K}|z-w|$ for $\lambda\in\B(0,r)$ and $z,w\in\D(f_0^q(w_0),\alpha)$.
\end{itemize}

~

\par Fix $\varepsilon>0$ such that $\frac{K\varepsilon}{K-1}<\frac{\alpha}{4}$. By compactness of $E_0$ we may find $n_0\geq0$ and $m\geq1$ such that $|f_0^{m+n_0}(w_0)-f_0^{n_0}(w_0)|\leq\frac{\varepsilon}{2}$. It is clearly sufficient to make the construction for $f_0^{n_0}(w_0)$. We thus will assume that $|f_0^{m}(w_0)-w_0|\leq\frac{\varepsilon}{2}$ and, after shrinking $r$, that:
\begin{center}
$|f_\lambda^m(w(\lambda))-w(\lambda)|\leq\varepsilon$ for every $\lambda\in\mathbb{B}(0,r)$.
\end{center} 
Set $z_i(\lambda)\pe f^{n-i+1}_\lambda(w(\lambda))$ and $f_{\lambda,i}^{-1}(z)\pe f_{f_0^{n-i}(w_0),\lambda}^{-1}(z)$ for $1\leq i\leq n$, $z_{n+1}(\lambda)\pe w(\lambda)$. Denote $\varepsilon_k\pe \varepsilon\big(1+\frac{1}{K^m}+\ldots+\frac{1}{K^{km}}\big)$, then $\varepsilon_k<\frac{\alpha}{4}$ by our choice of $\varepsilon$.

~

\par Set $\eta\pe\alpha/4$. Observe that the map $f_{\lambda,n}^{-1}$ is defined on $\D(f_\lambda(w(\lambda)),\eta)$ and takes values in $\D(w(\lambda),\eta/K)\subset\D(f^m_\lambda(w(\lambda)),\alpha/2)$. Since $z_{n+1}(\lambda)=w(\lambda)$ we obviously have $|z_{n+1}(\lambda)-f^m_\lambda(w(\lambda))|\leq \varepsilon_0=\varepsilon$. According to the above reformulation of Lemma \ref{cormvtholo}, we may take $f_{\lambda,n+1}^{-1}\pe f_{f_0^{m-1}(w_0),\lambda}^{-1}$ on $\D(z_{n+1}(\lambda),\eta)$. By induction one easily sees that the inverse branches $f_{\lambda,j}^{-1}$ exist, that the points $z_j(\lambda)$ are well-defined for $j\geq n+1$ and satisfy
\begin{eqnarray*}
|z_{j}(\lambda)-f_\lambda^{m-r}(w(\lambda))|\leq\frac{\varepsilon_k}{K^r}
\label{inegkj}
\end{eqnarray*}
when $j$ is of the form $j=n+km+r+1$ where $0\leq r\leq m-1$ and $k\geq0$. We may moreover observe that there exists $1\leq r(j)\leq m$ such that $z_j(\lambda)\in\D(f_\lambda^{r(j)}(w(\lambda)),\eta)$ for all $j\geq1$.\end{proof}

\subsection{Linearization along repelling orbits}
We are ready for proving Proposition \ref{corlinearisation}:

~

\par \textit{Proof of Proposition \ref{corlinearisation}.} Recall that $w_0\in E_0$ is fixed and that $w(\lambda)= h_\lambda(w_0)$. Let $K$, $B$, $f_{\lambda,j}^{-1}$ and $z_j(\lambda)$ be given by Lemma \ref{lmbrancheinverse}. In particular $z_1(\lambda)=f_\lambda^n(w(\lambda))$. For $j\geq1$, $\lambda\in\mathbb{B}(0,r)$ and $z\in\D(0,\eta)$ we set:
\begin{center}
$g_{j,\lambda}(z)\pe f_{\lambda,j}^{-1}(z+z_j(\lambda))-z_{j+1}(\lambda)$,
\end{center}
\begin{center}
$l_{j,\lambda}(z)\pe \big(f_{\lambda,j}^{-1}\big)'(z_j(\lambda)). z$.
\end{center}
Up to a translation $g_{j,\lambda}$ is an inverse branch of $f_\lambda$. Let us stress that one must consider the full sequence $(g_{j,\lambda})_{j\geq1}$ for applying Theorem \ref{lmlinearisation2}. We then set
\begin{center}
$a_j\pe\log\inf\limits_{\genfrac{}{}{0pt}{}{|z|\leq\eta}{\|\lambda\|\leq r}}|g_{j,\lambda}'(z)|$, \ $a\pe\inf_{j\geq1}a_j$, \ $b_j\pe\log\sup\limits_{\genfrac{}{}{0pt}{}{|z|\leq\eta}{\|\lambda\|\leq r}}|g_{j,\lambda}'(z)|$ \ and \ $b\pe\sup_{j\geq1}b_j$.
\end{center} 
According to Lemma \ref{lmbrancheinverse}, for every $\lambda\in\mathbb{B}(0,r)$, $j\geq1$ and $z\in\D(0,\eta)$ we have:
\begin{eqnarray}
\frac{1}{B}|z|\leq e^a|z|\leq e^{a_j}|z|\leq|g_{j,\lambda}(z)|\leq e^{b_j}|z|\leq e^b|z|\leq \frac{1}{K}|z|.
\label{lipgj1}
\end{eqnarray} 
Shrinking $\eta$, we may assume that $e^{2b_j-a_j}\leq e^{b/2}$ for any $j\geq1$ and taking the $\sup_{j\geq1}$ that $\sup_{j\geq1}e^{2b_j-a_j}\leq e^{b/2}$.

~

\par We will denote by $f_\lambda^{-n}$ the inverse branch of $f_\lambda^n$ satisfying $f_\lambda^{-n}(f_\lambda^n(w(\lambda)))=w(\lambda)$ obtained by composing the branches given by Lemma \ref{lmbrancheinverse}. We now apply Theorem \ref{lmlinearisation2} to the sequence $(g_{j,\lambda})_{j\geq1}$. This yields a chain linearization which we cut at $j=n$. We find in this way holomorphic injections $\tilde\psi^{(n)}_{1,\lambda}$ and $\psi^{(n)}_{0,\lambda}$ defined on $\D(0,\rho)$ and taking values in $\D(0,2\rho)$ holomorphically depending on $\lambda\in\B(0,r)$ and such that
\begin{eqnarray}
\psi^{(n)}_{0,\lambda}\Big(f_\lambda^{-n}\big(z+f_\lambda^n(w(\lambda))\big)-w(\lambda)\Big)=\big((f_\lambda^n)'(w(\lambda))\big)^{-1}\tilde\psi^{(n)}_{1,\lambda}(z)
\label{eqfonc}
\end{eqnarray}
for every $\lambda\in\B(0,r)$ and $z\in\D(0,\rho)$. Moreover, there exists $C>0$ such that
\begin{center}
$|\psi_{0,\lambda}^{(n)}(z)-z|\leq C|z|^2$ and $|\tilde \psi_{1,\lambda}^{(n)}(z)-z|\leq C|z|^2$
\end{center}
for any $\lambda\in\mathbb{B}(0,r)$ and $z\in\D(0,\rho)$. Up to reducing $\rho$ we may assume that $C\rho<1$ and $\psi^{(n)}_{1,\lambda}\pe\big(\tilde{\psi}^{(n)}_{1,\lambda}\big)^{-1}$ is defined and injective on $\D(0,\rho)$. Changing $C$ and reducing $\rho$ again, we also have $|\psi_{1,\lambda}^{(n)}(z)-z|\leq C|z|^2$. Moreover, for $z\in\D(0,\rho_n(\lambda))$, we have $|\psi^{(n)}_{0,\lambda}(z)|\leq2\rho_n(\lambda)$ and $|(f_\lambda^n)'(w(\lambda))\psi^{(n)}_{0,\lambda}(z)|\leq\rho$. To end the proof of the proposition, it remains to invert \ref{eqfonc}.

\section{A criterion for self-intersections of the bifurcation current}

~

\par We wish here to establish a criterion for a rational map to belong to the support of $T_\bif^k$. When $f_0$ has $q$ critical points eventually falling in a compact $f_0$-hyperbolic set, we show that $f_0\in\supp(T_\bif^q)$ as soon as the fibers of the activity map defined in section $3$ have codimension $q$. This leads to a sufficient condition for a $q$-Misiurewicz rational map to lie in the support of $T_\bif^q$, which is precisely the statement of Theorem \ref{tmtransverseintro}.

\subsection{No self-intersections for the bifurcation current of a critical point} The aim of this subsection is to prove that in any holomorphic family of rational maps, the bifurcation current of a critical point never has self-intersections. This has already been proved by Dujardin and Favre for polynomal families (see \cite{favredujardin} Proposition 6.9). As a consequence, we see that for a parameter to lie in the support of the $k$-th self-intersection $T_\bif^q$, at least $q$ distinct critical points need to be active at this parameter.

~

\begin{tm}
Let $(f_\lambda)_{\lambda\in X}$ be a holomorphic family of degree $d$ rational maps with $2d-2$ marked critical points. Assume that there exists a holomorphic family $(F_\lambda)_{\lambda\in X}$ of polynomial lifts of the family $(f_\lambda)_{\lambda\in X}$ and that the $2d-2$ marked critical points admit holomorphic lifts $\tilde c_j:X\longrightarrow\C^2\setminus\{0\}$ for which $\textup{det}D_zF_\lambda=\prod_{j=1}^{2d-2} \tilde c_j(\lambda)\wedge z$. Then
\begin{center}
$dd^cG_\lambda(\tilde c_i(\lambda))\wedge dd^cG_\lambda(\tilde c_i(\lambda))\equiv0$.
\end{center}
In particular, for any $1\leq q\leq \max(\dim_\C X,2d-2)$, we get
\begin{center}
$T_\bif^q=\displaystyle\sum_{i_1\neq i_2\neq\cdots\neq i_q}\bigwedge_{j=1}^qdd^cG_\lambda(\tilde c_{i_j}(\lambda))$.
\end{center}
\label{tmddcg}
\end{tm}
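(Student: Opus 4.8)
The plan is to derive the whole statement from the vanishing of the self-intersection of $dd^c u_i$, where $u_i(\lambda):=G_\lambda(\tilde c_i(\lambda))$. For $n\ge 1$ put $u_{i,n}(\lambda):=d^{-n}\log\|F_\lambda^n(\tilde c_i(\lambda))\|$. Since each $F_\lambda$ is a non-degenerate homogeneous endomorphism of $\C^2$ one has $F_\lambda^{-1}(0)=\{0\}$, hence $F_\lambda^n(\tilde c_i(\lambda))\in\C^2\setminus\{0\}$ for every $\lambda\in X$; thus $\lambda\mapsto F_\lambda^n(\tilde c_i(\lambda))$ is a holomorphic map $X\to\C^2\setminus\{0\}$ and $u_{i,n}$ is a continuous, hence locally bounded, plurisubharmonic function on $X$. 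The key remark is that, denoting by $\omega$ the Fubini--Study form on $\p^1$ normalised so that $dd^c\log\|z\|=\pi^*\omega$ on $\C^2\setminus\{0\}$, and using that $dd^c$ commutes with pull-back by holomorphic maps together with the intertwining relation $\pi\circ F_\lambda^n=f_\lambda^n\circ\pi$, one obtains
\[
dd^c u_{i,n}=d^{-n}\,g_{i,n}^{\,*}\omega,\qquad\text{where }g_{i,n}(\lambda):=f_\lambda^n(c_i(\lambda)).
\]
As $\p^1$ has complex dimension one we have $\omega\wedge\omega\equiv 0$, whence $dd^c u_{i,n}\wedge dd^c u_{i,n}=d^{-2n}\,g_{i,n}^{\,*}(\omega\wedge\omega)=0$ for every $n$; all the forms here are smooth, so no intersection-theoretic subtlety arises at this stage.

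I would then pass to the limit. By Proposition \ref{propGreen} (more precisely, by the standard telescoping estimate for the Green function of a homogeneous endomorphism, which is geometrically fast and locally uniform in the family), $u_{i,n}$ converges to $u_i=G_\lambda(\tilde c_i(\lambda))$ locally uniformly on $X$, and the sequence $(u_{i,n})_n$ is locally uniformly bounded. Continuity of the Bedford--Taylor operator under locally uniform convergence of locally bounded plurisubharmonic functions then gives
\[
dd^c u_{i,n}\wedge dd^c u_{i,n}\ \longrightarrow\ dd^c u_i\wedge dd^c u_i
\]
weakly as currents. Combined with the previous step, this yields $dd^c u_i\wedge dd^c u_i\equiv 0$, which is exactly the first assertion $dd^c G_\lambda(\tilde c_i(\lambda))\wedge dd^c G_\lambda(\tilde c_i(\lambda))\equiv 0$.

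For the second assertion I would argue locally on a small ball $U\subset X$, the identity being local in nature. On $U$ the hypothesis $\det D_zF_\lambda=\prod_{j=1}^{2d-2}\tilde c_j(\lambda)\wedge z$ says precisely that the $\tilde c_j(\lambda)$ are the marked critical points of $F_\lambda$, so DeMarco's formula (\ref{Demarco2}) reads $T_\bif|_U=\sum_{j=1}^{2d-2}dd^c u_j$ with all $u_j$ continuous plurisubharmonic. Expanding the $q$-th exterior power and using that the wedge product of closed positive currents with continuous local potentials is commutative and associative --- so that the only requirement for these products to make sense is $q\le\dim_\C X$ --- every monomial $dd^c u_{j_1}\wedge\cdots\wedge dd^c u_{j_q}$ in which an index is repeated, say $j_a=j_b$ with $a\ne b$, can be reordered so as to contain the factor $dd^c u_{j_a}\wedge dd^c u_{j_a}=0$ and hence vanishes. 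Only the monomials with pairwise distinct indices survive, which is the claimed formula; and when $q>2d-2$ the right-hand side is the empty sum while the left-hand side vanishes by the pigeonhole principle, so the identity holds trivially as well.

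The argument is essentially formal once one notices that $dd^c u_{i,n}$ is the pull-back of a form carried by the curve $\p^1$. The only point requiring a little care is the convergence $u_{i,n}\to u_i$: it must be strong enough for the Bedford--Taylor intersection to be continuous, and locally uniform convergence of a locally bounded sequence is what one needs --- which is exactly what the classical estimates on the Green function provide. Let us also note that the first assertion never uses that $\tilde c_i$ is a critical point: it holds for any holomorphic section $\tilde c_i\colon X\to\C^2\setminus\{0\}$.
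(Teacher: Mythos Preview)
Your proof is correct and takes a genuinely different route from the paper's own argument. The paper, following Bassanelli--Berteloot, slices down to a $2$-dimensional affine subspace $V$, solves the Dirichlet problem for $g|_V$ on a ball, and then uses the Briend--Duval mass estimate together with the harmonicity of $g|_V$ along the curves $\Per(n)=\{\lambda:f_\lambda^n(c_j(\lambda))=c_j(\lambda)\}$ to conclude that the Monge--Amp\`ere measure $(dd^c g|_V)^2$ vanishes. Your argument bypasses all of this: you observe that the smooth approximants $dd^c u_{i,n}$ are literally pull-backs of the Fubini--Study form from the one-dimensional space $\p^1$, so their self-intersection vanishes for dimension reasons, and then you pass to the limit via the continuity of the Bedford--Taylor product under locally uniform convergence. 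This is shorter and more transparent; the paper's approach, on the other hand, is closer in spirit to the general ``bifurcation currents have laminar structure'' philosophy and would adapt to situations where no such convenient smooth approximation by pull-backs from a curve is available. Your closing remark that the argument never uses that $\tilde c_i$ is a critical point is also worth noting: the vanishing of $(dd^c G_\lambda(\sigma(\lambda)))^2$ holds for any holomorphic section $\sigma:X\to\C^2\setminus\{0\}$.
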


\begin{proof} The idea of the proof is the same as the one used by Bassanelli and Berteloot to prove Fact page 224 of \cite{BB1}. If $c_j$ is passive on $X$ or if $\dim X\leq 1$, there is nothing to do. We thus assume that $\dim X=m\geq2$ and $dd^cG_\lambda(\tilde c_j(\lambda))\neq0$ on $X$. Set $g(\lambda)\pe G_\lambda(\tilde c_j(\lambda))$ and consider $\lambda_0\in\supp(dd^cg)$. As the desired property is local, we may assume that $X=\C^m$. Let $\B(\lambda_0,r)$ be a ball of $\C^m$ centered at $\lambda_0$.
\par Let $V$ be a $2$-dimensional affine subspace of $\B(\lambda_0,r)$ and let $\mu_V\pe(dd^c(g|_V))^2$. By a slicing argument, $\mu_V$ is the slice of $(dd^cg)^2$ by $V$. To prove that $(dd^cg)^2=0$ on $\B(\lambda_0,r)$, it suffices to show that $\mu_V$ vanishes on $\frac{1}{2}\B$ for any ball $\B\Subset V$, for any $2$-dimensional affine subspace $V$ of $\B(\lambda_0,r)$.

~

\par Let $h$ be the solution to the Dirichlet-Monge-Amp\`ere problem on $\B$ with data $g|_V$ on the boundary. The function $h$ is continuous on $\overline{\B}$, concides with $g|_V$ on $\partial\B$ and is maximal $p.s.h$ on $\B$ (see \cite{BedfordTaylor}). By maximality of $h$ one has $g|_V\leq h$ on $\overline{\B}$. For $\varepsilon>0$, we set
\begin{center}
$\mathcal{S}_\varepsilon\pe\{\lambda\in\frac{1}{2}\B$ / $0\leq h(\lambda)-g|_V(\lambda)\leq\varepsilon\}$.
\end{center} 
By a Theorem of Briend-Duval (see \cite{Sibony} Theorem A.10.2), there exists a constant $C$ only depending on $g|_V$ and $\B$ such that $\mu_V(\mathcal{S}_\varepsilon)\leq C\varepsilon$. To show that $\mu_V$ vanishes on $\frac{1}{2}\B$, it thus suffices to prove that $\supp(\mu_V)\cap\frac{1}{2}\B\subset\mathcal{S}_\varepsilon$ for any $\varepsilon>0$.

~

\par Set now $\Per(n)\pe\{\lambda\in \B(\lambda_0,r)$ / $f_\lambda^n(c_j(\lambda))=c_j(\lambda)\}$ for $n\geq1$. As the activity locus of $c_j$ intersects $\B(\lambda_0,r)$, the analytic sets $\Per(n)$ are curves of $\B(\lambda_0,r)$ and in $\Per(n)$ one has $\partial\big(\Per(n)\cap\B\big)=\Per(n)\cap\partial\B$.
\par Since for any $n\geq1$ the function $g|_V$ is harmonic on the curve $\Per(n)\cap\B$, the function $h-g|_V$ is subharmonic on $\Per(n)\cap\B$. As $h-g|_V$ vanishes at every  point of $\Per(n)\cap\partial\B$, the maximum principle yields $h\leq g|_V$ on $\Per(n)\cap\B$ and the maximality of $h$ on $\B$ gives $h-g|_V\equiv0$ sur $\Per(n)\cap\B$.
\par By Montel's Theorem, we have $\supp(dd^c(g|_V))\subset\overline{\bigcup_{n\geq1}\Per(n)}$. The function $h-g|_V$ being continuous, this implies $h-g|_V\equiv 0$ on $\supp(dd^c(g|_V))\cap\frac{1}{2}\B$. Finally, by definition of $\mu_V$, we get $\supp(\mu_V)\subset\supp(dd^c(g|_V))$, which means that $h-g|_V\equiv 0$ on $\supp(\mu_V)\cap\frac{1}{2}\B$. Thus we have shown that $\supp(\mu_V)\cap\frac{1}{2}\B\subset\mathcal{S}_\varepsilon$ for any $\varepsilon>0$.\end{proof}

\subsection{Statement and preliminaries}

\par Here we establish our criterion. We will use the notations introduced at the beginning of the subsection $3.1$. Let $(f_\lambda)_{\lambda\in\B(0,r)}$ be a holomorphic family of degree $d$ rational maps with $2d-2$ marked critical points parametrzed by a ball of $\C^m$. For $\lambda\in \B(0,r)$ we denote by $F_\lambda$ a non-degenerate homogeneous polynomial lift of $f_\lambda$ to $\C^2$. Up to reducing $r$, we may assume that the marked critical points $c_i(\lambda)$ admit holomorphic lifts $\tilde c_i(\lambda)$ to $\C^2$ and that $F_\lambda$ holomorphically depends on $\lambda$. Our precise result may be stated as follows:

~

\begin{tm}
Let $(f_\lambda)_{\lambda\in \B(0,r)}$ be a holomorphic family of degree $d$ rational maps with $2d-2$ marked critical points parametrized by a ball of $\C^m$. Assume that there is $k_0\geq1$ and $1\leq q\leq m$ such that $\overline{\{f_0^n(c_j(0))/ n\geq k_0\text{ and }1\leq j\leq q\}}$ is a compact $f_0$-hyperbolic set. Let $\chi$ be the activity map of $(f_\lambda)_{\lambda\in\B(0,r)}$ at $\lambda=0$. If $\chi^{-1}\{0\}$ has pure codimension $q$, then $f_0\in\textup{supp}(dd^cG_\lambda(\tilde c_1(\lambda))\wedge\cdots\wedge dd^cG_\lambda(\tilde c_q(\lambda)))$.\label{tmcourantsbif}
\end{tm}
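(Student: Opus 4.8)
The goal is to show that the positive closed $(q,q)$-current $\mu\pe dd^cG_\lambda(\tilde c_1(\lambda))\wedge\cdots\wedge dd^cG_\lambda(\tilde c_q(\lambda))$ charges every neighborhood of $\lambda=0$. I would first reduce to the case $m=q$. Since $\chi^{-1}\{0\}$ has pure codimension $q$, a generic affine $q$-plane $\Sigma\ni 0$ meets $\chi^{-1}\{0\}$ only at $0$ near $0$; restricting the family to $\Sigma$, the map $\chi|_\Sigma:\Sigma\to\C^q$ becomes a finite holomorphic map at $0$ with $(\chi|_\Sigma)^{-1}\{0\}=\{0\}$, and a dimension count forces each component $\chi_j|_\Sigma\not\equiv 0$. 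As the currents $dd^cG_\lambda(\tilde c_j(\lambda))$ have continuous potentials, for generic $\Sigma$ slicing commutes with the wedge, so $\mu|_\Sigma=\bigwedge_j\big(dd^cG_\lambda(\tilde c_j(\lambda))\big)\big|_\Sigma$, and $0\in\supp(\mu|_\Sigma)$ implies $0\in\supp(\mu)$; note that $E_0$ is unchanged and still $f_0$-hyperbolic. So we may assume $m=q$ and that $\chi:\B(0,r)\subset\C^q\to\C^q$ is finite at $0$ with $\chi^{-1}\{0\}=\{0\}$ and each $\chi_j\not\equiv0$.

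Next I would set up the renormalization. Applying Proposition \ref{corlinearisation} at each point $w_0^{(j)}\pe f_0^{k_0}(c_j(0))\in E_0$ yields, for every $n\geq1$, holomorphic injections $\psi^{(n),j}_{0,\lambda},\psi^{(n),j}_{1,\lambda}$ close to the identity with
\[
f^{k_0+n}_\lambda(c_j(\lambda))-h_\lambda\big(f^{k_0+n}_0(c_j(0))\big)=\psi^{(n),j}_{1,\lambda}\Big(m_{n,j}(\lambda)\,\psi^{(n),j}_{0,\lambda}\big(\chi_j(\lambda)\big)\Big)=:\chi^{(n)}_j(\lambda),
\]
where $m_{n,j}(\lambda)=(f^n_\lambda)'(\nu_{0,j}(\lambda))$, valid on a ball $\B(0,r_n)$ whose radius $r_n$ shrinks to $0$ (like $B^{-n}$) so that the condition $|\chi_j(\lambda)|<\tfrac{\rho}{2}|m_{n,j}(\lambda)|^{-1}$ of Proposition \ref{corlinearisation} holds. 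Because the $\psi^{(n),j}_{i,\lambda}$ are injective, fix $0$, and $m_{n,j}(\lambda)\neq0$, one has $(\chi^{(n)})^{-1}\{0\}\cap\B(0,r_n)=\chi^{-1}\{0\}\cap\B(0,r_n)=\{0\}$ for $n$ large; thus the renormalized map $\chi^{(n)}\pe(\chi^{(n)}_1,\ldots,\chi^{(n)}_q):\B(0,r_n)\to\C^q$ is finite, hence open, at $0$, so $\chi^{(n)}(\B(0,r_n))$ is an open neighborhood of $0\in\C^q$.

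Now I would conclude. The functional equation $G_\lambda\circ F_\lambda=dG_\lambda$ gives $G_\lambda(\tilde c_j(\lambda))=d^{-(k_0+n)}G_\lambda\big(F^{k_0+n}_\lambda\tilde c_j(\lambda)\big)$, whence
\[
\mu=d^{-q(k_0+n)}\bigwedge_{j=1}^q dd^c_\lambda\Big[G_\lambda\big(F^{k_0+n}_\lambda\tilde c_j(\lambda)\big)\Big]\qquad\text{on }\B(0,r_n).
\]
Since $F^{k_0+n}_\lambda\tilde c_j(\lambda)$ is a lift of $f^{k_0+n}_\lambda(c_j(\lambda))=h_\lambda\big(f^{k_0+n}_0(c_j(0))\big)+\chi^{(n)}_j(\lambda)$, the right-hand side is the pull-back, under the graph map $\lambda\mapsto\big(\lambda,(f^{k_0+n}_\lambda(c_j(\lambda)))_{j}\big)$, of the product of $q$ copies of the current $dd^c_{(\lambda,z)}G_\lambda(\widehat z)$ on $\B(0,r)\times\p^1$. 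The similarity between the parameter and dynamical spaces is the statement that, read in the renormalized coordinates $\chi^{(n)}$, this measure is comparable — via the finite covering $\chi^{(n)}$ — to the pull-back of that product current near the point $\big(0,(f^{k_0+n}_0(c_j(0)))_j\big)\in\B(0,r)\times E_0^q$. Each fibre-slice of the product current being the product measure $\mu_{f_\lambda}^{\otimes q}$, whose support $\J_{f_\lambda}^q$ contains $E_\lambda^q$, and $\chi^{(n)}(\B(0,r_n))$ being an open neighborhood of $0$, one obtains $\mu(\B(0,r_n))>0$. Since $r_n\to0$, every neighborhood of $0$ contains some $\B(0,r_n)$, so $\mu$ does not vanish near $0$, i.e. $f_0\in\supp\big(dd^cG_\lambda(\tilde c_1(\lambda))\wedge\cdots\wedge dd^cG_\lambda(\tilde c_q(\lambda))\big)$.

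The main obstacle is making the comparison of the previous paragraph precise: one must show that the restriction of the product current to the graph of $\lambda\mapsto\big(\lambda,(f^{k_0+n}_\lambda(c_j(\lambda)))_j\big)$ is a genuine, nonzero positive measure on $\B(0,r_n)$. This rests, on one hand, on the quantitative linearization of Proposition \ref{corlinearisation} to control $\chi^{(n)}_j(\lambda)$ as close to $m_{n,j}(0)\chi_j(\lambda)$ on $\B(0,r_n)$, so that $\chi^{(n)}$ is uniformly open at $0$; and on the other hand on the weak transversality input $\textup{codim}\ \chi^{-1}\{0\}=q$, which guarantees that the graph is not swallowed by the polar locus of the product current, so that $\bigwedge_j dd^c_\lambda\big[G_\lambda(F^{k_0+n}_\lambda\tilde c_j(\lambda))\big]$ is a nonzero positive measure and not $0$. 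Carrying out this renormalization estimate, together with the bookkeeping of the error terms (the $\psi$'s being only close to the identity, $m_{n,j}(\lambda)$ being only close to $m_{n,j}(0)$, and $h_\lambda$ being only Hölder), is the technical core of the argument.
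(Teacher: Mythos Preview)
Your outline tracks the paper's proof closely in its first two moves: the reduction to $m=q$ via a generic affine slice, and the use of the functional equation $G_\lambda\circ F_\lambda=dG_\lambda$ to write
\[
\mu^{(q)}\big|_{\Omega_n}=d^{-q(k_0+n)}\bigwedge_{j=1}^q dd^c_\lambda\,G_\lambda\big(\tilde\xi_{n,j}(\lambda)\big),
\]
together with the linearization of Proposition~\ref{corlinearisation} to express $\xi_{n,j}(\lambda)-\nu_{n,j}(\lambda)$ in terms of $m_{n,j}(\lambda)\chi_j(\lambda)$. Your observation that $(\chi^{(n)})^{-1}\{0\}=\chi^{-1}\{0\}=\{0\}$ is correct and matches the paper's use of $\chi:\Omega_n\to E_n$ as a branched cover of fixed degree $p$.

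The gap you yourself flag in your final paragraph is real, and your proposed heuristic for closing it does not work as stated. Writing $\mu^{(q)}$ as the pull-back to the graph $\lambda\mapsto\big(\lambda,(\xi_{n,j}(\lambda))_j\big)$ of the product of $q$ copies of $dd^c_{(\lambda,z)}G_\lambda(\widehat z)$ is tautological, but the slices of that product current in the $z$-directions are $\mu_{f_\lambda}^{\otimes q}$ only when $\lambda$ is \emph{fixed}. On your graph, $\lambda$ varies simultaneously with the $z$-coordinates, so the fact that $\chi^{(n)}(\B(0,r_n))$ is an open neighborhood of $0$ and that $\J_{f_\lambda}^q\supset E_\lambda^q$ gives no direct lower bound on the wedge product: the potentials $G_\lambda(\tilde\xi_{n,j}(\lambda))$ depend on $\lambda$ through \emph{both} the Green function and its argument, and there is no a priori reason the resulting $(q,q)$-current should be nonzero.

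The paper resolves this by pushing forward rather than pulling back. One considers the finite branched cover $D_n\circ\chi:\Omega_n\to\D_\epsilon^q$, where $D_n$ is the diagonal dilation by $(m_{n,1}(0),\ldots,m_{n,q}(0))$, and evaluates $(D_n\circ\chi)_*\mu^{(q)}$ on a Besicovitch cover of $\D_\epsilon^q$ by balls avoiding the branch locus. On each such ball the inverse branches $S_{n,l}$ satisfy $S_{n,l}\to 0$ uniformly (this is where the distortion Lemma~\ref{lmdistorsion} and item~4 of Proposition~\ref{propchi} enter), so the $\lambda$-dependence of $G_\lambda$ is killed in the limit. The linearization then gives $\xi_{n,j}\circ S_{n,l}(x)\to p_j(x_j)$ with $p_j$ a local biholomorphism sending $0$ to a point $z_j\in\J_0$. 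Hence
\[
G_{S_{n,l}(x)}\big(\tilde\xi_{n,j}\circ S_{n,l}(x)\big)\longrightarrow G_0\big(\sigma_j(p_j(x_j))\big),
\]
a function of $x_j$ alone, and by Fubini the limit measure is $\prod_{j=1}^q\mu_{f_0}(p_j(\D(0,\epsilon)))>0$ since each $p_j(\D(0,\epsilon))$ is an open neighborhood of $z_j\in\J_0$. Combined with Chern--Levine--Nirenberg bounds to control the tails of the Besicovitch sum, this yields $d^{q(k_0+n)}\mu^{(q)}(\Omega_n)\to p\cdot\ell>0$, hence $0\in\supp(\mu^{(q)})$.

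In short: the missing idea in your sketch is that one must force the parameter to $0$ before reading off positivity from the dynamical measure. This is achieved not by restricting to shrinking balls $\B(0,r_n)$ but by precomposing with the inverse branches of $D_n\circ\chi$, which is what decouples the Green-function parameter from its argument in the limit.
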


\par Our method for proving Theorem \ref{tmcourantsbif} is directly inspired by the work of Buff and Epstein \cite{buffepstein}. We give here a more general criterion. Let us remark that if $f_0$ is geometrically finite and $\chi$ is invertible, the proof of Theorem \ref{tmcourantsbif} is exactly the same as the one of Buff and Epstein. Lemma \ref{lmapproxmis} guarantees that any $q$-Misiurewicz map can be approximated by geometrically finite $q$-Misiurewicz maps and therefore, it suffices to prove Theorem \ref{tmcourantsbif} in the geometrically finie case. Making the proof for any $q$-Misiurewicz map allows us to establish estimates for the pointwise dimension of the bifurcation measure at Misiurewicz maps (see section \ref{sectionmubifloc}).

~

\par \textit{Proof of Theorem \ref{tm2intro}.} Let $f$ be a $k$-Misiurewicz map which is not a flexible Latt\`es map. Up to taking a finite branch cover of $\rat_d$, we can assume that the family has $2d-2$ marked critical points in a neighborhood of $f$. Assume that $c_1,\ldots,c_k\in\J_f$. Since $c_{k+1},\ldots,c_{2d-2}$ are passive at $f$, Theorem \ref{tmddcg}  implies that $f\notin\supp(T_\bif^{k+1})$ and gives
\begin{eqnarray*}
T_{\textup{bif}}^k=k!\bigwedge_{i=1}^kdd^cG_\lambda(\tilde c_i(\lambda))
\end{eqnarray*}
in a neighborhood of $f$. B Theorem \ref{tmcourantsbif} and Lemma \ref{lmvariete}, we have $f\in\supp(T_\bif^k)$.\hfill$\Box$

~
 
\par After replacing $\C^m$ by a transversal subspace $\C^q$ to $\chi^{-1}\{0\}$, the situation may be reduced to the case where $m=q$ and $\chi^{-1}\{0\}$ is discrete:

~

\begin{lm}
Let $1\leq q\leq m$ and let $u_1,\ldots,u_q$ be continuous psh functions on a ball $\B(0,r)\subset\C^m$. If $0\in\supp\big(dd^c(u_1|_{\C^q})\wedge\cdots\wedge dd^c(u_q|_{\C^q})\big)$, then $0\in\supp\big(dd^cu_1\wedge\cdots\wedge dd^cu_q\big)$.
\end{lm}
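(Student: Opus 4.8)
The plan is to argue by contraposition: I will show that if $\bigwedge_{i=1}^{q}dd^{c}u_{i}$ vanishes on a neighbourhood of $0$ in $\C^{m}$, then $\bigwedge_{i=1}^{q}dd^{c}(u_{i}|_{\C^{q}})$ vanishes on a neighbourhood of $0$ in $\C^{q}$. The only substantial input is the compatibility of the Bedford--Taylor product of continuous psh functions with slicing along a linear fibration, together with the continuity of that product under local uniform convergence (see \cite{BedfordTaylor} and \cite{dinhsibony2}).

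First I would fix coordinates so that $\C^{m}=\C^{q}\times\C^{m-q}$ with $\C^{q}=\C^{q}\times\{0\}$, let $\pi:\C^{m}\to\C^{m-q}$ be the second projection, and for $w\in\C^{m-q}$ let $\iota_{w}:\C^{q}\to\C^{m}$ be $\iota_{w}(z')=(z',w)$, with fibre $V_{w}\pe\iota_{w}(\C^{q})=\pi^{-1}(w)$. Since each $u_{i}$ is continuous, $u_{i}\circ\iota_{w}$ is a continuous psh function on the relevant open subset of $\C^{q}$, so $\sigma_{w}\pe\bigwedge_{i=1}^{q}dd^{c}(u_{i}\circ\iota_{w})$ is a well-defined positive measure (on $\C^{q}$, equivalently on $V_{w}$ via $\iota_{w}$). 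The key fact I would invoke is that, for continuous psh potentials, $\sigma_{w}$ is for almost every $w$ exactly the slice $\langle\bigwedge_{i=1}^{q}dd^{c}u_{i},\pi,w\rangle$; equivalently, the positive measure $\nu\pe\bigl(\bigwedge_{i=1}^{q}dd^{c}u_{i}\bigr)\wedge\pi^{*}\bigl((dd^{c}\|w\|^{2})^{m-q}\bigr)$ on $\C^{m}$ disintegrates over $\C^{m-q}$ as $\int_{\C^{m-q}}\sigma_{w}\,d\mathrm{Leb}(w)$.

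Granting this, the argument is short. Suppose $\bigwedge_{i=1}^{q}dd^{c}u_{i}\equiv 0$ on $\B(0,\varepsilon)\subset\C^{m}$; then $\nu\equiv 0$ on $\B(0,\varepsilon)$, and by positivity of the $\sigma_{w}$ and the disintegration, $\sigma_{w}$ vanishes on $\B(0,\varepsilon)\cap V_{w}$ for Lebesgue-almost every $w$. Pick a sequence $w_{n}\to 0$ inside that full-measure set; for $n$ large, $\B(0,\varepsilon)\cap V_{w_{n}}$ contains the ball $\B'\pe\{\|z'\|<\varepsilon/2\}\subset\C^{q}\cong V_{w_{n}}$, hence $\sigma_{w_{n}}\equiv 0$ on $\B'$. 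Now $u_{i}\circ\iota_{w_{n}}\to u_{i}|_{\C^{q}}$ locally uniformly on $\C^{q}$ by continuity of $u_{i}$, so by continuity of the Monge--Amp\`ere operator along uniformly convergent sequences of continuous psh functions, $\sigma_{w_{n}}\to\bigwedge_{i=1}^{q}dd^{c}(u_{i}|_{\C^{q}})$ weakly on $\B'$. The limit therefore vanishes on $\B'$, i.e. $0\notin\supp\bigl(\bigwedge_{i=1}^{q}dd^{c}(u_{i}|_{\C^{q}})\bigr)$, which is the contrapositive of the claim.

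The main obstacle is purely the pluripotential-theoretic bookkeeping: justifying the slicing/disintegration identity $\langle\bigwedge dd^{c}u_{i},\pi,w\rangle=\bigwedge dd^{c}(u_{i}|_{V_{w}})$ and the weak continuity of such products under local uniform convergence. Both are standard for locally bounded (in particular continuous) potentials, but they do genuinely use the continuity hypothesis on the $u_{i}$ --- the restriction of $\bigwedge dd^{c}u_{i}$ to a single complex plane need not even be defined for general psh potentials --- so this is the point where the hypotheses must be used carefully; everything else is soft.
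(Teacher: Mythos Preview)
Your argument is correct and follows essentially the same route as the paper: both proofs use that for continuous psh potentials the Bedford--Taylor product is compatible with slicing along a linear fibration and is continuous under local uniform convergence, and then combine the almost-everywhere vanishing of slices with this continuity to reach the slice through $0$. The paper phrases the last step as ``$\mu_a\to\mu_0$ so nearby slices are nonzero, contradicting a.e.\ vanishing'', while you phrase it as ``pick $w_n\to 0$ with $\sigma_{w_n}=0$ and pass to the limit''; these are the same argument.
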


\begin{proof} By definition of the slice $\big(dd^cu_1\wedge\cdots\wedge dd^cu_q\big)|_{\C^q\cap\B(0,r)}$, we have
\begin{center}
$\big(dd^cu_1\wedge\cdots\wedge dd^cu_q\big)|_{\C^q\cap\B(0,r)}=dd^c(u_1|_{\C^q\cap\B(0,r)})\wedge\cdots\wedge dd^c(u_q|_{\C^q\cap\B(0,r)})$.
\end{center}
Moreover, if $0\in\supp(dd^c(u_1|_{\C^q\cap\B(0,r)})\wedge\cdots\wedge dd^c(u_q|_{\C^q\cap\B(0,r)}))$, then the positive measure $dd^c(u_1|_{\C^q\cap\B(0,r)})\wedge\cdots\wedge dd^c(u_q|_{\C^q\cap\B(0,r)})$ is a non-zero on any neighborhood $V$ of $0$ in $\C^q\cap\B(0,r)$. By continuity of $u_1,\ldots u_q$, the psh functions $u_j(\cdot+a)|_{\C^q\cap\B(0,r)}$ converge uniformly locally to the psh function $u_j|_{\C^q\cap\B(0,r)}$. Therefore, the measures
\begin{center}
$\mu_a\pe dd^c(u_1(\cdot+a)|_{\C^q\cap\B(0,r)})\wedge\cdots\wedge dd^c(u_q(\cdot+a)|_{\C^q\cap\B(0,r)})$
\end{center}
converge to $\mu_0=dd^c(u_1|_{\C^q\cap\B(0,r)})\wedge\cdots\wedge dd^c(u_q|_{\C^q\cap\B(0,r)})$ when $a\rightarrow0$. In particular, $\mu_a$ is also non-zero on $\{\lambda\in V$ / $\lambda+a\in\B(0,r)\}$ for any $a\in\C^{m-q}\cap\B(0,r)$ close enough to $0$. Finally, if $dd^cu_1\wedge\cdots\wedge dd^cu_q$ were zero in a neighborhood of $0$, then almost all its slices would be zero.\end{proof}

~

\par Note then $\B_r$ for $\B(0,r)$ and $\chi$ for the activity map:
\begin{eqnarray*}
\chi:\B_r & \longrightarrow & \C^q\\
\lambda & \longmapsto & (\chi_1(\lambda),\ldots,\chi_q(\lambda)).
\end{eqnarray*}
After reducing $r$ we may assume that $\chi^{-1}\{0\}=\{0\}$.

~

\begin{defi}
For $\epsilon>0$ denote by $\D_\epsilon^q\pe\D(0,\epsilon)\times\cdots\times\D(0,\epsilon)$ a polydisc of $\C^q$ centered at $0$. Recall that $m_{n,i}(0)=(f_0^n)'(\nu_{0,i}(0))=(f_0^n)'(f_0^{k_0}(c_i(0)))$ and $|m_{n+1,i}(0)|\geq K|m_{n,i}(0)|$ where $K>1$. We define a sequence of dilations $D_n$ by: $D_0\pe\textup{id}_{\C^q}$ and
\begin{eqnarray*}
D_n:\C^q & \longrightarrow & \C^q\\
(x_1,\ldots,x_q) & \longmapsto & (m_{n,1}(0). x_1,\ldots,m_{n,q}(0). x_q).
\end{eqnarray*}
\end{defi}

~

\par For every $n\geq0$ we set $E_n\pe D_n^{-1}(\D_\epsilon^q)$ and denote by $\Omega_n$ the connected component of $\chi^{-1}(E_n)$ containing $0$. The following properties are either obvious or classical. The third assertion is a Lojaciewicz type inequality:

~

\begin{prop}
Up to reducing $r$ and $\epsilon$:
\begin{enumerate}
\item $\Omega_{n+1}\subset\Omega_n$ for $n\geq0$ and $(\Omega_n)_{n\geq0}$ is a neighborhood basis of $0$ in $\B_r$.
\item $\chi:\Omega_n\longrightarrow E_n$ is a finite ramified cover of constant degree $p$ for any $n\geq0$.
\item There exists $C_1>0$ such that $\|\lambda\|\leq C_1\|\chi(\lambda)\|^{1/p}$ for $\lambda\in\Omega_0$.
\item There exists $C_2>0$ and $K>1$ such that $\Omega_n\subset\B(0,C_2\big(\frac{\epsilon}{K^n}\big)^{1/p})$ for any $n\geq1$.
\end{enumerate} 
\label{propchi}
\end{prop}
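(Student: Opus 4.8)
The plan is to fix $r$ and $\epsilon$ once and for all so that all the $\Omega_n$ lie in a fixed compact region, and then to read off the four assertions from the elementary theory of proper holomorphic maps. Since $\chi^{-1}\{0\}=\{0\}$, after reducing $r$ the number $\delta\pe\min_{\|\lambda\|=r}\|\chi(\lambda)\|$ is positive; reducing $\epsilon$ we may assume $\sqrt{q}\,\epsilon<\delta$. As $m_{0,i}(0)=1$, the hypothesis $|m_{n+1,i}(0)|\geq K|m_{n,i}(0)|$ gives $|m_{n,i}(0)|\geq K^n$, so $E_n=D_n^{-1}(\D_\epsilon^q)$ is the polydisc of $\C^q$ with $i$-th radius $\epsilon/|m_{n,i}(0)|\leq\epsilon/K^n$; in particular $E_{n+1}\subset E_n$ and $E_n\subset\D_{\epsilon/K^n}^q$, and every point of $\overline{E_n}$ has norm $\leq\sqrt{q}\,\epsilon<\delta$. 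Consequently $\chi$ maps $\overline{\B(0,r)}\setminus\B(0,r)$ outside $\overline{E_n}$, so $\chi^{-1}(\overline{E_n})$ is disjoint from that sphere, and each connected component of the open set $\chi^{-1}(E_n)$ — in particular $\Omega_n$ — is relatively compact in $\B(0,r)$, with $\overline{\Omega_n}\subset\chi^{-1}(\overline{E_n})\cap\B(0,r)$ and $\chi(\partial\Omega_n)\subset\partial E_n$ (a point of $\partial\Omega_n$ mapping into $E_n$ would, $\chi^{-1}(E_n)$ being open, lie in the component $\Omega_n$). Thus $\chi\colon\Omega_n\to E_n$ is proper.

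For $(1)$: from $E_{n+1}\subset E_n$ we get $\chi^{-1}(E_{n+1})\subset\chi^{-1}(E_n)$, so the component of the former through $0$ is contained in the component of the latter through $0$, i.e. $\Omega_{n+1}\subset\Omega_n$. The compacta $\overline{\Omega_n}$ then decrease, and $\bigcap_n\overline{\Omega_n}\subset\chi^{-1}\big(\bigcap_n\overline{E_n}\big)=\chi^{-1}\{0\}=\{0\}$; hence for any neighbourhood $U$ of $0$ the compacta $\overline{\Omega_n}\setminus U$ decrease to $\emptyset$, so $\overline{\Omega_n}\subset U$ for $n$ large. Since each $\Omega_n$ is open and contains $0$, $(\Omega_n)_n$ is a neighbourhood basis of $0$. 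For $(2)$: a proper holomorphic map between connected open subsets of $\C^q$ is a finite branched cover (see \cite{Chirka}), so $\chi\colon\Omega_n\to E_n$ is surjective and there is an integer $p_n\geq1$ with $p_n=\sum_{z\in\chi^{-1}(w)\cap\Omega_n}\textup{mult}_z(\chi)$ for every $w\in E_n$; taking $w=0$ and using $\chi^{-1}\{0\}\cap\Omega_n=\{0\}$ gives $p_n=\textup{mult}_0(\chi)$, which is independent of $n$. Call it $p$.

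For $(3)$: write $w=\chi(\lambda)$ and let $\chi^{-1}(w)\cap\Omega_0=\{\lambda^{(1)}(w),\dots,\lambda^{(p)}(w)\}$, repeated according to multiplicity. For $1\leq k\leq q$ the coefficients of the monic polynomial $\prod_{j=1}^p\big(T-\lambda^{(j)}_k(w)\big)$ are symmetric functions of the fibre of the branched cover $\chi\colon\Omega_0\to E_0$, hence holomorphic and bounded on $E_0$; since $\chi^{-1}\{0\}=\{0\}$ every root vanishes at $w=0$, so these coefficients vanish at $0$ and are therefore $O(\|w\|)$ near $0$. As the roots tend to $0$ with $w$ (by properness), the elementary inequality $|\zeta|^p\leq\sum_{i=1}^p|a_i|\,|\zeta|^{p-i}$ for a root $\zeta$ of $T^p+a_1T^{p-1}+\dots+a_p$ yields $|\lambda^{(j)}_k(w)|\leq C\,\|w\|^{1/p}$ once $\|w\|$ is small, whence $\|\lambda\|\leq C_1\|\chi(\lambda)\|^{1/p}$ on $\Omega_0$. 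For $(4)$: any $\lambda\in\Omega_n\subset\Omega_0$ has $\chi(\lambda)\in E_n\subset\D_{\epsilon/K^n}^q$, so $\|\chi(\lambda)\|\leq\sqrt{q}\,\epsilon/K^n$, and $(3)$ gives $\|\lambda\|\leq C_1(\sqrt{q})^{1/p}(\epsilon/K^n)^{1/p}$; set $C_2\pe C_1(\sqrt{q})^{1/p}$.

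The only delicate point is obtaining the \emph{precise} exponent $1/p$ in $(3)$ rather than some unspecified Łojasiewicz exponent; this is exactly what the monic-polynomial estimate above provides, by exploiting that the local multiplicity of $\chi$ at $0$ coincides with the degree $p$ of the branched cover. Everything else is soft.
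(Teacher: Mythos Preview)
Your proof is correct and supplies precisely the details the paper omits: the paper gives no argument at all, simply declaring the four properties ``either obvious or classical'' and identifying (3) as a \L ojasiewicz-type inequality. Your route via properness of $\chi\colon\Omega_n\to E_n$, the constancy of the degree as $\textup{mult}_0(\chi)$, and the Weierstrass-polynomial / symmetric-function trick for the exponent $1/p$ is exactly the classical argument the authors have in mind. One small point you might make explicit: the bound $\|\lambda\|\leq C_1\|\chi(\lambda)\|^{1/p}$ is only derived near $0$, and its extension to all of $\Omega_0$ uses that $\Omega_0\Subset\B_r$ while $\|\chi\|$ is bounded below away from $0$; this is trivial but worth one clause.
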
 

~

The following Lemma refines a Lemma of Aspenberg (see \cite{Aspenberg1} Lemma 3.2):

~

\begin{lm}[Distortion]
Let $(f_\lambda)_{\lambda\in\B(0,r)}$ be a holomorphic family of degree $d$ rational maps. Let $E_0\subset\J_0$ be a compact $f_0$-hyperbolic set and $h:\B(0,r)\times E_0\longrightarrow\p^1$ be its dynamical holomorphic motion. Then there exists a constant $C>0$ such that for all $n\geq 1$ and all $w_0\in E_0$
\begin{center}
$\displaystyle\left|\frac{(f_\lambda^n)'(h_\lambda(w_0))}{(f_0^n)'(w_0)}-1\right|\leq e^{nC\|\lambda\|}-1$ for all $\lambda\in\B(0,r)$.
\end{center}
\label{lmdistorsion}
\end{lm}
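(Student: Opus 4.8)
The plan is to reduce the ratio to a product over the orbit of $w_0$ and then control each factor by a uniform Schwarz-type estimate. Since $h_\lambda$ conjugates $f_0$ to $f_\lambda$ on $E_0$, one has $f_\lambda^j(h_\lambda(w_0))=h_\lambda(f_0^j(w_0))$ for every $j\geq 0$, and for $\lambda\in\B(0,r)$ all these points remain in a fixed compact neighbourhood of $E_0$. Fixing once and for all a finite atlas of $\p^1$ covering that neighbourhood, the chain rule gives
\[
\frac{(f_\lambda^n)'(h_\lambda(w_0))}{(f_0^n)'(w_0)}=\prod_{j=0}^{n-1}\frac{f_\lambda'\big(h_\lambda(f_0^j(w_0))\big)}{f_0'\big(f_0^j(w_0)\big)}=\prod_{j=0}^{n-1}u_{f_0^j(w_0)}(\lambda),
\]
where, for $z\in E_0$, I put $u_z(\lambda)\pe f_\lambda'(h_\lambda(z))/f_0'(z)$. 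Because $E_0$ is $f_0$-hyperbolic, $f_0'(z)\neq 0$ for $z\in E_0$, so each $u_z$ is a well-defined holomorphic function of $\lambda\in\B(0,r)$ with $u_z(0)=1$.

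First I would establish a uniform linear estimate $|u_z(\lambda)-1|\leq C\|\lambda\|$, valid for all $z\in E_0$ and all $\lambda$ in a slightly smaller ball. The point is that $(\lambda,z)\mapsto u_z(\lambda)$ is continuous on $\B(0,r)\times E_0$ (joint continuity of $(\lambda,z)\mapsto f_\lambda'(z)$ in the fixed charts, continuity of the holomorphic motion $h$, and boundedness away from $0$ of $|f_0'|$ on the compact set $E_0$), hence bounded by some $M$ on $\overline{\B(0,r')}\times E_0$ for a suitable $0<r'\leq r$. For fixed $z$, the function $v_z\pe u_z-1$ is holomorphic on $\B(0,r')$, vanishes at $0$ and satisfies $|v_z|\leq M+1$; restricting it to the complex line through $0$ and a given $\lambda_0$ and applying the one-variable Schwarz lemma yields $|v_z(\lambda_0)|\leq\frac{M+1}{r'}\|\lambda_0\|$. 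This proves the claim with $C\pe(M+1)/r'$, uniformly in $z\in E_0$.

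Then I would conclude with the standard telescoping estimate for products: if $|a_j-1|\leq\varepsilon$ for $0\leq j\leq n-1$, then $|a_0\cdots a_{n-1}-1|\leq(1+\varepsilon)^n-1$, which follows by an immediate induction on $n$. Applying this with $a_j=u_{f_0^j(w_0)}(\lambda)$ and $\varepsilon=C\|\lambda\|$, and using $1+x\leq e^x$, gives
\[
\left|\frac{(f_\lambda^n)'(h_\lambda(w_0))}{(f_0^n)'(w_0)}-1\right|\leq(1+C\|\lambda\|)^n-1\leq e^{nC\|\lambda\|}-1,
\]
uniformly in $w_0\in E_0$ and $n\geq1$ (replacing $r$ by $r'$, as done tacitly throughout the paper).

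The only genuinely delicate point is the bookkeeping of the derivatives $(f_\lambda^n)'$ on $\p^1$: one has to check that the displayed ratio is independent of the choice of local coordinates, which holds because in each factor the source point $h_\lambda(f_0^j(w_0))$ and the target point $h_\lambda(f_0^{j+1}(w_0))$ converge as $\lambda\to0$ to $f_0^j(w_0)$ and $f_0^{j+1}(w_0)$ respectively, and the whole orbit stays in a fixed compact set. Once this is settled, every remaining step is soft — continuity, compactness of $E_0$, the Schwarz lemma, and an elementary induction — and needs no quantitative information on the dynamics beyond $|f_0'|\geq K>1$ on $E_0$.
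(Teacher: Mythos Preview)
Your proof is correct and follows essentially the same approach as the paper: both decompose the ratio via the chain rule into a product of factors $f_\lambda'(h_\lambda(f_0^j(w_0)))/f_0'(f_0^j(w_0))$, obtain a uniform linear bound $|u_z(\lambda)-1|\leq C\|\lambda\|$ from holomorphy in $\lambda$ and compactness of $E_0$ (the paper phrases this via Cauchy inequalities, you via the Schwarz lemma on complex lines), and then pass to the product. The only cosmetic difference is the product estimate---the paper invokes Rudin's Lemma 15.3 giving $\bigl|\prod(1+a_j)-1\bigr|\leq\exp\bigl(\sum|a_j|\bigr)-1$ directly, while you prove $(1+\varepsilon)^n-1$ by induction and then use $1+x\leq e^x$---but the two bounds coincide.
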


\begin{proof} Up to reducing $r$, one may assume that $r\leq 1$. As $h$ is uniformly continuous on $\B(0,r)\times E_0$ and $|f_0'(z)|\geq K>1$ for any $z\in E_0$, the family $\{f_\lambda'\circ h_\lambda(z)/f_0'(z)-1\}_{z\in E_0}$ is equicontinuous on $\B(0,r)$. Since $f_\lambda'\circ h_\lambda(z)/f_0'(z)-1$ vanishes at $0$ for any $z\in E_0$, the Cauchy inequalities yield the existence of $C'>0$ independent of $z$ such that
\begin{eqnarray}
\left|\frac{f_\lambda'\circ h_\lambda(z)}{f_0'(z)}-1\right|\leq \frac{C'}{r}\|\lambda\|\text{ for any }\lambda\in\B(0,r).
\label{equicontinuityh}
\end{eqnarray}
Let $n\geq1$ and  $w_0\in E_0$. Set $w_j(\lambda)\pe f_\lambda^j(h_\lambda(w_0))=h_\lambda(f_0^j(w_0))$, $0\leq j\leq n-1$. Then
\begin{eqnarray*}
\frac{(f^n_\lambda)'(w_0(\lambda))}{(f_0^n)'(w_0)}-1=\prod_{j=0}^{n-1}\frac{f_\lambda'(w_j(\lambda))}{f_0'(w_j(0))}-1=\prod_{j=0}^{n-1}\left(\frac{f_\lambda'(w_j(\lambda))}{f_0'(w_j(0))}-1+1\right)-1
\end{eqnarray*}
and Lemma 15.3 of \cite{rudin2} gives:
\begin{eqnarray*}
\left|\frac{(f^n_\lambda)'(w_0(\lambda))}{(f_0^n)'(w_0)}-1\right|\leq\exp\left(\sum_{j=0}^{n-1}\left|\frac{f_\lambda'(w_j(\lambda))}{f_0'(w_j(0))}-1\right|\right)-1.
\end{eqnarray*}
The conclusion then follows from setting $C\pe C'/r$ in estimate \ref{equicontinuityh}.\end{proof}

~

\par The linearization process (see Proposition \ref{corlinearisation}) states that there exists continuous functions $\rho_{n,i}:\B_r\longrightarrow\R_+^*$ and local biholomorphisms $\psi_{0,\lambda}^{(n,i)},\psi_{1,\lambda}^{(n,i)}$ holomorphically depending on $\lambda\in \B_r$, respectively defined on $\D(0,\rho_{n,i}(\lambda))$, $\D(0,\rho)$ and such that
\begin{eqnarray}
& & f_\lambda^n(z+\nu_{0,i}(\lambda))=\psi^{(n,i)}_{1,\lambda}\Big(m_{n,i}(\lambda).\psi^{(n,i)}_{0,\lambda}\big(z\big)\Big)+\nu_{n,i}(\lambda)\textup{ for every }z\in\D(0,\rho_{n,i}(\lambda)).
\label{eqlinchi}
\end{eqnarray}

~

\par Here we have used the notations introduced at the beginning of Subsection $3.1$. Let us recall that $\rho_{n,i}(\lambda)=\frac{\rho}{2}|m_{n,i}(\lambda)|^{-1}$. The following Lemma clarifies how we are going to use \ref{eqlinchi} and the Distortion Lemma to renormalize the unstable critical orbits (see subsection 5.3):

~

\begin{lm}
\begin{enumerate}
\item There exists $n_0\geq1$ such that the following holds for $1\leq i\leq q$, $n\geq n_0$ and $\lambda\in\Omega_n$:
\begin{enumerate}
	\item $\rho_{n,i}(\lambda)\geq\frac{\epsilon}{|m_{n,i}(0)|}$,
	\item $\big|m_{n,i}(\lambda)\psi_{0,\lambda}^{(n,i)}\big(\chi_i(\lambda)\big)\big|\leq\rho$ and
	\item $f_\lambda^{n+k_0}(c_i(\lambda))=\xi_{n,i}(\lambda)=\psi^{(n,i)}_{1,\lambda}\Big(m_{n,i}(\lambda).\psi^{(n,i)}_{0,\lambda}\big(\chi_i(\lambda)\big)\Big)+\nu_{n,i}(\lambda)$.
\end{enumerate}
\item Modulo extraction, the sequence $(\nu_{n,i}(0))_{n\geq0}$ converges to $z_i\in\J_0$ for all $1\leq i\leq q$. Moreover, there exists sections $\sigma_i:\D(z_i,\alpha)\longrightarrow\C^2\setminus\{0\}$ of $\pi$, such that the maps
\begin{center}
 $\tilde\xi_{n,i}(\lambda)\pe\sigma_i\circ\xi_{n,i}(\lambda)$
\end{center}
are well-defined on $\Omega_n$ for every $n\geq n_0$ and such that $\sigma_i(\D(z_i,\alpha))\Subset\C^2\setminus\{0\}$.
\end{enumerate} 
\label{lmlinbif}
\end{lm}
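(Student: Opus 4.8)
The plan is to assemble, paying careful attention to the order in which the constants are fixed, the three tools already at our disposal: the linearisation identity \ref{eqlinchi} furnished by Proposition \ref{corlinearisation}, the Distortion Lemma \ref{lmdistorsion}, and the Lojasiewicz-type inclusion $\Omega_n\subset\B(0,C_2(\epsilon/K^n)^{1/p})$ of Proposition \ref{propchi}. Concretely, I would first pick a small $\alpha>0$, so small that every spherical disc of radius $\alpha$ lies in an affine chart of $\p^1$; then shrink $\epsilon$ (in particular so that $\epsilon\leq\rho/3$ and $\epsilon$ is small compared with $\alpha$); and only afterwards pick $n_0$ large. The nested neighbourhoods $\Omega_n$ shrink to $\{0\}$ with $\|\lambda\|=O(K^{-n/p})$ on them, which is what makes all the tails below controllable.

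\textbf{Item (1).} Apply the Distortion Lemma \ref{lmdistorsion} to $w_0=f_0^{k_0}(c_i(0))\in E_0$, whose forward orbit lies in $E_0$ and which satisfies $h_\lambda(w_0)=\nu_{0,i}(\lambda)$; this gives $\big|m_{n,i}(\lambda)/m_{n,i}(0)-1\big|\leq e^{nC\|\lambda\|}-1$. For $\lambda\in\Omega_n$, Proposition \ref{propchi} yields $n\|\lambda\|\leq nC_2\epsilon^{1/p}K^{-n/p}\to 0$, so for $n\geq n_0$ one has $e^{nC\|\lambda\|}-1\leq\frac12$, hence $|m_{n,i}(\lambda)|\leq\frac32|m_{n,i}(0)|$; since $\rho_{n,i}(\lambda)=\frac{\rho}{2}|m_{n,i}(\lambda)|^{-1}$ and $\epsilon\leq\rho/3$, this is exactly $(a)$. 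For $(b)$, $\lambda\in\Omega_n$ means $\chi(\lambda)\in E_n=D_n^{-1}(\D_\epsilon^q)$, i.e. $|m_{n,i}(0)\chi_i(\lambda)|<\epsilon$, so $|\chi_i(\lambda)|<\epsilon/|m_{n,i}(0)|\leq\rho_{n,i}(\lambda)$ by $(a)$; thus $\psi^{(n,i)}_{0,\lambda}(\chi_i(\lambda))$ is defined and, since $\psi^{(n,i)}_{0,\lambda}$ takes values in $\D(0,2\rho_{n,i}(\lambda))$, we get $|m_{n,i}(\lambda)\psi^{(n,i)}_{0,\lambda}(\chi_i(\lambda))|<2\rho_{n,i}(\lambda)|m_{n,i}(\lambda)|=\rho$. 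For $(c)$, write $f_\lambda^{k_0}(c_i(\lambda))=\xi_{0,i}(\lambda)=\chi_i(\lambda)+\nu_{0,i}(\lambda)$ with $\chi_i(\lambda)\in\D(0,\rho_{n,i}(\lambda))$, plug $z=\chi_i(\lambda)$ into \ref{eqlinchi}, and use $\xi_{n,i}(\lambda)=f_\lambda^{n+k_0}(c_i(\lambda))$ together with $(b)$ (so the argument of $\psi^{(n,i)}_{1,\lambda}$ lies in $\D(0,\rho)$ where it is defined) to obtain the stated identity.

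\textbf{Item (2).} The convergence is immediate: $\nu_{n,i}(0)=f_0^{n+k_0}(c_i(0))\in E_0$ and $E_0$ is compact, so a diagonal extraction over the finitely many $i\in\{1,\dots,q\}$ produces, after relabelling, a subsequence along which $\nu_{n,i}(0)\to z_i\in E_0\subset\J_0$. Take $\sigma_i$ to be a holomorphic section of $\pi$ over $\D(z_i,\alpha)$ read in an affine chart containing $\overline{\D(z_i,\alpha)}$; as $\sigma_i$ is continuous and non-vanishing on this compact disc, $\sigma_i(\D(z_i,\alpha))\Subset\C^2\setminus\{0\}$ automatically. It remains to show $\xi_{n,i}(\lambda)\in\D(z_i,\alpha)$ for $\lambda\in\Omega_n$, $n\geq n_0$; I split $\xi_{n,i}(\lambda)-z_i=\big(\xi_{n,i}(\lambda)-\nu_{n,i}(\lambda)\big)+\big(\nu_{n,i}(\lambda)-\nu_{n,i}(0)\big)+\big(\nu_{n,i}(0)-z_i\big)$. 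The last term tends to $0$ by the choice of subsequence; the middle term is $h_\lambda(w)-h_0(w)$ with $w\in E_0$ and tends to $0$ uniformly in $i$ as $n\to\infty$ by uniform continuity of $h$ on $\B(0,r)\times E_0$ together with $\|\lambda\|\to 0$ on $\Omega_n$; and the first term equals $\psi^{(n,i)}_{1,\lambda}\big(m_{n,i}(\lambda)\psi^{(n,i)}_{0,\lambda}(\chi_i(\lambda))\big)$ by $(c)$, which I would bound by $O(\epsilon)$ using $|\psi^{(n,i)}_{j,\lambda}(z)-z|\leq C|z|^2$, $C\rho<1$, the distortion bound $|m_{n,i}(\lambda)|\leq 2|m_{n,i}(0)|$, the inequality $|m_{n,i}(0)\chi_i(\lambda)|<\epsilon$, and $|m_{n,i}(0)|\geq 1$ for $n\geq n_0$ (hyperbolicity of $E_0$ forces $|m_{n,i}(0)|\geq cK^n$). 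Having fixed $\epsilon$ small relative to $\alpha$, the three terms sum to less than $\alpha$ for $n\geq n_0$, so $\tilde\xi_{n,i}=\sigma_i\circ\xi_{n,i}$ is well-defined on $\Omega_n$.

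The main obstacle is not any isolated estimate but keeping the quantifier hierarchy straight: $\alpha$ must be frozen first, $\epsilon$ shrunk next so that the $O(\epsilon)$ error in the three-term decomposition beats $\alpha$, that $\epsilon\leq\rho/3$, and that the $1+O(\epsilon)$ factors stay bounded, and only then $n_0$ chosen large enough to simultaneously make $e^{nC\|\lambda\|}-1$ small, $\nu_{n,i}(\lambda)-\nu_{n,i}(0)$ and $\nu_{n,i}(0)-z_i$ small, and $|m_{n,i}(0)|\geq 1$. One must also check that every ``small'' quantity arising is genuinely comparable to $\epsilon$ (which one is free to shrink, since $\Omega_n$, $E_n$, $D_n$ are then merely re-defined and Proposition \ref{propchi} still applies) rather than to the linearisation radius $\rho$, which is frozen by Proposition \ref{corlinearisation}; this is why, in Item (1)(a), one insists on $\epsilon\leq\rho/3$ rather than trying to use $\epsilon=\rho$.
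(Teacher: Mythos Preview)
Your proof is correct and follows essentially the same approach as the paper's: both use the Distortion Lemma together with Proposition \ref{propchi}(4) to control $|m_{n,i}(\lambda)/m_{n,i}(0)|$ on $\Omega_n$, then reduce $\epsilon$ so that $\epsilon\leq\rho/3$ to obtain (a), and derive (b) and (c) by feeding $\chi_i(\lambda)$ into the linearisation identity \ref{eqlinchi}. For item (2) the paper is considerably terser---it reduces $\rho$ at the outset so that sections of $\pi$ exist over every disc of radius $2\rho$ (thus taking $\alpha=2\rho$), and then simply declares the conclusion ``clear''; your three-term decomposition of $\xi_{n,i}(\lambda)-z_i$ and your discussion of the quantifier hierarchy make explicit what the paper leaves implicit, but the underlying argument is the same.
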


\begin{proof} Up to reducing $\rho$ we may assume that for any $z\in\p^1$ there exists a section $\sigma_z:\D(z,2\rho)\longrightarrow \C^ 2\setminus\{0\}$ of $\pi$. By the Distortion Lemma \ref{lmdistorsion} and item 4 of Proposition \ref{propchi} there exists $C>0$, $K>1$ and $p\geq1$ such that
\begin{center}
$\displaystyle\left|\frac{m_{n,i}(\lambda)}{m_{n,i}(0)}\right|-1\leq\left|\frac{m_{n,i}(\lambda)}{m_{n,i}(0)}-1\right|\leq \exp\left(\frac{nC}{K^{n/p}}\right)-1$,
\end{center}
for every $\lambda\in\Omega_n$. Thus $\rho_{n,i}(\lambda)= \frac{\rho}{2|m_{n,i}(\lambda)|}\geq \frac{\rho}{3|m_{n,i}(0)|}$ for any $n$ large enough and every $\lambda\in\Omega_n$. Up to reducing $\epsilon$ we have $\epsilon\leq\rho/3$ and $(a)$ follows.
 \par Now if $\lambda\in\Omega_n$ we have $|\chi_i(\lambda)|\leq\frac{\epsilon}{|m_{n,i}(0)|}\leq\rho_{n,i}(\lambda)$ and Proposition \ref{corlinearisation} gives a constant $C>0$ such that $C\rho_{n,i}(\lambda)\leq C\rho\leq1$ and
\begin{center}
$\big|\psi_{0,\lambda}^{(n,i)}(\chi_i(\lambda))-\chi_i(\lambda)\big|\leq C|\chi_i(\lambda)|^2$.
\end{center}
Then $\big|\psi_{0,\lambda}^{(n,i)}(\chi_i(\lambda))\big|\leq2\rho_{n,i}(\lambda)=\rho|m_{n,i}(\lambda)|^{-1}$ and $(b)$ is proved. Since 
\begin{center}
$f_\lambda^{n+k_0}(c_i(\lambda))=f_\lambda^n(\xi_{0,i}(\lambda))=f_\lambda^n(\chi_i(\lambda)+\nu_{0,i}(\lambda))$,
\end{center}
the assertion \ref{eqlinchi} becomes 
\begin{center}
$\xi_{n,i}(\lambda)=f_\lambda^{n+k_0}(c_i(\lambda))=\psi_{1,\lambda}^{(n,i)}\Big(m_{n,i}(\lambda).\psi_{0,\lambda}^{(n,i)}\big(\chi_i(\lambda)\big)\Big)+\nu_{n,i}(\lambda)$,
\end{center}
which is $(c)$. Since $(\Omega_n)_{n\geq0}$ is a neighborhood basis of $0$ and since $\{\nu_{n,i}(0)$ / $n\geq0\}$ is relatively compact in $\p^1$, item $(2)$ is now clear.\end{proof}

~

\par The map $\chi:\Omega_0\longrightarrow \D_\epsilon^q$ being a finite branched cover, there exists a pure codimension $1$ analytic subset $\mathcal{R}$ of $\D_\epsilon^q$ such that $\chi:\Omega_0\setminus\chi^{-1}(\mathcal{R})\longrightarrow \D_\epsilon^q\setminus\mathcal{R}$ is a finite cover. Set $A_n\pe D_n\big(E_n\cap\mathcal{R}\big)$ for $n\geq1$. We have the following:

~

\begin{lm}
After taking a subsequence, the sequence $(A_n)_{n\geq1}$ of pure codimension $1$ analytic sets converges to a pure codimension $1$ analytic set $A_\infty$.
\end{lm}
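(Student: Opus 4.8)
The plan is to pass from the sets $A_n$ to their integration currents and then invoke a Bishop-type compactness theorem. Recall that $A_n=D_n(\mathcal R)\cap\D_\epsilon^q$, where $D_n\colon x\mapsto(m_{n,1}(0)x_1,\dots,m_{n,q}(0)x_q)$ and $|m_{n,i}(0)|\geq K^n\to\infty$. Write $[A_n]$ for the integration current over $A_n$ counted with multiplicities; it is a positive closed $(1,1)$-current on the fixed polydisc $\D_\epsilon^q$. The crucial point is a bound on its mass which is \emph{uniform in $n$}: for every $0<\epsilon'<\epsilon$ there is $M>0$ with $\mathrm{mass}\big([A_n]|_{\D_{\epsilon'}^q}\big)\leq M$ for all $n$. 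Granting this, weak compactness of positive closed currents of bounded mass yields a subsequence $[A_{n_k}]$ converging weakly to a positive closed $(1,1)$-current $T$ on $\D_\epsilon^q$. We may assume $\mathcal R\neq\emptyset$ (otherwise the statement is empty); then $0\in\mathcal R$, since the branch locus of a degree $p\geq 2$ branched cover of a polydisc which is totally ramified over the origin (recall $\chi^{-1}\{0\}=\{0\}$) contains $0$. Hence $0\in A_n$ for all $n$, so the Lelong number $\nu([A_n],0)$ equals $\mathrm{mult}_0(A_n)=\mathrm{mult}_0(\mathcal R)\geq1$ (a theorem of Thie, together with the invariance of multiplicity under the linear isomorphism $D_n$); by upper semicontinuity of Lelong numbers along weak limits, $\nu(T,0)\geq1$, so $T\neq0$ and $0\in\supp T$. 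Bishop's theorem (see e.g. \cite{Chirka}) then says that $T$ is the integration current, with integer multiplicities, over an analytic subset $A_\infty\subset\D_\epsilon^q$ of pure dimension $q-1$, and that $A_{n_k}\to A_\infty$ locally in the Hausdorff metric. This is the desired convergence.

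It remains to establish the uniform mass bound, which is the heart of the matter. Since $D_n$ is linear, $A_n\cap\D_{\epsilon'}^q=D_n\big(\mathcal R\cap\mathcal P_n\big)$, where $\mathcal P_n\pe D_n^{-1}(\D_{\epsilon'}^q)$ is the polydisc of polyradii $r_{n,i}=\epsilon'|m_{n,i}(0)|^{-1}$; this polydisc shrinks to $\{0\}$, so $\mathcal P_n\subset\D_\epsilon^q$ for $n$ large. Computing the mass (the $(2q-2)$-volume counted with multiplicity, for the Euclidean K\"ahler form $\omega$) by the change of variables $D_n$ and expanding $(D_n^*\omega)^{q-1}$ gives
\begin{center}
$\mathrm{mass}\big([A_n]|_{\D_{\epsilon'}^q}\big)=\displaystyle\sum_{|I|=q-1}\Big(\prod_{i\in I}|m_{n,i}(0)|^{2}\Big)\int_{\mathcal R\cap\mathcal P_n}\bigwedge_{i\in I}\tfrac{i}{2}\,dx_i\wedge d\bar x_i.$
\end{center}
For each multi-index $I$, the right-hand integral is the area of the projection of $\mathcal R\cap\mathcal P_n$ onto the $I$-coordinate subspace, counted with multiplicity; by the local structure theory of analytic sets (cf. \cite{Chirka}), after possibly shrinking $\D_\epsilon^q$ it is $\leq\nu_I\prod_{i\in I}r_{n,i}^{2}$ for a constant $\nu_I$ depending only on $\mathcal R$ (the components of $\mathcal R$ with degenerate $I$-projection contributing $0$). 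Substituting $r_{n,i}=\epsilon'|m_{n,i}(0)|^{-1}$, the factors $|m_{n,i}(0)|^{2}$ cancel exactly, and we obtain $\mathrm{mass}\big([A_n]|_{\D_{\epsilon'}^q}\big)\leq(\epsilon')^{2(q-1)}\sum_{|I|=q-1}\nu_I\pe M$, independently of $n$ (for $n$ large; the finitely many remaining indices are harmless).

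The only genuine obstacle is this volume estimate: one must control the volume of the \emph{fixed} analytic set $\mathcal R$ inside arbitrarily small, highly anisotropic polydisc neighbourhoods of $0$, and notice that the anisotropy introduced by $D_n$ is compensated precisely by the expansion factors $|m_{n,i}(0)|^{2}$ coming out of $D_n^*\omega$. Everything else is the standard Bishop compactness package; the one subtlety worth keeping in mind is that the limit could a priori be empty, which is exactly what the Lelong number bound $\nu(T,0)\geq1$ — itself a consequence of $0\in\mathcal R$ — rules out.
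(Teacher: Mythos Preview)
Your argument is correct, but it takes a genuinely different and considerably heavier route than the paper's. The paper works directly with a defining equation $F$ of $\mathcal R$: it isolates the finitely many monomials $\lambda^{\alpha_{(1)}},\dots,\lambda^{\alpha_{(N)}}$ that are minimal in the support of $F$ for the componentwise order, picks $s$ so that $|m_n^{\alpha_{(s)}}|=\min_j|m_n^{\alpha_{(j)}}|$, and observes that the rescaled defining functions $m_n^{\alpha_{(s)}}\cdot F\circ D_n^{-1}$ have bounded ``leading'' part (the coefficient of $\lambda^{\alpha_{(s)}}$ is exactly $a_{\alpha_{(s)}}\neq0$) and uniformly vanishing remainder; a subsequence then converges to a nonzero polynomial $F_\infty$, and $A_\infty=\{F_\infty=0\}$. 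Your mass bound via the cancellation between the anisotropic shrinking of $\mathcal P_n$ and the expansion factors in $D_n^*\omega$ is precisely the geometric counterpart of this coefficient bookkeeping, and your Lelong-number argument ($0\in\mathcal R$ when $p\geq2$, hence $\nu([A_n],0)\geq1$, hence $T\neq0$) replaces the observation that the $\alpha_{(s)}$-coefficient survives in the limit. What the paper's approach buys is elementarity (no Bishop theorem, no Thie, no semicontinuity of Lelong numbers) and an explicit description of $A_\infty$ as the zero set of a polynomial; what your approach buys is a coordinate-free argument that would transfer unchanged to settings where $\mathcal R$ is not globally a hypersurface of a polydisc.
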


\begin{proof} Write $\mathcal{R}=\{\lambda\in\D^q_\epsilon$ / $F(\lambda)=0\}$ and choose $\alpha_{(1)}$ a $q$-tuple such that the coefficient of $\lambda^{\alpha_{(1)}}$ in the power expansion of $F$ is non-zero. Then, we may write
\begin{eqnarray*}
F(\lambda)=a_{\alpha_{(1)}}\lambda^{\alpha_{(1)}}+\cdots+a_{\alpha_{(N)}}\lambda^{\alpha_{(N)}}+\sum_{\alpha_j>\alpha_{(1),j}}a_\alpha\lambda^\alpha.
\end{eqnarray*}
For a $q$-tuple $\alpha$, denote by $m_n^\alpha\pe m_{n,1}(0)^{\alpha_1}\ldots m_{n,q}(0)^{\alpha_q}$. Let $1\leq s\leq N$ be such that $\big|m_n^{\alpha_{(s)}}\big|=\inf\limits_{1\leq j\leq N}\big|m_n^{\alpha_{(j)}}\big|$. We then clearly have $A_n=\{\lambda\in\D^q_\epsilon$ / $m_n^{\alpha_{(s)}}. F\circ D_n^{-1}(\lambda)=0\}$ for $n\geq1$. The series expansion of $m_n^{\alpha_{(s)}}F\circ D_n^{-1}$ may be written
\begin{eqnarray*}
m_n^{\alpha_{(s)}}F\circ D_n^{-1}(\lambda)=\sum_{j=1}^Na_{\alpha_{(j)}}\frac{m_n^{\alpha_{(s)}}}{m_n^{\alpha_{(j)}}}\lambda^{\alpha_{(j)}}+\sum_{\alpha_j>\alpha_{(1),j}}a_\alpha\frac{m_n^{\alpha_{(s)}}}{m_n^\alpha}\lambda^\alpha.
\end{eqnarray*}
By the choice of $s$, the first sum defines a bounded sequence of polynomial functions and locally uniformly converges (up to extraction) to a polynomial $F_\infty$ on $\D^q_\epsilon$. Again by the choice of $s$, the second sum clearly uniformly converges to $0$ on $\D^q_\epsilon$. Thus the sequence $(A_n)_{n\geq1}$ converges (up to extraction) to a pure codimension $1$ analytic subset $A_\infty=\{\lambda\in\D^q_\epsilon$ / $F_\infty(\lambda)=0\}$ of $\D^q_\epsilon$.\end{proof}

~

\par We now set 
\begin{eqnarray*}
X_\infty\displaystyle\pe\overline{\bigcup_{n\geq 0}A_{n}}=A_\infty\cup\bigcup_{n\geq 0}A_{n}\text{ and }\dot\D_\epsilon^q\pe\D_\epsilon^q\setminus X_\infty.
\end{eqnarray*}
By a covering Theorem of Besicovitch (see \cite{Mattila} page 30), there exists an integer $P(q)\geq 1$ only depending on $q$ and a countable family $(\B_i)_{i\geq1}$ of closed balls such that $2\B_i\subset\dot\D_\epsilon^q$ and:
\begin{eqnarray}
\mathbf{1}_{\dot\D^q_\epsilon}\leq\sum_{i=1}^{+\infty}\mathbf{1}_{\B_i}\leq P(q).\mathbf{1}_{\dot\D^q_\epsilon}.
\label{besicovitch}
\end{eqnarray}
By Lebesgue convergence Theorem, this gives
\begin{center}
$\displaystyle\int_{\D^q_\epsilon}\sum_{i=1}^{+\infty}\mathbf{1}_{\B_i}.\mu=\sum_{i=1}^{+\infty}\mu(\B_i)$
\end{center}
for any finite Radon measure $\mu$ on $\D^q_\epsilon$. We may now start to prove Theorem \ref{tmcourantsbif}.

\subsection{First step: local bounds for $T_\bif^{q}$}
Recall that by Proposition \ref{propchi}, the map
\begin{eqnarray*}
\xymatrix {\relax \Omega_n \ar[rr]^{D_n\circ\chi} & &  \D^q_\epsilon}
\end{eqnarray*}
is a finite branched cover of degree $p$ on $\Omega_n$.

~

\begin{lm}
\begin{enumerate}
\item Let us set $\mu^{(q)}\pe dd^cG_\lambda(\tilde c_1(\lambda))|_{\B_r}\wedge\cdots\wedge dd^cG_\lambda(\tilde c_q(\lambda))|_{\B_r}$. Then for any $n\geq0$,
\begin{eqnarray*}
\frac{1}{P(q)}\sum_{i=1}^{+\infty}(D_n\circ\chi)_*\mu^{(q)}(\B_i) \leq \mu^{(q)}(\Omega_n) \leq \sum_{i=1}^{+\infty}(D_n\circ\chi)_*\mu^{(q)}(\B_i).
\end{eqnarray*}
\item There exists $n_0\geq1$ such that for any $n\geq n_0$, the exists $p$ inverse branches $S_{n,1,j},\ldots,S_{n,p,j}:\B_j\longrightarrow\Omega_n$ of $D_n\circ\chi$ so that the union $\bigcup_{1\leq l\leq p}S_{n,l,j}(\B_j)$ is a disjoint union and:
\begin{eqnarray*}
(D_n\circ\chi)_*\mu^{(q)}(\B_j)=d^{-q(n+k_0)}\sum_{l=1}^p\int_{\B_j}\bigwedge_{i=1}^qdd^cG_{S_{n,l,j}(x)}\big(\tilde\xi_{n,i}(S_{n,l,j}(x))\big).
\end{eqnarray*}
\end{enumerate}
\label{lmTk}
\end{lm}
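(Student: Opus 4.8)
The plan is to treat the two assertions separately, both resting on the same two facts: the map $D_n\circ\chi:\Omega_n\to\D^q_\epsilon$ is a proper degree-$p$ branched cover (Proposition \ref{propchi}), so $(D_n\circ\chi)_*\mu^{(q)}$ is a well-defined finite Radon measure on $\D^q_\epsilon$ of total mass $\mu^{(q)}(\Omega_n)$; and the Monge-Amp\`ere measure $\mu^{(q)}=\bigwedge_{i=1}^q dd^cG_\lambda(\tilde c_i(\lambda))$ of the \emph{continuous} $p.s.h.$ functions $G_\lambda(\tilde c_i(\lambda))$ puts no mass on pluripolar sets (Bedford-Taylor).

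For $(1)$, I would first check that $(D_n\circ\chi)_*\mu^{(q)}$ gives no mass to $X_\infty$. Indeed $X_\infty=A_\infty\cup\bigcup_{m\geq0}A_m$ is a countable union of proper analytic subsets of $\D^q_\epsilon$ ($A_\infty$ is proper because $F_\infty\not\equiv0$), so $(D_n\circ\chi)^{-1}(X_\infty)\cap\Omega_n$ is a countable union of proper analytic, hence pluripolar, subsets of $\Omega_n$, to which $\mu^{(q)}$ is blind. Therefore $(D_n\circ\chi)_*\mu^{(q)}(\dot\D^q_\epsilon)=\mu^{(q)}(\Omega_n)$, and integrating the Besicovitch estimate \ref{besicovitch} against $(D_n\circ\chi)_*\mu^{(q)}$ yields exactly the two-sided bound of $(1)$.

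For $(2)$, since $2\B_j\subset\dot\D^q_\epsilon$ while $A_n=D_n(E_n\cap\mathcal{R})\subset X_\infty$, the ball $2\B_j$ is disjoint from $A_n$, hence $D_n\circ\chi$ is an unramified degree-$p$ covering over $2\B_j$; as $\B_j$ is simply connected this covering is trivial, giving $p$ holomorphic sections $S_{n,1,j},\dots,S_{n,p,j}:\B_j\to\Omega_n$ with pairwise disjoint images whose union equals $(D_n\circ\chi)^{-1}(\B_j)\cap\Omega_n$. Consequently $(D_n\circ\chi)_*\mu^{(q)}=\sum_{l=1}^p(S_{n,l,j})^*\mu^{(q)}$ as measures on $\B_j$, and it only remains to compute $(S_{n,l,j})^*\mu^{(q)}$. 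Fix $n\geq n_0$ with $n_0$ from Lemma \ref{lmlinbif}, so that $\tilde\xi_{n,i}=\sigma_i\circ\xi_{n,i}$ is a holomorphic lift of $\xi_{n,i}$ well-defined on $\Omega_n$ with values in a fixed compact subset of $\C^2\setminus\{0\}$. On $\Omega_n$, $F_\lambda^{n+k_0}(\tilde c_i(\lambda))$ and $\tilde\xi_{n,i}(\lambda)$ are two lifts of the common point $\xi_{n,i}(\lambda)=f_\lambda^{n+k_0}(c_i(\lambda))\in\p^1$, so they differ by a nowhere-vanishing holomorphic factor $t_{n,i}$; the functional equation \ref{functorialGreen} and the homogeneity \ref{homogeneityGreen} then give
\[ G_\lambda(\tilde c_i(\lambda))=d^{-(n+k_0)}\big(\log|t_{n,i}(\lambda)|+G_\lambda(\tilde\xi_{n,i}(\lambda))\big)\quad\text{on }\Omega_n. \]
Since $\log|t_{n,i}|$ is pluriharmonic, applying $dd^c$ gives $dd^cG_\lambda(\tilde c_i(\lambda))=d^{-(n+k_0)}dd^cG_\lambda(\tilde\xi_{n,i}(\lambda))$, hence $\mu^{(q)}|_{\Omega_n}=d^{-q(n+k_0)}\bigwedge_{i=1}^q dd^cG_\lambda(\tilde\xi_{n,i}(\lambda))$ (the wedge is legitimate, each factor being $dd^c$ of a continuous $p.s.h.$ function). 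Pulling back by the biholomorphism $S_{n,l,j}$, which commutes with $dd^c$ and with wedge products, and summing over $l$ produces the displayed identity of $(2)$.

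The step I expect to be the main obstacle is this last identification $\mu^{(q)}|_{\Omega_n}=d^{-q(n+k_0)}\bigwedge_i dd^cG_\lambda(\tilde\xi_{n,i}(\lambda))$: one has to pass from the $\p^1$-valued orbit point $\xi_{n,i}(\lambda)$ to a genuine holomorphic $\C^2$-lift, absorbing the ambiguity into a pluriharmonic term, \emph{uniformly in $n$}. This is exactly where the hyperbolic structure enters, via Lemma \ref{lmlinbif} (and ultimately the linearization of Proposition \ref{corlinearisation}): it forces the $\nu_{n,i}(0)$ to accumulate on finitely many points $z_i\in\J_0$, so that the local sections $\sigma_i$ of $\pi$ near the $z_i$ can be chosen once and for all, making $\tilde\xi_{n,i}$ and the correction factors $t_{n,i}$ well-defined and holomorphic on every $\Omega_n$. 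The rest is Besicovitch bookkeeping together with the fact, recalled above, that continuous Monge-Amp\`ere measures ignore pluripolar sets.
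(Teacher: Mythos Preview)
Your proof is correct and follows essentially the same approach as the paper: in $(1)$ you use that $\mu^{(q)}$, being a Monge--Amp\`ere measure of continuous p.s.h.\ functions, ignores the pluripolar preimage of $X_\infty$ and then integrate the Besicovitch inequality against the pushforward; in $(2)$ you trivialize the cover over the simply connected $\B_j$ (which misses $A_n\subset X_\infty$), and obtain $dd^cG_\lambda(\tilde c_i(\lambda))=d^{-(n+k_0)}dd^cG_\lambda(\tilde\xi_{n,i}(\lambda))$ from \ref{homogeneityGreen} and \ref{functorialGreen} exactly as in the paper's formula \ref{formuleTbif2}. Your treatment is in fact slightly more explicit than the paper's on two points: the reason the cover is unramified over $\B_j$, and the pluriharmonic correction $\log|t_{n,i}|$ absorbing the lift ambiguity.
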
 

\begin{proof} $(1).$ As $G_\lambda(\tilde c_i(\lambda))$ is continuous for any $1\leq i\leq q$, the measure $\mu^{(q)}$ doesn't give mass to pluripolar sets. In particular, $\mu^{(q)}(\Omega_n)=\mu^{(q)}(\Omega_n\setminus (D_n\circ\chi)^{-1}(X_\infty))$. By \ref{besicovitch}, we have
\begin{eqnarray*}
\mathbf{1}_{\dot\D^q_\epsilon}\circ(D_n\circ\chi)\leq\sum_{i=1}^{+\infty}\mathbf{1}_{\B_i}\circ(D_n\circ\chi)\leq P(q).\mathbf{1}_{\dot\D^q_\epsilon}\circ(D_n\circ\chi),
\end{eqnarray*}
which gives
\begin{eqnarray*}
\mathbf{1}_{\Omega_n\setminus (D_n\circ\chi)^{-1}(X_\infty)}\leq\sum_{i=1}^{+\infty}\mathbf{1}_{\B_i}\circ(D_n\circ\chi)\leq P(q).\mathbf{1}_{\Omega_n\setminus (D_n\circ\chi)^{-1}(X_\infty)}.
\end{eqnarray*}
Evaluating the positive measure $\mu^{(q)}$, we find
\begin{eqnarray*}
\mu^{(q)}(\Omega_n)=\mu^{(q)}(\Omega_n\setminus (D_n\circ\chi)^{-1}(X_\infty))\leq\sum_{i=1}^{+\infty}\mu^{(q)}(\mathbf{1}_{\B_i}\circ(D_n\circ\chi))=\sum_{i=1}^{+\infty}(D_n\circ\chi)_*\mu^{(q)}(\B_i).
\end{eqnarray*}
We prove the same way the other inequality.

~ 

\par $(2)$. After item $(2)$ of Lemma \ref{lmlinbif}, the current $dd^cG_\lambda(\tilde\xi_{n,i}(\lambda))$ is well-defined on $\Omega_n$ for $1\leq i\leq q$ and $n\geq n_0$. For $\lambda\in\Omega_n$ we have $\pi\circ F_\lambda^{n+k_0}(\tilde c_i(\lambda))=\pi\circ\tilde\xi_{n,i}(\lambda)$. Owing to \ref{homogeneityGreen} and  \ref{functorialGreen} it yields
\begin{eqnarray}
d^{n+k_0}dd^cG_\lambda\big(\tilde c_i(\lambda)\big)=dd^cG_\lambda\big(\tilde\xi_{n,i}(\lambda)\big).
\label{formuleTbif2}
\end{eqnarray}
From \ref{formuleTbif2}, we deduce that
\begin{eqnarray*}
\mu^{(q)}(U_n)=\int_{U_n}\bigwedge_{i=1}^qdd^cG_\lambda\big(\tilde c_i(\lambda)\big)=d^{-q(n+k_0)}\int_{U_n}\bigwedge_{i=1}^qdd^cG_\lambda\big(\tilde\xi_{n,i}(\lambda)\big)
\end{eqnarray*}
for every Borel set $U_n\subset\Omega_n$ and every $n\geq n_0$.

~

\par By assumption $\B_j\cap X_\infty=\emptyset$. Since $D_n\circ\chi$ is a degree $p$ covering map on $\Omega_n\setminus\chi^{-1}(A_n)$, there exists $p$ inverse branches $S_{n,1,j},\ldots,S_{n,p,j}$ of $D_n\circ\chi$ defined on $\B_j$ and taking values in $\Omega_n$. It thus comes
\begin{eqnarray*}
\frac{1}{q!}d^{q(n+k_0)}(D_n\circ\chi)_*\mu^{(q)}(\B_j) & = & \sum_{l=1}^p\int_{S_{n,l,j}(\B_j)}(D_n\circ\chi)^*\bigwedge_{i=1}^qdd^cG_{S_{n,l,j}(x)}\big(\tilde\xi_{n,i}(S_{n,l,j}(x))\big),\\
 & = & \sum_{l=1}^p\int_{\B_j}\bigwedge_{i=1}^qdd^cG_{S_{n,l,j}(x)}\big(\tilde\xi_{n,i}(S_{n,l,j}(x))\big).
\end{eqnarray*}
This is the announced estimate.\end{proof}

\subsection{Second step: renormalization}

Recall that $\dot\D_\epsilon^q=\D_\epsilon^q\setminus X_\infty$. Let $\B\subset\D_\epsilon^q\setminus X_\infty$ be a closed ball and let $S_{n,1}\ldots,S_{n,p}$ be the inverse branches of $D_n\circ \chi$ defined on $\B$ and taking values in $\Omega_n$. The family $(S_{n,l})_{n\geq0}$ is a sequence of holomorphic maps defined on $\B$.

~

\begin{lm}
Let $\B$ be a closed ball contained in $\dot\D_\epsilon^q$. Modulo extraction the following sequences $(1\leq l\leq p$ and $1\leq i\leq q$) uniformly converge on $\B$ when $n\rightarrow+\infty$:
\begin{center}
$\begin{array}{l}
\xymatrix {\relax
S_{n,l} \ar[r] & 0,}\\
\xymatrix {\relax
\xi_{n,i}\circ S_{n,l}(x) \ar[r] & p_i(x_i),}\\
\xymatrix {\relax
G_{S_{n,l}(x)}\Big(\tilde \xi_{n,i}\circ S_{n,l}(x)\Big) \ar[r] & G_0\circ \sigma_i\big(p_i(x_i)\big),}
\end{array}$
\end{center}
where $p_i:\D(0,\epsilon)\longrightarrow\p^1$ is holomorphic and satisfies $p_i'(0)=1$ and $p_i(0)=z_i\in\J_0$. Moreover, there exists a constant $M>0$ such that:
\begin{center}
$\displaystyle\left\|G_{S_{n,l}}\left(\tilde\xi_{n,i}\circ S_{n,l}\right)\right\|_{L^{\infty}(\B)}\leq M$,
\end{center}
for any ball $\B\subset\dot\D_\epsilon^q$, any $n\geq n_0$, any $1\leq i\leq q$ and any $1\leq l\leq p$.
\label{lmconv}
\end{lm}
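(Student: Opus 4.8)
The plan is to extract everything from the linearized formula for $\xi_{n,i}$ provided by Lemma \ref{lmlinbif}$(1)(c)$, pushed through the {\L}ojasiewicz-type shrinking of the $\Omega_n$ and the distortion control of the multipliers. \emph{First}, the convergence $S_{n,l}\to 0$ is immediate: since $S_{n,l}(\B)\subset\Omega_n$, item $(4)$ of Proposition \ref{propchi} gives $\|S_{n,l}(x)\|\leq C_2(\epsilon/K^n)^{1/p}$ for all $x\in\B$, so $S_{n,l}\to 0$ uniformly on $\B$, and, crucially, $n\|S_{n,l}(x)\|\to 0$ uniformly in $x$; this is what will let us absorb the factor $n$ appearing in the Distortion Lemma.

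\emph{Second}, for $\lambda=S_{n,l}(x)\in\Omega_n$ and $n\geq n_0$, Lemma \ref{lmlinbif}$(1)(c)$ reads
\[
\xi_{n,i}(S_{n,l}(x))=\psi^{(n,i)}_{1,S_{n,l}(x)}\Bigl(m_{n,i}(S_{n,l}(x))\cdot\psi^{(n,i)}_{0,S_{n,l}(x)}\bigl(\chi_i(S_{n,l}(x))\bigr)\Bigr)+\nu_{n,i}(S_{n,l}(x)).
\]
Since $D_n\circ\chi\circ S_{n,l}=\id_\B$, one has $\chi_i(S_{n,l}(x))=x_i/m_{n,i}(0)$, hence $|\chi_i(S_{n,l}(x))|<\epsilon/|m_{n,i}(0)|\to 0$. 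I would then combine the quadratic estimate $|\psi^{(n,i)}_{0,\lambda}(z)-z|\leq C|z|^2$ of Proposition \ref{corlinearisation}, the Distortion Lemma \ref{lmdistorsion} (which by the first paragraph gives $m_{n,i}(S_{n,l}(x))/m_{n,i}(0)\to 1$ uniformly on $\B$) and $|m_{n,i}(0)|\to\infty$ to conclude that $m_{n,i}(S_{n,l}(x))\,\psi^{(n,i)}_{0,S_{n,l}(x)}(\chi_i(S_{n,l}(x)))\to x_i$ uniformly on $\B$. The maps $(\lambda,z)\mapsto\psi^{(n,i)}_{1,\lambda}(z)$ are jointly holomorphic and uniformly bounded on $\B_r\times\D(0,\rho)$ (by $|z|+C|z|^2<2\rho$), so by Montel, along a subsequence, they converge locally uniformly to a holomorphic $\Psi_{\infty,i}$ with $|\Psi_{\infty,i}(0,z)-z|\leq C|z|^2$; evaluating at $(\lambda,z)=(S_{n,l}(x),\,m_{n,i}(S_{n,l}(x))\psi^{(n,i)}_{0,S_{n,l}(x)}(\chi_i(S_{n,l}(x))))$ and using the two uniform convergences just obtained, the first summand tends uniformly to $\tilde p_i(x_i):=\Psi_{\infty,i}(0,x_i)$, with $\tilde p_i(0)=0$ and $\tilde p_i'(0)=1$. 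Finally $\nu_{n,i}(S_{n,l}(x))=h_{S_{n,l}(x)}(f_0^{n+k_0}(c_i(0)))$ stays uniformly close to $\nu_{n,i}(0)=f_0^{n+k_0}(c_i(0))$ by uniform continuity of $h$ on $\B(0,\rho)\times E_0$ and $S_{n,l}\to 0$, while $\nu_{n,i}(0)\to z_i\in\J_0$ after the extraction of Lemma \ref{lmlinbif}$(2)$. So $p_i:=\tilde p_i+z_i$ works: $\xi_{n,i}\circ S_{n,l}\to p_i(x_i)$ uniformly, $p_i(0)=z_i\in\J_0$, $p_i'(0)=1$. (The limit does not depend on $l$, the $l$-dependence entering only through $S_{n,l}\to 0$; exhausting $\dot\D^q_\epsilon$ by countably many balls and diagonalising makes the extraction, hence $p_i$, independent of $\B$; and after possibly reducing $\epsilon$ we may assume $p_i(\D(0,\epsilon))\subset\D(z_i,\alpha)$, since already $\xi_{n,i}(\Omega_n)\subset\D(z_i,\alpha)$ for $n\geq n_0$.)

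\emph{Third}, because $\tilde\xi_{n,i}=\sigma_i\circ\xi_{n,i}$ with $\sigma_i$ continuous on $\D(z_i,\alpha)$, $\sigma_i(\D(z_i,\alpha))\Subset\C^2\setminus\{0\}$, and $G_\lambda(z)$ continuous on $\B_r\times(\C^2\setminus\{0\})$ (Proposition \ref{propGreen}), the composite $G_{S_{n,l}(x)}(\sigma_i(\xi_{n,i}(S_{n,l}(x))))$ converges uniformly on $\B$ to $G_0(\sigma_i(p_i(x_i)))$ by uniform continuity of $G$ and of $\sigma_i$ on the compacta involved, together with the second step. The uniform bound is then cheap: for $n\geq n_0$ all points $S_{n,l}(x)$ lie in a fixed compact subset of $\B_r$ and all points $\sigma_i(\xi_{n,i}(S_{n,l}(x)))$ lie in the fixed compact set $\bigcup_i\sigma_i(\D(z_i,\alpha))\Subset\C^2\setminus\{0\}$, so $M:=\sup|G_\lambda(z)|$ over the product of these two compacta — independent of $\B$, $n$, $i$, $l$ — does the job.

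The step I expect to be the main obstacle is the uniform convergence $m_{n,i}(S_{n,l}(x))\,\psi^{(n,i)}_{0,S_{n,l}(x)}(\chi_i(S_{n,l}(x)))\to x_i$: it forces one to feed the {\L}ojasiewicz-type bound $\|S_{n,l}(x)\|=O((\epsilon/K^n)^{1/p})$ of Proposition \ref{propchi} into the Distortion Lemma in order to kill $n\|\lambda\|$, while simultaneously balancing the quadratic error of the linearising coordinate $\psi^{(n,i)}_0$ against the blow-up $|m_{n,i}(0)|\to\infty$; everything else is a Montel extraction and continuity of the Green function.
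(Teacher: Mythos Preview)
Your proof is correct and follows essentially the same route as the paper: the {\L}ojasiewicz shrinking of $\Omega_n$ to force $S_{n,l}\to 0$ (and kill the factor $n$ in the Distortion Lemma), the linearized formula from Lemma \ref{lmlinbif}, the quadratic estimates on $\psi^{(n,i)}_{0}$ and $\psi^{(n,i)}_{1}$ from Proposition \ref{corlinearisation}, and the continuity of the Green function for the last step. The only organisational difference is that the paper first freezes $\lambda=0$, defines $u_{n,i}(x_i):=\psi^{(n,i)}_{1,0}\bigl(m_{n,i}(0)\,\psi^{(n,i)}_{0,0}(x_i/m_{n,i}(0))\bigr)$ on all of $\D(0,\epsilon)$, extracts a limit $q_i$ of this one-variable family by equicontinuity, and then shows $u_{n,l,i}-u_{n,i}\to 0$ on $\B$; whereas you show $v_{n,l,i}\to x_i$ directly and apply Montel to $(\lambda,z)\mapsto\psi^{(n,i)}_{1,\lambda}(z)$ jointly. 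Both give the same $p_i$, and since your Montel extraction is on the fixed domain $\B_r\times\D(0,\rho)$ (and the extraction $\nu_{n,i}(0)\to z_i$ is already global from Lemma \ref{lmlinbif}(2)), the diagonalisation over balls you mention is in fact unnecessary: $p_i$ is automatically independent of $\B$.
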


\begin{proof} By construction $S_{n,l}(\B)=\phi_{n,l}\circ D_n^{-1}(\B)\subset\Omega_n$. Thus the sequence $(S_{n,l})_{n\geq0}$ uniformly converges to $0$ on $\B$. Let us use Lemma \ref{lmlinbif}. Item $1.(c)$ allows us to write
\begin{center}
$\displaystyle\xi_{n,i}\big(S_{n,l}(x)\big)=\psi^{(n,i)}_{1,S_{n,l}(x)}\Big(m_{n,i}(S_{n,l}(x)).\psi^{(n,i)}_{0,S_{n,l}(x)}\Big(\frac{x_i}{m_{n,i}(0)}\Big)\Big)+\nu_{n,i}(S_{n,l}(x))$.
\end{center}
As the sequence $(\nu_{n,i}(0))_{n\geq0}$ converges up to extraction to $z_i\in\J_0$ and as $(\nu_{n,i})_{n\geq0}$ is equicontinuous, we see that $(\nu_{n,i}\circ S_{n,l})_{n\geq0}$ uniformly converges to $z_i$ on $\B$.

~

\par Let us now look at $\xi_{n,i}\circ S_{n,l}-\nu_{n,i}\circ S_{n,l}$. Set
\begin{center}
$u_{n,i}(x_i)\displaystyle\pe\psi^{(n,i)}_{1,0}\Big(m_{n,i}(0).\psi^{(n,i)}_{0,0}\Big(\frac{x_i}{m_{n,i}(0)}\Big)\Big)$
\end{center} 
for $n\geq0$, $x_i\in\D(0,\epsilon)$ and $1\leq i\leq q$. According to Proposition \ref{corlinearisation} there exists $C_1>0$ such that
\begin{eqnarray}
 & |\psi^{(n,i)}_{0,\lambda}(z)-z|\leq C_1|z|^2 \text{ for }\lambda\in\B_r\text{ and }z\in\D(0,\rho_{n,i}(\lambda)),\label{inegpsi0} & \\
 & |\psi^{(n,i)}_{1,\lambda}(w)-w|\leq C_1|w|^2 \text{ for }\lambda\in\B_r\text{ and }w\in\D(0,\rho).\label{inegpsi1} &
\end{eqnarray}
We deduce from it that the family $(u_{n,i})_{n\geq0}$ is equicontinuous and locally uniformly converges up to extraction to a holomorphic function $q_i$ on $\D(0,\epsilon)$ which verifies $q_i'(0)=1$ and $q_i(0)=0$. Set $p_i\pe q_i+z_i$.

~

\par Set now for every $x\in\B$
\begin{center}
$\begin{array}{clc}
 & \displaystyle u_{n,l,i}(x)\pe\psi^{(n,i)}_{1,S_{n,l}(x)}\Big(m_{n,i}(S_{n,l}(x)).\psi^{(n,i)}_{0,S_{n,l}(x)}\Big(\frac{x_i}{m_{n,i}(0)}\Big)\Big),& \\ & & \\
 & \displaystyle v_{n,l,i}(x)\pe m_{n,i}\Big( S_{n,l}(x)\Big)\psi_{0,S_{n,l}(x)}^{(n,i)}\Big(\frac{x_i}{m_{n,i}(0)}\Big) \textup{ and}& \\ & & \\
 & \displaystyle v_{n,i}(x_i)\pe m_{n,i}(0)\psi_{0,0}^{(n,i)}\Big(\frac{x_i}{m_{n,i}(0)}\Big).&
\end{array}$
\end{center}
To conclude it suffices to prove that $(u_{n,l,i}(x)-u_{n,i}(x_i))_{n\geq0}$ uniformly converges to $0$ on $\B$. As $S_{n,l}(\B)\subset\Omega_{n}$, the Distortion Lemma \ref{lmdistorsion}  and item 4. of Proposition \ref{propchi} give $C_2>0$, $K>1$ and $p\geq1$ such that $\left|\frac{m_{n,i}(S_{n,l}(x))}{m_{n,i}(0)}-1\right|\leq \exp(C_2\frac{n}{K^{n/p}})-1$. This together with \ref{inegpsi0} gives for $x\in\B$ and $n\geq n_0$
\begin{center}
$\displaystyle\left|v_{n,l,i}(x)-v_{n,i}(x_i)\right|\leq\left(\exp\left(\frac{nC_2}{K^{n/p}}\right)-1\right)|x_i|+ C_1\left(1+\exp\left(\frac{nC_2}{K^{n/p}}\right)\right)\frac{|x_i|^2}{|m_{n,i}(0)|}$
\end{center}
and the sequence $(v_{n,l,i}(x)-v_{n,i}(x))_{n\geq0}$ uniformly converges to $0$ on $\B$.

~

\par To conclude, we remark that
\begin{eqnarray*}
u_{n,l,i}(x)-u_{n,i}(x) & = & \ \Big(\psi_{1,S_{n,l}(x)}^{(n,i)}(v_{n,l,i}(x))-\psi_{1,0}^{(n,i)}(v_{n,l,i}(x))\Big)\\
 &  & + \ \Big(\psi_{1,0}^{(n,i)}(v_{n,l,i}(x))-\psi_{1,0}^{(n,i)}(v_{n,i}(x))\Big).
\end{eqnarray*}
 Since $S_{n,l}(\B)\subset\Omega_n$ item $1.(b)$ of Lemma \ref{lmlinbif} says that $\left|v_{n,l,i}(x)\right|\leq\rho$ and $\left|v_{n,i}(x_i)\right|\leq\rho$ for any $n\geq n_0$ and $x\in\B$. It is now clear that $(u_{n,l,i}-u_{n,i})_{n\geq1}$ uniformly converges to $0$ on $\B$. Finally, if $W\Subset\C^2\setminus\{0\}$ is a neighborhood of $\bigcup_{1\leq j\leq q}\sigma_j(\D(0,\tilde\alpha))$ (see item 2 of Lemma \ref{lmlinbif}), then $\Omega_0\times W\Subset\B_r\times\C^2\setminus\{0\}$ and $(S_{n,l}(x),\xi_{n,i}\circ S_{n,l}(x))\in \Omega_0\times W$ for any $x\in\B$ and any $n\geq n_0$ and, by continuity of $(\lambda,z)\longmapsto G_\lambda(z)$ on $\B_r\times\C^2\setminus\{0\}$, we get $M>0$ such that $\|G_\lambda(z)\|_{L^\infty(\Omega_0\times W)}\leq M$.\end{proof}

\subsection{Third step: asymptotic reduction to a dynamical data}

\par Using the $p_i$ which are given by Lemma \ref{lmconv}, we define a Radon measure on $\C^q$ by setting
\begin{center}
$\mu(B)\pe \displaystyle\int_{\D_\epsilon^q\cap B}\bigwedge_{i=1}^qdd^cG_0\circ\sigma_i\big(p_i(x_i))$ for any Borel set $B\subset\C^q$.
\end{center}

~

\begin{lm}
Let $\ell\pe\displaystyle\sum_{j=1}^{+\infty}\mu(\B_j)$. Then:
\begin{center}
$0<\mu(\D^q_\epsilon)\leq\ell\leq P(q).\mu(\D^q_\epsilon)<+\infty$.
\end{center}
\label{lmlimitsum}
\end{lm}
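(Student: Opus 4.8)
The plan is to exhibit $\mu$ as a finite positive Radon measure on $\D^q_\epsilon$ which charges no proper analytic subset, then to integrate the Besicovitch covering estimate \eqref{besicovitch} against $\mu$, and finally to establish the strict positivity $\mu(\D^q_\epsilon)>0$, which I expect to be the one genuinely non-formal step.

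First I would reduce $\epsilon$ so that, for each $1\leq i\leq q$, one has $p_i(\overline{\D(0,\epsilon)})\Subset\D(z_i,\alpha)$, where $\sigma_i$ is defined (see Lemma \ref{lmconv} and item $(2)$ of Lemma \ref{lmlinbif}). Then $u_i\pe G_0\circ\sigma_i\circ p_i$ is a continuous subharmonic function on a neighbourhood of $\overline{\D(0,\epsilon)}$ (recall $G_0$ is continuous on $\C^2\setminus\{0\}$ by Proposition \ref{propGreen} and $\sigma_i$ has values in a compact subset of $\C^2\setminus\{0\}$), so $dd^cu_i$ is a positive measure of finite mass on $\D(0,\epsilon)$. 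Since $\bigwedge_{i=1}^q dd^c\big(G_0\circ\sigma_i(p_i(x_i))\big)=dd^cu_1\otimes\cdots\otimes dd^cu_q$ as positive measures on $\D^q_\epsilon$, we obtain $\mu(\D^q_\epsilon)=\prod_{i=1}^q\|dd^cu_i\|_{\D(0,\epsilon)}<+\infty$. Moreover $\mu$ is the Monge-Amp\`ere measure of the continuous p.s.h. function $(x_1,\ldots,x_q)\longmapsto\sum_{i=1}^qu_i(x_i)$ on $\D^q_\epsilon$, hence it puts no mass on pluripolar sets (see \cite{BedfordTaylor}). As $X_\infty=A_\infty\cup\bigcup_{n\geq 0}A_n$ is a countable union of proper analytic subsets of $\D^q_\epsilon$, it is pluripolar, so $\mu(X_\infty)=0$ and $\mu(\dot\D^q_\epsilon)=\mu(\D^q_\epsilon)$.

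I would then integrate \eqref{besicovitch} against $\mu$. The identity $\int_{\D^q_\epsilon}\sum_j\mathbf{1}_{\B_j}\,\mu=\sum_j\mu(\B_j)=\ell$ has already been recorded above (Lebesgue convergence), so from $\mathbf{1}_{\dot\D^q_\epsilon}\leq\sum_j\mathbf{1}_{\B_j}\leq P(q)\,\mathbf{1}_{\dot\D^q_\epsilon}$ we get $\mu(\D^q_\epsilon)=\mu(\dot\D^q_\epsilon)\leq\ell\leq P(q)\,\mu(\dot\D^q_\epsilon)=P(q)\,\mu(\D^q_\epsilon)<+\infty$. This is the whole statement except for the strict positivity $\mu(\D^q_\epsilon)>0$.

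For that last point, by the product formula above it suffices to show $\|dd^cu_i\|_{\D(0,\epsilon)}>0$ for every $i$. Here I would invoke the classical fact that, for a local holomorphic section $\sigma$ of $\pi$, the measure $dd^c(G_0\circ\sigma)$ coincides, up to normalisation, with the maximal entropy measure $\mu_{f_0}$ of $f_0$ read in the chart $\sigma$, so that $\supp\big(dd^c(G_0\circ\sigma_i)\big)=\J_0\cap\D(z_i,\alpha)$ (see \cite{BB1}). By item $(2)$ of Lemma \ref{lmlinbif} and Lemma \ref{lmconv}, $p_i(0)=z_i\in\J_0$ and $p_i'(0)=1$, so $p_i$ is non-constant and its image contains an open neighbourhood of $z_i$; were $dd^cu_i$ to vanish on $\D(0,\epsilon)$, then $G_0\circ\sigma_i$ would be harmonic near $z_i$, contradicting $z_i\in\supp\,\mu_{f_0}=\J_0$. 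Hence $\|dd^cu_i\|_{\D(0,\epsilon)}>0$ for all $i$ and $\mu(\D^q_\epsilon)>0$, completing the proof. The genuinely non-formal ingredient is exactly this: it is where one uses that the limit points $z_i$ of the renormalised critical orbits lie in the Julia set $\J_0$, which is the support of the measure of maximal entropy.
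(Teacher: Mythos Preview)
Your proof is correct and follows essentially the same route as the paper: both arguments use continuity of the potentials to see that $\mu$ does not charge the pluripolar set $X_\infty$, integrate the Besicovitch inequality \eqref{besicovitch} against $\mu$, exploit the product (Fubini) structure $\mu(\D^q_\epsilon)=\prod_i\|dd^cu_i\|_{\D(0,\epsilon)}$, and establish $\mu(\D^q_\epsilon)>0$ via $p_i(0)=z_i\in\J_0=\supp(\mu_{f_0})$ together with $p_i'(0)=1$. The only cosmetic difference is that the paper computes each factor directly as $\mu_{f_0}(W_i)$ with $W_i=p_i(\D(0,\epsilon))$, whereas you argue by contradiction that $dd^cu_i\not\equiv 0$; both rely on the same fact that $dd^c(G_0\circ\sigma_i)$ is the measure $\mu_{f_0}$ in the chart $\sigma_i$.
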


\begin{proof} As the functions $G_0\circ\sigma_i\big(p_i(x_i)\big)$ are continuous, the measure $\mu$ doesn't give mass to pluripolar sets. In particular, $\mu(\D_\epsilon^q)=\mu(\dot\D_\epsilon^q)$. By \ref{besicovitch}, we thus find
\begin{center}
$\displaystyle\mu(\D^q_\epsilon)=\mu(\dot\D^q_\epsilon)\leq\sum_{j=1}^{+\infty}\mu(\B_j)\leq P(q).\mu(\dot\D^q_\epsilon)=P(q).\mu(\D^q_\epsilon)$.
\end{center}
The function $G_0\circ\sigma_i\big(p_i(x_i)\big)$ only depending on the $i$-th variable, Fubini Theorem gives
\begin{center}
$\mu(\D_\epsilon^q)=\displaystyle\int_{\D_\epsilon^q}\bigwedge_{i=1}^qdd^cG_0\circ\sigma_i\big(p_i(x_i)\big)=\prod_{i=1}^q\int_{\D(0,\epsilon)}dd^cG_0\circ\sigma_i\big(p_i(z)\big)$.
\end{center}
Set now $W_i\pe p_i(\D(0,\epsilon))$ for $1\leq i\leq q$. As $p_i$ is non-constant, the set $W_i$ is an open neighborhood of $p_i(0)=z_i\in\J_0$. Thus we have $0<\mu_{f_0}(W_i)<+\infty$ and
\begin{center}
$0<\mu(\D^q_\epsilon)=\displaystyle\prod_{i=1}^q\mu_{f_0}(W_i)<+\infty$,
\end{center}
which gives the announced estimate.\end{proof}

~

\par We are now in position to end the proof of Theorem \ref{tmcourantsbif}:

~

\begin{lm}
$0\in\supp(\mu^{(q)})$. More precisely, $\displaystyle\lim_{n\rightarrow+\infty}\frac{\log\mu^{(q)}(\Omega_n)}{n}=-q\log d\neq0$.
\label{lmequivmu}
\end{lm}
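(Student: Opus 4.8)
The plan is to combine the two-sided estimate from Lemma~\ref{lmTk}(1) with the renormalization convergence from Lemma~\ref{lmconv} in order to compute the exact exponential rate at which $\mu^{(q)}(\Omega_n)$ decays. Starting from
\[
\frac{1}{P(q)}\sum_{j=1}^{+\infty}(D_n\circ\chi)_*\mu^{(q)}(\B_j)\leq\mu^{(q)}(\Omega_n)\leq\sum_{j=1}^{+\infty}(D_n\circ\chi)_*\mu^{(q)}(\B_j),
\]
I would substitute the formula from Lemma~\ref{lmTk}(2),
\[
(D_n\circ\chi)_*\mu^{(q)}(\B_j)=d^{-q(n+k_0)}\sum_{l=1}^p\int_{\B_j}\bigwedge_{i=1}^q dd^cG_{S_{n,l,j}(x)}\big(\tilde\xi_{n,i}(S_{n,l,j}(x))\big),
\]
valid for $n\geq n_0$. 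Thus $d^{q(n+k_0)}\mu^{(q)}(\Omega_n)$ is, up to the factor $P(q)^{\pm1}$, equal to $\sum_{j}\sum_{l}\int_{\B_j}\bigwedge_i dd^cG_{S_{n,l,j}(x)}(\tilde\xi_{n,i}(S_{n,l,j}(x)))$. It therefore suffices to show that this double sum stays bounded between two positive constants as $n\to\infty$; then $\log\mu^{(q)}(\Omega_n)=-q(n+k_0)\log d+O(1)$ and dividing by $n$ gives the result.

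For the upper bound I would use the uniform $L^\infty$ estimate $\|G_{S_{n,l}}(\tilde\xi_{n,i}\circ S_{n,l})\|_{L^\infty(\B)}\leq M$ from Lemma~\ref{lmconv}: since each $G_{S_{n,l,j}(x)}(\tilde\xi_{n,i}(S_{n,l,j}(x)))$ is a psh function on a neighborhood of $\B_j$ bounded by $M$, the mixed Monge--Amp\`ere mass $\int_{\B_j}\bigwedge_i dd^c(\cdot)$ is controlled by a Chern--Levine--Nirenberg type inequality by $C_qM^q$ times a purely geometric constant depending on $2\B_j\subset\dot\D_\epsilon^q$; summing over $j$ and $l$ and using the bounded-overlap property \eqref{besicovitch} of the Besicovitch cover together with the fact that there are exactly $p$ branches, one gets a bound of the form $pC_qM^qP(q)\cdot(\text{vol of }\D^q_\epsilon)$, independent of $n$. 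For the lower bound I would pass to the limit: by Lemma~\ref{lmconv}, on each fixed ball $\B_j$ one has $S_{n,l,j}\to 0$ and $G_{S_{n,l,j}(x)}(\tilde\xi_{n,i}(S_{n,l,j}(x)))\to G_0\circ\sigma_i(p_i(x_i))$ uniformly, so by continuity of the mixed Monge--Amp\`ere operator under uniform convergence of bounded psh functions (Bedford--Taylor),
\[
\lim_{n\to\infty}\sum_{l=1}^p\int_{\B_j}\bigwedge_{i=1}^q dd^cG_{S_{n,l,j}(x)}\big(\tilde\xi_{n,i}(S_{n,l,j}(x))\big)=p\int_{\B_j}\bigwedge_{i=1}^q dd^cG_0\circ\sigma_i(p_i(x_i))=p\,\mu(\B_j).
\]
Summing over $j$ and invoking Fatou's lemma for the liminf of the full series, $\liminf_n d^{q(n+k_0)}\mu^{(q)}(\Omega_n)\geq \frac{p}{P(q)}\sum_j\mu(\B_j)=\frac{p}{P(q)}\ell$, which is strictly positive by Lemma~\ref{lmlimitsum} ($\ell\geq\mu(\D^q_\epsilon)=\prod_i\mu_{f_0}(W_i)>0$).

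Combining the two bounds, there are constants $0<A\leq B<\infty$ with $A\leq d^{q(n+k_0)}\mu^{(q)}(\Omega_n)\leq B$ for all large $n$, hence $\mu^{(q)}(\Omega_n)>0$ for all $n$, so $0\in\supp(\mu^{(q)})$, and
\[
\frac{\log\mu^{(q)}(\Omega_n)}{n}=\frac{-q(n+k_0)\log d+O(1)}{n}\xrightarrow[n\to\infty]{}-q\log d\neq 0.
\]
Since, after Lemma~\ref{lmvariete} and the slicing reduction, $\mu^{(q)}$ coincides locally with a constant multiple of $dd^cG_\lambda(\tilde c_1(\lambda))\wedge\cdots\wedge dd^cG_\lambda(\tilde c_q(\lambda))$, this yields $0\in\supp(dd^cG_\lambda(\tilde c_1(\lambda))\wedge\cdots\wedge dd^cG_\lambda(\tilde c_q(\lambda)))$ and finishes Theorem~\ref{tmcourantsbif}. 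The main obstacle I anticipate is making the interchange of limit and infinite sum fully rigorous: the uniform $L^\infty$ bound $M$ gives domination of $\mu(\B_j)$ by the geometric series $\sum_j \mathrm{vol}(\B_j)\lesssim \mathrm{vol}(\dot\D^q_\epsilon)<\infty$ via \eqref{besicovitch}, so dominated convergence applies to the series $\sum_j (D_n\circ\chi)_*\mu^{(q)}(\B_j)$; but one must be careful that the number $n_0$ beyond which the $p$ disjoint inverse branches $S_{n,l,j}$ exist is uniform in $j$, which is guaranteed because $\B_j\cap X_\infty=\emptyset$ and $D_n\circ\chi$ is a genuine degree-$p$ covering over $\D^q_\epsilon\setminus X_\infty$ for every $n$ by Proposition~\ref{propchi}(2) together with the definition of $A_n$ and $X_\infty$.
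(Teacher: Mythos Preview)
Your proposal is correct and follows essentially the same route as the paper. Both arguments combine Lemma~\ref{lmTk} with the renormalization Lemma~\ref{lmconv}, use the Chern--Levine--Nirenberg bound (with the Besicovitch finite-overlap property \eqref{besicovitch} to make the series $\sum_j\mathrm{Leb}(\B_j)$ convergent) to dominate the tails uniformly in $n$, and invoke Bedford--Taylor continuity of Monge--Amp\`ere under uniform convergence on each fixed $\B_j$ to identify the limit with $p\,\mu(\B_j)$; the only cosmetic difference is that the paper carries out the truncation-plus-$\varepsilon$ argument explicitly to show $d^{q(n+k_0)}\mu^{(q)}(\Omega_n)\in[\ell_n/P(q),\ell_n]$ with $\ell_n\to p\ell$, whereas you shortcut the lower bound via Fatou's lemma (yielding $\liminf\geq p\ell/P(q)$), which is equally sufficient for the stated limit $-q\log d$.
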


\begin{proof} By Chern-Levine-Nirenberg inequalities and Lemma \ref{lmconv}, there exists a constant $M>0$ such that for any $n\geq n_0$, any $j\geq1$ and any $1\leq l\leq p$,
\begin{eqnarray*}
\int_{\B_j}\bigwedge_{i=1}^q dd^cG_{S_{n,l,j}(x)}(\tilde\xi_{n,i}\circ S_{n,l,j}(x))\leq \textup{Leb}(\B_j).M^q<+\infty.
\end{eqnarray*}
Therefore, taking the sum over $j$ and $l$ we fing for any $N\geq1$:
\begin{eqnarray*}
\sum_{l=1}^p\sum_{j\geq N+1}\int_{\B_j}\bigwedge_{i=1}^q dd^cG_{S_{n,l,j}(x)}(\tilde\xi_{n,i}\circ S_{n,l,j}(x)) & \leq &  \left(\sum_{l=1}^p\sum_{j\geq N+1}\textup{Leb}(\B_j)\right).M^q\\
 & \leq & p.M^q\sum_{j\geq N+1}\textup{Leb}(\B_j)
\end{eqnarray*}
for $n\geq n_0$. Since $\sum_{j\geq1}\mathbf{1}_{\B_j}\leq P(q).\mathbf{1}_{\dot\D^q_\epsilon}$, the series $\sum_{j\geq 1}\textup{Leb}(\B_j)$ is convergent and for any $\varepsilon>0$, there exists $N_0\geq1$ such that for any $N\geq N_0$ we find:
\begin{eqnarray*}
0\leq\sum_{l=1}^p\sum_{j\geq N+1}\int_{\B_j}\bigwedge_{i=1}^q dd^cG_{S_{n,l,j}(x)}(\tilde\xi_{n,i}\circ S_{n,l,j}(x))\leq \varepsilon.
\end{eqnarray*}
Similarly, we prove that there exists $N_1\geq N_0$ such that for $N\geq N_1$ one has:
\begin{eqnarray*}
0\leq p.\ell-\sum_{l=1}^p\sum_{j= 1}^{N}\int_{\B_j}\bigwedge_{i=1}^q dd^cG_0(\sigma_i\circ p_i(x_i))\leq \varepsilon.
\end{eqnarray*}
After Lemma \ref{lmconv}, the sequence $G_{S_{n,l,j}(x)}(\tilde\xi_{n,i}\circ S_{n,l,j}(x))$ converges to $G_0(\sigma_i\circ p_i(x_i))$ uniformly on $\B_j$. We thus have shown that
\begin{eqnarray*}
\lim_{n\rightarrow+\infty}\sum_{l=1}^p\sum_{j=1}^{+\infty}\int_{\B_j}\bigwedge_{i=1}^q dd^cG_{S_{n,l,j}(x)}(\tilde\xi_{n,i}\circ S_{n,l,j}(x))=p.\ell>0.
\end{eqnarray*}
Combined with Lemma \ref{lmTk} and Lemma \ref{lmlimitsum}, this gives a sequence $\ell_n\longrightarrow\ell$, so that
\begin{center}
$P(q)^{-1}p.d^{-q(n+k_0)}\ell_n\leq\mu^{(q)}(\Omega_n)\leq p.d^{-q(n+k_0)}\ell_n$.
\end{center}
As $\ell>0$ and as $(\Omega_n)$ is a basis of neighborhood of $0$, we have proved that $0\in\supp(\mu^{(q)})$. Moreover, the previous estimates may be rewritten:
\begin{eqnarray*}
\frac{\log(P(q)^{-1}p\ell_n)}{n}-\Big(1+\frac{k_0}{n}\Big)q\log d\leq\frac{\log\mu^{(q)}(\Omega_n)}{n}\leq\frac{\log(p\ell_n)}{n}-\Big(1+\frac{k_0}{n}\Big)q\log d.
\end{eqnarray*}
As $\ell_n$ converges, one concludes letting $n$ tend to $\infty$.\end{proof}

\section{Dimension estimates for the bifurcation measure}\par In this section, we want to  summarize the dimension estimates for the bifurcation measure which can be deduced from the previous sections.

\subsection{Hausdorff dimension of the support of $\mu_\bif$} Recall that a set $E\subset\R^k$ is said \emph{homogeneous} if for any open set $U\subset \R^k$ such that $U\cap E\neq\emptyset$, then $\dim_H(E)=\dim_H(E\cap U)$. Theorem \ref{tm2intro} combined with fine parabolic implosion techniques allows to establish the following result:

~

\begin{tm}
In the family $\rat_d$, we have the following:
\begin{enumerate}
\item The set $\supp(T_\bif^ k)\setminus\supp(T_\bif^ {k+1})$ has maximal Hausdorff dimension $2(2d+1)$,
\item the support of $T_\bif^{2d-2}$ is homogeneous and $\dim_H(\supp(T_\bif^{2d-2}))=2(2d+1)$.
\end{enumerate}
\label{tmdimH3}
\end{tm}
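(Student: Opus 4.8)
The statement of Theorem \ref{tmdimH3} must be derived by combining the local dimension estimate of Theorem \ref{tm2intro} with the self-intersection detection result of Theorem \ref{tm1intro}, and then upgrading the lower bounds to exact equalities using the trivial upper bound $\dim_H \le 2(2d+1)$ coming from the ambient dimension of $\rat_d$. For part (1), I would argue as follows. Fix $1 \le k \le 2d-3$. By Theorem \ref{tm1intro}, every $k$-Misiurewicz rational map which is not a flexible Latt\`es map lies in $\supp(T_\bif^k) \setminus \supp(T_\bif^{k+1})$; in particular the set $\mathfrak{M}_k$ of $k$-Misiurewicz maps with simple critical points that are not flexible Latt\`es maps satisfies $\mathfrak{M}_k \subset \supp(T_\bif^k) \setminus \supp(T_\bif^{k+1})$. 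Hence
\begin{center}
$\dim_H\big(\supp(T_\bif^k)\setminus\supp(T_\bif^{k+1})\big) \ge \dim_H(\mathfrak{M}_k)$.
\end{center}
By Theorem \ref{tmdimH2}, $\dim_H(\mathfrak{M}_k) = 2(2d+1)$, which is the maximal possible value since $\rat_d$ has complex dimension $2d+1$. This proves (1) for $k \le 2d-3$; for $k = 2d-2$ the set $\supp(T_\bif^{2d-2}) \setminus \supp(T_\bif^{2d-1})$ equals $\supp(T_\bif^{2d-2})$ since $T_\bif^{2d-1}=0$ for dimension reasons (there are only $2d-2$ critical points, so by Theorem \ref{tmddcg} the $(2d-1)$-st self-intersection vanishes), and one invokes part (2) instead.

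\textbf{Part (2): homogeneity.} For the homogeneity statement about $\supp(T_\bif^{2d-2})$, the key point is that $\supp(T_\bif^{2d-2})$ is the closure of $\mathfrak{M}_{2d-2}$ together with possibly other parameters, but more importantly that $\mathfrak{M}_{2d-2}$ is \emph{dense} in $\supp(T_\bif^{2d-2})$ and homogeneous. Density follows because any parameter in $\supp(T_\bif^{2d-2})$ has all $2d-2$ critical points active (by Theorem \ref{tmddcg}) and one can perturb, using Lemma \ref{lmapproxmis} together with the parabolic-implosion perturbation of Lemma \ref{parabolicimplosion}, to land on a $(2d-2)$-Misiurewicz map with simple critical points arbitrarily nearby; such maps are in $\mathfrak{M}_{2d-2}$ since when $\J_f = \p^1$ every critical point is in the Julia set. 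Homogeneity of $\supp(T_\bif^{2d-2})$ then reduces to: for any open $\Omega$ meeting $\supp(T_\bif^{2d-2})$, one has $\dim_H(\supp(T_\bif^{2d-2}) \cap \Omega) = 2(2d+1)$. The lower bound comes from the fact that $\Omega$ meets $\mathfrak{M}_{2d-2}$ (by density), and then Theorem \ref{tm2intro} applied to a point $f \in \mathfrak{M}_{2d-2} \cap \Omega$ gives $\dim_H(\mathfrak{M}_{2d-2} \cap \Omega) \ge 2(2d+1)$ after letting $\varepsilon \to 0$ in the estimate $2(2d+1-k) + k\dim_{\textup{hyp}}(f)$ with $k = 2d-2$ and $\dim_{\textup{hyp}}(f) = 2$ (Julia set equals $\p^1$); the upper bound is again the ambient dimension. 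Thus $\dim_H(\supp(T_\bif^{2d-2}) \cap \Omega) = 2(2d+1)$ for every such $\Omega$, which is exactly homogeneity together with the value of the dimension.

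\textbf{Main obstacle.} The step I expect to require the most care is establishing that $\mathfrak{M}_{2d-2}$ (equivalently $\supp(\mu_\bif)$'s preimage, once one passes to the moduli space in Theorem \ref{tmprincipal}) is genuinely \emph{dense} in $\supp(T_\bif^{2d-2})$ with the homogeneity property, because homogeneity is not automatic: it requires that the local dimension estimate of Theorem \ref{tm2intro} be available at a dense set of points of $\supp(T_\bif^{2d-2})$, and this in turn relies on both the transversality Theorem \ref{tmtransverseintro} (to know the activity map has discrete fibers, hence Theorem \ref{tmcourantsbif} applies) and on the approximation lemmas \ref{lmapproxmis}, \ref{parabolicimplosion}. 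One must also be careful about the flexible Latt\`es locus: these are excluded from $\mathfrak{M}_k$, but when $d$ is a perfect square and $k = 2d-2$ they form a proper algebraic subset, so removing them does not affect densities or dimensions — this needs a brief remark. Finally, I would note that the case $1 \le k \le 2d-3$ of homogeneity is deliberately \emph{not} claimed, since Theorem \ref{tmdimH2} gives $\dim_H(\mathfrak{M}_k) = 2(2d+1)$ but one does not know that $\mathfrak{M}_k$ is dense in all of $\supp(T_\bif^k)$ — only that it is contained in $\supp(T_\bif^k)\setminus\supp(T_\bif^{k+1})$, which suffices for (1) but not for a homogeneity statement about $\supp(T_\bif^k)$ itself.
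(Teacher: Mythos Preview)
Your argument for part (1) is correct and matches the paper exactly: it is the combination of Theorem \ref{tm1intro} (giving $\mathfrak{M}_k \subset \supp(T_\bif^k)\setminus\supp(T_\bif^{k+1})$) with Theorem \ref{tmdimH2} (giving $\dim_H(\mathfrak{M}_k)=2(2d+1)$).

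For part (2), your overall structure is right --- density of $\mathfrak{M}_{2d-2}$ in $\supp(T_\bif^{2d-2})$ plus the homogeneity of $\mathfrak{M}_{2d-2}$ from Theorem \ref{tmdimH2} --- but your proposed proof of density has a genuine gap. You invoke Lemmas \ref{lmapproxmis} and \ref{parabolicimplosion}, but both of these lemmas take as \emph{hypothesis} that the starting map is already $k$-Misiurewicz (they are perturbation lemmas \emph{within} the Misiurewicz locus, used to improve the hyperbolic dimension or capture critical points in a prescribed set). They say nothing about approximating an arbitrary point of $\supp(T_\bif^{2d-2})$ by Misiurewicz maps. Knowing that all $2d-2$ critical points are active (via Theorem \ref{tmddcg}) is not enough by itself: the naive inductive scheme --- make $c_1$ preperiodic, restrict to that hypersurface, then $c_2$, etc. --- breaks down because the intermediate maps are not Misiurewicz, so you have no transversality and no control guaranteeing that the remaining critical points stay active on the successive slices.

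The paper closes this gap by citing the Main Theorem of Buff--Epstein \cite{buffepstein}, which establishes (via quadratic differential techniques) that strictly postcritically finite maps are dense in $\supp(T_\bif^{2d-2})$; since these are $(2d-2)$-Misiurewicz, density of $\mathfrak{M}_{2d-2}$ follows. You correctly identified density as the delicate step in your ``Main obstacle'' paragraph, but the tools you proposed for it do not do the job --- this is precisely where an external input is needed.
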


\begin{proof}
 Item $(1)$ is just the combination of Theorem \ref{tm1intro} and Theorem \ref{tmdimH2}. Moreover, by the Main Theorem of \cite{buffepstein}, the set $\mathfrak{M}_{2d-2}$ is dense in $\supp(T_\bif^{2d-2})$ and, using again Theorem \ref{tm1intro} and Theorem \ref{tmdimH2}, we conclude.
\end{proof}

~




\par The Lyapounov function $L:\rat_d\longmapsto\R$ induces a $p.s.h$ and continuous function $\tilde L$ on $\mathcal{M}_d$. Recall that the space $\mathcal{M}_d$ has complex dimension $2d-2$. Bassanelli and Berteloot define the \emph{bifurcation measure} in $\mathcal{M}_d$ by setting $\mu_\bif\pe(dd^c\tilde L)^{2d-2}$ (see \cite{BB1} Section 6). As the natural projection $\Pi:\rat_d\longrightarrow\mathcal{M}_d$ is a principal fibration on the complement of an analytic set (see \cite{BB1} page 226), Theorem \ref{tmdimH3} immediatly yields
\begin{center}
$\dim_H\big(\supp(\mu_\bif)\big)=2(2d-2)$
\end{center}
and the homogeneity of the set $\supp(\mu_\bif)$. This is Theorem \ref{tmprincipal}.

\subsection{Pointwise dimension of the bifurcation measure at Misiurewicz maps}\label{sectionmubifloc} Recall that, when $\mu$ is a Radon measure on $\R^k$, the \emph{upper pointwise dimension} of $\mu$ at $x\in\mathbb{R}^k$ is given by (see \cite{Mattila}):
\begin{center}
$\displaystyle\overline{\dim}_\mu(x)\pe\limsup_{r\rightarrow0}\frac{\log\mu(\B(x,r))}{\log r}$.
\end{center}
Owing to Theorem \ref{tm2intro} and Lemma \ref{lmequivmu}, we can easily give a lower bound for the upper pointwise dimension of the bifurcation measure at Misiurewicz parameters:

~

\begin{tm}
Let $[f]\in\mathcal{M}_d$ be the conjugacy class of a $(2d-2)$-Misiurewicz rational map $f\in\rat_d$. Assume that $[f]$ is not a singular point of $\mathcal{M}_d$. Then the upper pointwise dimension of $\mu_\bif$ at $[f]$ satisfies
\begin{center}
$\displaystyle\overline{\dim}_{\mu_\bif}[f]\geq (2d-2)\cdot\frac{\log d}{\liminf\limits_{n\rightarrow+\infty}\frac{\log\mathfrak{m}_n^+}{n}}>0$,
\end{center}
where $k_0\geq 1$ is the least integer such that $P^{k_0}(f)$ is an $f$-hyperbolic set and
\begin{center}
$\displaystyle\mathfrak{m}_n^+\pe\max_{1\leq i\leq 2d-2}\left|(f^n)'(f^{k_0}(c_i))\right|$.
\end{center}
\label{cordimmuq}
\end{tm}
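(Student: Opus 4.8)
Since $\log r\to-\infty$ as $r\to 0$, a \emph{lower} bound for $\overline{\dim}_{\mu_\bif}[f]$ is exactly an \emph{upper} bound for $\mu_\bif(\B([f],r_n))$ along a well-chosen sequence $r_n\to 0$. The plan is to take $r_n$ of the order of the inradius of the renormalization cell $\Omega_n$ of Section 6, and to combine it with the upper mass estimate $\mu^{(q)}(\Omega_n)\leq C\,d^{-q(n+k_0)}$ from Lemma \ref{lmequivmu}. Arguing as in the proof of Theorem \ref{tm2intro} (passing to a finite branch cover if $f$ has multiple critical points, then restricting to a $(2d-2)$-dimensional submanifold $V_f$ transversal to the orbit of $f$), I may assume we are given a holomorphic family $(f_\lambda)_{\lambda\in\B(0,r)}$, $\B(0,r)\subset\C^{2d-2}$, with $2d-2$ marked critical points and $f=f_0$; as $\Aut(f)$ is finite, $\Pi^{-1}(\Pi(\lambda))\cap V_f$ is finite (hence discrete) for every $\lambda$, and, $[f]$ being nonsingular, $\Pi$ identifies a neighbourhood of $f$ in $V_f$ biholomorphically with a neighbourhood of $[f]$ in $\mathcal{M}_d$. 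We also assume $f$ is not a flexible Latt\`es map (this case, where $\chi^{-1}\{0\}$ fails to be discrete, would need a separate argument). Since $f$ is $(2d-2)$-Misiurewicz all $2d-2$ critical points are active at $f$, so by Theorem \ref{tmddcg} one has $T_\bif^{2d-2}|_{V_f}=(2d-2)!\,\mu^{(q)}$ near $f$, with $q\pe 2d-2$ and $\mu^{(q)}=\bigwedge_{i=1}^q dd^cG_\lambda(\tilde c_i(\lambda))$; hence $\overline{\dim}_{\mu_\bif}[f]=\overline{\dim}_{\mu^{(q)}}(0)$ (the constant $(2d-2)!$ and the biholomorphism being harmless for pointwise dimensions). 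By Theorem \ref{tmtransverseintro}, the activity map $\chi$ of $(f_\lambda)_{\lambda\in\B(0,r)}$ at $0$ has $\chi^{-1}\{0\}$ of codimension $q$, i.e.\ discrete near $0$, so we are in the framework of Section 6 with $m=q$: here $k_0$ is the least integer with $P^{k_0}(f)$ $f$-hyperbolic, $m_{n,i}(0)=(f^n)'(f^{k_0}(c_i))$ and $\mathfrak{m}_n^+=\max_{1\leq i\leq q}|m_{n,i}(0)|$.

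\textbf{A lower bound for the size of $\Omega_n$.} This is the only genuinely new ingredient. Recall $E_n=D_n^{-1}(\D_\epsilon^q)$ is the polydisc of $\C^q$ with $i$-th radius $\epsilon/|m_{n,i}(0)|$, so it contains the euclidean ball $\B'(0',\epsilon/\mathfrak{m}_n^+)$ of $\C^q$. Let $L>0$ be a Lipschitz constant of the holomorphic map $\chi$ on $\overline{\B(0,r/2)}$; then for $n$ large one has $\epsilon/(L\mathfrak{m}_n^+)<r/2$ and, for $\lambda$ in the ball $\B(0,\epsilon/(L\mathfrak{m}_n^+))$,
\[
\|\chi(\lambda)\|=\|\chi(\lambda)-\chi(0)\|\leq L\|\lambda\|<\frac{\epsilon}{\mathfrak{m}_n^+},
\]
so $\chi(\lambda)\in E_n$. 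Since this ball is connected and contains $0$, it lies in the connected component $\Omega_n$ of $\chi^{-1}(E_n)$ through $0$; thus $\B(0,\epsilon/(L\mathfrak{m}_n^+))\subset\Omega_n$. Note that Proposition \ref{propchi}(4) bounds $\Omega_n$ only from above, whereas it is this bound \emph{from below} that is needed to control $\mu^{(q)}$ of a \emph{ball}.

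\textbf{Conclusion.} Set $r_n\pe\epsilon/(L\mathfrak{m}_n^+)$. Since $P^{k_0}(f)$ is compact, forward invariant and $f$-hyperbolic, $C'\alpha^n\leq\mathfrak{m}_n^+\leq B_0^n$ for suitable $\alpha>1$ and $B_0<\infty$, so $r_n\to 0$ and $\liminf_n\tfrac1n\log\mathfrak{m}_n^+\leq\log B_0<\infty$. By the previous step and Lemma \ref{lmequivmu}, $\mu^{(q)}(\B(0,r_n))\leq\mu^{(q)}(\Omega_n)\leq C\,d^{-qn}$, and since $\log\mu^{(q)}(\B(0,r_n))$ and $\log r_n$ are both negative for $n$ large,
\[
\frac{\log\mu^{(q)}(\B(0,r_n))}{\log r_n}\ \geq\ \frac{qn\log d-\log C}{\log\mathfrak{m}_n^+-\log\epsilon+\log L}\,;
\]
the $\limsup$ over $n$ of the right-hand side equals $q\log d\cdot\big(\liminf_n\tfrac1n\log\mathfrak{m}_n^+\big)^{-1}$. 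Hence
\[
\overline{\dim}_{\mu^{(q)}}(0)=\limsup_{r\to 0}\frac{\log\mu^{(q)}(\B(0,r))}{\log r}\ \geq\ (2d-2)\cdot\frac{\log d}{\liminf_n\tfrac1n\log\mathfrak{m}_n^+}\,,
\]
which, together with the first paragraph, gives the claimed inequality; and it is $>0$ because the $\liminf$ is finite.

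\textbf{Main obstacle.} Everything of substance is imported from Section 6, which I am allowed to use --- in particular the renormalization Lemma \ref{lmequivmu} giving $\mu^{(q)}(\Omega_n)\asymp d^{-qn}$. The one point to get right is the inradius estimate: one must extract from the local geometry of $\chi$ near $0$ a lower bound for the cell $\Omega_n$ at exactly the scale $1/\mathfrak{m}_n^+$, so that the cell-mass estimate turns into a ball-mass \emph{upper} bound with $-\log r_n\sim\log\mathfrak{m}_n^+$. That is precisely what makes the pointwise-dimension inequality come out with the correct (lower) sign; invoking instead only $\Omega_n\subset\B([f],r_n)$ would produce the opposite inequality.
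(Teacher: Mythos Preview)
Your proof is correct and follows essentially the same route as the paper: restrict to a $(2d-2)$-dimensional transversal $V_f$, establish the inclusion $\B(0,C/\mathfrak{m}_n^+)\subset\Omega_n$, and combine it with the mass estimate $\mu^{(q)}(\Omega_n)\asymp d^{-q(n+k_0)}$ coming from Lemma \ref{lmequivmu}. Your Lipschitz argument for the inradius of $\Omega_n$ is in fact more transparent than the paper's citation of items 3 and 4 of Proposition \ref{propchi} (which literally give the \emph{outer} bound on $\Omega_n$); and your explicit flagging of the flexible Latt\`es exception, needed to invoke Theorem \ref{tmtransversalite} and hence the whole Section 6 machinery, is a point the paper's proof leaves implicit.
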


\begin{proof} Let $f$ be a $(2d-2)$-Misiurewicz map and let $V_f$ be the local $(2d-2)$-dimensional submanifold of $\rat_d$ containing $f$ which is transversal to the orbit $\mathcal{O}(f)\subset\rat_d$ of $f$ under the action of $\textup{Aut}(\p^1)$ (see for example \cite{BB1} page 226). Let $\B(f,r_0)\subset V_f$ be a ball centered at $f$. Recall that, since $P^{k_0}(f)$ is an $f$-hyperbolic set, there exists constants $0<A<B<1$ such that $A^n\leq|(f^n)'(f^{k_0}(c_i))|^{-1}\leq B^n$ for any $n\geq0$ and any $1\leq i\leq 2d-2$ and that we have defined a basis of neighborhood of $f$ in $V_f$ by defining $\Omega_n$ as the component of
\begin{center}
$\chi^{-1}\D(0,\epsilon/(f^n)'(f^{k_0}(c_1)))\times\cdots\times\D(0,\epsilon/(f^n)'(f^{k_0}(c_{2d-2})))$
\end{center}
containing $f$, where $\chi$ is the activity map defined in section \ref{sectionactivitymap}. After items $3$ and $4$ of Proposition \ref{propchi}, there exists a constant $C>0$ such that $\B(f,C/\mathfrak{m}_n^+)\subset\Omega_n$. If we set $\mu\pe (dd^cL|_{V_f})^{2d-2}$, it thus comes
\begin{eqnarray*}
\frac{\log\mu\big(\B(f,C/\mathfrak{m}_{n}^+)\big)}{\log (C/\mathfrak{m}_{n}^+)}\geq \frac{\log\mu(\Omega_{n})}{\log (C/\mathfrak{m}_{n}^+)}=\frac{\log\mu(\Omega_{n})}{-n}\cdot\frac{-n}{\log C -\log\mathfrak{m}_{n}^+}.
\end{eqnarray*}
By Theorem \ref{tmddcg} and Lemma \ref{lmequivmu}, taking the $\limsup$ over $n$, we find:
\begin{eqnarray*}
\limsup_{r\rightarrow0}\frac{\log\mu(\B(f,r))}{\log r}\geq\limsup_{n\rightarrow+\infty}\frac{\log\mu(\B(f,C/\mathfrak{m}_{n}^+))}{\log C/\mathfrak{m}_{n}^+}\geq (2d-2)\cdot\log d\cdot\frac{1}{\liminf\limits_{n\rightarrow+\infty}\frac{\log\mathfrak{m}_n^+}{n}}.
\end{eqnarray*}
The natural projection $\Pi:V_f\longrightarrow\mathcal{M}_d$ being a finite branched cover, we get
\begin{center}
$\overline{\dim}_{\mu_\bif}[f]=\displaystyle\limsup_{r\rightarrow0}\frac{\log\mu(\B(f,r))}{\log r}$.
\end{center}
Since $|(f^n)'(f^{k_0}(c_i))|\leq A^{-n}$ with $A<1$, we can conclude that $\overline{\dim}_{\mu_\bif}[f]>0$.
\end{proof}

~

\par Recall that the family $\rat_d$ of all degree $d$ rational maps is a quasiprojective variety of $\p^{2d+1}$ which is connected (see \cite{BB1} Section 1.1). We denote by $\omega$ the Fubini-Study form of $\p^{2d+1}$ restricted to $\rat_d$.

~

\begin{rem}
The proof should be similar when the point $[f]$ is singular. Let:
\begin{center}
$\sigma_{T_\bif^k}\pe\Pi_*(T_\bif^k\wedge\omega^{2d-2-k})=(\Pi_*T_\bif)^k\wedge(\Pi_*\omega)^{2d-2-k}$
\end{center}
where $\Pi:\rat_d\longrightarrow\mathcal{M}_d$ is the quotient map. As in the classical situation, we call this measure the \emph{trace measure} of $(\Pi_*T_\bif)^k$. Theorem \ref{cordimmuq} may also generalize for $k$-Misiurewicz maps in the following way:
\begin{eqnarray*}
\overline{\dim}_{\sigma_{T_\bif^k}}[f]\geq k\cdot\frac{\log d}{\liminf\limits_{n\rightarrow\infty}\frac{\log\mathfrak{m}_n^+}{n}}>0.
\end{eqnarray*}
\end{rem}

~ 

In the case of strictly postcritically finite rational maps, Theorem \ref{cordimmuq} can be precised:

~

\begin{cor}
Let $[f]\in\mathcal{M}_d$ be the conjugacy class of a strictly postcritically finite rational map $f\in\rat_d$ which is not a flexible Latt\`es map and such that $[f]$ is not a singular point of $\mathcal{M}_d$. Let $\alpha_j$ be the multipliers of the $p_j$-repelling cycles capturing the critical point $c_j$ and let $p\pe \gcd(p_j)$. Then:
\begin{center}
$\displaystyle\overline{\dim}_{\mu_\bif}[f]\geq (2d-2)\cdot\frac{\log d}{\log\max\limits_{1\leq i\leq2d-2}|\alpha_i|^{p/p_i}}$.
\end{center}
In particular, if $f$ is a rigid (i.e. non-flexible) Latt\`es map, then $\overline{\dim}_{\mu_\bif}(f)=2 (2d-2)$.
\end{cor}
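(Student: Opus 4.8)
The plan is to deduce this from Theorem \ref{cordimmuq}, the only real work being the evaluation of the quantity $\liminf_{n}\frac{\log\mathfrak{m}_n^+}{n}$ appearing there in the strictly postcritically finite setting. First I would check the hypotheses of Theorem \ref{cordimmuq}: a strictly postcritically finite map $f$ has no parabolic cycle, and its Fatou set is empty, since any periodic Fatou component would produce either a critical point with infinite forward orbit or a periodic critical point; hence $\J_f=\p^1$, all $2d-2$ critical points lie in $\J_f$, and, together with the hypothesis that $f$ is not a flexible Latt\`es map, $f$ is $(2d-2)$-Misiurewicz. Theorem \ref{cordimmuq} then applies and gives
\begin{center}
$\overline{\dim}_{\mu_\bif}[f]\geq (2d-2)\cdot\dfrac{\log d}{\liminf\limits_{n\rightarrow+\infty}\frac{\log\mathfrak{m}_n^+}{n}}$,
\end{center}
where $\mathfrak{m}_n^+=\max_{1\leq i\leq 2d-2}|(f^n)'(f^{k_0}(c_i))|$ and $k_0\geq 1$ is the least integer for which $P^{k_0}(f)$ is $f$-hyperbolic.

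The next step is the computation of this limit. Since $f$ is strictly postcritically finite, $P^{k_0}(f)$ is a finite set, consisting of the repelling cycles on which the critical orbits land together with finitely many transient points, and for each $i$ the forward orbit of $c_i$ eventually lands on the repelling cycle $\gamma_i$ that captures it, of period $p_i$ and multiplier $\alpha_i$. Writing $n=a_i+mp_i+s$, where $a_i$ is the number of iterations bringing $f^{k_0}(c_i)$ onto $\gamma_i$ and $0\leq s<p_i$, the chain rule and the periodicity of $\gamma_i$ give $(f^n)'(f^{k_0}(c_i))=\alpha_i^{m}\cdot\theta_{n,i}$, where $\theta_{n,i}$ ranges over a finite set of nonzero complex numbers as $n$ varies (nonzero because a repelling cycle contains no critical point). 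Hence $\frac{1}{n}\log|(f^n)'(f^{k_0}(c_i))|\to\frac{\log|\alpha_i|}{p_i}$, and taking the maximum over $i$,
\begin{center}
$\dfrac{\log\mathfrak{m}_n^+}{n}\ \longrightarrow\ \max_{1\leq i\leq 2d-2}\dfrac{\log|\alpha_i|}{p_i}$,
\end{center}
so the $\liminf$ in Theorem \ref{cordimmuq} equals this common limit. With $p=\gcd(p_i)$ and $p_i=p\,q_i$ one has $\frac{\log|\alpha_i|}{p_i}=\frac{1}{p}\log|\alpha_i|^{1/q_i}=\frac{1}{p}\log|\alpha_i|^{p/p_i}$, hence $\liminf_{n}\frac{\log\mathfrak{m}_n^+}{n}=\frac{1}{p}\log\max_i|\alpha_i|^{p/p_i}$; substituting into the inequality above yields the first assertion of the corollary.

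For the last statement, recall that every cycle of exact period $n$ of a degree $d$ Latt\`es map has multiplier of modulus $d^{n/2}$ --- the uniform form of $L(f)=\frac{\log d}{2}$, visible in the orbifold metric in which $|f'|\equiv\sqrt d$. Thus $\frac{\log|\alpha_i|}{p_i}=\frac{1}{2}\log d$ for every $i$, and the bound just obtained reads $\overline{\dim}_{\mu_\bif}[f]\geq (2d-2)\cdot\frac{\log d}{\frac{1}{2}\log d}=2(2d-2)$. Since $\dim_\C\mathcal{M}_d=2d-2$, equality follows as soon as the reverse inequality $\overline{\dim}_{\mu_\bif}[f]\leq 2(2d-2)$ is known; this is the one point deserving care, because a Monge--Amp\`ere measure can a priori have strictly larger pointwise dimension, and the argument uses that near an isolated rigid Latt\`es parameter $\tilde L-\frac{\log d}{2}$ vanishes to order exactly two, so that $\mu_\bif=(dd^c\tilde L)^{2d-2}$ is comparable with the Lebesgue measure there. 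Apart from this, no further transversality or renormalization input is needed: once the periodic structure of the postcritical set is in hand the limit computation is elementary, and everything else is supplied by Theorem \ref{cordimmuq}.
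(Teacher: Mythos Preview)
Your proof of the inequality is correct and follows the paper's approach: both deduce it from Theorem \ref{cordimmuq} by computing the growth rate of $|(f^n)'(f^{k_0}(c_i))|$ along the eventually periodic critical orbits. The paper records this via the identity $(f^{n_jp})'(f^{k_0}(c_i))=(\alpha_i^{p/p_i})^{n_j}$ along the subsequence $n=n_jp$; your version computes the full limit $\frac{1}{n}\log|(f^n)'(f^{k_0}(c_i))|\to\frac{\log|\alpha_i|}{p_i}$. Note that your substitution actually gives $\liminf_n\frac{\log\mathfrak{m}_n^+}{n}=\max_i\frac{\log|\alpha_i|}{p_i}=\frac{1}{p}\log\max_i|\alpha_i|^{p/p_i}$, hence a lower bound sharper by a factor $p$ than the one stated in the corollary; since $p\geq 1$ the stated inequality follows a fortiori, and the Latt\`es lower bound $2(2d-2)$ comes out correctly either way.

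Regarding the Latt\`es equality: you rightly flag the upper bound $\overline{\dim}_{\mu_\bif}[f]\leq 2(2d-2)$ as the nontrivial direction, but your justification---that $\tilde L-\frac{\log d}{2}$ vanishes to order exactly two so that $\mu_\bif$ is comparable to Lebesgue measure---assumes regularity of the Lyapunov function that is not established in the paper and is delicate in general for Monge--Amp\`ere potentials. The paper's own proof does not address this direction either; it simply invokes the multiplier formula $|\alpha_i|=d^{p_i/2}$ (citing Milnor) and asserts the result. So your argument is at least as complete as the paper's on this point, but neither fully closes the gap.
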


\begin{proof}
Let $f$ be a strictly postcritically finite degree $d$ rational map which is not a flexible Latt\`es map, then the same proof as for Theorem \ref{cordimmuq} gives the wanted estimate, since
 \begin{center}
$(f^{n_jp})'(f^k_0(c_i))=\big((f^p)'(f^{k_0}(c_i))\big)^{n_j}=(\alpha_i^{p/p_i})^{n_j}$.
\end{center}
\par If $f$ is a Latt\`es map, the multiplier of any $n$-cycles has modulus $(\sqrt{d})^n$ (see \cite{Milnor} Corollary 3.9) and the result follows.\end{proof}

~

\par The proof of Theorem \ref{tm3intro} is now obvious: there exists a projective variety $V\subset\rat_d$ such that $\mathcal{M}_d\setminus\Pi(V)$ is a complex manifold. Taking $\mathfrak{M}\pe\Pi(\mathfrak{M}_{2d-2}\setminus V)$ gives directly Theorem \ref{tm3intro}.

\section{In the family of degree $d$ polynomials}

\par Let $\poly_d$ be the family of all degree $d$ complex polynomials. The group $\Aut(\C)$ acts by conjugacy on $\poly_d$ and we denote by $\mathcal{P}_d\pe \poly_d/\Aut(\C)$ the quotient space. For $c=(c_1,\ldots,c_{d-2})\in\C^{d-2}$ and $a\in\C$ we denote $\lambda\pe(c,a)\in\C^{d-1}$ and we set
\begin{center}
$p_\lambda(z)\pe \displaystyle\frac{1}{d}z^d+\sum_{j=2}^{d-1}(-1)^{d-j}\frac{\sigma_{d-j}(c)}{j}z^j+a^d$.
\end{center}
This parametrization was introduced by Branner and Hubbard in \cite{BH} to study the connectedness locus $\mathcal{C}_d$ of the family $\mathcal{P}_d$. This family has $d-1$ marked critical points $c_0(\lambda)\pe 0,c_1(\lambda)\pe c_1,\ldots, c_{d-2}(\lambda)\pe c_{d-2}$ and admits a natural compactification as $\p^{d-1}$. The following Proposition (see \cite{BH} section 2 and Corollary 3.7 or \cite{favredujardin} Proposition 5.1, Proposition 6.2 and Proposition 6.14 or \cite{BB2} section 4.2) summarizes the interest of this parametrization:

~

\begin{prop}
\begin{enumerate}
\item The natural projection $\Pi:\C^{d-1}\longrightarrow\mathcal{P}_d$ is a degree $d(d-1)$ analytic branched cover,
\item The loci $\mathcal{B}_i\pe\{\lambda$ / $(p^n_\lambda(c_i(\lambda)))_{n\geq1}$ is bounded in $\C\}$ accumulate at infinity of $\C^{d-1}$ in $\p^{d-1}$ on codimension $1$ algebraic sets $\Gamma_i$ of the hyperplan $\p_{\infty}=\p^{d-1}\setminus\C^{d-1}$ which intersect two-by-two transversally. As a consequence, $\mathcal{C}_d$ is compact in $\C^{d-1}$,
\item The support of the bifurcation measure $\mu_\bif\pe T_\bif^{d-1}$ is the Shilov boundary $\mathcal{C}_d$. 
\end{enumerate}
\label{propPolyd}
\end{prop}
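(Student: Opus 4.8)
The plan is to obtain the three assertions by assembling Branner--Hubbard's construction of this parametrization \cite{BH} with the dynamical study of the family $(p_\lambda)_{\lambda\in\C^{d-1}}$ from \cite{favredujardin} and \cite{BB2}; no item requires a long argument, so I only indicate how each is produced. For item $(1)$ I would run the classical affine normalization of a degree $d$ polynomial: a suitable translation $z\mapsto z+\beta$ kills the coefficient of $z^{d-1}$, a suitable dilation $z\mapsto\alpha z$ normalizes the leading coefficient to $1/d$, and the only remaining conjugacies preserving this shape are the $z\mapsto\omega z$ with $\omega^{d-1}=1$. A normalized polynomial has $p'=z\prod_{j=1}^{d-2}(z-c_j)$, hence equals $p_{(c,a')}$ with $c=(c_1,\dots,c_{d-2})$ its nonzero critical points and $a'$ a $d$-th root of its constant term; recording the constant term as $a^d$ produces the residual $d$-to-one choice of $a$. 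Counting the $(d-1)$ dilation symmetries against this $d$-fold ambiguity shows that $\Pi\colon\C^{d-1}\to\mathcal{P}_d$ is an analytic branched cover of degree $d(d-1)$ (this is \cite{BH}; see also \cite{favredujardin} Proposition 5.1), the compactification $\C^{d-1}\subset\p^{d-1}$ being the obvious linear one, and $0,c_1,\dots,c_{d-2}$ are globally marked critical points by construction.

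For item $(2)$ the plan is to analyse the critical escape-rate functions $g_i(\lambda)\pe G_{p_\lambda}\big(c_i(\lambda)\big)$, $0\le i\le d-2$. Each $g_i$ is nonnegative, continuous and $p.s.h$ on $\C^{d-1}$, pluriharmonic off $\mathcal{B}_i=\{g_i=0\}$, and $\mathcal{C}_d=\bigcap_i\mathcal{B}_i=\{\sum_i g_i=0\}$. The substantial input is the asymptotics of $g_i$ near $\p_\infty$ established by Branner and Hubbard (\cite{BH} \S2 and Corollary 3.7): in a chart at infinity each $g_i$ has a well-defined leading part whose zero set, read inside $\p_\infty\simeq\p^{d-2}$, is a codimension-one algebraic subset $\Gamma_i$, and inspecting these leading parts shows the $\Gamma_i$ are pairwise transverse with empty common intersection (see also \cite{favredujardin} Proposition 6.2 and \cite{BB2} \S4.2). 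Since $\mathcal{B}_i$ clusters at infinity only along $\Gamma_i$ and $\mathcal{C}_d=\bigcap_i\mathcal{B}_i$, the emptiness of $\bigcap_i\Gamma_i$ forbids a sequence in $\mathcal{C}_d$ from escaping to infinity, so $\mathcal{C}_d$ is compact in $\C^{d-1}$.

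For item $(3)$ I would combine the polynomial form of DeMarco's formula with the no-self-intersection statement. Up to an additive constant, $L(\lambda)=\log d+\sum_{i=0}^{d-2}g_i(\lambda)$, so $T_\bif=\sum_{i=0}^{d-2}dd^cg_i$; since $dd^cg_i\wedge dd^cg_i=0$ for every $i$ by Theorem \ref{tmddcg} (the point at infinity being the only passive critical point), the expansion of $T_\bif^{\,d-1}$ collapses to $(d-1)!\bigwedge_{i=0}^{d-2}dd^cg_i$, so $\mu_\bif$ is, up to the factor $(d-1)!$, the wedge of the activity currents of the $d-1$ critical points. By item $(2)$ the compact $\mathcal{C}_d\subset\C^{d-1}$ is polynomially convex, with pluricomplex Green function $\max_i g_i$ up to normalization; I would then quote the pluripotential-theoretic fact that the support of the Monge--Amp\`ere mass of the extremal function of a polynomially convex compact is its Shilov boundary. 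Together with the identity above this yields $\supp(\mu_\bif)=\partial_{\mathrm{Shilov}}\mathcal{C}_d$, as in \cite{favredujardin} Proposition 6.14 and \cite{BB2} \S4.2.

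The only genuinely analytic step, as opposed to bookkeeping, is the one hidden in item $(2)$: the sharp control of the critical Green functions near infinity of $\p^{d-1}$, proving that they degenerate precisely along transversally placed algebraic hypersurfaces of $\p_\infty$. I would import this directly from Branner--Hubbard; everything else --- the normalization behind $(1)$, the manipulation $T_\bif=\sum dd^cg_i$ with Theorem \ref{tmddcg}, and the abstract characterization of the Shilov boundary --- is either elementary or directly quotable.
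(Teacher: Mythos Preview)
Your proposal is correct and in fact provides more than the paper does: the paper gives no proof of this proposition, treating it purely as a compilation of known results with citations to \cite{BH}, \cite{favredujardin}, and \cite{BB2}. Your sketch accurately summarizes the content of those references and matches the way the results are actually established there.

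The one place where your outline skips a step is in item $(3)$: you correctly obtain $\mu_\bif=(d-1)!\,\bigwedge_{i=0}^{d-2}dd^cg_i$ and correctly identify $\max_i g_i$ (up to normalization) as the pluricomplex Green function of $\mathcal{C}_d$, but the passage from the former to ``$\supp(\mu_\bif)$ is the Shilov boundary of $\mathcal{C}_d$'' requires knowing that $\bigwedge_i dd^cg_i$ and $(dd^c\max_i g_i)^{d-1}$ are proportional (or at least have the same support). This identification is precisely what is carried out in \cite{favredujardin} around Proposition~6.14, so your citation does cover it, but the logical link between the wedge of the activity currents and the Monge--Amp\`ere of the extremal function deserves to be made explicit rather than absorbed into ``together with the identity above''.
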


\par In the family $(p_\lambda)_{\lambda\in\C^{d-1}}$, the main results stated throughout the paper hold. Let us mention that for proving Lemma \ref{parabolicimplosion}, we don't use the fact that we have a family of rational maps. Therefore, Lemma \ref{parabolicimplosion} and Theorem \ref{tmdimH2} remain valid in this context. This again implies that $\supp(T_\bif^k)\setminus\supp(T_\bif^{k+1})$ has maximal Hausdorff dimension $2(d-1)$ and, for $k=d-1$, that the support of the measure $\mu_\bif$ is homogeneous and has maximal Hausdorff dimension. Owing to Proposition \ref{propPolyd}, this is exactly Theorem \ref{tmshilov}.

~

\par The following Lemma allows us to generalize Theorem \ref{tm2intro} for polynomials having critical points preperiodic to repelling cycles: to have transversality at $p_{\lambda_0}$, you don't need any asumptions on the critical points of $p_{\lambda_0}$ which are not preperiodic to repelling cycles. In particular, the formalism of good families is not anymore relevent for geometrically finite Misiurewicz polynomials:

~

\begin{lm}
Let $\lambda_0\in\C^{d-1}$  and $1\leq k\leq d-1$. Assume that $p_{\lambda_0}$ has $k$ critical points $c_{j_1}(\lambda_0),\ldots,c_{j_k}(\lambda_0)$ preperiodic to repelling cycles with $p_{\lambda_0}^{k_0+n_i}(c_{j_i}(\lambda_0))=p_{\lambda_0}^{k_0}(c_{j_i}(\lambda_0))$. Set $\chi_i(\lambda)\pe p_{\lambda}^{k_0+n_i}(c_{j_i}(\lambda))-p_{\lambda}^{k_0}(c_{j_i}(\lambda))$ and let $\chi$ be the activity map of $p_{\lambda_0}$:
\begin{eqnarray*}
\chi:\C^{d-1} & \longrightarrow & \C^k\\
\lambda & \longmapsto & \big(\chi_1(\lambda),\ldots,\chi_k(\lambda)\big).
\end{eqnarray*}
Then $\textup{codim}\ \chi^{-1}\{0\}=k$.
\label{lmpolynomes}
\end{lm}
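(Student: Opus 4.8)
The plan is to reduce the statement to a single one-variable assertion: for each $i$, the hypersurface $\chi_i^{-1}\{0\}$ is a proper analytic subset (codimension exactly $1$), and moreover the $k$ hypersurfaces intersect properly, so that $\chi^{-1}\{0\}=\bigcap_{i=1}^k\chi_i^{-1}\{0\}$ has codimension $k$. The key observation, which is why the formalism of good families becomes unnecessary here, is that the critical point $0=c_0(\lambda)$ is always \emph{escaping or bounded} in a way one can control directly in the Branner--Hubbard coordinates, and that the only critical points on which we impose a condition are those preperiodic to repelling cycles. So I would first reorganize the problem: it suffices to show that for every $1\leq i\leq k$, the critical point $c_{j_i}$ is active at $\lambda_0$ \emph{within} the family restricted to $\{\chi_1=\cdots=\chi_{i-1}=0\}$, and then iterate exactly as in the proof of Proposition \ref{lmactif}(2). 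Indeed, activity of $c_{j_i}$ at $\lambda_0$ forces $\chi_i\not\equiv 0$ on any component of that restricted family through $\lambda_0$ (by Lemma \ref{lmactivite}, applied to the hyperbolic set given by the repelling cycle capturing $c_{j_i}$, together with the fact that $P^{k_0}(p_{\lambda_0})$ restricted to that cycle is hyperbolic), hence $\chi_i^{-1}\{0\}$ cuts the codimension by exactly one.

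So the heart of the matter is the activity statement: \emph{if $\lambda_0$ has a critical point $c$ preperiodic to a repelling cycle, then $c$ is active at $\lambda_0$ in $\C^{d-1}$ (and in any analytic subvariety through $\lambda_0$ obtained by freezing the relations $\chi_{i'}=0$ for the other marked preperiodic critical points).} First I would note $c_{j_i}(\lambda_0)\in\J_{p_{\lambda_0}}$, since a point preperiodic to a repelling cycle lies in the Julia set. If $c_{j_i}$ were passive at $\lambda_0$, then $(p_\lambda^n(c_{j_i}(\lambda)))_{n\geq 0}$ would be a normal family near $\lambda_0$; but for polynomials one has the dichotomy that the orbit of a critical point either stays bounded or escapes to infinity, and normality near $\lambda_0$ together with $c_{j_i}(\lambda_0)\in\J_{p_{\lambda_0}}$ (so it does not escape at $\lambda_0$) forces the orbit to stay bounded and in fact to avoid an escaping region uniformly near $\lambda_0$; one then argues, exactly in the spirit of the classical Ma\~n\'e--Sad--Sullivan argument and of Lemma \ref{lmactivite}, that the derivative of $\lambda\mapsto p_\lambda^n(c_{j_i}(\lambda))-(\text{the holomorphically moving preperiodic point})$ grows geometrically with $n$ along the repelling cycle, contradicting normality — unless it is identically zero, which cannot happen for all $n$ because $p_\lambda$ genuinely varies and the repelling periodic point moves. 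This is the only place where one genuinely uses that the cycle is repelling (to get the expansion constant $>1$).

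The main obstacle, as I see it, is not the one-variable activity argument but the bookkeeping needed to make the iteration go through cleanly: one must check that after freezing $\chi_1=\cdots=\chi_{i-1}=0$, the resulting analytic set $X_{i-1}$ is still a legitimate ambient space on which Lemma \ref{lmactivite} applies, i.e.\ that $p_\lambda$ still has the relevant repelling cycle moving holomorphically on all of $X_{i-1}$ (which follows from Theorem \ref{mvtholo}, since repelling cycles persist under perturbation) and that $c_{j_i}$ restricted to $X_{i-1}$ is not forced to be passive for a spurious reason. Here the polynomial setting helps: there are no Latt\`es obstructions (degree $d$ polynomials are never Latt\`es), the stabilizer for the $\Aut(\C)$-action is handled by Proposition \ref{propPolyd}(1) (the projection is a finite branched cover, so fibers are finite, in particular discrete), and the critical points that are \emph{not} among $c_{j_1},\dots,c_{j_k}$ impose no constraint whatsoever — we simply let them roam freely in $\C^{d-1}$. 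Thus each freezing step drops the dimension by exactly one, and after $k$ steps we reach $\operatorname{codim}\chi^{-1}\{0\}=k$, which is also the maximum possible since $\chi$ takes values in $\C^k$. This last inequality $\operatorname{codim}\chi^{-1}\{0\}\leq k$ is automatic because $\chi^{-1}\{0\}$ is cut out by $k$ equations, so equality holds.
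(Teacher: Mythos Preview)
Your inductive, activity-based approach has a genuine gap at the step where you claim $c_{j_i}$ is active at $\lambda_0$ in the restricted family $X_{i-1}=\{\chi_1=\cdots=\chi_{i-1}=0\}$. By Lemma~\ref{lmactivite} this is equivalent to $\chi_i\not\equiv 0$ on the component of $X_{i-1}$ through $\lambda_0$, and your only justification is that ``$p_\lambda$ genuinely varies and the repelling periodic point moves.'' That is not enough: there is no a priori reason why the relation $\chi_i=0$ could not be forced by $\chi_1=\cdots=\chi_{i-1}=0$ on a whole irreducible component. In the general rational setting this obstruction is precisely what the good-families machinery (Proposition~\ref{lmactif}) is for, and it works only because the \emph{remaining} critical points are controlled (they sit in attracting basins, so passivity of $c_{j_1},\dots,c_{j_k}$ forces stability of the whole family, and then one invokes quasiconformal rigidity). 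Here the entire point of the lemma is that \emph{no} hypothesis is placed on the other critical points of $p_{\lambda_0}$, so you cannot conclude stability on $X_{i-1}$, and the rigidity argument you allude to never gets off the ground.

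The paper's proof is of a completely different nature and explains why this lemma is ``very easy'' in the polynomial case. It is global and algebro-geometric rather than local and dynamical: each $H_i=\chi_i^{-1}\{0\}$ is an algebraic hypersurface in $\C^{d-1}$ contained in the boundedness locus $\mathcal{B}_{j_i}$, hence its closure in $\p^{d-1}$ meets the hyperplane at infinity inside $\Gamma_{j_i}$. Since the $\Gamma_{j_i}$ intersect pairwise transversally (Proposition~\ref{propPolyd}), the intersection $H_1\cap\cdots\cap H_k\cap\p_\infty$ has codimension at least $k+1$ in $\p^{d-1}$, forcing $\textup{codim}\,H_1\cap\cdots\cap H_k\geq k$ in $\C^{d-1}$; the reverse inequality is automatic. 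No activity argument, no good families, no assumption on the other critical points.
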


\begin{proof} First remark that the activity map exhibited in the Lemma coincides with the one introduced in section \ref{sectionactivitymap} in a neighborhood of $\lambda_0$. Consider the algebraic set
\begin{center}
$H_i\pe\{\lambda\in\C^{d-1}$ / $p_{\lambda}^{k_0+n_i}(c_{j_i}(\lambda))=p_{\lambda}^{k_0}(c_{j_i}(\lambda))\}=\chi^{-1}_i\{0\}$.
\end{center}
Since $H_i\subset\mathcal{B}_{j_i}$, it is an hypersurface which extends as an hypersurface of $\p^{d-1}$, still denoted by $H_i$. Proposition \ref{propPolyd} implies that $H_1\cap\cdots\cap H_k\cap \p_{\infty}\subset\Gamma_{j_1}\cap\cdots\cap \Gamma_{j_k}\cap \p_{\infty}$. Using again Proposition \ref{propPolyd}, we get $\textup{codim}\ H_1\cap\cdots\cap H_k\cap \p_\infty\geq k+1$. As $\p_\infty$ is a hypersurface, we have $\textup{codim}\ H_1\cap\cdots\cap H_k\geq k$. Since $\lambda_0\in H_1\cap \cdots\cap H_k$, the set $H_1\cap \cdots\cap H_k$ is not empty and thus $\textup{codim}\ H_1\cap\cdots\cap H_k\leq k$.\end{proof}

~

\par Due to the above Lemma, applying Theorem \ref{tmcourantsbif} we find:

~

\begin{tm}
Let $\lambda_0\in\C^{d-1}$  and $1\leq k\leq d-1$. Assume that $p_{\lambda_0}$ has $k$ critical points which are preperiodic to repelling cycles, then $\lambda_0\in\supp(T_\bif^k)$.
\end{tm}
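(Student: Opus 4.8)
The plan is to obtain the statement directly from Lemma~\ref{lmpolynomes} and Theorem~\ref{tmcourantsbif}, as the sentence preceding it already indicates. First I would pass to a small ball $\B(\lambda_0,r)\subset\C^{d-1}$ on which the $k$ distinguished critical points stay marked and, after relabelling, assume that the critical points of $p_{\lambda_0}$ which are preperiodic to repelling cycles are $c_1,\ldots,c_k$. Pick $k_0\geq1$ with $p_{\lambda_0}^{k_0}(c_i(\lambda_0))$ lying on a repelling cycle of $p_{\lambda_0}$ for every $1\leq i\leq k$; then
\[
E_0\pe\overline{\{\,p_{\lambda_0}^{n}(c_i(\lambda_0))\ /\ n\geq k_0,\ 1\leq i\leq k\,\}}
\]
is the finite union of those cycles, hence a compact $p_{\lambda_0}$-hyperbolic set. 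As noted at the beginning of the proof of Lemma~\ref{lmpolynomes}, the map $\chi=(\chi_1,\ldots,\chi_k)$ appearing there coincides in a neighbourhood of $\lambda_0$ with the activity map of $(p_\lambda)_{\lambda\in\B(\lambda_0,r)}$ at $\lambda_0$ in the sense of Section~\ref{sectionactivitymap}. We are then exactly in the setting of Theorem~\ref{tmcourantsbif}, with $m=d-1$ and $q=k\leq m$, once we know that $\chi^{-1}\{0\}$ is of pure codimension $k$ near $\lambda_0$.

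To get this purity I would reuse the argument of the proof of Lemma~\ref{lmpolynomes}. For $1\leq i\leq k$ the set $H_i=\chi_i^{-1}\{0\}$ is contained in $\mathcal{B}_i$, hence is an algebraic hypersurface of $\C^{d-1}$ whose closure in $\p^{d-1}$ we still denote $H_i$; so every irreducible component of $H_1\cap\cdots\cap H_k$ has codimension at most $k$. If some component had codimension $<k$, its closure in $\p^{d-1}$ would be a projective variety of dimension $>d-1-k$ and would meet $\p_\infty$ in dimension $>d-2-k$, i.e.\ in codimension $\leq k$ in $\p^{d-1}$; but $H_1\cap\cdots\cap H_k\cap\p_\infty\subset\Gamma_1\cap\cdots\cap\Gamma_k\cap\p_\infty$, which has codimension $\geq k+1$ in $\p^{d-1}$ by Proposition~\ref{propPolyd} --- a contradiction. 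Thus $\chi^{-1}\{0\}$ has pure codimension $k$ near $\lambda_0$, Theorem~\ref{tmcourantsbif} applies, and with holomorphic lifts $\tilde c_i$ of the $d-1$ finite marked critical points (the polynomial counterparts of DeMarco's formula and of the determinant identity of Theorem~\ref{tmddcg} being classical for the Branner--Hubbard parametrisation; cf.\ the references in Proposition~\ref{propPolyd}) it yields
\[
\lambda_0\in\supp\big(dd^cG_\lambda(\tilde c_1(\lambda))\wedge\cdots\wedge dd^cG_\lambda(\tilde c_k(\lambda))\big).
\]

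Finally I would conclude by positivity. Near $\lambda_0$ one has $T_\bif=\sum_{j=1}^{d-1}dd^cG_\lambda(\tilde c_j(\lambda))$ by DeMarco's formula \eqref{Demarco2} (the critical points at infinity contributing nothing), and since the potentials $G_\lambda(\tilde c_j(\lambda))$ are continuous the power $T_\bif^k$ expands (cf.\ Theorem~\ref{tmddcg}) as a sum of wedge products of the positive closed $(1,1)$-currents $dd^cG_\lambda(\tilde c_j(\lambda))$, all of which are positive, and one summand is a positive multiple of $dd^cG_\lambda(\tilde c_1(\lambda))\wedge\cdots\wedge dd^cG_\lambda(\tilde c_k(\lambda))$. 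Hence $\supp(T_\bif^k)$ contains the support of that current, so $\lambda_0\in\supp(T_\bif^k)$, as claimed. There is no real analytic obstacle here; the two points deserving some care are the passage from a mere codimension~$k$ statement (Lemma~\ref{lmpolynomes}) to pure codimension~$k$, so that Theorem~\ref{tmcourantsbif} applies verbatim, and checking that the family $(p_\lambda)_{\lambda\in\C^{d-1}}$ with its $d-1$ finite marked critical points genuinely satisfies the hypotheses of Theorems~\ref{tmddcg} and~\ref{tmcourantsbif}.
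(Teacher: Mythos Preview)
Your proof is correct and follows the same route as the paper: Lemma~\ref{lmpolynomes} provides the codimension condition, and Theorem~\ref{tmcourantsbif} then gives the conclusion. The paper compresses this into a single sentence, whereas you have made explicit two points the paper leaves implicit --- the upgrade from ``codimension $k$'' to ``\emph{pure} codimension $k$'' (your projective argument via $\p_\infty$ is exactly how one extracts this from the proof of Lemma~\ref{lmpolynomes}), and the passage from $\supp\big(\bigwedge_{i=1}^k dd^cG_\lambda(\tilde c_i(\lambda))\big)$ to $\supp(T_\bif^k)$ by positivity of the remaining summands in the expansion of $T_\bif^k$.
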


\bibliographystyle{short}
\bibliography{biblio}

\end{document}